\newif\ifcompositio
\newif\ifarxiv
\crefname{subsection}{Subsection}{subsections}
\newtheorem{theorem}{Theorem}
\newtheorem{thm}{Theorem}[section]
\newtheorem{lem}[thm]{Lemma}
\newtheorem{prop}[thm]{Proposition}
\newtheorem{cor}[thm]{Corollary}
\newtheorem{question}[thm]{Question}
\newtheorem{conj}[thm]{Conjecture}
\theoremstyle{definition}
\newtheorem{defn}[thm]{Definition}
\theoremstyle{remark}
\newtheorem{remark}[thm]{Remark}
\newtheorem{example}[thm]{Example}
\newcommand{\ncmd}{\newcommand}
\definecolor{DefColor}{rgb}{0.6,0.15,0.25}
\newcommand{\mdef}[1]{\textcolor{DefColor}{#1}}
\newcommand{\tdef}[1]{\mdef{\emph{#1}}}
\ncmd{\pwrap}[1]{(#1)}
\ncmd{\pwrap}[1]{#1}
\ncmd{\mbb}[1]{\mathbb{#1}}
\ncmd{\mrm}[1]{\mathrm{#1}}
\ncmd{\mcl}[1]{\mathcal{#1}}
\ncmd{\mfk}[1]{\mathfrak{#1}}
\DeclareFontFamily{U}{rcjhbltx}{}
\DeclareFontShape{U}{rcjhbltx}{m}{n}{<->s*[1.2]rcjhbltx}{}
\DeclareSymbolFont{hebrewletters}{U}{rcjhbltx}{m}{n}
\DeclareMathSymbol{\tsadi}{\mathord}{hebrewletters}{118}
\ncmd{\todo}[1]{\textbf{TODO #1}}
\ncmd{\reftodo}[1]{\textbf{REF #1}}
\ncmd{\id}[1][]{\mrm{id}\ifstrempty{#1}{}{_{#1}}}
\ncmd{\op}{\mrm{op}}
\ncmd{\rev}{\mrm{rev}}
\ncmd{\seg}{\mrm{seg}}
\ncmd{\gpc}{\mrm{gpc}}
\ncmd{\gl}{\mrm{gl}}
\ncmd{\BC}{\mrm{BC}}
\ncmd{\ho}{\mrm{ho}}
\ncmd{\cocart}{\sqcup}
\ncmd{\infcocart}{{\cocart_\infty}}
\ncmd{\mcocart}{{\cocart_m}}
\ncmd{\mzcocart}{{\cocart_{m_0}}}
\ncmd{\atomicpres}{\mathrm{at}}
\ncmd{\st}{\mathrm{st}}
\ncmd{\atomic}[1][]{\ifstrempty{#1}{}{#1\text{-}}\atomicpres}
\ncmd{\dbl}{\mrm{dbl}}
\ncmd{\ldbl}{\mrm{ldbl}}
\ncmd{\lax}{\mrm{lax}}
\ncmd{\ofin}{1\text{-}\mathrm{fin}}
\ncmd{\mfin}{m\text{-}\mathrm{fin}}
\ncmd{\pifin}{\pi\text{-}\mathrm{fin}}
\ncmd{\Mod}{\mrm{Mod}}
\ncmd{\ModEk}{\widehat{\Mod}_{\Ek}}
\ncmd{\LMod}{\mrm{LMod}}
\ncmd{\RMod}{\mrm{RMod}}
\ncmd{\Vect}{\mrm{Vect}}
\ncmd{\Sp}{\mrm{Sp}}
\ncmd{\conSp}{\Sp_{\geq 0}}
\ncmd{\Spaces}{\mcl{S}}
\ncmd{\Spacesm}{\Spaces_m^{(p)}}
\ncmd{\Spacespifin}{\Spaces_{\pifin}^{(p)}}
\ncmd{\Spaceso}{\Spaces_1^{(p)}}
\ncmd{\paran}[1]{\ifstrempty{#1}{}{({#1})}}
\ncmd{\Op}[1][]{\mrm{Op}}
\ncmd{\PCMonm}[1][]{\mrm{PCMon}_m^{(p)} \paran{#1}}
\ncmd{\Mon}[1][]{\mrm{Mon}\paran{#1}}
\ncmd{\CMon}[1][]{\mrm{CMon}\paran{#1}}
\ncmd{\CMongl}[1][]{\mrm{CMon}^\gl\paran{#1}}
\ncmd{\CMonm}[1][]{\mrm{CMon}_m^{(p)}\paran{#1}}
\ncmd{\CMono}[1][]{\mrm{CMon}_1^{(p)}\paran{#1}}
\ncmd{\CMonmz}[1][]{\mrm{CMon}_{m_0}^{(p)}\paran{#1}}
\ncmd{\CMoninf}[1][]{\mrm{CMon}_\infty^{(p)}\paran{#1}}
\ncmd{\Perf}{\mathrm{Perf}}
\ncmd{\Poset}{\mathrm{Poset}}
\ncmd{\FinSet}{\mathrm{FinSet}}
\ncmd{\Cat}{\mathrm{Cat}}
\ncmd{\Cato}{\Cat_1}
\ncmd{\Catst}{\Cat^{\st}}
\ncmd{\Catmfin}{\Cat_{\mfin}}
\ncmd{\Catmfinst}{\Catst_{\mfin}}
\ncmd{\Catofinst}{\Catst_{\ofin}}
\ncmd{\Catpifin}{\Cat_{\pifin}}
\ncmd{\Catpifinst}{\Catst_{\pifin}}
\ncmd{\Catmfinstleqn}{\Catst_{\mfin,\leq n}}
\ncmd{\Catmzfinst}{\Catst_{m_0\text{-}\mathrm{fin}}}
\ncmd{\CatK}{\Cat_\clK}
\ncmd{\CAT}{\widehat{\Cat}}
\ncmd{\CATo}{\CAT_1}
\renewcommand{\Pr}{\mathrm{Pr}}
\ncmd{\PrL}{\Pr^\mathrm{L}}
\ncmd{\PrLtimes}{\Pr^\mathrm{L,\times}}
\ncmd{\PrLotimes}{\Pr^\mathrm{L,\otimes}}
\ncmd{\PrR}{\Pr^\mathrm{R}}
\ncmd{\add}{\mathrm{add}}
\ncmd{\tsadim}{\tsadi^{[m]}}
\ncmd{\tsadio}{\tsadi^{[1]}}
\ncmd{\tsadimz}{\tsadi^{[m_0]}}
\ncmd{\yon}{\text{\usefont{U}{min}{m}{n}\symbol{'110}}}
\DeclareFontFamily{U}{min}{}
\DeclareFontShape{U}{min}{m}{n}{<-> dmjhira}{}
\ncmd{\yonM}{\yon^\MM}
\ncmd{\yonKM}{\yon_\clK^\MM}
\ncmd{\epsKM}{\varepsilon_\clK^\MM}
\ncmd{\ryonM}{\yon_0^\MM}
\ncmd{\underlying}[1][-]{\underline{({#1})}}
\ncmd{\unfurling}{\Upsilon}
\ncmd{\BB}{\mrm{B}}
\ncmd{\UU}{\mrm{U}}
\ncmd{\BU}{\BB\UU}
\ncmd{\ZZ}{\mbb{Z}}
\ncmd{\Zp}{\ZZ_p}
\ncmd{\Fp}{\mbb{F}_p}
\ncmd{\bbC}{\mbb{C}}
\ncmd{\QQ}{\mbb{Q}}
\renewcommand{\SS}{\mbb{S}}
\ncmd{\SSpinv}{\SS[p^{-1}]}
\ncmd{\Sppinv}{\Sp[p^{-1}]}
\ncmd{\unit}{\mathbf{1}}
\ncmd{\Qb}{\overline{\QQ}}
\ncmd{\CC}{\mcl{C}}
\ncmd{\DD}{\mcl{D}}
\ncmd{\EE}{\mcl{E}}
\ncmd{\MM}{\mcl{M}}
\ncmd{\NN}{\mcl{N}}
\ncmd{\VV}{\mcl{V}}
\ncmd{\OO}{\mcl{O}}
\ncmd{\bbE}{\mbb{E}}
\ncmd{\II}{I}
\ncmd{\JJ}{J}
\ncmd{\sK}{K}
\ncmd{\clK}{\mcl{K}}
\ncmd{\clKop}{{\clK^\op}}
\ncmd{\KK}{\mrm{K}}
\ncmd{\THH}{\mrm{THH}}
\ncmd{\TC}{\mrm{TC}}
\ncmd{\rE}{\mrm{E}}
\ncmd{\En}{\rE_n}
\ncmd{\Ek}{\rE_k}
\ncmd{\JW}{\rE(n)}
\ncmd{\cJW}{\widehat{\JW}}
\ncmd{\TT}{\mrm{T}}
\ncmd{\Km}{\KK^{[m]}}
\ncmd{\Kmo}{\KK^{[1]}}
\ncmd{\Kmz}{\KK^{[m_0]}}
\ncmd{\Kmmz}{\KK^{[m_0]}_{[m]}}
\ncmd{\Kzm}{\KK_{[m]}}
\ncmd{\Kzinf}{\KK_{[\infty]}}
\ncmd{\Kzo}{\KK_{[1]}}
\ncmd{\Kinf}{\KK^{[\infty]}}
\ncmd{\KU}{\KK\UU}
\ncmd{\ku}{\mrm{ku}}
\ncmd{\Kze}{\KK(0)}
\ncmd{\Ko}{\KK(1)}
\ncmd{\Tze}{\TT(0)}
\ncmd{\To}{\TT(1)}
\ncmd{\height}{\mathrm{ht}}
\ncmd{\Kn}{\KK(n)}
\ncmd{\Kk}{\KK(k)}
\ncmd{\Tn}{\TT(n)}
\ncmd{\Tk}{\TT(k)}
\ncmd{\Tnpo}{\TT(n+1)}
\ncmd{\MU}{\mrm{MU}}
\ncmd{\BP}{\mrm{BP}}
\ncmd{\BPn}{\BP\langle n \rangle}
\ncmd{\SpKn}{\Sp_{\Kn}}
\ncmd{\SpTn}{\Sp_{\Tn}}
\ncmd{\SpTnpo}{\Sp_{\Tnpo}}
\ncmd{\SpTk}{\Sp_{\TT(k)}}
\ncmd{\SpTo}{\Sp_{\To}}
\ncmd{\SpTze}{\Sp_{\Tze}}
\ncmd{\LTn}{L_{\Tn}}
\ncmd{\LTnpo}{L_{\Tnpo}}
\ncmd{\LKo}{L_{\Ko}}
\ncmd{\Lof}{L_1^f}
\ncmd{\Lnf}{L_n^f}
\ncmd{\Lnpof}{L_{n+1}^f}
\ncmd{\LnfSp}{\Lnf\Sp}
\ncmd{\LnpofSp}{\Lnpof\Sp}
\ncmd{\Fseg}{F^{\seg}}
\ncmd{\fL}{\mathrm{L}}
\ncmd{\fLn}{\fL^n}
\ncmd{\GL}{\mrm{GL}}
\ncmd{\omegapn}{\omega^{(n)}_p}
\ncmd{\iso}{\xrightarrow{\smash{\raisebox{-0.4ex}{\ensuremath{\scriptstyle\sim}}}}}
\DeclareMathOperator{\Fun}{Fun}
\ncmd{\FunL}{\Fun^\mrm{L}}
\ncmd{\FunR}{\Fun^\mrm{R}}
\ncmd{\intL}{\mrm{iL}}
\ncmd{\FunintL}{\Fun^\intL}
\ncmd{\Funlax}{\Fun^\mrm{lax}}
\DeclareMathOperator{\Aut}{Aut}
\DeclareMathOperator{\Span}{Span}
\DeclareMathOperator{\Alg}{Alg}
\DeclareMathOperator{\CAlg}{CAlg}
\DeclareMathOperator*{\colim}{colim}
\ncmd{\Lseg}{L^{\seg}}
\DeclareMathOperator{\End}{End}
\DeclareMathOperator{\PSh}{\mcl{P}}
\DeclareMathOperator{\PShM}{\PSh^\MM}
\DeclareMathOperator{\PShK}{\PSh_\clK}
\DeclareMathOperator{\PShKM}{\PSh_\clK^\MM}
\begin{document}
	\title{Higher Semiadditive Algebraic K-Theory and Redshift}

	\ifcompositio
	\author{Shay Ben-Moshe}
	\email{shay.benmoshe@mail.huji.ac.il}
	\address{Einstein Institute of Mathematics\\The Hebrew University of Jerusalem\\Jerusalem 91904\\Israel}
	\author{Tomer M. Schlank}
	\email{tomer.schlank@mail.huji.ac.il}
	\address{Einstein Institute of Mathematics\\The Hebrew University of Jerusalem\\Jerusalem 91904\\Israel}

	\classification{19D55, 55P42, 18N60.}
	\keywords{Algebraic K-theory, higher semiadditivity, ambidexterity, chromatic homotopy theory, redshift.}
	\thanks{The second author is supported by ISF1588/18 and BSF 2018389.}
	\else
	\author{Shay Ben-Moshe\thanks{Einstein Institute of Mathematics, The Hebrew University of Jerusalem.} \and Tomer M. Schlank\footnotemark[1]}
	\date{}
	\fi
	
	\ifarxiv
	\maketitle
	\fi

	\begin{abstract}
		We define higher semiadditive algebraic K-theory, a variant of algebraic K-theory that takes into account higher semiadditive structure, as enjoyed for example by the $\Kn$- and $\Tn$-local categories.
		We prove that it satisfies a form of the redshift conjecture.
		Namely, that if $R$ is a ring spectrum of height $\leq n$, then its semiadditive K-theory is of height $\leq n+1$.
		Under further hypothesis on $R$, which are satisfied for example by the Lubin--Tate spectrum $\En$, we show that its semiadditive algebraic K-theory is of height exactly $n+1$.
		Finally, we connect semiadditive K-theory to $\Tnpo$-localized K-theory, showing that they coincide for any $p$-invertible ring spectrum and for the completed Johnson--Wilson spectrum $\cJW$.
	\end{abstract}

	\ifcompositio
	\maketitle
	\fi
	
	\begin{figure}[h!]
		\vspace{4mm}
		\centering
		\includegraphics[width=90mm]{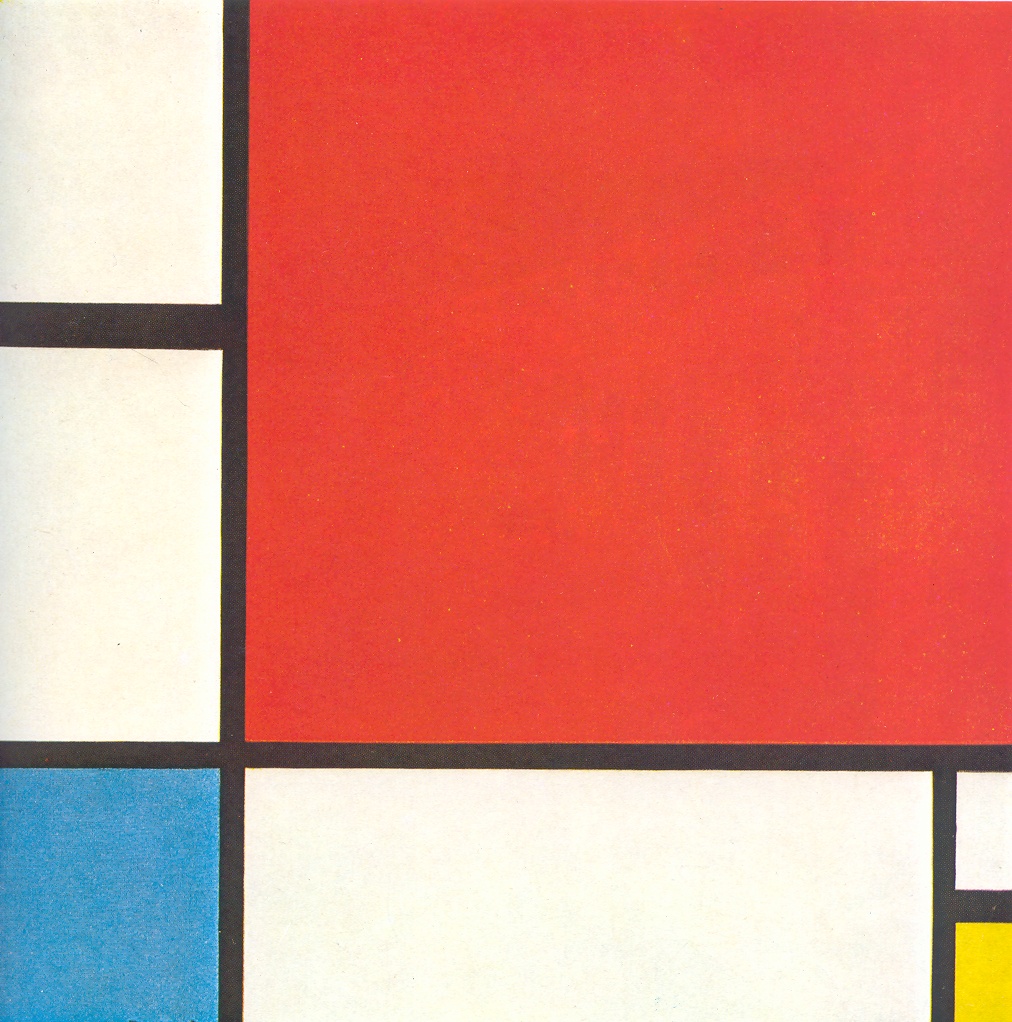}
		\caption*{Composition with Red, Blue and Yellow by Piet Mondrian.}
	\end{figure}

	\newpage
	
	\ifarxiv
	\tableofcontents
	\newpage
	\fi
	
	\section{Introduction}

\subsection{Overview}

\subsubsection{Descent and Redshift}\label{descent-redshift}

Algebraic K-theory $\KK\colon \Catst \to \Sp$ is a rich invariant of stable $\infty$-categories and thus of rings and ring spectra.
Ausoni--Rognes \cite{AR02, AR08} suggested a fascinating program concerning the interaction between algebraic K-theory and the chromatic filtration on spectra, now known as the redshift philosophy.
Namely, that algebraic K-theory increases the chromatic height of ring spectra by $1$.
They demonstrated this phenomenon at height $1$, and conjectured that it persists to arbitrary heights.
Another interesting aspect of algebraic K-theory is its descent properties.
For example, it is known by \cite{NisDesc} that it satisfies Nisnevich descent for ordinary rings, while it fails to satisfy \'etale descent due to its failure to satisfy Galois descent.
The recent breakthroughs of \cite{DescVan, Pur} have shown that chromatically localized K-theory does satisfy Galois descent under certain hypotheses, which was used to prove the following part of the redshift conjecture.

\begin{thm}[{\cite[Theorem A]{DescVan}}]
	Let $R \in \CAlg(\Sp)$ and $n \geq 0$. If $\LTn R = 0$ then $L_{\Tnpo} \KK(R) = 0$.
\end{thm}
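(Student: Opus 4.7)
The plan is to establish $\Tnpo$-local vanishing of $\KK(R)$ by combining a chromatic Galois-descent result for algebraic K-theory with a reduction that exploits the hypothesis $L_{\Tn}R = 0$.

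First I would try to reduce to a more tractable class of rings. Using the Bass-Thomason localization sequence for $\KK$, together with the arithmetic fracture square that splits $R$ into its $p$-inverted and $p$-complete parts, and the fact that $L_{\Tn}R = 0$ is preserved by these operations, one isolates the summand where the argument really needs to be run. The $\Tn$-acyclicity tells us $R$ has no height-$n$ chromatic content, and this property should propagate under any reasonable functorial construction applied to $R$.

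Second, the heart of the proof is a descent theorem for $L_{\Tnpo}\KK$ along a faithful Galois-type extension $R \to R'$, for instance a cyclotomic extension adjoining $p^k$-th roots of unity, or an $\En$-style extension of the $\Kn$-local sphere. While $\KK$ famously fails to satisfy Galois descent in general, the failure disappears after sufficient chromatic localization, which is precisely the regime we are in. Granting such descent, one tries to choose $R'$ so that $L_{\Tnpo}\KK(R')$ vanishes directly: the hypothesis $L_{\Tn}R = 0$ should ensure that the height-$n$ information in $R'$ is still trivial, and a purity argument would then prevent $\KK(R')$ from acquiring height-$(n+1)$ content. The vanishing then descends back to $R$ via the cosimplicial Galois tower.

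The main obstacle is the descent itself, which is far from routine: establishing $\Tnpo$-local Galois descent for $\KK$ requires the delicate ambidexterity results of Hopkins-Lurie in the $\Kn$-local setting, along with a careful analysis of the universal cyclotomic/Lubin-Tate towers, and this is essentially the content of \cite{DescVan}. Combining such a descent statement with the chromatic purity results of \cite{Pur} is what lets one actually propagate vanishing along the tower; both ingredients appear genuinely necessary to cross the height barrier from $n$ to $n+1$.
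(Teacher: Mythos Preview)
The paper does not contain a proof of this statement. It appears in the introduction (\S\ref{descent-redshift}) as background, attributed to \cite[Theorem A]{DescVan}, and is quoted solely to motivate the redshift philosophy before the paper turns to its own constructions. There is therefore nothing in the present paper to compare your proposal against.

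As to the proposal itself: it is not a proof but a plausibility sketch. You correctly identify the two main ingredients that drive the argument in \cite{DescVan}---chromatic Galois descent for $L_{\Tnpo}\KK$ and a purity statement controlling how height propagates---and you even say explicitly that ``this is essentially the content of \cite{DescVan}.'' That is honest, but it also means no independent argument has been given. The reductions you gesture at (Bass--Thomason, arithmetic fracture) are reasonable preliminary moves, but the real work lies entirely in establishing descent and purity, and your sketch defers both to the cited references. If the intent was to summarize the strategy of \cite{DescVan}, the outline is in the right spirit; if the intent was to supply a self-contained proof, it does not.
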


In addition, Hahn--Wilson \cite{BPex} and Yuan \cite{Yuan} give the first examples of non-vanishing of $\TT(n+1)$-localized K-theory for ring spectra of chromatic height $n$, at arbitrary heights $n \geq 0$.
Building on this, Burklund--S.--Yuan \cite{Null} have recently proved the non-vanishing of $\TT(n+1)$-localized K-theory for all commutative ring spectra of chromatic height $n$.

\subsubsection{Higher Semiadditivity}

Hopkins--Lurie \cite[Theorem 5.2.1]{HL} and Carmeli--S.--Yanovski \cite[Theorem A]{TeleAmbi} proved that the chromatically localized $\infty$-categories $\SpKn$ and $\SpTn$ (respectively) are \emph{$\infty$-semiadditive}.
Namely, that there is a canonical natural equivalence between limits and colimits indexed by $\pi$-finite spaces (i.e.\ spaces with finitely many connected components and finitely many non-zero homotopy groups all of which are finite).
In this paper we will only make use of \emph{$p$-typical} higher semiadditivity, that is, relaxing the condition to $\pi$-finite $p$-spaces (i.e.\ $\pi$-finite spaces whose homotopy groups are all $p$-groups), which we thus simply call higher semiadditivity.

Harpaz \cite{Harpaz} studied the connection between $\infty$-semiadditivity and \emph{$\infty$-commutative monoids}.
Recall that a ($0$-)commutative monoid is, roughly speaking, the structure of summation of finite families of elements (in a coherently associative and commutative way).
Similarly, a ($p$-typical) $\infty$-commutative monoid is, roughly speaking, the structure of ``integration'' of families of elements indexed by a $\pi$-finite $p$-space (in a coherently associative and commutative way).
More precisely, given an $\infty$-category $\CC$, the $\infty$-category of ($p$-typical) $\infty$-commutative monoids in $\CC$ is defined to be
$\CMoninf[\CC] = \Fun^{\seg}(\Span(\Spacespifin)^\op,\CC)$,
the full subcategory of those functors from spans of $\pi$-finite $p$-spaces that satisfy the $\infty$-Segal condition.
In \cite[Corollary 5.19]{Harpaz} and \cite[Proposition 5.3.1]{AmbiHeight} it is shown that the property of being a ($p$-typically) $\infty$-semiadditive presentable $\infty$-category is classified by the \emph{mode} $\CMoninf[\Spaces]$ of $\infty$-commutative monoids in spaces.\footnote{The cited papers work in the non $p$-typical case, but the same proofs work for the $p$-typical case.}
That is, a presentable $\infty$-category $\CC$ is $\infty$-semiadditive if and only if it admits a (necessarily unique) module structure over $\CMoninf[\Spaces]$ in $\PrL$.
Furthermore, any object $X \in \CC$ in an $\infty$-semiadditive presentable $\infty$-category $\CC$ is canonically endowed with the structure of an $\infty$-commutative monoid, that is, there is an equivalence $\CC \iso \CMoninf[\CC]$.

Using this $\infty$-commutative monoid structure, \cite[Definition 3.1.6]{AmbiHeight} introduces the \emph{semiadditive height} of an object $X \in \CC$, denoted by $\height(X)$.
The notion of semiadditive height, which is defined in arbitrary $\infty$-semiadditive $\infty$-categories, is related to the chromatic height.
For example, all objects of $\SpKn$ and $\SpTn$ are of semiadditive height $n$ by \cite[Theorem 4.4.5]{AmbiHeight}.

A particularly interesting example of an $\infty$-semiadditive presentable $\infty$-category, which is studied in \cite{AmbiHeight}, is the mode classifying the property of being a $p$-local stable $\infty$-semiadditive presentable $\infty$-category, $\tsadi = \CMoninf[\Sp_{(p)}]$, consisting of ($p$-typical) $\infty$-commutative monoids in $p$-local spectra (see \cref{tsadim-def}).
By construction, there is a canonical map of modes $(-)^{\gpc}\colon \CMoninf[\Spaces] \to \tsadi$, which we call the \emph{group-completion}.
Additionally, there is a canonical map of modes $\LTn^{\tsadi}\colon \tsadi \to \SpTn$, which by \cite[Corollary 5.5.14]{AmbiHeight} is a smashing localization, and in particular has a fully faithful right adjoint.

Another important example of an $\infty$-semiadditive presentable $\infty$-category is $\Catpifin$, consisting of $\infty$-categories admitting colimits over all $\pi$-finite $p$-spaces (see \cite[Theorem 5.23]{Harpaz} and \cite[Proposition 2.2.7]{AmbiHeight}).
As an $\infty$-semiadditive $\infty$-category, its objects, which are themselves $\infty$-categories, can have a semiadditive height.
Additionally, there is an interplay between the semiadditive height of objects in an $\infty$-semiadditive $\infty$-category and the semiadditive height of the $\infty$-category itself as an object of $\Catpifin$, which we view as the crucial step at which redshift happens.

\begin{thm}[\pwrap{Semiadditive Redshift {\cite[Theorem B]{AmbiHeight}}}]\label{semiadditive-redshift}
	Let $\CC$ be an $\infty$-semiadditive $\infty$-category,
	then $\height(X) \leq n$ for all $X \in \CC$ if and only if $\height(\CC) \leq n + 1$ as an object of $\Catpifin$.
\end{thm}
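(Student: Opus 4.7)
The plan is to analyze semiadditive height through cardinality operators and to establish a ``loop space'' shift relating the cardinality attached to $\CC$ (viewed as an object of $\Catpifin$) to the cardinalities attached to the individual objects of $\CC$.

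First, I would recall the characterization of semiadditive height of an object $Y$ in an $\infty$-semiadditive $\infty$-category in terms of cardinality: for each $\pi$-finite $p$-space $A$, the $\infty$-semiadditive structure furnishes a natural endomorphism $|A|_Y \colon Y \to Y$ (a natural transformation of $\id_Y$), and $\height(Y) \leq n$ is expressed, following \cite{AmbiHeight}, as a specific vanishing/nilpotence condition on the cardinality for $A = B^{n+1} C_p$. The corresponding statement for $\CC$ as an object of $\Catpifin$ is about cardinality natural transformations of $\id_\CC$ in $\Catpifin$.

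The core technical step is then the shift identity
\[
  |B^{n+2} C_p|_\CC \text{ evaluated at } X \;\simeq\; |B^{n+1} C_p|_X
  \qquad\text{for every } X \in \CC.
\]
The intuition is that $\Catpifin$ sits one categorical level above its objects, so the norm equivalence inside $\Catpifin$ indexed by a $\pi$-finite $p$-space $A$ ``loops'' to the norm equivalence inside $\CC$ indexed by $\Omega A$; applied to $A = B^{n+2} C_p$ this yields $\Omega A = B^{n+1} C_p$ and hence the desired shift. To prove the identity, I would use that objects of $\CC$ are themselves canonically $\infty$-commutative monoids (via the $\CMoninf[\Spaces]$-module structure on $\CC$ recalled in the introduction), and verify that evaluation at $X$ is compatible with the $\pi$-finite colimits of the constant diagram $A \to \Catpifin$ with value $\CC$ in a way that implements the looping of the indexing space.

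From the shift identity both directions of the biconditional follow formally: if $\height(X) \leq n$ for every $X$, then $|B^{n+1} C_p|_X$ vanishes pointwise, so by the identity $|B^{n+2} C_p|_\CC$ is a natural transformation of $\id_\CC$ that vanishes on every object and therefore vanishes, giving $\height(\CC) \leq n+1$; conversely, the vanishing of $|B^{n+2} C_p|_\CC$ forces $|B^{n+1} C_p|_X$ to vanish for every $X$, giving $\height(X) \leq n$. The main obstacle is establishing the shift identity rigorously: one must unpack how $\pi$-finite colimits of constant $\infty$-category-valued diagrams are computed in $\Catpifin$ and how the associated norm equivalences interact with pointwise evaluation, which is a genuinely two-dimensional compatibility---the single place where the redshift by one actually happens.
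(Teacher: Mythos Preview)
The paper does not prove this statement; it is quoted from \cite[Theorem B]{AmbiHeight} (once in the introduction and again as \cref{semiadditive-redshift-2}), so there is no in-paper proof to compare against directly. That said, your proposal has genuine problems.

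First, you misstate the height condition. By the definition recalled in \cref{sec-red}, $\height(Y) \leq n$ means that $p_{(n)} = |\BB^n C_p|$ acts \emph{invertibly} on $Y$ (this is $p_{(n)}$-divisibility), not that some cardinality vanishes or is nilpotent; and the relevant space is $\BB^n C_p$, not $\BB^{n+1} C_p$. Likewise $\height(\CC) \leq n+1$ in $\Catpifin$ means $|\BB^{n+1} C_p|_\CC$ is invertible, not $|\BB^{n+2} C_p|_\CC$.

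Second, your shift identity does not type-check. The map $|\BB^{n+2} C_p|_\CC$ is a morphism $\CC \to \CC$ in $\Catpifin$, i.e.\ a functor; evaluating it at $X$ yields an \emph{object} of $\CC$ (by \cref{pn-formula} it is $\colim_{\BB^{n+2} C_p} X$). By contrast $|\BB^{n+1} C_p|_X$ is an \emph{endomorphism} of $X$. These cannot be equated.

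Even with corrected indices, the looping heuristic does not produce an identity of the shape you want. What is actually true, and is the content of the cited theorem, is that the functor $|\BB^{n+1} C_p|_\CC \colon X \mapsto \colim_{\BB^{n+1} C_p} X$ is an autoequivalence of $\CC$ if and only if $|\BB^n C_p|_X$ is invertible for every $X$. One direction appears in the paper's proof of \cref{pk-1}: if $|\BB^n C_p|_X$ is invertible for all $X$, then \cite[Proposition 2.4.7]{AmbiHeight} shows the fold $\colim_{\BB^{n+1} C_p} X \xrightarrow{\nabla} X$ is an equivalence, whence $|\BB^{n+1} C_p|_\CC \simeq \id_\CC$. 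The converse is not a formal consequence of any pointwise identity and requires further arguments from \cite{AmbiHeight}. So your intuition that the shift by one comes from the categorical level difference is correct, but the mechanism is the relation between invertibility of $|\BB^n C_p|$ on objects and the fold map from the $\BB^{n+1} C_p$-colimit being an equivalence, not an equality of cardinality operators across levels.
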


\subsubsection{Higher Semiadditive Algebraic K-Theory of Categories}

In this paper we study the confluence of the above ideas.
As $\Catpifin$ is itself $\infty$-semiadditive, the $\infty$-categories therein admit a canonical structure of $\infty$-commutative monoids, called the \emph{$\infty$-cocartesian structure}, via the equivalence
$\Catpifinst \xrightarrow{(-)^\cocart} \CMoninf[\Catpifinst]$,
where the integration of a family of objects is given by their colimit.
We observe that the $S_\bullet$-construction preserves limits, thus it preserves $\infty$-commutative monoid structure.
This observation along with the group-completion functor described above lead us to the main definition of the present paper.
By analogy with the definition of ordinary algebraic K-theory, in \cref{sa-k-def} we define \emph{$\infty$-semiadditive algebraic K-theory} $\Kinf\colon \Catpifinst \to \tsadi$ as the composition
$$
	\Catpifinst
	\xrightarrow{(-)^\cocart} \CMoninf[\Catpifinst]
	\xrightarrow{S_\bullet} \CMoninf[\Spaces]^{\Delta^\op}
	\xrightarrow{(-)^{\gpc}} \tsadi^{\Delta^\op}
	\xrightarrow{|-|} \tsadi
	\xrightarrow{\Omega} \tsadi
	.	
$$
It is immediate from this definition that $\Kinf$ is $\infty$-semiadditive, i.e.\ preserves (co)limits indexed by $\pi$-finite $p$-spaces.

We give a second construction of $\Kinf$ which connects it to ordinary algebraic K-theory.
We show that $\Kinf(\CC)$ is obtained by taking the $\infty$-cocartesian structure on $\CC$, applying ordinary algebraic K-theory level-wise, and forcing the result to satisfy the $\infty$-Segal condition.
More precisely, we define the functor $\Kzinf\colon \Catpifinst \to \Fun(\Span(\Spacespifin)^\op, \Sp_{(p)})$ by the composition
$$
	\Catpifinst
	\xrightarrow{\mkern-1mu(-)^\cocart\mkern-3mu} \CMoninf[\Catpifinst]
	\subset \Fun(\Span(\Spacespifin)^\op, \Catpifinst)
	\xrightarrow{\KK \circ -\mkern-1mu} \Fun(\Span(\Spacespifin)^\op, \Sp_{(p)}),
$$
and construct a natural transformation $\Kzinf \Rightarrow \Kinf$.
Then, in \cref{sheafification-ord} we show that after reflecting to the subcategory $\tsadi \subset \Fun(\Span(\Spacespifin)^\op, \Sp_{(p)})$ of functors satisfying the $\infty$-Segal condition, the map becomes an equivalence $\Lseg \Kzinf(\CC) \iso \Kinf(\CC) \in \tsadi$.
Note that evaluation of the original map at $* \in \Spacespifin$ gives a comparison map $\KK(\CC) \to \underline{\Kinf(\CC)} \in \Sp_{(p)}$.

Using the second construction of $\Kinf$ and the lax symmetric monoidal structure on ordinary algebraic K-theory, in \cref{k-lax} we endow $\Kinf$ with a lax symmetric monoidal structure.
To that end, we prove certain results about Day convolution and its connection to the mode symmetric monoidal structure on $\CMoninf[\Spaces]$ (see \cref{cmon-mul}).

Similar constructions can be carried in the ($p$-typically) $m$-semiadditive context, for any $0 \leq m \leq \infty$.
We define an $m$-semiadditive version of algebraic K-theory $\Km\colon \Catmfinst \to \tsadim$ from the $\infty$-category of stable $\infty$-categories admitting colimits indexed by $m$-finite $p$-spaces to the universal $p$-local stable ($p$-typically) $m$-semiadditive presentable $\infty$-category.
The case $m=\infty$ is $\Kinf\colon \Catpifinst \to \tsadi$ mentioned above.
The case $m=0$ reproduces the $p$-localization of ordinary algebraic K-theory by \cref{K0-is-K}.
The case $m=1$ is closely related to equivariant algebraic K-theory.
In equivariant algebraic K-theory, given finite groups $H < G$, there is a corresponding transfer map.
Assuming $H$ and $G$ are $p$-groups, $\BB H \to \BB G$ is a map of $1$-finite $p$-spaces, which gives an integration operation on $\Kzo(\CC)$ for $\CC \in \Catofinst$, arising from the left Kan extension $\CC^{\BB H} \to \CC^{\BB G}$, which reproduces the transfer map on equivariant algebraic K-theory.
Thus $\Kmo$ is essentially obtained from equivariant algebraic K-theory by forcing the $1$-Segal condition.

\subsubsection{Higher Semiadditive Algebraic K-Theory of Algebras}

The above discussion gives the definition of the $\infty$-semiadditive algebraic K-theory of $\infty$-categories in $\Catpifinst$.
One rich source of such $\infty$-categories is the $\infty$-category $\LMod_R$ for an algebra $R \in \Alg(\tsadi)$.
However, even though this $\infty$-category is stable and has all colimits indexed by $\pi$-finite $p$-spaces, it is too large.
In order to avoid the Eilenberg swindle, one has to pass to a smaller $\infty$-category, which still enjoys these properties.
We show that passing to the left dualizable modules gives $\LMod_R^\ldbl \in \Catpifinst$, leading us to define
$$
	\Kinf(R) = \Kinf(\LMod_R^\ldbl).
$$
Recall that $\LTn^{\tsadi}\colon \tsadi \to \SpTn$ is a smashing localization, so that if $R \in \SpTn$, then the $\infty$-categories of modules in $\tsadi$ and in $\SpTn$ coincide.
We generalize the passage from all modules to the left dualizable modules, and make it into a lax symmetric monoidal functor, via the theory of atomic objects, as explained later in the introduction.
This allows us to endow $\Kinf\colon \Alg(\tsadi) \to \tsadi$ with a lax symmetric monoidal structure.

\subsubsection{Redshift}

Recall that $\Kinf$ is an $\infty$-semiadditive functor.
As $\infty$-semiadditive functors can only decrease semiadditive height, we immediately get that if $\CC \in \Catpifinst$ has $\height(\CC) \leq n$ as an object of $\Catpifinst$, then $\height(\Kinf(\CC)) \leq n$ (see \cref{redshift-small}).
Evidently, this does not exhibit the increase in height postulated by the redshift philosophy.
Instead, the increase in height happens at the stage of categorification, turning from considering the height of objects to the height of their $\infty$-category, as in \cref{semiadditive-redshift}.
Using \cref{semiadditive-redshift} and \cref{redshift-small} we deduce our first main result.

\begin{theorem}[\pwrap{\cref{redshift-upper}}]\label{redshift-upper-intro}
	Let $R \in \Alg(\tsadi)$ have $\height(R) \leq n$ and let $m > n$, then
	$$
	\height(\Km(R)) \leq n+1.
	$$
\end{theorem}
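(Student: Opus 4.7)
The plan is to combine \cref{semiadditive-redshift} applied to the module category $\LMod_R^\ldbl$ with the height-nonincreasing property of the $\infty$-semiadditive functor $\Kinf$ recorded in \cref{redshift-small}. Since $\Kinf(R)$ is defined as $\Kinf(\LMod_R^\ldbl)$, the task reduces to bounding $\height(\LMod_R^\ldbl) \leq n+1$ as an object of $\Catpifinst$, and then transporting this bound through $\Kinf$.

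The first and, I expect, most delicate step is to verify that every object $M \in \LMod_R^\ldbl$ satisfies $\height(M) \leq n$ whenever $\height(R) \leq n$. The semiadditive height of an $\infty$-commutative monoid is detected by the invertibility of the integrals (normed cardinalities) associated to certain $\pi$-finite $p$-spaces acting on it. Since the $R$-module structure on $M$ is a map of $\infty$-commutative monoids compatible with these integrals, the corresponding operations on $M$ factor through the action of the analogous operations on $R$; thus invertibility on $R$ forces invertibility on $M$. This transfer of height along the module structure is the technical heart of the argument, and I expect it either to be established directly from the definition of height in \cite{AmbiHeight} or to follow from a general lemma of the form ``modules over an object of height $\leq n$ have height $\leq n$'', which ought to be available once one knows that $\LMod_R^\ldbl$ inherits an $\infty$-semiadditive structure from $\tsadi$.

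Granted this, $\LMod_R^\ldbl$ is an $\infty$-semiadditive $\infty$-category (inherited from the module structure over $\tsadi$) in which every object has semiadditive height $\leq n$. Applying \cref{semiadditive-redshift} to $\CC = \LMod_R^\ldbl$ yields
\[
\height(\LMod_R^\ldbl) \leq n+1
\]
as an object of $\Catpifin$. Since $\LMod_R^\ldbl$ lies in $\Catpifinst$ (a fact asserted in the paragraph preceding the statement), the same bound holds when $\LMod_R^\ldbl$ is viewed in $\Catpifinst$.

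To conclude, recall that $\Kinf\colon \Catpifinst \to \tsadi$ is $\infty$-semiadditive, so by \cref{redshift-small} it cannot increase semiadditive height. Therefore
\[
\height(\Kinf(R)) = \height(\Kinf(\LMod_R^\ldbl)) \leq \height(\LMod_R^\ldbl) \leq n+1,
\]
as desired. The only nontrivial ingredient beyond the results already stated in the introduction is the module-to-ring height comparison, which I flagged as the main obstacle; everything else is a direct assembly of \cref{semiadditive-redshift} and \cref{redshift-small}.
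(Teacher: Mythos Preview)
Your proposal is correct and follows exactly the paper's two-step strategy: bound $\height(\LMod_R^\ldbl)\leq n+1$ via \cref{semiadditive-redshift}, then apply \cref{redshift-small}. The step you flag as the main obstacle is dispatched in the paper not by a direct integral-transfer argument but by the mode decomposition (\cref{tsadi-n}): since $\height(R)\leq n$ forces $R\in\tsadi_{\leq n}$, a smashing localization of $\tsadi$, one has $\LMod_R(\tsadi)=\LMod_R(\tsadi_{\leq n})\in\Mod_{\tsadi_{\leq n}}$, and $\tsadi_{\leq n}$ is by definition the mode in which every object has height $\leq n$.
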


To give a lower bound on the height, we make use of the higher height analogues of cyclotomic extensions defined in \cite[Definition 4.7]{Cyclo}.
Recall that for $R \in \Alg(\tsadi)$ of $\height(R)=n$, there is a ($(\ZZ/p)^\times$-equivariant) splitting of algebras
$R[\BB^n C_p] \cong R \times R[\omegapn]$,
where $R[\omegapn]$ is called the (height $n$) $p$-cyclotomic extension of $R$, which generalizes ordinary cyclotomic extensions at height $0$ (i.e.\ for algebras over the rationals).
We say that $R$ has \emph{(height $n$) $p$-th roots of unity} if the cyclotomic extension splits as a product $R[\omegapn] \cong \prod_{(\ZZ/p)^\times} R$ (see \cref{p-roots}).
For example, by \cite[Proposition 5.1]{Cyclo}, the Lubin--Tate spectrum $\En$ has (height $n$) $p$-th roots of unity.
For such $R$, we get an equivalence of $R$-modules $R[\BB^n C_p] \cong R^p$, from which we immediately deduce the following strengthening of \cref{redshift-upper-intro}.

\begin{theorem}[\pwrap{\cref{redshift-lower}}]\label{redshift-lower-intro}
	Let $R \in \Alg(\tsadi)$ of $\height(R) = n$ have (height $n$) $p$-th roots of unity and let $m > n$, then
	$$
	\height(\Km(R)) = n+1.
	$$
	In particular, $\height(\Km(\En)) = n+1$.
\end{theorem}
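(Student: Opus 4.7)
The upper bound $\height(\Kinf(R)) \leq n+1$ is \cref{redshift-upper}, so the task reduces to producing a matching lower bound $\height(\Kinf(R)) \geq n+1$. The ``In particular'' clause for $\En$ will then follow since \cite[Proposition 5.1]{Cyclo} supplies the required height-$n$ $p$-th roots of unity, so I focus on the main statement.

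My plan is to leverage the splitting $R[\BB^n C_p] \cong R \times \prod_{(\ZZ/p)^\times} R$ provided by the hypothesis to produce an explicit non-vanishing class witnessing height $n+1$ in $\Kinf(R)$. The splitting realizes $R[\BB^n C_p]$ as a $p$-fold direct sum in $\LMod_R^\ldbl$, but crucially carries a $(\ZZ/p)^\times$-action permuting $p-1$ of the factors. On underlying classes in $\Kinf(R)$ this gives only the uninformative equality $[R[\BB^n C_p]] = p \cdot [R]$; the information lies in the $(\ZZ/p)^\times$-equivariance, which is additional data that, via the $\infty$-semiadditive structure of $\Kinf(R)$, assembles into a class on the ``next categorical layer''. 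Concretely, I would trace the cardinality operator $|\BB^n C_p|$---which is invertible on $R$ precisely because $R$ has $p$-th roots of unity at height $n$---through the $\infty$-semiadditive functor $\Kinf$ to obtain a corresponding cardinality operator on $\Kinf(R)$, and then appeal to the semiadditive height characterization of \cite[Section 3.1]{AmbiHeight} to extract the lower bound.

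The main obstacle is making this detection precise: namely, ensuring that the $(\ZZ/p)^\times$-equivariant splitting genuinely transports the non-vanishing of $|\BB^n C_p|$ on $R$ into non-vanishing of the relevant height-$(n+1)$ cardinality operator on $\Kinf(R)$, rather than into a class which could still lie in the height-$\leq n$ part. I expect this to reduce to carefully tracking the interaction of $\Kinf$ with cyclotomic extensions---a semiadditive analogue of the Bott-element detection underpinning classical redshift lower bounds---together with the interplay between the height of $\LMod_R^\ldbl$ as an object of $\Catpifinst$ and the height of $\Kinf(R)$ as an object of $\tsadi$ provided by \cref{semiadditive-redshift}. Once the detection is verified, the lower bound combines with \cref{redshift-upper} to give the equality $\height(\Kinf(R)) = n+1$.
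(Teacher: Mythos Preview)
Your proposal has the right raw material but misidentifies where the leverage lies, and it omits a step that the paper treats separately.

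First, the role of the $p$-th roots of unity hypothesis is not to make $|\BB^n C_p|$ invertible on $R$---that already follows from $\height(R) \leq n$---but rather to identify the natural transformation $p_{(n)}$ of the identity on $\LMod_R^\ldbl$ (viewed as an object of $\Catpifinst$) with the $p$-fold product functor. Concretely, $p_{(n)}$ acts on $\LMod_R^\ldbl$ as $\unit[\BB^n C_p] \otimes (-)$, and under the hypothesis this becomes $(\unit \times \prod_{(\ZZ/p)^\times} \unit) \otimes (-) \simeq \prod_p(-)$. Since $\Kinf$ is $\infty$-semiadditive it intertwines $p_{(n)}$ with $p_{(n)}$, and being additive it sends $\prod_p$ to multiplication by $p$; hence $p_{(n)} = p$ as endomorphisms of $\Kinf(R)$. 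Your dismissal of the equation $[R[\BB^n C_p]] = p\cdot[R]$ as ``uninformative'' is therefore exactly backwards: the functor-level equality it shadows is the entire argument. No $(\ZZ/p)^\times$-equivariance, no Bott-type class detection, and no passage to a ``next categorical layer'' is needed or used.

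Second, the identity $p_{(n)} = p$ on $\Kinf(R)$ does not by itself yield $\height > n$: one must know that $\Kinf(R)$ is $p$-complete (equivalently, of height $> 0$) in order to deduce $p_{(n)}$-completeness from it. The paper handles this by a separate argument: since every object of $\LMod_R^\ldbl$ has height $\leq n$, the map $p_{(k)}$ acts as the identity on $\LMod_R^\ldbl$ for all $k \geq n+1$, hence as the identity on $\Kinf(R)$; on the height-$0$ component this forces $p = p_{(2k)} = 1$ for large $k$, killing that component. Your outline does not address this point at all, and the appeal to \cref{semiadditive-redshift} contributes only to the upper bound.
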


A natural question left open is the following:

\begin{question}
	Can the assumption of having (height $n$) $p$-th roots of unity be dropped?
	Namely, is it true that if $R \in \Alg(\tsadi)$ is of height $n$, then $\Km(R)$ is of height exactly $n+1$?
\end{question}

\subsubsection{Relationship to Chromatically Localized K-Theory}

As we have seen in \cref{redshift-upper-intro} and \cref{redshift-lower-intro}, $\Km$ satisfies a form of the redshift conjecture with respect to semiadditive height.
A natural next direction is connecting these results to ordinary algebraic K-theory and the chromatic height.
Let $R \in \Alg(\SpTn)$.
The inclusion $\SpTnpo \subset \Sp$ admits a left adjoint $\LTnpo\colon \Sp \to \SpTnpo$.
Since $\KK(R) \in \Sp$, we can consider $\LTnpo \KK(R) \in \SpTnpo$.
Similarly, there is an inclusion $\SpTnpo \subset \tsadim$, which admits a left adjoint $\LTnpo^{\tsadim}\colon \tsadim \to \SpTnpo$.
Since $\Km(R) \in \tsadim$, we can consider $\LTnpo^{\tsadim} \Km(R) \in \SpTnpo$.
The comparison map between ordinary algebraic K-theory and higher semiadditive algebraic K-theory yields a comparison map\footnote{Note that the source of the comparison map is simply $\LTnpo^{\tsadimz} \Kmz(R)$ for $m_0 = 0$. Namely, the comparison map is the comparison map of two different levels of semiadditivity.}
$$
\LTnpo \KK(R) \to \LTnpo^{\tsadim} \Km(R)
\quad \in \SpTnpo.
$$
This raises two independent questions:

\begin{enumerate}
	\item Does $\Km(R)$ land in $\SpTnpo \subset \tsadim$?
 	\item Is the comparison map an equivalence?
\end{enumerate}

A positive answer to both questions will imply that $\Km(R) \cong \LTnpo \KK(R)$ (see \cref{conj-Km-Tnpo-K} below).
The first question is closely related to the Quillen--Lichtenbaum conjecture for $R$, in the guise of having a non-zero finite spectrum $X$ such that $\KK(R) \otimes X$ is bounded above, as we show in \cref{Km-Tk-local}.
The second question is equivalent to $\LTnpo \Kzm(R)$ satisfying the $m$-Segal condition.
More informally, having descent properties for chromatically localized K-theory.

Using the Galois descent results for $\Tnpo$-localized K-theory of \cite{DescVan}, this second question is answered in the affirmative for $m = 1$ in \cref{Ko-desc}.
In work in progress with Carmeli and Yanovski \cite{cycloredshift} we show that the descent result for chromatically localized K-theory generalizes from finite $p$-groups to arbitrary $\pi$-finite $p$-spaces.
This would give a positive answer to the second question for every $m \geq 1$.

Next, we focus on the case of height $0$, answering both question in the affirmative in complete generality.
Using the Quillen--Lichtenbaum property of $\SSpinv$ together with Galois descent we obtain the following:

\begin{theorem}[\pwrap{\cref{Km-height-0}}]\label{Km-height-0-intro}
	Let $R \in \Alg(\Sppinv)$ and let $m \geq 1$, then
	$$\Km(R) \cong L_{\To}\KK(R).$$
	In particular, $\Km(\Qb) \cong \KU_p$.
\end{theorem}

Finally, we study the completed Johnson--Wilson spectrum $\cJW$ at height $n \geq 1$.
In \cite{BPex}, Hahn--Wilson produced an $\bbE_3$-algebra structure on $\BPn$, for which they have proven a version of the Quillen--Lichtenbaum conjecture.
This structure also endows $\cJW$ with an $\bbE_3$-algebra structure.
Using their Quillen--Lichtenbaum result, along with a comparison of two direct computations of the higher commutative monoid structure on $\Km(\cJW)$, we obtain the following strengthening of \cref{redshift-lower-intro} for $\cJW$-algebras.
We would like to thank the anonymous referee for suggesting crucial parts of the proof of this result.

\begin{theorem}[\pwrap{\cref{Km-cJW}}]\label{Km-cJW-intro}
	Let $R \in \Alg(\LMod_{\cJW})$ where $\cJW$ is endowed with the Hahn--Wilson $\bbE_3$-algebra structure, and let $m \geq 1$, then
	$$
	\Km(R) \in \SpTnpo.
	$$
\end{theorem}

Using Galois descent for chromatically localized K-theory from \cite{DescVan} as mentioned above, we immediately get the following at $m = 1$:

\begin{theorem}[\pwrap{\cref{Ko-cJW}}]\label{Ko-cJW-intro}
	Let $R \in \Alg(\LMod_{\cJW})$ where $\cJW$ is endowed with the Hahn--Wilson $\bbE_3$-algebra structure, then
	$$
	\Kmo(R) \cong \LTnpo \KK(R).
	$$
	In particular, $\Kmo(\cJW) \cong \LTnpo \KK(\cJW)$.
\end{theorem}

As mentioned above, our upcoming work with Carmeli and Yanovski \cite{cycloredshift} implies that \cref{Ko-cJW-intro} generalizes to $m$-semiadditive K-theory for any $m \geq 1$.
This generalization, along with \cref{Km-height-0-intro} answering the case of height $0$, lead us to conjecture the following:

\begin{conj}\label{conj-Km-Tnpo-K}
	For any $R \in \Alg(\SpTn)$ and $m \geq 1$ we have
	$$
	\Km(R) \cong \LTnpo \KK(R).
	$$
\end{conj}

We would like to highlight two interesting phenomena exemplified by \cref{Km-height-0-intro} and \cref{Ko-cJW-intro}.
First, higher semiadditive algebraic K-theory lands in the highest non-zero height predicted by the redshift conjecture, without forcing it be in a pure height from the outside.
Second, algebraic K-theory can be modified to have a higher commutative monoid structure in two ways -- either by chromatically localizing it from the outside, or by internally remembering the higher commutative monoid structure on the input $\infty$-category.
These results show that these two a priori distinct objects coincide, at least in some cases.
This identification gives different approaches to study the higher commutative monoid structure, similarly to the proof of \cref{Km-cJW-intro} itself.

\subsubsection{Atomic Objects and a Monoidal Natural Yoneda}

Recall that in the construction of the higher semiadditive algebraic K-theory of $R \in \Alg(\tsadi)$ described above, we passed to the left dualizable objects.
In order to study the functoriality of this construction in $R$, as well as to generalize the construction to stable $\infty$-semiadditive presentable $\infty$-categories other than $\infty$-categories of modules, we define and study \emph{$\MM$-atomic} objects for any mode $\MM$ (see \cref{atomic-def}).
One of our main results is that $\MM$-atomic objects indeed coincide with left dualizable objects in left modules, i.e.\ $\LMod_R^{\atomic} = \LMod_R^\ldbl$ for any $R \in \Alg(\MM)$ (see \cref{atomic-ldbl}).
Another direction of generalization is the case $\MM = \Sp$, where $\Sp$-atomic objects coincide with compact objects.
We also show that for any \emph{absolute limit} of $\MM$, the $\MM$-atomic objects are closed under $\II^\op$-shaped colimits (see \cref{atomic-abs}).
These two results are then applied in \cref{tsadi-atomics} to show that for $R \in \Alg(\tsadi)$, we have $\LMod_R^\ldbl \in \Catmfinst$, so that it can be used as an input to higher semiadditive algebraic K-theory.

Another key result is the strong connection between the functor $\PShM$ taking $\MM$-valued presheaves and the functor taking $\MM$-atomic objects.
Let $\Mod_\MM^{\intL}$ denote the subcategory of $\PrL$ consisting of $\infty$-categories in the mode $\MM$ and \emph{internally left adjoint} functors (that is, left adjoint functors whose right adjoint admits a further right adjoint), which inherits a symmetric monoidal structure from $\PrL$.
We then have the following:

\begin{theorem}[\pwrap{\cref{atomic-psh-sm-adj}}]
	There is a symmetric monoidal adjunction
	$$
	\PShM\colon \Cat \rightleftarrows \Mod_\MM^{\intL}\colon (-)^{\atomic[\MM]}
	,
	$$
	i.e.\ $\PShM$ is symmetric monoidal with a lax symmetric monoidal right adjoint $(-)^{\atomic[\MM]}$.
\end{theorem}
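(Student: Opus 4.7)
The plan is to build the theorem in three steps: establish the underlying $\infty$-categorical adjunction, endow the left adjoint with a symmetric monoidal structure, and transfer this formally to a lax symmetric monoidal structure on the right adjoint.

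For step one, I would begin from the classical free-forgetful adjunction $\Cat \leftrightarrows \Mod_\MM^{L}$ whose left adjoint sends $\sK \mapsto \PShM(\sK)$. Restriction along the $\MM$-enriched Yoneda gives the universal property
$$
\Fun^{L}_\MM(\PShM(\sK), \CC) \simeq \Fun(\sK, \CC).
$$
The task is then to single out, within the left hand side, the internally left adjoint functors, and to identify them via restriction with $\Fun(\sK, \CC^{\atomic[\MM]})$. In one direction, any internal left adjoint preserves $\MM$-atomic objects; since the representables $\yonM(k) \in \PShM(\sK)$ are $\MM$-atomic, their images lie in $\CC^{\atomic[\MM]}$. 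For the converse, given a colimit-preserving $\MM$-linear $F\colon \PShM(\sK) \to \CC$ with $F(\yonM(k)) \in \CC^{\atomic[\MM]}$, one must show that the right adjoint $F^R$ preserves colimits. This is where the molecularity of $\PShM(\sK)$ is used crucially: because the atomic objects form a set of generators, preservation of colimits by $F^R$ can be detected on the atomic generators $\yonM(k)$, where it is equivalent to $F(\yonM(k))$ being $\MM$-atomic in $\CC$.

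For step two, the cartesian structure on $\Cat$ and the relative tensor product $\otimes_\MM$ on $\Mod_\MM^L$ are intertwined by the Fubini-style identity
$$
\PShM(\sK \times \sK') \simeq \PShM(\sK) \otimes_\MM \PShM(\sK'),
$$
together with the unit $\PShM(\ast) \simeq \MM$. This is a relative form, over the mode $\MM$, of the standard tensor decomposition of ordinary presheaf $\infty$-categories in $\PrL$. I would also verify that $\otimes_\MM$ preserves the subcategory $\Mod_\MM^{\intL}$ — i.e.\ that the tensor of two internal left adjoints is again internally left adjoint — so that the symmetric monoidal lift of $\PShM$ is well-defined with target $\Mod_\MM^{\intL}$.

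Step three is formal: once $\PShM$ is exhibited as a symmetric monoidal left adjoint, its right adjoint $(-)^{\atomic[\MM]}$ inherits a canonical lax symmetric monoidal structure by general $(\infty,2)$-categorical principles governing symmetric monoidal adjunctions. The main obstacle is the converse direction of step one: upgrading the pointwise condition that $F$ sends representables to atomic objects to the global condition that $F^R$ preserves all colimits. The hypothesis of molecularity is precisely tailored to this, by guaranteeing that the $\MM$-atomic objects in $\PShM(\sK)$ generate under colimits; this is the real content of the theorem, while the symmetric monoidal upgrade is essentially a formal consequence.
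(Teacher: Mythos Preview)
Your three-step outline matches the paper's proof almost exactly: the paper first constructs the adjunction (its Proposition~2.30), then shows $\PShM$ is symmetric monoidal by writing it as $\PSh(-)\otimes\MM$ and invoking \cite[Remark~4.8.1.8]{HA} (its Corollary~2.31), and finally applies \cite[Corollary~7.3.2.7]{HA} to transport the structure to the right adjoint. Your identification of the essential surjectivity direction in step one as the place where molecularity enters is exactly right, and your remark that step three is formal is also how the paper treats it.

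There is one point that needs tightening. You write that preservation of colimits by $F^R$ ``can be detected on the atomic generators $\yonM(k)$''. In general the representables $\yonM(k)$ alone do \emph{not} generate $\PShM(\sK)$ under colimits: already for $\MM=\Sp$ one cannot obtain $\Sigma^{-1}\yon^{\Sp}(k)$ from the representables by colimits. The paper's Proposition~2.28 shows that, assuming $\MM$ is molecular, the atomic generators of $\PShM(\sK)$ are the objects $\yonM(k)\otimes m$ for $k\in\sK$ and $m\in\MM^{\atomic}$. One then uses $\MM$-linearity of $F$ to get $F(\yonM(k)\otimes m)\simeq F(\yonM(k))\otimes m$, which is atomic because the tensor of two atomics is atomic. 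So your conclusion that the condition reduces to ``$F(\yonM(k))$ atomic'' is correct, but the route passes through the larger generating set and an extra $\MM$-linearity step; this is also precisely where the hypothesis that $\MM$ (not just $\PShM(\sK)$) is molecular is actually consumed.
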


Building on the work of Glasman \cite{Saul} and Haugseng--Hebestreit--Linskens--Nuiten \cite[Theorem 8.1]{yon} on the Yoneda embedding, the adjunction is constructed such that the unit is (the factorization through the $\MM$-atomic objects of) the Yoneda map $\yonM\colon \CC_0 \to \PShM(\CC_0)$, reproducing the ordinary Yoneda embedding for $\MM = \Spaces$.
As an immediate consequence, we obtain a monoidal and natural version of the Yoneda map for any operad $\OO$, which may be of independent interest.

\begin{theorem}[\pwrap{\cref{yoneda-O}}]
	The Yoneda map $\yonM\colon \CC_0 \to \PShM(\CC_0)$ is $\OO$-monoidal and natural in $\CC_0 \in \Alg_\OO(\Cat)$.
\end{theorem}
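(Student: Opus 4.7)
The plan is to deduce the theorem as a formal consequence of the preceding result (the symmetric monoidal adjunction $\PShM \colon \Cat \leftrightarrows \Mod_\MM^{\intL} \colon (-)^{\atomic[\MM]}$) by applying the standard machinery that symmetric monoidal adjunctions lift to $\infty$-operad algebras. Recall that for any symmetric monoidal adjunction $L \dashv R$ between symmetric monoidal $\infty$-categories, and any $\infty$-operad $\OO$, one obtains an induced adjunction $L_\OO \dashv R_\OO$ between $\Alg_\OO$'s, with unit and counit obtained by promoting the original ones to maps of $\OO$-algebras via the (lax) monoidal structure.

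First I would apply this principle to the adjunction supplied by the previous theorem, obtaining, for every operad $\OO$, an adjunction
$$
(\PShM)_\OO \colon \Alg_\OO(\Cat) \leftrightarrows \Alg_\OO(\Mod_\MM^{\intL}) \colon ((-)^{\atomic[\MM]})_\OO.
$$
The unit at $\CC_0 \in \Alg_\OO(\Cat)$ is, by construction, a map of $\OO$-algebras $\CC_0 \to \PShM(\CC_0)^{\atomic[\MM]}$, which is natural in $\CC_0 \in \Alg_\OO(\Cat)$.

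Next, I would invoke the identification—already encoded in the construction of the adjunction in the preceding theorem—of this unit with the atomic factorization of the Yoneda map $\yonM$. Post-composing with the inclusion $\PShM(\CC_0)^{\atomic[\MM]} \hookrightarrow \PShM(\CC_0)$, which is lax $\OO$-monoidal with respect to the $\OO$-algebra structure on $\PShM(\CC_0)$ transported to $\Cat$ via the forgetful functor $\Mod_\MM^{\intL} \to \Cat$ (itself lax symmetric monoidal since the tensor product on $\Mod_\MM^{\intL}$ is inherited from $\PrL$), one recovers the full Yoneda map $\yonM \colon \CC_0 \to \PShM(\CC_0)$ as an $\OO$-monoidal morphism. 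Naturality in $\CC_0$ is then immediate from naturality of the unit.

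The principal obstacle is not the formal lifting of adjunctions to operad algebras, which is standard, but rather keeping track of the fact that the unit of the constructed adjunction really is the atomic factorization of $\yonM$ \emph{as a morphism in $\Alg_\OO$}—i.e.\ that the $\OO$-monoidal refinement of the Yoneda map produced by the adjunction agrees with the one a reader might have in mind. Since the adjunction in the previous theorem is built so that its unit is (the factorization of) the Yoneda map, this compatibility should already be baked in, and the argument here ultimately reduces to unwinding definitions.
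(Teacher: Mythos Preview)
Your proposal is correct and follows essentially the same approach as the paper: deduce the result directly from the symmetric monoidal adjunction $\PShM \dashv (-)^{\atomic[\MM]}$ by passing to $\Alg_\OO$, identify the unit of the induced adjunction with the atomic factorization of the Yoneda map, and post-compose with the inclusion of atomics. The paper's proof is just as brief and makes the same observations, noting additionally that $\CC^{\atomic} \subset \CC$ is an $\OO$-monoidal subcategory for any $\CC \in \Alg_\OO(\Mod_\MM^{\intL})$.
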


\subsection{Organization}

In \cref{sec-at}, we develop the notion of $\MM$-atomic objects in a presentable $\infty$-category in the mode $\MM$.
We study the connection between $\MM$-atomic objects and $\MM$-valued presheaves, and leverage this connection to endow the functor taking the $\MM$-atomic objects with a lax symmetric monoidal structure.
As a byproduct, we obtain a monoidal natural version of the Yoneda map.

In \cref{sec-day}, we recall the universal property of the Day convolution, and study its functoriality in the source and the target.

In \cref{sec-cmon}, we recall some facts about ($p$-typical) (pre-)$m$-commutative monoids, and study their multiplicative structure.
We observe that the $\infty$-category of $m$-commutative monoids can naturally be endowed with two symmetric monoidal structures, and we show that these two structures coincide.

In \cref{sec-cocart}, we recall the definition of the higher cocartesian structure, and show that it satisfies certain expected properties.
In particular, we show that tensoring a family of objects is indeed given by their colimit.

In \cref{sec-k}, we define $m$-semiadditive algebraic K-theory using the tools developed in the previous sections, and study its properties.
We construct it in two different ways, first using the $S_\bullet$-construction, and second by exhibiting it as the universal way to make ordinary algebraic K-theory into an $m$-semiadditive functor.
We leverage the second definition of $m$-semiadditive algebraic K-theory to endow it with a lax symmetric monoidal structure.

In \cref{sec-red}, we study the interplay between $m$-semiadditive algebraic K-theory and semiadditive height.
In particular, we show that it can increase the height of rings at most by one.
Furthermore, we show that if the ring has (height $n$) $p$-th roots of unity, then the height of its $m$-semiadditive algebraic K-theory is exactly $n+1$.

In \cref{sec-chrom-k}, we study the connection between higher semiadditive algebraic K-theory and chromatically localized K-theory.
We apply the Quillen--Lichtenbaum conjecture and the Galois descent result for chromatically localized K-theory, to show that the higher semiadditive algebraic K-theory of $p$-invertible algebras coincides with their $\To$-localized algebraic K-theory.
Finally, we use the Quillen--Lichtenbaum result for $\BPn$ to show that the higher semiadditive algebraic K-theory of $\cJW$-algebras lands in $\Tnpo$-local spectra, and that specifically their $1$-semiadditive algebraic K-theory coincides with their $\To$-localized algebraic K-theory.

\subsection{Conventions}

Throughout the paper, we work in the framework of $\infty$-categories, mostly following the notations of \cite{HTT, HA}.
For brevity, we use the word category to mean an $\infty$-category.
We also generally follow the notation and terminology of \cite{AmbiHeight} related to higher semiadditivity, but we diverge by working exclusively in the $p$-typical case.

\begin{enumerate}
	\item We denote the space of morphisms between two objects $X, Y \in \CC$ by $\mdef{\hom_\CC(X, Y)}$ and omit $\CC$ when it is clear from the context. If $\CC$ is $\DD$-enriched (e.g.\ in a mode $\DD = \MM$, or closed symmetric monoidal $\DD = \CC$), we denote by $\mdef{\hom^\DD_\CC(X, Y)}$ the $\DD$-object of morphisms and omit $\CC$ when it is clear from the context.
	
	\item We say that a space $A \in \Spaces$ is
	\begin{enumerate}
		\item a \tdef{$p$-space}, if all the homotopy groups of $A$ are $p$-groups.
		\item \tdef{$m$-finite} for $m \ge -2$, if $m = -2$ and $A$ is contractible,
		or $m \ge -1$, the set $\pi_{0}A$ is finite and all the fibers of the diagonal map
		$\Delta\colon A\to A\times A$ are $(m-1)$-finite.\footnote{For $m \ge 0$, this is equivalent to $A$ having finitely many components, each of them $m$-truncated with finite homotopy groups.}
		\item \tdef{$\pi$-finite} or $\infty$-finite, if it is $m$-finite for some integer $m \ge -2$.
	\end{enumerate}

	\item For $-2 \leq m \leq \infty$, we denote by $\mdef{\Spacesm} \subset \Spaces$ the full subcategory spanned by all $m$-finite $p$-spaces.

	\item We say that a category $\CC$ is \tdef{($p$-typically) $m$-semiadditive} if all $m$-finite $p$-spaces $A \in \Spacesm$ are $\CC$-ambidextrous.
	
	\item We denote by $\mdef{\Catst} \subset \Cat$ the subcategory spanned by all stable categories and exact
	functors.
	\item For a collection $\clK$ of indexing categories, we let $\mdef{\CatK} \subset \Cat$ be the subcategory spanned by all categories admitting all colimits indexed by $\II \in \clK$ and functors preserving them.
	\item For $-2 \leq m \leq \infty$, we define $\mdef{\Catmfin} = \CatK$ for $\clK = \Spacesm$, and we let $\mdef{\Catmfinst} \subset \Catmfin$ be the subcategory of those categories which are additionally stable and functors which are additionally exact.
\end{enumerate}

	\ifarxiv
	\ifcompositio
\begin{acknowledgements}
\else
\subsection{Acknowledgements}
\fi
	We are grateful to the anonymous referees for valuable suggestions and corrections, particularly for suggesting the strategy for \cref{Km-cJW-intro}.
	We would like to thank the entire Seminarak group, especially Shaul Barkan, Shachar Carmeli, Shaul Ragimov and Lior Yanovski, for useful discussions on different aspects of this project, and for their valuable comments on the paper's first draft.
	In particular, we would like to thank Lior Yanovski for sharing many ideas regarding atomic objects appearing in \cref{sec-at}, and for suggesting the proof of \cref{p-ls-prl}.
	We would also like to thank Bastiaan Cnossen and Maxime Ramzi for helpful comments on the paper's first version.
	\ifarxiv
	The second author is supported by ISF1588/18 and BSF 2018389.
	\fi
\ifcompositio
\end{acknowledgements}
\fi

	\fi
	\section{Atomic Objects}\label{sec-at}

Let $\MM$ be a mode, that is, an idempotent algebra in $\PrL$ (see \cite[Section 5]{AmbiHeight} for generalities on modes).
In this section we study \emph{$\MM$-atomic objects} (see \cref{atomic-def}), a finiteness property of objects in categories $\CC \in \Mod_\MM$ in the mode $\MM$, which generalizes both compactness (for the case $\MM = \Sp$, see \cref{stable-atomic}) and dualizability of modules (for the case $\CC = \LMod_R(\MM)$, see \cref{atomic-ldbl}).
The results of this section are subsequently used in \cref{ring-sa-k-def} to define the higher semiadditive algebraic K-theory of algebras in $\tsadim$ (see \cref{tsadim-def}), and in particular for algebras in $\SpTn$, including its lax symmetric monoidal structure.

In \cref{subsec-at} we give the definition of atomic objects (see \cref{atomic-def}) and study their basic properties.
We show that taking atomic objects is functorial in \emph{internally left adjoint} functors (see \cref{il-def}).
Analogously to the condition of being compactly generated, we study the condition of being generated under colimits and the action of $\MM$ from the $\MM$-atomic objects, which we call being \emph{$\MM$-molecular} (see \cref{molecular-def}), and we explain its relationship to internally left adjoint functors.
Lastly, in \cref{atomic-abs} we show that for any \emph{absolute limit} $\II$ of $\MM$ (see \cref{abs-def}), the atomic objects are closed under $\II^\op$-shaped colimits.
This yields a functor $(-)^{\atomic} \colon \Mod_\MM^{\intL} \to \CatK$ where $\clK$ is any small collection of opposites of absolute limits of $\MM$.

In \cref{subsec-at-psh} and \cref{subsec-at-tensor} we study the connection between $\MM$-atomic objects and \emph{$\MM$-valued presheaves} (see \cref{M-psh}), and the multiplicative structure of both functors.
The main result of this section is \cref{atomic-psh-sm-adj}, exhibiting a symmetric monoidal adjunction
$$
\PShKM\colon \CatK \rightleftarrows \Mod_\MM^{\intL}\colon (-)^{\atomic}
.
$$
Moreover, the unit of this adjunction is the Yoneda map, and as an immediate consequence, \cref{yoneda-O} shows that $\yonKM\colon \CC_0 \to \PShKM(\CC_0)$ is $\OO$-monoidal and natural in $\CC_0 \in \Alg_\OO(\CatK)$, which may be of independent interest.

Lastly, in \cref{subsec-at-mod} we study atomic objects in categories of left modules.
In \cref{atomic-ldbl} we show that atomic objects and left dualizable left modules coincide, i.e.
$\LMod_R^{\atomic} = \LMod_R^\ldbl$.

\begin{remark}\label{at-enrich}
	Many of the results of this section can be generalized to modules in $\PrL$ over any presentably monoidal category $\VV \in \Alg(\PrL)$ and $\VV$-linear functors.
	Parts of these generalizations were carried out by the first author in \cite{NatYon}, building on the works of \cite{enriched,Hin,Hin2,EnAct}.
	The main feature of modes is that being an $\MM$-module is a \emph{property} rather than extra structure, and that any left adjoint functor is automatically $\MM$-linear.
	As such, working over a mode simplifies the definitions and proofs, and avoids using enriched category theory.
	Since this suffices for our applications in the rest of the paper, we have restricted to this case.
\end{remark}

\subsection{Atomics and Internally Left Adjoints}\label{subsec-at}

\begin{lem}\label{M-adj}
	Let $F\colon \CC \rightleftarrows \DD\colon G$ be an adjunction with $\CC, \DD \in \Mod_\MM$.
	Let $X \in \CC$ and $Y \in \DD$, then there is an equivalence
	$\hom^\MM(FX, Y) \cong \hom^\MM(X, GY)$
	lifting the equivalence
	$\hom(FX, Y) \cong \hom(X, GY)$.
\end{lem}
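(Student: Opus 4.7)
The plan is to chase the enriched hom through its universal property, using that modes behave as idempotents so that $\MM$-linearity of $F$ is automatic rather than extra data.

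First I would record that since $\MM$ is a mode, being an $\MM$-module is a property of a presentable category, and any colimit-preserving functor between $\MM$-modules is canonically $\MM$-linear. Because $F$ is a left adjoint in $\PrL$, it preserves colimits, hence commutes with the $\MM$-action: for $M \in \MM$ and $X \in \CC$ there is a canonical equivalence $F(M \otimes X) \simeq M \otimes FX$ in $\DD$.

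Next I would invoke the defining universal property of the $\MM$-enriched hom: for any $X, Y$ in an $\MM$-module $\CC$, the object $\hom^\MM_\CC(X, Y) \in \MM$ is characterized, via tensor-hom adjunction for the $\MM$-action, by natural equivalences $\hom_\MM(M, \hom^\MM_\CC(X, Y)) \simeq \hom_\CC(M \otimes X, Y)$ for $M \in \MM$. Combining this with $\MM$-linearity of $F$ and the ordinary adjunction $F \dashv G$, I get the following chain, natural in $M \in \MM$:
\begin{align*}
\hom_\MM(M, \hom^\MM_\DD(FX, Y))
&\simeq \hom_\DD(M \otimes FX, Y) \\
&\simeq \hom_\DD(F(M \otimes X), Y) \\
&\simeq \hom_\CC(M \otimes X, GY) \\
&\simeq \hom_\MM(M, \hom^\MM_\CC(X, GY)).
\end{align*}
By the Yoneda lemma in $\MM$, this produces the desired equivalence $\hom^\MM_\DD(FX, Y) \simeq \hom^\MM_\CC(X, GY)$ in $\MM$.

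Finally, I would verify the lifting claim by specializing to the unit $M = \unit_\MM$: the corepresentable $\hom_\MM(\unit_\MM, -)\colon \MM \to \Spaces$ sends each $\hom^\MM(-, -)$ to the underlying mapping space $\hom(-, -)$, and the chain of equivalences above at $M = \unit_\MM$ recovers the adjunction isomorphism $\hom_\DD(FX, Y) \simeq \hom_\CC(X, GY)$. The only subtlety worth being careful about is the automatic $\MM$-linearity of $F$ and the universal property of $\hom^\MM$; once these are in hand the argument is pure formalism.
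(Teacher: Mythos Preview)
Your proof is correct and follows essentially the same argument as the paper: both apply the Yoneda lemma in $\MM$ via the tensor--hom characterization of $\hom^\MM$, use the $\MM$-linearity of the left adjoint $F$ to move $M \otimes -$ across $F$, and then invoke the ordinary adjunction, with the case $M = \unit_\MM$ giving the lift of the underlying hom-space equivalence. Your additional remark that $\MM$-linearity of $F$ is automatic because $\MM$ is a mode is a helpful clarification that the paper leaves implicit.
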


\begin{proof}
	We prove this using the Yoneda lemma.
	Let $m \in \MM$.
	Recall that $F\colon \CC \to \DD$ is a map in $\Mod_\MM$, so that it commutes with $m \otimes -$, so we conclude that
	\begin{align*}
		\hom(m, \hom^\MM(FX, Y))
		&\cong \hom(m \otimes FX, Y)\\
		&\cong \hom(F(m \otimes X), Y)\\
		&\cong \hom(m \otimes X, GY)\\
		&\cong \hom(m, \hom^\MM(X, GY))
		.
	\end{align*}
	The fact that it is a lift of the $\Spaces$-enriched hom is the case $m = \unit_\MM$.
\end{proof}

\begin{defn}\label{atomic-def}
	Let $\CC \in \Mod_\MM$.
	An object $X \in \CC$ is called \tdef{$\MM$-atomic}, if $\hom^\MM(X, -)\colon \CC \to \MM$ commutes with colimits.
	We denote by $\mdef{\CC^{\atomic[\MM]}} \subseteq \CC$ the full subcategory of the $\MM$-atomic objects.
	When the mode is clear from the context, it is dropped from the notation.
\end{defn}

\begin{remark}
	The definition of atomic objects will be made functorial in \cref{atomic-def-fun}.
\end{remark}

\begin{remark}
	If $X \in \CC$ is atomic then $\hom^\MM(X, -)$ is a left adjoint functor, thus a morphism in $\Mod_\MM$, so that it also commutes with the action of $\MM$.
	That is, for any $m \in \MM$ we have $\hom^\MM(X, - \otimes m) \cong \hom^\MM(X, -) \otimes m$.
\end{remark}

\begin{example}
	The unit $\unit_\MM \in \MM$ is atomic because the functor $\hom^\MM(\unit_\MM, -)\colon \MM \to \MM$ is the identity functor and in particular commutes with colimits.
\end{example}

\begin{prop}
	The only $\Spaces$-atomic object in $\Spaces$ is the point $*$.
\end{prop}

\begin{proof}
	Let $X \in \Spaces^{\atomic}$ be atomic, then for any $Y \in \Spaces$ we have
	$$
	\hom(X, Y) \cong \hom(X, \colim_Y *)
	\cong \colim_Y \hom(X, *)
	\cong \colim_Y *
	\cong Y
	.
	$$
	Thus $X$ corepresents the identity functor $\id\colon \Spaces \to \Spaces$, namely $X = *$.
\end{proof}

\begin{prop}\label{stable-atomic}
	Let $\CC \in \Mod_\Sp$ be a presentable stable category, then the $\Sp$-atomics are the compact objects, i.e.\ $\CC^{\atomic[\Sp]} = \CC^{\omega}$.
\end{prop}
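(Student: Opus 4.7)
The plan is to exploit the standard identity $\hom_\CC(X, Y) = \Omega^\infty \hom^\Sp_\CC(X, Y)$ relating the space-valued and spectrum-valued mapping objects, combined with the fact that in the stable setting $\hom^\Sp_\CC(X, -)$ is automatically exact. Since a functor between presentable categories preserves all small colimits iff it preserves both finite colimits and filtered colimits, and exactness handles the finite colimits for free, $\Sp$-atomicity of $X$ reduces to the preservation of filtered colimits by $\hom^\Sp_\CC(X,-)$.

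For the direction $\Sp$-atomic $\Rightarrow$ compact, the observation is that $\Omega^\infty$ preserves filtered colimits, so if $\hom^\Sp_\CC(X,-)$ preserves filtered colimits then so does $\hom_\CC(X,-) = \Omega^\infty \hom^\Sp_\CC(X,-)$, making $X$ compact in the ordinary sense.

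For the converse direction I would use that filtered colimits in $\Sp$ are detected on homotopy groups, together with the identification $\pi_n \hom^\Sp_\CC(X, Y) = \pi_0 \hom_\CC(\Sigma^n X, Y)$ for every $n \in \ZZ$. Since suspension is an equivalence on the stable category $\CC$, each shift $\Sigma^n X$ is compact whenever $X$ is, so $\hom_\CC(\Sigma^n X, -)$ preserves filtered colimits, and then $\pi_0$ of it does as well (since $\pi_0$ preserves filtered colimits of spaces). Hence each $\pi_n \hom^\Sp_\CC(X,-)$ preserves filtered colimits, so $\hom^\Sp_\CC(X,-)$ does, and combined with the automatic exactness we conclude that $X$ is $\Sp$-atomic. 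The only point that deserves care is the detection of filtered colimits in $\Sp$ by the family $\{\pi_n\}_{n \in \ZZ}$, but this is standard, following from the fact that filtered colimits in $\Sp$ are exact and computed on underlying infinite loop spaces, where $\pi_n$ is itself filtered-colimit-preserving.
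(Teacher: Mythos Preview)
Your proposal is correct and follows essentially the same approach as the paper: both directions use $\hom_\CC(X,-) = \Omega^\infty \hom^\Sp_\CC(X,-)$, reduce atomicity to exactness plus preservation of filtered colimits, and detect the latter via the jointly conservative, filtered-colimit-preserving family $\{\Omega^\infty\Sigma^n\}_{n\in\ZZ}$ (equivalently $\{\pi_n\}_{n\in\ZZ}$). The only cosmetic difference is that you shift the source object ($\Sigma^n X$) while the paper shifts inside the target ($\Omega^\infty\Sigma^n\hom^\Sp(X,-)$), but these are the same statement via exactness of $\hom^\Sp$.
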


\begin{proof}
	Let $X \in \CC$.
	First assume that $X$ is atomic.
	Recall that $\Omega^\infty\colon \Sp \to \Spaces$ commutes with filtered colimits, so that $\hom(X, -) \cong \Omega^\infty \hom^\Sp(X, -)$ commutes with filtered colimits, i.e.\ it is compact.

	Now assume that $X$ is compact.
	Recall that for any $n \in \ZZ$, the functor $\Sigma^n\colon \Sp \to \Sp$ commutes with all limits and colimits and in particular with filtered colimits, thus $\hom(X, \Sigma^n -) \cong \Omega^\infty \Sigma^n \hom^\Sp(X, -)$ also commutes with filtered colimits.
	Additionally, the functors $\Omega^\infty \Sigma^n\colon \Sp \to \Spaces$ are jointly conservative, implying that $\hom^\Sp(X, -)$ commutes with filtered colimits.
	Furthermore, it commutes with finite limits, thus by stability also with all finite colimits, which together with filtered colimits generate all colimits.
\end{proof}

\begin{prop}\label{atomic-compact}
	Let $\CC \in \Mod_\MM$, then $\CC^{\atomic} \in \Cat$ is a small category.
\end{prop}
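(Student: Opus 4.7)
The plan is to show that every $\MM$-atomic object is $\kappa$-compact for some fixed regular cardinal $\kappa$, thereby embedding $\CC^{\atomic}$ into the essentially small subcategory $\CC^\kappa$.

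First I would choose a regular cardinal $\kappa$ large enough that (i) $\CC$ is $\kappa$-accessible as a presentable category and (ii) the unit $\unit_\MM \in \MM$ is a $\kappa$-compact object. Such a $\kappa$ exists because both $\CC$ and $\MM$ are presentable (the latter being a mode, i.e.\ an idempotent algebra in $\PrL$).

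Next, for any $X \in \CC$ I would record the factorization of the ordinary mapping space functor through the enriched hom. Indeed, setting $m = \unit_\MM$ in the universal property $\hom_\MM(m, \hom^\MM(X,Y)) = \hom_\CC(m \otimes X, Y)$ gives a natural equivalence
\[
\hom_\CC(X,-) \;\simeq\; \hom_\MM(\unit_\MM,-) \,\circ\, \hom^\MM(X,-).
\]
If $X$ is $\MM$-atomic then $\hom^\MM(X,-)\colon \CC \to \MM$ preserves all colimits by definition, and $\hom_\MM(\unit_\MM,-)\colon \MM \to \Spaces$ preserves $\kappa$-filtered colimits by the choice of $\kappa$. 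Consequently $\hom_\CC(X,-)$ preserves $\kappa$-filtered colimits, i.e.\ $X \in \CC^\kappa$.

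Since $\CC$ is presentable and $\kappa$ was chosen accordingly, $\CC^\kappa$ is essentially small, and hence so is the full subcategory $\CC^{\atomic} \subseteq \CC^\kappa$. There is no real obstacle here; the proof is a one-step reduction to the classical fact that in a presentable category the $\kappa$-compact objects form an essentially small subcategory, using only the bookkeeping needed to choose $\kappa$ uniformly for $\CC$ and $\MM$.
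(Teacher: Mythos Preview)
Your proof is correct and follows essentially the same argument as the paper: choose $\kappa$ so that $\unit_\MM$ is $\kappa$-compact, factor $\hom_\CC(X,-)$ as $\hom_\MM(\unit_\MM,-)\circ\hom^\MM(X,-)$, and conclude $\CC^{\atomic}\subseteq\CC^\kappa$. The only cosmetic difference is that you additionally require $\CC$ to be $\kappa$-accessible, which is harmless but unnecessary since $\CC^\kappa$ is essentially small for any regular $\kappa$ in a presentable category.
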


\begin{proof}
	Let $\kappa$ be a regular cardinal such that the unit $\unit_\MM \in \MM$ is $\kappa$-compact.
	We show that $\CC^{\atomic} \subseteq \CC^\kappa$, that is the atomics are $\kappa$-compact.
	Let $X \in \CC$ be an atomic object, so in particular $\hom^\MM(X,-)$ commutes with $\kappa$-filtered colimits.
	Since $\unit_\MM$ is $\kappa$-compact, $\hom(\unit_\MM,-)$ commutes with $\kappa$-filtered colimits,
	implying that the composition $\hom(X,-) \cong \hom(\unit_\MM, \hom^\MM(X,-))$ commutes with $\kappa$-filtered colimits.
\end{proof}

\begin{defn}\label{molecular-def}
	Let $\CC \in \Mod_\MM$.
	We say that a collection of atomic objects $B \subseteq \CC^{\atomic}$ are \tdef{$\MM$-atomic generators}, if $\CC$ is generated from $B$ under colimits and the action of $\MM$.\footnote{In a previous version of this paper, $B \subseteq \CC^{\atomic}$ was called a collection of atomic generators if they generate $\CC$ under colimits (without the action of $\MM$). It was later noticed that our proofs work under the new, weaker, assumption.}
	If such $B$ exists, we say that $\CC$ is \tdef{$\MM$-molecular}.\footnote{As a result of the weakening of the condition of being atomic generators, the condition of being molecular is correspondingly weaker.}
	If the mode is clear from the context, we call $\CC$ molecular and say that $B$ are atomic generators.
\end{defn}

\begin{example}
	Every mode $\MM$ is itself $\MM$-molecular, because the unit $\unit_\MM$ is atomic and any object $m$ can be written as $m \otimes \unit_\MM$.\footnote{In the previous version it was posed as a question whether every mode is molecular. With the new definition of atomic generators, this always holds. However, the question whether every mode is generated from atomic objects under colimits (without the action of $\MM$) is still open.}
\end{example}

\begin{defn}\label{il-def}
	Let $\CC, \DD \in \PrL$.
	We say that a functor $F\colon \CC \to \DD$ is \tdef{internally left adjoint} if it is left adjoint in $\PrL$,
	namely if it is a left adjoint functor and its right adjoint $G\colon \DD \to \CC$ is itself a left adjoint.
	We denote by $\mdef{\FunintL(\CC, \DD)} \subseteq \FunL(\CC, \DD)$ the full subcategory of internally left adjoint functors.
	We let $\mdef{\Mod_\MM^\intL}$ be the wide subcategory of $\Mod_\MM$ with the same objects, and morphisms the internally left adjoint functors.
\end{defn}

\begin{prop}\label{internally-left-atomic-pres}
	Let $\CC, \DD \in \Mod_\MM$, and let $F\colon \CC \to \DD$ be an internally left adjoint functor, then it sends atomic objects to atomic objects.
\end{prop}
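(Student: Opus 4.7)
The plan is to directly unwind the definitions and use the $\MM$-enriched adjunction from \cref{M-adj}. Since $F$ is internally left adjoint, write $F \dashv G \dashv H$ for the resulting adjoint triple between $\CC$, $\DD$, and $\CC$. Let $X \in \CC$ be $\MM$-atomic; we must show that $\hom^\MM_\DD(FX, -) \colon \DD \to \MM$ preserves colimits.

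First I would invoke \cref{M-adj} on the adjunction $F \dashv G$ (both functors are morphisms in $\Mod_\MM$) to obtain a natural equivalence
$$
\hom^\MM_\DD(FX, -) \;\simeq\; \hom^\MM_\CC(X, G(-))
$$
of functors $\DD \to \MM$. Then I would argue that each of the two factors on the right preserves colimits: the functor $G$ preserves all colimits because it is itself a left adjoint (to $H$), which is precisely the content of $F$ being internally left adjoint; and $\hom^\MM_\CC(X, -)$ preserves colimits by the assumption that $X$ is $\MM$-atomic. Composing these, $\hom^\MM_\DD(FX, -)$ preserves colimits, so $FX$ is $\MM$-atomic.

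There is really no serious obstacle here: the statement is essentially a formal consequence of \cref{M-adj} together with the characterization of internal left adjoints as those functors whose right adjoint is cocontinuous. The only point deserving a moment of care is to verify that \cref{M-adj} is applicable, i.e.\ that $F$ and $G$ are genuine morphisms in $\Mod_\MM$ so that the enriched hom-adjunction holds; this is automatic since $F \in \FunintL(\CC, \DD) \subseteq \Fun^{\mathrm{L}}_\MM(\CC, \DD)$ by definition, and its right adjoint in $\PrL$ between $\MM$-modules is automatically $\MM$-linear.
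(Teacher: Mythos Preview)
Your proof is correct and is essentially identical to the paper's own argument: invoke \cref{M-adj} to rewrite $\hom^\MM(FX,-)$ as $\hom^\MM(X,G-)$, then observe this is a composite of two colimit-preserving functors. Your closing remark about verifying applicability of \cref{M-adj} is harmless but unnecessary, since that lemma only requires $\CC,\DD\in\Mod_\MM$ and an adjunction $F\dashv G$---the $\MM$-linearity of $F$ used in its proof is automatic for left adjoints between modules over a mode.
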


\begin{proof}
	By assumption the right adjoint $G\colon \DD \to \CC$ is itself a left adjoint, thus preserves colimits.
	Let $X \in \CC^{\atomic}$ be an atomic object, then using \cref{M-adj} $\hom^\MM(FX,-) \cong \hom^\MM(X, G-)$, which is the composition of $G$ and $\hom^\MM(X, -)$, both of which preserve colimits, so that $FX$ is atomic.
\end{proof}

\begin{prop}\label{atomic-pres-il}
	Let $\CC, \DD \in \Mod_\MM$, and let $F\colon \CC \to \DD$ be a left adjoint functor.
	If $\CC$ is molecular and $F$ sends a collection of atomic generators $B \subset \CC$ to atomic objects in $\DD$, then $F$ is internally left adjoint.
\end{prop}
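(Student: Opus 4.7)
The plan is to show that the right adjoint $G\colon \DD \to \CC$ of $F$ preserves all colimits, which is exactly the condition for $F$ to be internally left adjoint per \cref{il-def}. The strategy is to detect colimit preservation for $G$ through the restricted Yoneda embedding on the atomic generating collection $B$, leveraging the commutative square of \cref{adj-yon-diag}.

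First, I would observe that the restricted Yoneda $\yon|_B\colon \CC \to \Fun(B^\op, \Spaces)$ both preserves colimits and is conservative: the former by \cref{yon-at-colims} since $B \subseteq \CC^{\atomic}$, and the latter by \cref{molecular-conservative} since $B$ generates $\CC$ under colimits by hypothesis. A standard argument then shows that $\yon|_B$ reflects colimits: for any cocone, comparing it with the actual colimit yields a comparison map whose image under $\yon|_B$ is an equivalence (preservation of colimits plus uniqueness), hence the map itself is an equivalence (conservativity).

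Next, I would examine the other route in \cref{adj-yon-diag}: the composition $F^* \circ \yon|_{FB}\colon \DD \to \Fun(FB^\op, \Spaces) \to \Fun(B^\op, \Spaces)$. Here $\yon|_{FB}$ preserves colimits by \cref{yon-at-colims}, since $FB \subseteq \DD^{\atomic}$ by the hypothesis that $F$ carries atomic generators to atomic objects, and $F^*$ preserves all colimits because colimits in presheaf categories are computed pointwise.

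Finally, commutativity of the square of \cref{adj-yon-diag} gives $\yon|_B \circ G = F^* \circ \yon|_{FB}$, so the left-hand side preserves colimits, and by the reflection of colimits established above, $G$ itself preserves colimits. I expect no essential obstacle; the only step deserving a line of justification is the passage from ``preserves plus conservative'' to ``reflects'', but this is purely formal once the two properties of $\yon|_B$ are in hand.
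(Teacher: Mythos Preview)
Your proposal is correct and follows essentially the same route as the paper: both arguments use the commutative square of \cref{adj-yon-diag} together with the facts that $\yon|_B$ is conservative and colimit-preserving (from \cref{molecular-conservative} and \cref{yon-at-colims}) and that $F^*\circ\yon|_{FB}$ is colimit-preserving (from \cref{yon-at-colims} applied to $FB$). The only difference is cosmetic: the paper phrases the key reduction as ``it suffices to show that $\yon|_B\circ G$ preserves colimits'', while you unpack this as ``$\yon|_B$ reflects colimits''.
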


\begin{proof}
	We wish to show that $G$, the right adjoint of $F$, is itself a left adjoint, namely that it preserves colimits.
	Let $Y_i\colon \II \to \DD$ be a diagram, and we wish to show that $G(\colim Y_i) \cong \colim G Y_i$.
	By the Yoneda lemma, this is equivalent to checking that for every $X \in \CC$ we have
	$$
	\hom(X, G(\colim Y_i)) \cong \hom(X, \colim G Y_i).
	$$
	Since $\hom(-, -) \cong \hom(\unit_\MM, \hom^\MM(-, -))$, it suffices to check that for every $X \in \CC$ we have
	$$
	\hom^\MM(X, G(\colim Y_i)) \cong \hom^\MM(X, \colim G Y_i).
	$$
	Let $A$ denote the collection of $X \in \CC$ for which this condition holds, and we shall show that $A = \CC$.

	First, for every $X \in B$, we know that
	\begin{align*}
		\hom^\MM(X, G(\colim_I Y_i))
		&\cong \hom^\MM(F X, \colim_I Y_i)\\
		&\cong \colim_I \hom^\MM(F X, Y_i)\\
		&\cong \colim_I \hom^\MM(X, G Y_i)\\
		&\cong \hom^\MM(X, \colim_I G Y_i)
	\end{align*}
	where the first and third steps follow from \cref{M-adj},
	the second step follows from the assumption that $FX$ is atomic since $X \in B$ and $F$ sends $B$ to atomic objects,
	and the fourth step follows from the fact $X$ is atomic.
	Therefore, $B \subseteq A$.

	Second, for every $X \in A$ and $m \in \MM$, we know that
	$\hom^\MM(m \otimes X, -) \cong \hom^\MM(m, \hom^\MM(X, -))$
	so that $m \otimes X \in A$, i.e.\ $A$ is closed under the action of $\MM$.

	Third, for every diagram $X_j\colon \JJ \to \CC$ landing in $A$, we know that
	$\hom^\MM(\colim_J X_j, -) \cong \lim_{\JJ^\op} \hom^\MM(X_j, -)$
	so that $\colim_J X_j \in A$, i.e.\ $A$ is closed under colimits.
	
	We have shown that $B \subseteq A$ and that $A$ is closed under the action of $\MM$ and colimits, and by assumption $B$ are atomic generators, thus $A = \CC$ as needed.
\end{proof}

Recall that for $\CC \in \Mod_\MM$ we have an equivalence $\FunL(\MM, \CC) \iso \CC$ given by evaluation at $\unit_\MM$.
Its inverse sends $X \in \CC$ to the functor $- \otimes X\colon \MM \to \CC$ (part of the data admitting $\CC$ as an $\MM$-module).
Furthermore, the right adjoint of $- \otimes X\colon \MM \to \CC$ is $\hom^\MM(X, -)\colon \CC \to \MM$.

\begin{prop}\label{atomic-tensor}
	Let $\CC \in \Mod_\MM$,
	then the equivalence $\FunL(\MM, \CC) \iso \CC$ restricts to an equivalence $\FunintL(\MM, \CC) \iso \CC^{\atomic}$.
	In addition, if $X \in \CC$ and $m \in \MM$ are atomic then so is $m \otimes X \in \CC$.
\end{prop}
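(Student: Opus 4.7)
The plan is to directly identify the internally left adjoint condition on a functor $F \in \FunL(\MM, \CC)$ with the atomicity of the object $F(\unit_\MM) \in \CC$, using the explicit description of the inverse equivalence recalled in the paragraph preceding the statement.

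First I would unwind the equivalence $\FunL(\MM,\CC) \xrightarrow{\sim} \CC$. The inverse sends $X \in \CC$ to $- \otimes X\colon \MM \to \CC$, and its right adjoint is exactly $\hom^\MM(X,-)\colon \CC \to \MM$. By \cref{il-def}, a functor in $\FunL(\MM,\CC)$ lies in $\FunintL(\MM,\CC)$ iff its right adjoint preserves colimits; applied to $- \otimes X$, this is precisely the condition that $\hom^\MM(X,-)$ preserves colimits, which by \cref{atomic-def} is exactly the definition of $X$ being atomic. Thus the equivalence restricts to $\FunintL(\MM,\CC) \xrightarrow{\sim} \CC^{\atomic}$, proving the first claim.

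For the second claim, I would use the first part together with closure of internally left adjoint functors under composition. If $X \in \CC$ and $m \in \MM$ are both atomic, then the functor $- \otimes (m \otimes X)\colon \MM \to \CC$ factors (using associativity of the $\MM$-action) as the composition
$$
\MM \xrightarrow{- \otimes m} \MM \xrightarrow{- \otimes X} \CC.
$$
By the first part applied to $\MM \in \Mod_\MM$ and to $\CC$ respectively, both factors are internally left adjoint. Since the composition of two internally left adjoint functors is again internally left adjoint (left adjoints compose to a left adjoint, and the right adjoint of the composite is the composite of the right adjoints, which is again a left adjoint), the composite $- \otimes (m \otimes X)$ is internally left adjoint. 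Applying the first part in the reverse direction, this forces $m \otimes X \in \CC$ to be atomic.

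There is no real obstacle here beyond carefully tracking the identifications; the whole proof is essentially transport of structure along the equivalence $\FunL(\MM,\CC) \simeq \CC$, once one observes that the $\MM$-enriched hom is the right adjoint of the tensoring functor.
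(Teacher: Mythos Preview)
Your proof is correct and the first part is essentially identical to the paper's argument: both identify atomicity of $X$ with internal left adjointness of $-\otimes X$ via the fact that $\hom^\MM(X,-)$ is its right adjoint. For the second part, the paper instead applies \cref{internally-left-atomic-pres} directly to the internally left adjoint functor $-\otimes X\colon \MM \to \CC$, concluding that it sends the atomic $m$ to the atomic $m\otimes X$; your argument via composition of internally left adjoints is a minor variant that avoids citing \cref{internally-left-atomic-pres} at the cost of invoking the first part twice, but the two are interchangeable.
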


\begin{proof}
	First, by \cref{internally-left-atomic-pres} and the fact that the $\unit_\MM$ is atomic, the functor indeed lands in the full subcategory $\CC^{\atomic}$.
	In particular, it is also fully faithful as the restriction of an equivalence to two full subcategories.
	We need to show that it is essentially surjective, i.e.\ that if $X \in \CC^{\atomic}$ then $- \otimes X\colon \MM \to \CC$ is internally left adjoint.
	This holds since its right adjoint is $\hom^\MM(X, -)\colon \CC \to \MM$, which by assumption preserves colimits.

	For the last part, as $- \otimes X\colon \MM \to \CC$ is internally left adjoint, \cref{internally-left-atomic-pres} implies that it sends atomic objects to atomic objects.
\end{proof}

\begin{remark}
	In \cref{atomics-mod-m-at} we extend the last part of the proposition to show that in fact $\CC^{\atomic}$ is a module over $\MM^{\atomic}$.
\end{remark}

In light of this proposition, we construct the functor of taking atomics functorially.
We also recall from \cref{atomic-compact} that $\CC^{\atomic}$ is a small category.

\begin{defn}\label{atomic-def-fun}
	We define the functor $(-)^{\atomic} \colon \Mod_\MM^{\intL} \to \Cat$ by $(-)^{\atomic} = \FunintL(\MM, -)$.
\end{defn}

\begin{defn}\label{abs-def}
	Let $\II$ be an indexing category.
	We say that $\II$ is an \tdef{absolute limit} of $\MM$ if for any $\CC \in \Mod_\MM$, $\II$-shaped limits in $\CC$ commute with colimits.
\end{defn}

\begin{remark}
	The term absolute limit is usually used in the context of enriched categories, saying that $\II$ is an absolute limit of $\VV \in \Mon(\Cat)$ if any $\VV$-enriched functor commutes with $\II$-shaped limits.
	We will not use this condition in this paper, but for the convenience of the reader we remark on the connection between this condition and the one appearing in \cref{abs-def} when $\VV$ is a mode.

	Assume that $\II$ is an absolute limit in the ordinary sense, namely that $\VV$-enriched functors commute with $\II$-shaped limits.
	Let $\CC \in \Mod_\VV$.
	For any indexing category $\JJ$ consider $\colim_\JJ\colon \CC^\JJ \to \CC$.
	This functor commutes with colimits, and since $\VV$ is a mode, it is a morphism in $\Mod_\VV$, so, as referred to in \cref{at-enrich}, it is canonically $\VV$-enriched, and therefore commutes with $\II$-shaped limits.
	This holds for any $\JJ$, meaning that $\II$-shaped limits in $\CC$ commute with colimits, reproducing \cref{abs-def}.
	
	The implication in the other direction should follow from a working theory of enriched left Kan extensions and their compatibility with the enriched Yoneda embedding, which we are unaware of a reference for.
\end{remark}

\begin{lem}\label{abs-mod}
	If $\II$ is an absolute limit of $\MM$ and $\MM \to \NN$ is map of modes, then $\II$ is an absolute limit of $\NN$ as well.
\end{lem}

\begin{proof}
	This is immediate from the fact that $\Mod_\NN \subseteq \Mod_\MM$.
\end{proof}

\begin{lem}\label{abs-m}
	Let $\II$ be an absolute limit of $\MM$, and $\CC \in \Mod_\MM$.
	Then $m \otimes -\colon \CC \to \CC$ commutes with $\II$-shaped limits, for any $m \in \MM$.
\end{lem}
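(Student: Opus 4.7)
The plan is to leverage the equivalence $\FunL(\MM, \CC) \xrightarrow{\sim} \CC$ (evaluation at $\unit_\MM$, with inverse $Y \mapsto (- \otimes Y)$) recalled just before \cref{atomic-tensor}. This lets me reformulate the statement as a computation of pointwise $\II$-limits inside $\Fun(\MM, \CC)$, reducing everything to verifying closure of $\FunL(\MM, \CC)$ under such limits.

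The crucial step is to show that $\II$-limits in $\FunL(\MM, \CC)$ agree with the pointwise $\II$-limits taken in $\Fun(\MM, \CC)$; equivalently, that a pointwise $\II$-limit of colimit-preserving functors $\MM \to \CC$ is still colimit-preserving. Given colimit-preserving $F_i\colon \MM \to \CC$ with pointwise $\II$-limit $F$, and a colimit diagram $m = \colim_\alpha m_\alpha$ in $\MM$, I would compute
\[
F(m) = \lim_\II F_i(\colim_\alpha m_\alpha) = \lim_\II \colim_\alpha F_i(m_\alpha) = \colim_\alpha \lim_\II F_i(m_\alpha) = \colim_\alpha F(m_\alpha),
\]
where the interchange of the $\II$-limit with the $\alpha$-colimit is precisely the absolute colimit hypothesis applied to $\CC \in \Mod_\MM$.

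Given $X\colon \II \to \CC$, I would then transport it to $\tilde X\colon \II \to \FunL(\MM, \CC)$ with $\tilde X_i = (- \otimes X_i)$. Its $\II$-limit exists in $\FunL(\MM, \CC)$ and, by the previous step, is computed pointwise, so evaluated at $m \in \MM$ it equals $\lim_\II(m \otimes X_i)$. On the other hand, since the equivalence $\FunL(\MM, \CC) \xrightarrow{\sim} \CC$ preserves all limits, $\lim_\II \tilde X$ corresponds to $\lim_\II X$, which under the inverse equivalence is the functor $- \otimes \lim_\II X$; evaluating at $m$ gives $m \otimes \lim_\II X$. Comparing the two expressions for $(\lim_\II \tilde X)(m)$ yields the desired equivalence $m \otimes \lim_\II X = \lim_\II(m \otimes X)$.

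The only real content lies in the first step, where the absolute colimit hypothesis is used to ensure that pointwise $\II$-limits remain inside $\FunL$; the remainder is a formal consequence of the Yoneda-style identification of colimit-preserving functors out of $\MM$ with objects of $\CC$. Note in particular that no generation hypothesis on $\MM$ is needed, since once the pointwise $\II$-limit lies in $\FunL(\MM, \CC)$ it is automatically identified with $- \otimes \lim_\II X$ via the equivalence.
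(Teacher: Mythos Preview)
Your argument is correct, but it takes a considerably longer route than the paper's. The paper's proof is two lines: since $\II$ is an absolute colimit of $\MM$, the functor $\lim_\II\colon \CC^\II \to \CC$ preserves colimits and is therefore a morphism in $\Mod_\MM$; but any morphism in $\Mod_\MM$ is automatically $\MM$-linear, so $\lim_\II$ commutes with $m \otimes -$. This exploits directly the key feature of modes (idempotent algebras in $\PrL$): every left adjoint between $\MM$-modules is $\MM$-linear, so one never needs to unpack the action of $m$ explicitly. Your approach instead proves this $\MM$-linearity ``by hand'' in the specific case of $\lim_\II$, by passing through the equivalence $\FunL(\MM,\CC)\simeq\CC$ and checking closure of $\FunL$ under pointwise $\II$-limits. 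Both arguments ultimately rest on the same interchange (the absolute colimit hypothesis applied in $\CC$), but the paper's route is more conceptual and reusable, while yours is more self-contained in that it does not invoke the general $\MM$-linearity of left adjoints between $\MM$-modules.
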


\begin{proof}
	By assumption, $\lim_\II\colon \CC^\II \to \CC$ commutes with colimits.
	Therefore, it is a map in $\Mod_\MM$, so that it also commutes $m \otimes -\colon \CC \to \CC$ for any $m \in \MM$.
\end{proof}

\begin{prop}
	Let $\II$ be an absolute limit of $\MM$, then for any $\CC \in \Mod_\MM$, the atomics $\CC^{\atomic} \subset \CC$ are closed under $\II^\op$-shaped colimits.
\end{prop}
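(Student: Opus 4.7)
The plan is to verify atomicity directly from the definition. Let $F\colon \II \to \CC^{\atomic}$ be a diagram of atomic objects and set $X = \colim_\II F$ in $\CC$; the task is to show that $\hom^\MM(X, -)\colon \CC \to \MM$ preserves colimits.

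First, I would use the universal property of the colimit in the first argument of $\hom^\MM$ to obtain a natural equivalence $\hom^\MM(X, -) \simeq \lim_{\II^\op} \hom^\MM(F(i), -)$ of functors $\CC \to \MM$, with the limit computed pointwise in $\MM$. Since each $F(i)$ is atomic by hypothesis, each $\hom^\MM(F(i), -)$ preserves colimits in $\CC$. Hence $\hom^\MM(X, -)$ is exhibited as a pointwise limit of colimit-preserving functors into $\MM$.

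To conclude, for an arbitrary diagram $Y\colon \JJ \to \CC$ I would chain equivalences $\hom^\MM(X, \colim_\JJ Y) \simeq \lim_{\II^\op} \hom^\MM(F(i), \colim_\JJ Y) \simeq \lim_{\II^\op} \colim_\JJ \hom^\MM(F(i), Y(j))$, where the second step uses atomicity of each $F(i)$. I would then interchange the limit and the colimit by invoking the absolute colimit hypothesis applied to $\MM \in \Mod_\MM$ itself, obtaining $\colim_\JJ \lim_{\II^\op} \hom^\MM(F(i), Y(j)) \simeq \colim_\JJ \hom^\MM(X, Y(j))$, which is precisely colimit-preservation.

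The argument is essentially formal, and I foresee no real obstacle: the content of the absolute colimit hypothesis is exactly what licenses the limit-colimit swap in $\MM$, and everything else is the standard hom-colimit formula together with the definition of atomicity. The only point requiring care is the routine $\II$ versus $\II^\op$ bookkeeping when converting a colimit in the covariant slot of $\hom^\MM$ into a limit over the opposite shape.
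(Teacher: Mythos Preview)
Your proof is correct and essentially the same as the paper's. The paper is simply terser: it writes $\hom^\MM(\colim_\II X_i, -) = \lim_\II \hom^\MM(X_i, -)$ and observes this is a composition of two colimit-preserving functors (the diagonal-into-atomics functor $\CC \to \MM^{\II^\op}$ and the limit functor $\MM^{\II^\op} \to \MM$), whereas you spell out the limit-colimit interchange explicitly; the content is identical, and your attention to the $\II$ versus $\II^\op$ bookkeeping is a point the paper leaves implicit.
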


\begin{proof}
	Let $X_i\colon \II^\op \to \CC$ be a diagram landing in the atomics.
	Recall that $\hom^\MM(-, -)\colon \CC^\op \times \CC \to \MM$ commutes with limits in the first coordinate, thus $\hom^\MM(\colim_{\II^\op} X_i, -)$ is equivalent to
	$$
	\CC
	\xrightarrow{\Delta} \CC^{\II^\op}
	\xrightarrow{(\hom^\MM(X_i, -))_\II} \MM^\II
	\xrightarrow{\lim_\II} \MM
	.
	$$
	$\Delta$ commutes with colimits since colimits in functor categories are computed level-wise.
	Since each $X_i$ is atomic, each $\hom^\MM(X_i, -)$ commutes with colimits, and as colimits in functor categories are computed level-wise, we get that $(\hom^\MM(X_i, -))_\II$ commutes with colimits.
	By assumption, $\II$ is an absolute limit of $\MM$, thus $\lim_\II$ commutes with colimits.
	This shows that $\hom^\MM(\colim_{\II^\op} X_i, -)$ commutes with colimits, i.e.\ that $\colim_{\II^\op} X_i$ is indeed atomic.
\end{proof}

\begin{remark}
	Let $F\colon \CC \to \DD$ be an internally left adjoint functor, and let $\II$ be an absolute limit of $\MM$.
	Then $F$ preserves colimits, and the atomics are closed under $\II^\op$-shaped colimits, so that the induced functor between the atomics preserves $\II^\op$-shaped colimits.
\end{remark}

The following claim immediately follows.

\begin{prop}\label{atomic-abs}
	Let $\clK \subset \{ \II^\op \mid \II \text{ absolute limit of } \MM \}$ be a collection of opposites of absolute limits of $\MM$, then the functor of taking atomics $(-)^{\atomic} \colon \Mod_\MM^{\intL} \to \Cat$ factors through $\CatK$.
\end{prop}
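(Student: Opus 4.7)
The plan is to assemble this from the two statements immediately preceding it (which together already do all the work, since the final proposition is the remark/proposition packaged functorially). Concretely, I would unwind what it means for $(-)^{\atomic}$ to factor through $\CatK \hookrightarrow \Cat$: for every $\CC \in \Mod_\MM$ the category $\CC^{\atomic}$ must admit $\II$-shaped colimits for each $\II \in \clK$, and for every internally left adjoint $F\colon \CC \to \DD$ the induced functor $F^{\atomic}\colon \CC^{\atomic} \to \DD^{\atomic}$ must preserve them.

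First I would handle the objects. Fix $\CC \in \Mod_\MM$ and $\II \in \clK$. Since $\CC$ is presentable it admits all $\II$-shaped colimits, and by the preceding proposition $\CC^{\atomic} \subseteq \CC$ is closed under $\II$-shaped colimits. Hence $\CC^{\atomic}$ admits $\II$-shaped colimits, and the inclusion $\CC^{\atomic} \hookrightarrow \CC$ preserves them.

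Next I would handle the morphisms. Let $F\colon \CC \to \DD$ be internally left adjoint. By \cref{internally-left-atomic-pres}, $F$ restricts to a functor $F^{\atomic}\colon \CC^{\atomic} \to \DD^{\atomic}$ (which is how $(-)^{\atomic}$ is defined on morphisms via \cref{atomic-def-fun}, using the identification $\FunintL(\MM, -) \simeq (-)^{\atomic}$ from \cref{atomic-tensor}). Given a diagram $X_\bullet\colon \II \to \CC^{\atomic}$, its colimit computed in $\CC^{\atomic}$ agrees with the colimit in $\CC$ by the previous paragraph, and similarly in $\DD^{\atomic}$. Since $F$ is a left adjoint it preserves the colimit in $\CC$, so $F^{\atomic}$ preserves the colimit in $\CC^{\atomic}$.

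Combining these two observations shows that the functor $(-)^{\atomic}\colon \Mod_\MM^{\intL} \to \Cat$ lands in the (non-full) subcategory $\CatK \subseteq \Cat$, giving the desired factorization. There is essentially no obstacle here beyond checking that the factorization through a non-full subcategory is witnessed objectwise and morphismwise, both of which are supplied by the previous proposition and its remark.
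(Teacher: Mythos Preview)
Your proposal is correct and matches the paper's approach exactly: the paper states that the proposition ``immediately follows'' from the preceding proposition (closure of atomics under $\II$-shaped colimits) and the remark (the induced functor on atomics preserves these colimits), and you have simply spelled out what that means for the factorization through the non-full subcategory $\CatK$.
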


\begin{prop}\label{absolute-stable}
	Let $\MM$ be a stable mode, then all finite categories are absolute limits.
	Furthermore, for any $\CC \in \Mod_\MM$, $\CC^{\atomic[\MM]}$ is a stable subcategory of $\CC$.
\end{prop}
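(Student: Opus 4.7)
The plan is to prove the two claims in sequence, using Part 1 together with the preceding proposition on closure of atomics under absolute colimits to derive most of Part 2.

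For Part 1, since $\MM$ is stable every $\CC \in \Mod_\MM$ is stable, so it suffices to show that in any stable presentable $\infty$-category all finite limits commute with colimits, which I would reduce to the three generating finite limits. The terminal object is the zero object $0$ (the empty colimit), so this case is automatic. Finite products agree with finite coproducts in a stable category, and coproducts commute with colimits. For pullbacks, I would use the stable formula expressing the pullback of $X \to Z \leftarrow Y$ as $\Omega(\mathrm{cofib}(X \oplus Y \to Z))$: cofibers are finite colimits (hence commute with colimits) and $\Omega = \Sigma^{-1}$ is an equivalence on $\CC$ (hence colimit-preserving), so pullbacks commute with colimits. Since every finite limit is built from these, all finite categories are absolute colimits of $\MM$.

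For Part 2, combining Part 1 with the preceding proposition, $\CC^{\atomic[\MM]}$ is closed under all finite colimits in $\CC$: in particular it contains $0$ and is closed under coproducts, pushouts and $\Sigma$. To upgrade this to stability, it remains to verify closure under the loop functor $\Omega$ (equivalently under fibers, since $\mathrm{fib} \simeq \Omega \circ \mathrm{cofib}$ in stable $\CC$). For this I would apply \cref{M-adj} to the adjunction $\Omega \dashv \Sigma$ (which holds because $\Omega$ and $\Sigma$ are mutually inverse equivalences on $\CC$), to obtain for atomic $X$ the natural equivalence
\[
\hom^\MM(\Omega X, -) \simeq \hom^\MM(X, \Sigma(-)).
\]
The right-hand side is the composition of $\hom^\MM(X, -)$ (colimit-preserving by atomicity of $X$) with $\Sigma$ (colimit-preserving as an equivalence), hence itself colimit-preserving, so $\Omega X$ is atomic.

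The main point requiring care is the $\MM$-linearity of $\Sigma$ and $\Omega$, which is needed in order to invoke \cref{M-adj}. This follows from the identification $\Sigma X \simeq (\Sigma \unit_\MM) \otimes X$ via the $\MM$-module structure (using stability of the mode $\MM$, so that $\Sigma \unit_\MM \in \MM$), making $\Sigma$ an $\MM$-linear equivalence with $\Omega \simeq \Sigma^{-1}$ inheriting $\MM$-linearity; beyond this, only the standard decomposition of pullbacks in stable categories via cofibers and $\Omega$ is used.
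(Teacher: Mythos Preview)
Your proof is correct and follows essentially the same approach as the paper: both reduce Part~1 to the standard fact that finite limits commute with colimits in stable categories, and both handle Part~2 by combining closure under finite colimits (from the preceding proposition) with an explicit check that desuspensions of atomics are atomic. The only differences are cosmetic: the paper states Part~1 in one line without unpacking the generating finite limits, and for Part~2 writes the identity as $\hom^\MM(\Sigma^{-1}X,-) = \Sigma\,\hom^\MM(X,-)$ (with $\Sigma$ applied in $\MM$) rather than going through \cref{M-adj}. Your detour justifying $\MM$-linearity of $\Sigma$ and $\Omega$ is harmless but unnecessary, since for a mode $\MM$ the forgetful $\Mod_\MM \hookrightarrow \PrL$ is fully faithful, so every left adjoint between $\MM$-modules is automatically $\MM$-linear.
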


\begin{proof}
	Recall that $\CC$ itself is stable, so the first part follows from the commutativity of finite limits and colimits in stable categories.
	For the second part, first note that the zero object is obviously atomic.
	As finite limits are absolute, the atomics are closed under finite colimits, so it suffices to show that the atomics are also closed under desuspensions.
	Let $X \in \CC^{\atomic[\MM]}$, then indeed
	$\hom^\MM(\Sigma^{-1} X, -) \cong \Sigma \hom^\MM(X, -)$
	is colimit preserving, because $X$ is atomic and $\Sigma\colon \MM \to \MM$ is colimit preserving.
\end{proof}

\begin{prop}\label{atomic-smashing}
	Let $F\colon \MM \to \NN$ be a smashing localization of modes (see \cite[Definition 5.1.2]{AmbiHeight}),
	and let $\CC \in \Mod_\NN$.
	Then the $\NN$-atomics coincide with the $\MM$-atomics,
	that is $\CC^{\atomic[\NN]} = \CC^{\atomic[\MM]}$.
\end{prop}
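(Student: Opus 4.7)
The plan is to reduce the comparison of $\NN$-atomicity and $\MM$-atomicity to the fact that, for a smashing localization, the right adjoint $G\colon \NN \hookrightarrow \MM$ of $F$ is both colimit-preserving and fully faithful (hence conservative), so postcomposition with $G$ does not affect whether a functor preserves colimits.

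First I would identify the two enriched hom functors by showing that for all $X, Y \in \CC$,
$$
\hom^\MM_\CC(X, Y) \simeq G\bigl(\hom^\NN_\CC(X, Y)\bigr).
$$
Indeed, the $\MM$-action on $\CC \in \Mod_\NN$ factors as $m \otimes_\MM X = F(m) \otimes_\NN X$. Plugging this into the Yoneda characterization of $\hom^\MM$ and using the adjunction $F \dashv G$ gives, for any $m \in \MM$,
\begin{align*}
\hom_\MM(m, \hom^\MM_\CC(X,Y))
&= \hom_\CC(F(m) \otimes_\NN X, Y) \\
&= \hom_\NN(F(m), \hom^\NN_\CC(X,Y)) \\
&= \hom_\MM(m, G\hom^\NN_\CC(X,Y)),
\end{align*}
and the claim follows by Yoneda (as in \cref{M-adj}).

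Second, I would verify the two key properties of $G$. Being the right adjoint of a reflective localization, $G$ is fully faithful, hence conservative. For colimit-preservation, note that for a diagram $(N_i)$ in $\NN$ the smashing condition gives $(\colim_i^\MM N_i) \otimes \unit_\NN = \colim_i^\MM (N_i \otimes \unit_\NN) = \colim_i^\MM N_i$, so the $\MM$-colimit of $(G N_i)$ already lies in $\NN$ and coincides with $G$ applied to the $\NN$-colimit.

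Combining the two steps, $X$ is $\MM$-atomic iff $G \circ \hom^\NN_\CC(X, -)$ preserves colimits, and since $G$ both preserves colimits and is conservative it also reflects them, so this happens iff $\hom^\NN_\CC(X, -)$ preserves colimits, i.e.\ iff $X$ is $\NN$-atomic. I do not anticipate a substantive obstacle; the argument is a routine Yoneda manipulation together with the standard fact that smashing localizations have colimit-preserving, fully faithful inclusions.
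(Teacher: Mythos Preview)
Your proposal is correct and follows essentially the same approach as the paper: identify $\hom^\MM_\CC(X,-)$ with $G \circ \hom^\NN_\CC(X,-)$, observe that $G$ is fully faithful and colimit-preserving (since the localization is smashing), and conclude that one functor preserves colimits iff the other does. The only difference is cosmetic: the paper imports both facts from \cite{AmbiHeight} (Propositions 5.2.10 and 5.2.15), whereas you supply direct Yoneda and smashing-idempotent arguments for them.
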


\begin{proof}
	By \cite[Proposition 5.2.15]{AmbiHeight}, the localization is smashing if and only if the right adjoint of $F\colon \MM \to \NN$ is itself a left adjoint (i.e.\ if $F$ is internally left adjoint).
	This allows us to treat $\NN$ as a full subcategory of $\MM$ closed under both limits and colimits.
	Now, \cite[Proposition 5.2.10]{AmbiHeight} shows that $\NN$ classifies the property of being in the mode $\MM$ such that all $\MM$-enriched $\hom$'s land in $\NN$, i.e.\ $\hom^\MM(X,-) = \hom^\NN(X,-)$, thus by the closure of $\NN \subseteq \MM$ under colimits, $\hom^\MM(X,-)\colon \CC \to \MM$ commutes with colimits if and only if $\hom^\NN(X,-)\colon \CC \to \NN$ does.
\end{proof}

\subsection{Atomics and Presheaves}\label{subsec-at-psh}

Throughout this subsection, let $\clK \subset \{ \II^\op \mid \II \text{ absolute limit of } \MM \}$ be some small collection of opposites of absolute limits of $\MM$ (not necessarily all of them, for instance, $\clK$ is allowed to be empty).
We also let $\clKop = \{ \II \mid \II^\op \in \clK \}$ be the collection of all of the opposite categories.

\begin{defn}\label{M-psh}
	For $\CC_0 \in \Cat$, we define the category of \tdef{$\MM$-valued presheaves} by $\mdef{\PShM(\CC_0)} = \Fun(\CC_0^\op, \MM)$.
	If $\CC_0 \in \CatK$, we let $\mdef{\PShKM(\CC_0)} = \Fun^\clKop(\CC_0^\op, \MM)$ be the full subcategory of $\PShM(\CC_0)$ on those functors that preserve all limits indexed by $\II \in \clKop$, namely functors $F\colon \CC_0^\op \to \MM$ that send $\II^\op$-shaped colimits in $\CC_0$ to $\II$-shaped limits in $\MM$.
\end{defn}

\begin{remark}
	The definition will be made functorial in \cref{psh-fun-def}.
\end{remark}

For the case $\MM = \Spaces$, \cite[Lemma 10.6]{PK-pres} shows that $\PShK(\CC_0)$ is presentable.
From this we deduce the following:

\begin{prop}\label{pshkm-mod-m}
	There is an equivalence $\PShKM(\CC_0) \cong \PShK(\CC_0) \otimes \MM$, and in particular it is presentable and in the mode $\MM$.
\end{prop}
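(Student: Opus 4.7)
The plan is to combine the universal property of $\PShK(\CC_0)$ with standard properties of the tensor product in $\PrL$. First I would recall the equivalence $\PSh(\CC_0) \otimes \MM \simeq \Fun(\CC_0^\op, \MM)$ in $\PrL$ (a special case of Lurie's tensor-product formula for presentable categories), through which $\MM$ acts by postcomposition and endows $\Fun(\CC_0^\op, \MM)$ with a canonical $\Mod_\MM$-structure.

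Next, the inclusion $\PShK(\CC_0) \hookrightarrow \PSh(\CC_0)$ is a reflective localization at a small set of maps $W$, namely the maps $\colim_\II \yon \circ d \to \yon(\colim_\II d)$ for diagrams $d\colon \II \to \CC_0$ with $\II \in \clK$. Since reflective localizations in $\PrL$ are preserved under tensoring with any presentable category, $\PShK(\CC_0) \otimes \MM$ sits inside $\PSh(\CC_0) \otimes \MM \simeq \Fun(\CC_0^\op, \MM)$ as the full subcategory of $W$-local objects. Unwinding, an $\MM$-valued presheaf $F$ is $W$-local iff it sends $\II$-indexed colimits in $\CC_0$ to $\II$-indexed limits in $\MM$ for every $\II \in \clK$, i.e. $F \in \PShKM(\CC_0)$. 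The absoluteness of $\clK$ with respect to $\MM$ is what makes this matching clean and confirms that representables stay local after the enrichment.

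Finally, presentability of $\PShK(\CC_0) \otimes \MM$ follows from the presentability of $\PShK(\CC_0)$ together with the closure of presentable categories under tensoring in $\PrL$. The $\Mod_\MM$-structure is inherited from the second tensor factor using the idempotence $\MM \otimes \MM \simeq \MM$. The main obstacle will be the identification of the $W$-local $\MM$-valued functors with $\PShKM(\CC_0)$, which requires tracking how tensoring with $\MM$ interacts with both the reflective localization and the absoluteness condition on $\clK$; once this identification is established, the rest is formal.
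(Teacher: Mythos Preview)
Your localization approach is viable, but the paper's argument is considerably more direct: it chains together the tensor--hom formula $\PShK(\CC_0)\otimes\MM \simeq \FunL(\PShK(\CC_0),\MM^\op)$ from \cite[Proposition~4.8.1.17]{HA}, the universal property of $\PShK$ as the free cocompletion under $\clK$-colimits from \cite[Corollary~5.3.6.10]{HTT} (giving $\Fun_\clK(\CC_0,\MM^\op)$), and then passes to opposites to land in $\Fun^\clK(\CC_0^\op,\MM)=\PShKM(\CC_0)$. No localization bookkeeping is needed. Your route instead requires knowing precisely how the local objects of $L\otimes\id_\MM$ are described---namely that one must test against $w\otimes m$ for all $m\in\MM$ (equivalently, use the $\MM$-enriched hom) rather than just the image of $W$---before the ``unwinding'' step goes through. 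That is a real lemma, not a triviality, so your flagged obstacle is genuine, though surmountable.

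One correction: the absoluteness hypothesis on $\clK$ plays no role in this proposition, and the paper's proof does not use it. The equivalence $\PShKM(\CC_0)\simeq\PShK(\CC_0)\otimes\MM$ holds for an arbitrary collection $\clK$. Absoluteness only enters later, e.g.\ in showing that $\PShKM(\CC_0)\subseteq\PShM(\CC_0)$ is closed under colimits (\cref{psh-k-closure}) or that atomics are closed under $\clK$-colimits. So you should drop the sentence invoking absoluteness; it is a red herring here and would mislead a reader into thinking the result is weaker than it is.
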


\begin{proof}
	Indeed, we have an equivalence
	\begin{align*}
		\PShK(\CC_0) \otimes \MM
		&\cong \FunR(\PShK(\CC_0)^\op, \MM)\\
		&\cong \FunL(\PShK(\CC_0), \MM^\op)^\op\\
		&\cong \Fun_\clK(\CC_0, \MM^\op)^\op\\
		&\cong \Fun^\clKop(\CC_0^\op, \MM)\\
		&= \PShKM(\CC_0)
		,
	\end{align*}
	where the first equality is \cite[Proposition 4.8.1.17]{HA}, the second is passing to the opposite, the third is the universal property of $\PShK$ given in \cite[Corollary 5.3.6.10]{HTT}, the fourth is by passing to the opposite, and the last is by definition.
\end{proof}

\begin{lem}\label{psh-k-closure}
	Let $\CC_0 \in \CatK$, then $\PShKM(\CC_0) \subseteq \PShM(\CC_0)$ is closed under limits and colimits, thus the inclusion has both adjoints.
\end{lem}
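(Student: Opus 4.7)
The plan is to leverage the fact that both limits and colimits in $\PShM(\CC_0) = \Fun(\CC_0^\op, \MM)$ are computed pointwise. Thus the question of whether a pointwise (co)limit of $\clK$-limit-preserving functors is itself $\clK$-limit-preserving reduces to whether, for each $\II \in \clK$, $\II$-indexed limits in $\MM$ commute with the indexing category of the outer (co)limit.

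First I would verify closure under limits. Given a diagram $F_j \colon J \to \PShKM(\CC_0)$ and an $\II$-diagram $c_i \colon \II \to \CC_0$ with $\II \in \clK$, the chain
\begin{align*}
(\lim_j F_j)(\lim_i c_i) &= \lim_j F_j(\lim_i c_i) = \lim_j \lim_i F_j(c_i) \\
&= \lim_i \lim_j F_j(c_i) = \lim_i (\lim_j F_j)(c_i)
\end{align*}
holds, using that each $F_j$ lies in $\PShKM(\CC_0)$ together with Fubini for iterated limits in the middle step. Notably, this step does not require the absolute colimit hypothesis.

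Next I would establish closure under colimits by the analogous computation, where the crucial step becomes the identity $\colim_j \lim_i F_j(c_i) = \lim_i \colim_j F_j(c_i)$ inside $\MM$. This is precisely the content of \cref{abs-def} applied to $\CC = \MM$, viewed as a module over itself: the absolute colimit assumption on $\II \in \clK$ says exactly that $\II$-shaped limits in $\MM$ commute with arbitrary colimits, in particular with the $J$-colimit.

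Finally, to produce the two adjoints I observe that $\PShM(\CC_0)$ is presentable as a functor category from the small category $\CC_0^\op$ into the presentable category $\MM$, and $\PShKM(\CC_0)$ is presentable by \cref{pshkm-mod-m}. Since the inclusion preserves both limits and colimits by the two previous steps, the adjoint functor theorem supplies both a left and a right adjoint. I do not foresee any genuine obstacle: the definition of $\clK$ as a collection of absolute colimits of $\MM$ is engineered exactly to enable the needed colim--lim swap, and the rest of the argument is formal.
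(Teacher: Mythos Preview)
Your proposal is correct and follows essentially the same approach as the paper: both arguments reduce to pointwise (co)limits in the functor category, use Fubini for the limit case, and invoke the absolute-colimit hypothesis on $\II \in \clK$ to swap $\colim_J$ past $\lim_\II$ in $\MM$ for the colimit case. The only cosmetic difference is that you spell out the appeal to presentability and the adjoint functor theorem for the existence of adjoints, whereas the paper leaves this implicit.
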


\begin{proof}
	Let $F_j\colon \JJ \to \PShM(\CC_0)$ be a diagram landing in $\PShKM(\CC_0)$.
	We need to show that $\colim_\JJ F_j$ and $\lim_\JJ F_j$ are again in $\PShKM(\CC_0)$, i.e.\ that they commute with all limits indexed by $\II \in \clKop$.
	Let $X_i\colon \II^\op \to \CC_0$ be a diagram.
	Using the fact that colimits and limits in functor categories are computed level-wise, and that $\II$ is an absolute limit, we get:
	$$
	\colim_\JJ F_j(\colim_{\II^\op} X_i)
	\cong \colim_\JJ \lim_\II F_j(X_i)
	\cong \lim_\II \colim_\JJ F_j(X_i)
	.
	$$
	Similarly $\lim_\JJ F_j \in \PShKM(\CC_0)$, since limits commute with limits.
\end{proof}

\begin{defn}\label{L-K-def}
	Let $\CC_0 \in \CatK$.
	We denote by $\mdef{L_\clK}\colon \PShM(\CC_0) \to \PShKM(\CC_0)$ the (internally) left adjoint of the inclusion $\PShKM(\CC_0) \subseteq \PShM(\CC_0)$.
\end{defn}

\begin{lem}\label{f*-psh-k}
	For $f\colon \CC_0 \to \DD_0$, a morphism in $\CatK$, the restriction of $f^*\colon \PShM(\DD_0) \to \PShM(\CC_0)$ to $\PShKM(\DD_0)$ lands in $\PShKM(\CC_0)$.
\end{lem}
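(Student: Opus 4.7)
The plan is a direct unwinding of definitions: we need to show that if $F \in \PShKM(\DD_0)$, meaning $F\colon \DD_0^\op \to \MM$ sends $\II$-indexed colimits in $\DD_0$ to $\II$-indexed limits in $\MM$ for every $\II \in \clK$, then $f^*F = F \circ f^\op$ does the same for colimits in $\CC_0$.

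The key observation is that $f\colon \CC_0 \to \DD_0$ is a morphism in $\CatK$, so by definition $f$ preserves $\II$-indexed colimits for each $\II \in \clK$. Thus given any diagram $X_i\colon \II \to \CC_0$, I would chain together the two preservation properties:
$$
(f^*F)(\colim_\II X_i) \;=\; F\bigl(f(\colim_\II X_i)\bigr) \;=\; F\bigl(\colim_\II f(X_i)\bigr) \;=\; \lim_\II F(f(X_i)) \;=\; \lim_\II (f^*F)(X_i),
$$
where the second equality uses that $f \in \CatK$ and the third uses that $F \in \PShKM(\DD_0)$.

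There is no real obstacle; the statement is essentially formal once one unpacks what membership in $\PShKM$ and in $\CatK$ means. The only thing to be mildly careful about is the handling of the opposite categories, which I have absorbed into the statement that $F\colon \DD_0^\op \to \MM$ preserving $\II$-limits is the same as sending $\II$-colimits in $\DD_0$ to $\II$-limits in $\MM$. This is precisely the same style of level-wise verification used in \cref{psh-k-closure}, just transported across the functor $f$.
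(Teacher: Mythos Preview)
Your proof is correct and follows essentially the same approach as the paper: the paper simply notes that $f^*$ is precomposition with $f^\op\colon \CC_0^\op \to \DD_0^\op$, which preserves $\II$-shaped limits as the opposite of a morphism in $\CatK$. Your chain of equalities is just this observation written out explicitly.
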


\begin{proof}
	$f^*$ is given by pre-composition with $f^\op\colon \CC_0^\op \to \DD_0^\op$, which preserves limits indexed by $\II \in \clKop$, as the opposite of a morphism in $\CatK$.
\end{proof}

\begin{lem}\label{f*-adjoint}
	For $f\colon \CC_0 \to \DD_0$ a morphism in $\CatK$, the functor $f^*\colon \PShKM(\DD_0) \to \PShKM(\CC_0)$ preserves all limits and colimits and thus has a right adjoint $f_*$ and a left adjoint $f_!$.
\end{lem}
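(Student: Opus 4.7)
The plan is to reduce the claim to the corresponding statement about the unrestricted presheaf categories, where limits and colimits are computed pointwise, and then apply the adjoint functor theorem.

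First I would observe that by \cref{f*-psh-k}, we indeed have a restricted functor $f^*\colon \PShKM(\DD_0) \to \PShKM(\CC_0)$, fitting into a commutative square
$$
\begin{tikzcd}
\PShKM(\DD_0) \arrow{r}{f^*} \arrow[hook]{d} & \PShKM(\CC_0) \arrow[hook]{d} \\
\PShM(\DD_0) \arrow{r}{f^*} & \PShM(\CC_0)
\end{tikzcd}
$$
in which the vertical inclusions, by \cref{psh-k-closure}, preserve both limits and colimits (and reflect them since they are fully faithful). The bottom $f^*$ is pre-composition with $f^\op$, and since limits and colimits in the functor categories $\PShM(-) = \Fun((-)^\op, \MM)$ are computed pointwise, this bottom functor preserves all limits and colimits. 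A diagram chase through the square then shows that the top $f^*$ preserves all limits and colimits as well.

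Finally, by \cref{pshkm-mod-m}, both $\PShKM(\DD_0)$ and $\PShKM(\CC_0)$ are presentable. The functor $f^*$ preserves all colimits between presentable categories, so by the adjoint functor theorem it admits a right adjoint $f_*$. For the left adjoint, $f^*$ preserves all limits, and being a colimit-preserving functor between presentable categories it is accessible, so again by the adjoint functor theorem it admits a left adjoint $f_!$.

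I do not anticipate a real obstacle here: the only point that requires a moment's care is that closure of $\PShKM$ under limits and colimits (\cref{psh-k-closure}) genuinely means these (co)limits agree with those computed in $\PShM$, which is what lets us transport the pointwise preservation statement across the square.
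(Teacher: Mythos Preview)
Your proposal is correct and follows essentially the same approach as the paper: both reduce to the levelwise computation of (co)limits in $\PShKM$ via \cref{psh-k-closure} and use that precomposition preserves levelwise (co)limits. The paper carries this out as a one-line direct computation rather than a diagram chase, and leaves the invocation of the adjoint functor theorem implicit, but the content is the same.
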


\begin{proof}
	By \cref{psh-k-closure}, $\PShKM(\CC_0)$ is closed under limits and colimits in $\PShM(\CC_0)$, which are thus computed level-wise, and similarly for $\DD_0$.
	Therefore, we get
	$$
	f^*(\colim_\II F_i)(c)
	= (\colim_\II F_i)(fc)
	\cong \colim_\II F_i(fc)
	= \colim_\II f^*F_i(c)
	\cong (\colim_\II f^*F_i)(c)
	,
	$$
	showing that $f^*$ commutes with colimits, and similarly for limits.
\end{proof}

\cref{f*-psh-k} shows that the functor $\Fun((-)^\op, \MM)\colon \CatK^\op \to \CAT$, sending $\CC_0$ to $\PShM(\CC_0)$ and $f$ to $f^*$, has a subfunctor $\Fun^\clKop((-)^\op, \MM)\colon \CatK^\op \to \CAT$ sending $\CC_0$ to $\PShKM(\CC_0)$ and $f$ to $f^*$.
By \cref{pshkm-mod-m}, the categories $\PShKM(\CC_0)$ are in the mode $\MM$, and by \cref{f*-adjoint}, the morphism $f^*$ is a right adjoint, so that the functor factors as $\Fun^\clKop((-)^\op, \MM)\colon \CatK^\op \to \PrR$.

\begin{defn}\label{psh-fun-def}
	We define the functor $\PShKM\colon \CatK \to \Mod_\MM$ by passing to the left adjoints in $\Fun^\clKop((-)^\op, \MM)\colon \CatK^\op \to \PrR$, that is the functor sending $\CC_0$ to $\PShKM(\CC_0)$ and $f\colon \CC_0 \to \DD_0$ to $f_!\colon \PShKM(\CC_0) \to \PShKM(\DD_0)$.
\end{defn}

\begin{prop}\label{pshm-il}
	The functor $\PShKM\colon \CatK \to \Mod_\MM$ lands in $\Mod_\MM^{\intL}$.
\end{prop}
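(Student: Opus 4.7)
The plan is to unfold the definition of internally left adjoint (\cref{il-def}) and observe that the required properties have essentially already been established in the preceding lemmas. Concretely, for a morphism $f\colon \CC_0 \to \DD_0$ in $\CatK$, I need to show that the morphism $\PShKM(f) = f_!\colon \PShKM(\CC_0) \to \PShKM(\DD_0)$ is a left adjoint whose right adjoint is itself a left adjoint.

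By the construction in \cref{psh-fun-def}, the functor $f_!$ is precisely the left adjoint of $f^*\colon \PShKM(\DD_0) \to \PShKM(\CC_0)$, so being a left adjoint is automatic. It remains to verify that $f^*$ is itself a left adjoint. But this is exactly the content of \cref{f*-adjoint}, which states that $f^*$ preserves all limits and colimits between presentable categories and hence admits a right adjoint $f_*$. Therefore, $f^*$ is a left adjoint, making $f_!$ internally left adjoint as required.

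The only subtlety worth spelling out is that the categories $\PShKM(\CC_0)$ and $\PShKM(\DD_0)$ are presentable and in the mode $\MM$ (so that morphisms and adjunctions are taken within $\Mod_\MM \subseteq \PrL$), which is guaranteed by \cref{pshkm-mod-m}. Once that is noted, there is no obstacle; the proof is essentially a one-line invocation of \cref{f*-adjoint} together with the definitions.
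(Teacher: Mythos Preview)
Your proposal is correct and follows exactly the paper's approach: the paper's proof is the single sentence ``\cref{f*-adjoint} shows that $f_!\colon \PShKM(\CC_0) \to \PShKM(\DD_0)$ is internally left adjoint,'' which is precisely what you spell out in slightly more detail.
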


\begin{proof}
	\cref{f*-adjoint} shows that $f_!\colon \PShKM(\CC_0) \to \PShKM(\DD_0)$ is internally left adjoint.
\end{proof}

\begin{prop}\label{sh-natural}
	There is a natural transformation $L_\clK\colon \PShM \Rightarrow \PShKM$ of functors $\CatK \to \Mod_\MM$, making the construction of \cref{L-K-def} natural.
\end{prop}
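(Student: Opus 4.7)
The plan is to construct $L_\clK$ as the mate of the obvious inclusion $\iota\colon \PShKM \hookrightarrow \PShM$, which is manifestly natural as a transformation of functors going in the opposite direction.

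First, I would assemble the level-wise inclusions $\iota_{\CC_0}\colon \PShKM(\CC_0) \hookrightarrow \PShM(\CC_0)$ into a natural transformation $\iota\colon \Fun^\clK((-)^\op, \MM) \Rightarrow \Fun((-)^\op, \MM)$ of functors $\CatK^\op \to \PrR$, where both functors send a morphism $f\colon \CC_0 \to \DD_0$ to $f^*$. The naturality square for such an $f$ reads
$$
\iota_{\CC_0} \circ f^* = f^* \circ \iota_{\DD_0}\colon \PShKM(\DD_0) \to \PShM(\CC_0),
$$
which commutes tautologically: by \cref{f*-psh-k}, the restriction $f^*$ maps $\PShKM(\DD_0)$ into $\PShKM(\CC_0)$, so both compositions spell out the same restriction functor post-composed with the inclusion.

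Second, I would pass to left adjoints. Each component $\iota_{\CC_0}$ has a left adjoint $L_\clK$ by \cref{L-K-def}. The standard duality between $\PrL$ and $(\PrR)^\op$ converts $\iota$ into a natural transformation $L_\clK\colon \PShM \Rightarrow \PShKM$ of functors $\CatK \to \PrL$ in the opposite direction, with naturality square $L_\clK \circ f_! \simeq f_! \circ L_\clK$ obtained as the mate of the one for $\iota$ and thus commuting automatically.

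Finally, I would lift the natural transformation to $\Mod_\MM$. By \cref{abs-m}, tensoring with any $m \in \MM$ preserves $\clK$-limits, so the subcategory $\PShKM(\CC_0) \subseteq \PShM(\CC_0)$ is closed under the $\MM$-action; consequently $\iota$ is $\MM$-linear, and so is its left adjoint $L_\clK$. The main point to justify carefully is the passage from a natural transformation in $\PrR$ to its mate in $\PrL$ and the compatibility with the $\MM$-module structure, both of which are formal consequences of standard adjoint-functor duality that I would invoke rather than reprove.
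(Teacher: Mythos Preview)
Your approach is essentially identical to the paper's: both assemble the inclusions $\PShKM(\CC_0) \hookrightarrow \PShM(\CC_0)$ into a natural transformation of functors $\CatK^\op \to \PrR$ and then pass to left adjoints; the paper cites \cite[Corollary 4.7.4.18 (3)]{HA} for this passage, which is precisely the mate construction you invoke. Your third step is unnecessary, since $\MM$ is an idempotent algebra in $\PrL$ and hence $\Mod_\MM \hookrightarrow \PrL$ is fully faithful, so any $\PrL$-natural transformation between functors landing in $\Mod_\MM$ is automatically $\Mod_\MM$-natural.
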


\begin{proof}
	Since $\Fun^\clKop((-)^\op, \MM)\colon \CatK^\op \to \PrR$ is a subfunctor of $\Fun((-)^\op, \MM)\colon \CatK^\op \to \PrR$, there is a natural transformation from the former to the latter given by the inclusion.
	Applying \cite[Corollary 4.7.4.18 (3)]{HA} for $S = \CatK^\op$ (and considering our functors as landing in $\CAT$) shows that by passing to the left adjoints in the target and in the natural transformation, we obtain a natural transformation $L_\clK\colon \PShM \Rightarrow \PShKM$.
	Indeed, $L_\clK$ was defined in \cref{L-K-def} as the left adjoint of the inclusion.
\end{proof}

\begin{defn}
	We define the \tdef{Yoneda map} $\mdef{\yonKM}\colon \CC_0 \to \PShKM(\CC_0)$ as the composition
	$$
		\CC_0
		\xrightarrow{\yon} \PSh(\CC_0)
		\to \PSh(\CC_0) \otimes \MM
		\cong \PShM(\CC_0)
		\xrightarrow{L_\clK} \PShKM(\CC_0)
		,
	$$
	where $\yon$ is the ordinary Yoneda embedding, and the second map is given by tensoring with the unit map $\Spaces \to \MM$.
\end{defn}

\begin{remark}
	Generally, the Yoneda map $\yonKM\colon \CC_0 \to \PShKM(\CC_0)$ is \emph{not} fully faithful.
	For example, in the case where $\clK = \emptyset$ and $\CC_0 = *$, the map is $\yonKM\colon * \to \MM$, which induces $\hom(*, *) \to \hom(\unit_\MM, \unit_\MM)$ that is usually not an equivalence.
\end{remark}

\begin{prop}\label{yoneda-natural}
	The Yoneda map can be upgraded to a natural transformation $\yonKM\colon \iota_{\clK} \Rightarrow \PShKM$ from the inclusion $\iota_{\clK}\colon \CatK \to \Cat \to \CAT$ to $\CatK \xrightarrow{\PShKM} \PrL \to \CAT$.
\end{prop}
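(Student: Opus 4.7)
The plan is to display $\yonKM$ as a vertical composite of three natural transformations, one for each stage in its definition
$$
\CC_0 \xrightarrow{\yon} \PSh(\CC_0) \longrightarrow \PSh(\CC_0) \otimes \MM = \PShM(\CC_0) \xrightarrow{L_\clK} \PShKM(\CC_0).
$$
Each stage will be shown to be natural in $\CC_0 \in \CatK$ as a transformation between suitable functors into $\CAT$, and their composite recovers $\yonKM$ pointwise by construction.

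For the first stage, the classical Yoneda embedding is a natural transformation $\yon\colon \id_{\Cat} \Rightarrow \PSh$ of functors $\Cat \to \CAT$ (see \cite{HTT}); precomposing with $\iota_\clK\colon \CatK \to \Cat$ provides the first natural transformation. For the second stage, the unit map $\Spaces \to \MM$ of the mode is a morphism in $\CAlg(\PrL)$, and tensoring with it provides a natural transformation of endofunctors $\id_{\PrL} \Rightarrow (- \otimes \MM)$; whiskering with $\PSh\colon \CatK \to \PrL$ yields a natural transformation $\PSh \Rightarrow \PShM$ of functors $\CatK \to \PrL$. For the third stage, \cref{sh-natural} already supplies the natural transformation $L_\clK\colon \PShM \Rightarrow \PShKM$ of functors $\CatK \to \Mod_\MM$.

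Post-composing the last two stages with the forgetful functors $\PrL \to \CAT$ and $\Mod_\MM \to \CAT$ places all three transformations between functors $\CatK \to \CAT$, so they may be composed vertically. The resulting natural transformation $\iota_\clK \Rightarrow (\CatK \xrightarrow{\PShKM} \PrL \to \CAT)$ has value at $\CC_0$ given, by construction, precisely by the composite defining $\yonKM$. There is no essential obstacle: once each stage is identified as a named natural transformation in the appropriate functor $\infty$-category, the only bookkeeping is to track the forgetful functors used to make the three transformations composable, which is formal.
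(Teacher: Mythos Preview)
Your approach is essentially the same as the paper's: both decompose $\yonKM$ into the same three stages (ordinary Yoneda, tensoring with the unit $\Spaces \to \MM$, then $L_\clK$) and assemble them as a vertical composite of natural transformations landing in $\CAT$.

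The one substantive correction concerns your first stage. You cite \cite{HTT} for the naturality of the Yoneda embedding $\yon\colon \id_{\Cat} \Rightarrow \PSh$, but this is not established there: Lurie constructs $\yon$ for each category individually, not as a natural transformation of functors $\Cat \to \CAT$. In the $\infty$-categorical setting this naturality is a genuinely non-trivial result, and the paper invokes \cite[Theorem 6.1]{Ortho} (Hebestreit--Linskens--Nuiten) for exactly this input. Your argument is otherwise correct, but you should replace the \cite{HTT} reference accordingly; without it, the first stage is an assertion rather than a citation of an established fact.
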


\begin{proof}
	The natural transformation is obtained by the following diagram:
	
	% https://q.uiver.app/?q=WzAsOSxbMCwxLCJcXENhdEsiXSxbMSwxLCJcXENhdCJdLFsyLDEsIlxcUHJMIl0sWzIsM10sWzUsMSwiXFxDQVQiXSxbNCwxLCJcXFByTCJdLFszLDBdLFsyLDJdLFszLDFdLFswLDEsIiIsMix7InN0eWxlIjp7InRhaWwiOnsibmFtZSI6Imhvb2siLCJzaWRlIjoidG9wIn19fV0sWzEsMiwiXFxQU2giXSxbMCw1LCJcXFBTaEtNIiwyLHsiY3VydmUiOjV9XSxbMiw1LCItIFxcb3RpbWVzIFxcTU0iLDIseyJjdXJ2ZSI6Mn1dLFs1LDQsIiIsMCx7InN0eWxlIjp7InRhaWwiOnsibmFtZSI6Imhvb2siLCJzaWRlIjoidG9wIn19fV0sWzIsNywiTF9cXGNsSyIsMix7Im9mZnNldCI6NCwibGV2ZWwiOjJ9XSxbNiw4LCJcXHlvbiIsMix7ImxhYmVsX3Bvc2l0aW9uIjozMCwic2hvcnRlbiI6eyJzb3VyY2UiOjIwLCJ0YXJnZXQiOjYwfSwibGV2ZWwiOjJ9XSxbMiw1LCIiLDAseyJjdXJ2ZSI6LTIsInN0eWxlIjp7ImJvZHkiOnsibmFtZSI6ImRhc2hlZCJ9fX1dLFsxLDQsIlxcaW90YSIsMCx7ImN1cnZlIjotNX1dLFsxNiwxMiwidSIsMix7InNob3J0ZW4iOnsic291cmNlIjoxMCwidGFyZ2V0IjoxMH19XV0=
	\[\begin{tikzcd}
		&&& {} \\
		\CatK & \Cat & \PrL & {} & \PrL & \CAT \\
		&& {} \\
		&& {}
		\arrow[hook, from=2-1, to=2-2]
		\arrow["\PSh", from=2-2, to=2-3]
		\arrow["\PShKM"', curve={height=40pt}, from=2-1, to=2-5]
		\arrow[""{name=0, anchor=center, inner sep=0}, "{- \otimes \MM}"', curve={height=10pt}, from=2-3, to=2-5]
		\arrow[hook, from=2-5, to=2-6]
		\arrow["{L_\clK}"', shift right=4, Rightarrow, from=2-3, to=3-3]
		\arrow["\yon"'{pos=0.3}, shorten <=2pt, shorten >=12pt, Rightarrow, from=1-4, to=2-4]
		\arrow[""{name=1, anchor=center, inner sep=0}, curve={height=-10pt}, equals, from=2-3, to=2-5]
		\arrow["\iota", curve={height=-40pt}, from=2-2, to=2-6]
		\arrow["u"', shorten <=2pt, shorten >=2pt, Rightarrow, from=1, to=0]
	\end{tikzcd}\]

	Here $\yon\colon \iota \Rightarrow \PSh$ is the ordinary Yoneda natural transformation constructed in \cite[Theorem 8.1]{yon},
	the natural transformation $u\colon \id \Rightarrow - \otimes \MM$ is the unit map of the free-forgetful adjunction $- \otimes \MM\colon \PrL \rightleftarrows \Mod_\MM\colon \underlying$ given by tensoring with $\Spaces \to \MM$,
	and $L_\clK\colon \PShM \Rightarrow \PShKM$ is the natural transformation of \cref{sh-natural}.
\end{proof}

\begin{prop}
	For $X \in \CC_0$ and $F \in \PShKM(\CC_0)$ we have $\hom^\MM(\yonKM(X), F) \cong F(X)$.
\end{prop}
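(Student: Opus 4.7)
The plan is to reduce to the $\MM$-enriched Yoneda lemma in the free presheaf category $\PShM(\CC_0)$ by using the localization adjunction $L_\clK \dashv \iota$, and then to extract the latter from the classical Yoneda lemma via the free-forgetful adjunction $- \otimes \MM \dashv U$.

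First I would check that the adjunction $L_\clK \dashv \iota$, where $\iota\colon \PShKM(\CC_0) \hookrightarrow \PShM(\CC_0)$, lives in $\Mod_\MM$. The inclusion is $\MM$-linear: using \cref{pshkm-mod-m} it can be identified with $\iota_{\PShK} \otimes \id_\MM$, or one can check directly using \cref{abs-m} that the level-wise $\MM$-action preserves the $\clK$-limit-preservation property. Writing $\yonM(X) \in \PShM(\CC_0)$ for the intermediate step in the definition of the Yoneda map, so that $\yonKM(X) = L_\clK(\yonM(X))$, \cref{M-adj} gives
$$\hom^\MM_{\PShKM}(\yonKM(X), F) = \hom^\MM_{\PShM}(\yonM(X), \iota F),$$
and since $\iota F$ has the same underlying functor as $F$, it remains to prove $\hom^\MM_{\PShM(\CC_0)}(\yonM(X), G) = G(X)$ for every $G \in \PShM(\CC_0) = \Fun(\CC_0^\op, \MM)$.

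Next, consider the evaluation functor $\ev_X\colon \PShM(\CC_0) \to \MM$. Both the $\MM$-action and colimits in $\Fun(\CC_0^\op, \MM)$ are computed level-wise, so $\ev_X$ is $\MM$-linear and colimit-preserving, hence a morphism in $\Mod_\MM$, with a left adjoint $\ev_X^L\colon \MM \to \PShM(\CC_0)$. The key identification is $\ev_X^L(\unit_\MM) = \yonM(X)$: via the equivalence $\PShM(\CC_0) = \PSh(\CC_0) \otimes \MM$, the functor $\ev_X$ is obtained from the classical evaluation $\ev_X^\Spaces\colon \PSh(\CC_0) \to \Spaces$ by tensoring with $\MM$, whose left adjoint sends $*$ to $\yon(X)$ by the ordinary Yoneda lemma; tensoring up then sends $\unit_\MM$ to $\yon(X) \otimes \unit_\MM$, which is precisely $\yonM(X)$ by its definition.

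Finally, a second application of \cref{M-adj}, now to $\ev_X^L \dashv \ev_X$, yields
$$\hom^\MM_{\PShM}(\yonM(X), G) = \hom^\MM_{\PShM}(\ev_X^L(\unit_\MM), G) = \hom^\MM_\MM(\unit_\MM, \ev_X G) = G(X),$$
where the last equality uses that $\hom^\MM_\MM(\unit_\MM, -) = \id_\MM$ since $\unit_\MM$ is the monoidal unit. The only nontrivial step is the identification $\ev_X^L(\unit_\MM) = \yonM(X)$, which amounts to tracking how the classical Yoneda embedding transports under the free-forgetful adjunction $- \otimes \MM \dashv U$ between $\PrL$ and $\Mod_\MM$; once this compatibility is in place the remaining steps are direct applications of \cref{M-adj}.
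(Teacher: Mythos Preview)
Your proof is correct and follows essentially the same strategy as the paper: reduce to $\clK = \emptyset$ via the adjunction $L_\clK \dashv \iota$ and \cref{M-adj}, then deduce the $\MM$-enriched Yoneda lemma in $\PShM(\CC_0)$ from the classical one via the free-forgetful adjunction $\PSh(\CC_0) \to \PSh(\CC_0) \otimes \MM$. The only difference is in packaging: the paper carries out the second step by a direct Yoneda computation in $\MM$ (checking $\hom_\MM(m, \hom^\MM(\yonM(X), F)) = \hom_\MM(m, F(X))$ for all $m$), whereas you recognize the evaluation $\ev_X$ as a right adjoint with $\ev_X^L(\unit_\MM) = \yonM(X)$ and invoke \cref{M-adj} a second time; the underlying ingredients are identical.
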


\begin{proof}
	We first reduce to the case where $\clK = \emptyset$.
	Since $L_\clK\colon \PShM(\CC_0) \to \PShKM(\CC_0)$ is the left adjoint of the inclusion, using \cref{M-adj} we get
	$$
	\hom^\MM(\yonKM(X), F)
	\cong \hom^\MM(L_\clK \yonM(X), F)
	\cong \hom^\MM(\yonM(X), F)
	.
	$$
	We finish the proof by showing that the latter is equivalent to $F(X)$, using the Yoneda lemma in the category $\MM$.
	Indeed, let $m \in \MM$ be any object, then
	\begin{align*}
		\hom(m, \hom^\MM(\yonM(X), F))
		&\cong \hom(m \otimes \yonM(X), F)\\
		&\cong \hom(\yonM(X), \hom^\MM(m, F))\\
		&\cong \hom(\yon(X), \hom(m, F))\\
		&\cong \hom(m, F)(X)\\
		&\cong \hom(m, F(X))
		,
	\end{align*}
	where the first and second step use the exponential adjunction, the third uses the free-forgetful adjunction $\CC \to \CC \otimes \MM$, the fourth uses the ordinary Yoneda lemma for $\CC_0$ and the last step uses that the action of $\MM$ is level-wise.
\end{proof}

\begin{cor}\label{yoneda-atomic}
	The Yoneda map $\yonKM\colon \CC_0 \to \PShKM(\CC_0)$ lands in the atomics.
\end{cor}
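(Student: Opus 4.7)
The plan is to apply the preceding proposition, which identifies $\hom^\MM(\yonKM(X), -)$ with the evaluation functor at $X$, and then observe that evaluation commutes with colimits.

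More precisely, fix $X \in \CC_0$. By the proposition just before the corollary, we have a natural equivalence
$$
\hom^\MM(\yonKM(X), F) = F(X)
\qquad \text{for } F \in \PShKM(\CC_0),
$$
i.e.\ the functor $\hom^\MM(\yonKM(X), -)\colon \PShKM(\CC_0) \to \MM$ agrees with the evaluation functor $\mathrm{ev}_X\colon F \mapsto F(X)$. So it suffices to show that $\mathrm{ev}_X$ preserves colimits.

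For this I would invoke \cref{psh-k-closure}: the inclusion $\PShKM(\CC_0) \subseteq \PShM(\CC_0)$ is closed under colimits, so a colimit $\colim_\JJ F_j$ in $\PShKM(\CC_0)$ coincides with the colimit computed in $\PShM(\CC_0) = \Fun(\CC_0^\op, \MM)$. Colimits in the latter functor category are computed pointwise, hence
$$
(\colim_\JJ F_j)(X) = \colim_\JJ F_j(X),
$$
which is exactly the statement that $\mathrm{ev}_X$ commutes with colimits. Combining this with the identification above shows that $\hom^\MM(\yonKM(X), -)$ commutes with colimits, so $\yonKM(X) \in \PShKM(\CC_0)^{\atomic}$, as desired.

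There is no real obstacle here; the content has already been concentrated in the preceding proposition (which identifies the enriched hom out of $\yonKM(X)$ with evaluation) and in \cref{psh-k-closure} (which ensures colimits in $\PShKM$ are level-wise). The corollary is really just reading off atomicity from these two facts.
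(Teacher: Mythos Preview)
Your proof is correct and matches the paper's own argument essentially verbatim: both use the preceding proposition to identify $\hom^\MM(\yonKM(X),-)$ with evaluation at $X$, then invoke \cref{psh-k-closure} to conclude that colimits in $\PShKM(\CC_0)$ are computed level-wise, so evaluation preserves them. The paper compresses this into a single sentence, but the content is identical.
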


\begin{proof}
	By \cref{psh-k-closure}, $\PShKM(\CC_0)$ is closed under colimits and limits in $\PShM(\CC_0)$,
	which are therefore computed level-wise,
	so that $\hom^\MM(\yonKM(X), F) \cong F(X)$ commutes with all colimits in the $F$-coordinate.
\end{proof}

We use the same notation $\yonKM\colon \CC_0 \to \PShKM(\CC_0)^{\atomic}$ to denote the factorization.
Recall from \cref{yoneda-natural} that the Yoneda map gives a natural transformation $\yonKM\colon \iota_{\clK} \Rightarrow \PShKM$ of functors $\CatK \to \CAT$.
Since taking the atomics lands in $\CatK$ by \cref{atomic-abs}, together with \cref{yoneda-atomic}, we obtain a natural transformation $\yonKM\colon \id \Rightarrow \PShM(-)^{\atomic}$ of functors $\CatK \to \CatK$.

\begin{prop}\label{psh-molecular}
	For any $\CC_0 \in \CatK$ the category $\PShKM(\CC_0)$ is molecular, with atomic generators $\yonKM(X)$ for $X \in \CC_0$.
\end{prop}

\begin{proof}
	We first show the result for $\PShM(\CC_0)$, i.e.\ for the case $\clK=\emptyset$.
	Recall that $\PShM(\CC_0) \cong \PSh(\CC_0) \otimes \MM$ is generated under colimits from the image of $\PSh(\CC_0) \times \MM$, i.e.\ from objects of the form $F \otimes m$.
	Second, $\PSh(\CC_0)$ is generated under colimits from objects of the form $\yon(X)$ for $X \in \CC$.
	Therefore, $\PShM(\CC_0)$ is generated under colimits and the action of $\MM$ from objects of the form $\yonM(X)$ for $X \in \CC$, which are indeed atomic by \cref{yoneda-atomic}.

	For the general case, recall that $L_\clK\colon \PShM(\CC_0) \to \PShKM(\CC_0)$ is an internally left adjoint functor so it sends atomic objects to atomic objects by \cref{internally-left-atomic-pres}, thus $\yonKM(X)$ is atomic for any $X \in \CC$.
	Since it preserves colimits and the action of $\MM$, and $\yonM(X)$ generate $\PShM(\CC_0)$ under these operations, their images $\yonKM(X)$ generate the essential image of $L_\clK$ under these operations.
	In addition, $L_\clK$ is essentially surjective, so that $\yonKM(X)$ are atomic generators of $\PShKM(\CC_0)$ as needed.
\end{proof}

\begin{prop}\label{atomic-psh-adj}
	There is an adjunction
	$$
	\PShKM\colon \CatK \rightleftarrows \Mod_\MM^{\intL}\colon (-)^{\atomic}
	$$
	with unit
	$\yonKM\colon \id \Rightarrow \PShKM(-)^{\atomic}$.
\end{prop}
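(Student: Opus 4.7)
The plan is to verify, for every $\CC_0 \in \CatK$ and every $\DD \in \Mod_\MM^{\intL}$, that precomposition with $\yonKM\colon \CC_0 \to \PShKM(\CC_0)^{\atomic}$ yields a natural equivalence of mapping spaces
$$
\FunintL_{\Mod_\MM}(\PShKM(\CC_0), \DD) \xrightarrow{\sim} \Fun_\clK(\CC_0, \DD^{\atomic}).
$$
Since $\yonKM$ has already been assembled into a natural transformation $\id \Rightarrow \PShKM(-)^{\atomic}$ (by \cref{yoneda-natural} combined with \cref{yoneda-atomic}), this pointwise hom-equivalence will promote formally to the claimed adjunction, with $\yonKM$ as unit.

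First I would strip away both subcategory conditions. Using $\PShKM(\CC_0) = \PShK(\CC_0) \otimes \MM$ from \cref{pshkm-mod-m}, the free-forgetful adjunction $-\otimes\MM\colon \PrL \leftrightarrows \Mod_\MM$, and the universal property of $\PShK$ from \cite[Corollary 5.3.6.10]{HTT}, I obtain the chain
$$
\FunL_{\Mod_\MM}(\PShKM(\CC_0), \DD) \simeq \FunL(\PShK(\CC_0), \DD) \simeq \Fun_\clK(\CC_0, \DD),
$$
under which a module left adjoint $F$ is sent to the $\clK$-preserving functor $F \circ \yonKM\colon \CC_0 \to \DD$.

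Next I would match the subcategory conditions on the two sides under this identification. On the right, $\DD^{\atomic}$ is closed under $\clK$-colimits by \cref{atomic-abs}, so $\Fun_\clK(\CC_0, \DD^{\atomic}) \subseteq \Fun_\clK(\CC_0, \DD)$ is exactly the full subspace of functors landing in the atomics. On the left, the hypothesis that $\MM$ is molecular lets \cref{psh-molecular} supply $\PShKM(\CC_0)$ with atomic generators of the form $\yonKM(X) \otimes m$, $X \in \CC_0$, $m \in \MM^{\atomic}$. Since such an $F$ is $\MM$-linear, $F(\yonKM(X) \otimes m) \simeq F(\yonKM(X)) \otimes m$, which by \cref{atomic-tensor} is atomic whenever $F(\yonKM(X))$ is. Hence by \cref{atomic-pres-il}, $F$ is internally left adjoint iff $F \circ \yonKM$ factors through $\DD^{\atomic}$; the converse direction (internally left adjoint implies the factorization) follows from \cref{internally-left-atomic-pres} together with \cref{yoneda-atomic}. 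The two subspaces therefore correspond, yielding the displayed equivalence, and tracing $\id_{\PShKM(\CC_0)}$ through it recovers $\yonKM$ as the unit.

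The main obstacle is packaging this pointwise identification into a genuine natural adjunction between the functors $\PShKM\colon \CatK \to \Mod_\MM^{\intL}$ and $(-)^{\atomic}\colon \Mod_\MM^{\intL} \to \CatK$. Naturality in $\DD$ is inherited from that of the free-forgetful adjunction and the universal property of $\PShK$; naturality in $\CC_0$ is inherited from the functoriality of $\PShKM$ established in \cref{pshm-il} and of $(-)^{\atomic}$ from \cref{atomic-def-fun}. Once both naturalities are in hand, the adjunction follows formally with unit the already-natural $\yonKM$ from \cref{yoneda-natural}.
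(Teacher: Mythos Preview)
Your proof is correct and follows essentially the same route as the paper: reduce to the ambient equivalence $\FunL(\PShKM(\CC_0),\DD)\simeq\Fun_\clK(\CC_0,\DD)$ via \cref{pshkm-mod-m} and the universal property of $\PShK$, then match the full subcategories on either side using \cref{psh-molecular}, \cref{atomic-tensor}, \cref{atomic-pres-il}, and \cref{internally-left-atomic-pres}. The only cosmetic difference is that the paper phrases the matching step as ``fully faithful plus essentially surjective'' and leaves the passage from the pointwise hom-equivalence to the adjunction implicit, whereas you spell out the naturality packaging at the end.
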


\begin{proof}
	To check that the data in the theorem supports an adjunction, it suffices to check that for any $\CC_0 \in \CatK$ and $\DD \in \Mod_\MM^{\intL}$, the canonical map
	\begin{equation}\label{pmk-at-check}
		\FunintL(\PShKM(\CC_0), \DD)
		\to \Fun_\clK(\PShKM(\CC_0)^{\atomic}, \DD^{\atomic})
		\xrightarrow{- \circ \yonKM} \Fun_\clK(\CC_0, \DD^{\atomic})
	\end{equation}
	is an equivalence (in fact, it suffices to show this for the hom spaces, rather then the functor categories, but we show that the stronger statement holds).
	Note that
	\begin{equation}\label{pmk-at-lift}
		\FunL(\PShKM(\CC_0), \DD)
		\cong \FunL(\PShK(\CC_0), \DD)
		\cong \Fun_\clK(\CC_0, \DD)
		.
	\end{equation}
	Furthermore, both the first and last categories in \cref{pmk-at-check} are full subcategories of the first and last categories in \cref{pmk-at-lift}, showing that the composition in \cref{pmk-at-check} is also fully faithful.

	To finish the argument, we need to show that \cref{pmk-at-check} is essentially surjective.
	To that end, let $F\colon \CC_0 \to \DD^{\atomic}$ be a functor preserving $\II^\op$-shaped colimits for $\II^\op \in \clK$.
	We can post-compose it with the inclusion $\DD^{\atomic} \to \DD$, and using \cref{pmk-at-lift} we get a left adjoint functor $\tilde{F}\colon \PShKM(\CC_0) \to \DD$, and we need to show that it is in fact internally left adjoint.
	By construction, for any $X \in \CC_0$ we have that $\tilde{F}(\yonKM(X)) \cong F(X) \in \DD^{\atomic}$ is atomic.
	\cref{psh-molecular} shows that these are atomic generators for $\PShKM(\CC_0)$, so \cref{atomic-pres-il} shows that $\tilde{F}$ is indeed internally left adjoint.
\end{proof}

\subsection{Tensor Product of Atomics}\label{subsec-at-tensor}

\begin{prop}\label{il-sm}
	The symmetric monoidal structure on $\Mod_\MM$ restricts to a symmetric monoidal structure on the subcategory $\Mod_\MM^{\intL}$.
\end{prop}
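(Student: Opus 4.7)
The plan is to reduce to showing that the class of internally left adjoint morphisms is closed under the tensor product in $\Mod_\MM$. Since $\Mod_\MM^{\intL}$ is by definition a wide subcategory of $\Mod_\MM$, and since equivalences (in particular the coherence morphisms of the symmetric monoidal structure on $\Mod_\MM$) are trivially internally left adjoint, the symmetric monoidal structure restricts once we verify that identity morphisms are internally left adjoint (clear, as $\id \dashv \id$) and that the tensor product $F_1 \otimes F_2 \colon \CC_1 \otimes \CC_2 \to \DD_1 \otimes \DD_2$ of internally left adjoint morphisms $F_i \colon \CC_i \to \DD_i$ in $\Mod_\MM$ is itself internally left adjoint.

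For the substantive closure under $\otimes$, let $G_i$ denote the right adjoint of $F_i$, which by hypothesis is a left adjoint and in particular preserves colimits. The first step is to promote $G_i$ to a morphism in $\Mod_\MM$: the $\MM$-linear structure on $F_i$ produces via mate calculus a lax $\MM$-linear structure on $G_i$, and this lax structure is strong because $G_i$ preserves colimits and hence the $\MM$-action (which is itself assembled from colimits of tensors with the unit via the module structure maps). Thus $G_i$ is a morphism in $\Mod_\MM$, and we can form $G_1 \otimes G_2 \colon \DD_1 \otimes \DD_2 \to \CC_1 \otimes \CC_2$ inside $\Mod_\MM$.

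The second step is to produce an adjunction $F_1 \otimes F_2 \dashv G_1 \otimes G_2$ in $\Mod_\MM$. Tensoring the units $\id \to G_i F_i$ and counits $F_i G_i \to \id$ supplies candidate unit and counit for the tensored pair, and the triangle identities follow from the functoriality of $- \otimes -$ on morphisms applied to the triangle identities for the $F_i \dashv G_i$. Since $G_1 \otimes G_2$ is a morphism in $\Mod_\MM \subseteq \PrL$ it preserves colimits, so it is itself a left adjoint and $F_1 \otimes F_2$ is internally left adjoint, as required.

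The main obstacle is making the ``tensor product of adjunctions is an adjunction'' step $\infty$-categorically rigorous. The cleanest route is to invoke the $(\infty,2)$-categorical enhancement of $\Mod_\MM$ and the fact that its tensor product is a functor of $(\infty,2)$-categories, so that it preserves adjoint pairs. Alternatively, one can verify the universal property of $G_1 \otimes G_2$ directly, using the defining exponential law for the relative tensor product in $\Mod_\MM$ together with the adjunctions $F_i \dashv G_i$ applied in each coordinate; this gives a natural equivalence of mapping spaces $\hom(F_1 \otimes F_2)(-), -) \simeq \hom(-, (G_1 \otimes G_2)(-))$ which witnesses the adjunction.
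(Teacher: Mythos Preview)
Your approach is essentially identical to the paper's: both reduce to showing that the tensor of internally left adjoint morphisms is internally left adjoint, form $G_1 \otimes G_2$ from the (colimit-preserving) right adjoints, and verify that the tensored unit and counit exhibit the adjunction $F_1 \otimes F_2 \dashv G_1 \otimes G_2$.

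Two minor remarks. First, your ``first step'' is unnecessary: since $\MM$ is a \emph{mode}, the forgetful functor $\Mod_\MM \to \PrL$ is fully faithful, so any colimit-preserving functor between $\MM$-modules is automatically $\MM$-linear. There is no need to invoke mate calculus to upgrade the lax $\MM$-linear structure on $G_i$ to a strong one; the paper silently uses this. Second, your concern about the $\infty$-categorical rigor of the ``tensor of adjunctions is an adjunction'' step is legitimate, and the paper simply declares it ``straightforward to check''. Either of your suggested justifications (the $(\infty,2)$-categorical one, or reducing to $F_i \otimes \id$ and checking the universal property on mapping spaces) is adequate, and this level of detail is standard in the literature.
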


\begin{proof}
	Since $\Mod_\MM^{\intL}$ is a wide subcategory of $\Mod_\MM$, all we need to show is that if $L_i\colon \CC_i \to \DD_i, i=1,2$ are in $\Mod_\MM^{\intL}$, then so is $L_1 \otimes L_2\colon \CC_1 \otimes \CC_2 \to \DD_1 \otimes \DD_2$.
	Let $R_i$ be the right adjoints of $L_i$, which by assumption are themselves left adjoints.
	Because they are left adjoints, we can tensor them to obtain another left adjoint functor $R_1 \otimes R_2\colon \DD_1 \otimes \DD_2 \to \CC_1 \otimes \CC_2$.
	It is then straightforward to check that tensoring the unit and counit of $L_i \dashv R_i$ exhibit an adjunction $L_1 \otimes L_2 \dashv R_1 \otimes R_2$, showing that $L_1 \otimes L_2$ is an internally left adjoint functor.
\end{proof}

We recall that the category $\CatK$ has a symmetric monoidal structure, developed in \cite[\S 4.8.1]{HA}.

\begin{cor}\label{psh-sm}
	The functor $\PShKM\colon \CatK \to \Mod_\MM^{\intL}$ of \cref{pshm-il} is symmetric monoidal.
\end{cor}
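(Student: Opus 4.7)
The plan is to factor $\PShKM$ through $\PrL$ and leverage known symmetric monoidality results. By Lurie's universal property of the $\clK$-presheaf construction (see \cite[\S 4.8.1]{HA}), the functor $\PShK \colon \CatK \to \PrL$ is symmetric monoidal with respect to the tensor product on $\CatK$ and the Lurie tensor product on $\PrL$. Since $\MM \in \CAlg(\PrL)$, base change along the unit $\Spaces \to \MM$ yields a symmetric monoidal left adjoint $- \otimes \MM \colon \PrL \to \Mod_\MM$. The composition $\CC_0 \mapsto \PShK(\CC_0) \otimes \MM$ is then a symmetric monoidal functor $\CatK \to \Mod_\MM$, and \cref{pshkm-mod-m} identifies it pointwise with $\PShKM$.

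To deduce symmetric monoidality of $\PShKM$ itself, I would upgrade this pointwise identification to an equivalence of symmetric monoidal functors. This amounts to tracing the chain of equivalences used in the proof of \cref{pshkm-mod-m} and observing that each is natural as a symmetric monoidal equivalence: the universal property of $\PShK$, the identification $\FunL(\CC, \DD) \simeq \CC^\op \otimes \DD$ from \cite[Proposition 4.8.1.17]{HA}, and passage to opposites are all compatible with the tensor products in question.

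Finally, \cref{pshm-il} shows that the image of $\PShKM$ lands in the wide subcategory $\Mod_\MM^{\intL} \subseteq \Mod_\MM$. Since the preceding proposition defines the symmetric monoidal structure on $\Mod_\MM^{\intL}$ as the restriction of the one on $\Mod_\MM$, the factored functor $\PShKM \colon \CatK \to \Mod_\MM^{\intL}$ inherits the symmetric monoidal structure. The main obstacle is the middle step: coherently upgrading the pointwise equivalence $\PShKM(\CC_0) \simeq \PShK(\CC_0) \otimes \MM$ to an equivalence of symmetric monoidal functors. While each constituent identification is forced by a universal property and so ought to carry its symmetric monoidal structure canonically, assembling these coherences is the only non-formal piece of the argument.
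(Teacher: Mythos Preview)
Your approach matches the paper's exactly: reduce to showing symmetric monoidality as a functor into $\Mod_\MM$ (since the structure on $\Mod_\MM^{\intL}$ is restricted from there), then identify $\PShKM$ with the composite $(- \otimes \MM) \circ \PShK$ via \cref{pshkm-mod-m} and invoke \cite[Remark 4.8.1.8]{HA} for the symmetric monoidality of $\PShK$. The paper does not elaborate on the coherence upgrade you flag as the ``only non-formal piece''; it simply treats the pointwise equivalence of \cref{pshkm-mod-m} as an identification of functors.
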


\begin{proof}
	Since the symmetric monoidal structure on $\Mod_\MM^{\intL}$ is inherited from $\Mod_\MM$, it suffices to show that $\PShKM\colon \CatK \to \Mod_\MM$ is symmetric monoidal.
	Indeed, \cite[Remark 4.8.1.8]{HA} shows that $\PShK\colon \CatK \to \PrL$ is symmetric monoidal, $- \otimes \MM\colon \PrL \to \Mod_\MM$ is symmetric monoidal, and by \cref{pshkm-mod-m}, $\PShK \otimes \MM \cong \PShKM$.
\end{proof}

Applying \cite[Corollary 7.3.2.7]{HA}, we immediately get:

\begin{thm}\label{atomic-psh-sm-adj}
	The adjunction
	$\PShKM\colon \CatK \rightleftarrows \Mod_\MM^{\intL}\colon (-)^{\atomic}$
	of \cref{atomic-psh-adj} is symmetric monoidal,
	i.e.\ $\PShKM$ is symmetric monoidal with a lax symmetric monoidal right adjoint $(-)^{\atomic}$.
\end{thm}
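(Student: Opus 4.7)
The plan is to derive this purely formally from pieces already in place. The underlying adjunction $\PShKM \dashv (-)^{\atomic}$ between $\CatK$ and $\Mod_\MM^{\intL}$ has been constructed in \cref{atomic-psh-adj} under the molecularity hypothesis on $\MM$, while \cref{psh-sm} has shown that the left adjoint $\PShKM$ is symmetric monoidal, where $\CatK$ carries the symmetric monoidal structure of \cite[\S 4.8.1]{HA} and $\Mod_\MM^{\intL}$ inherits its symmetric monoidal structure from $\Mod_\MM$, as verified at the start of this subsection.

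With both facts in hand, the theorem reduces to the general principle that a symmetric monoidal left adjoint between symmetric monoidal $\infty$-categories automatically promotes its right adjoint to a canonically lax symmetric monoidal functor, making the whole adjunction into an adjunction of commutative algebras in $\CAT$. This is precisely the content of \cite[Corollary 7.3.2.7]{HA}, which I would apply directly to our adjunction. The output is exactly the asserted symmetric monoidal adjunction, with $\PShKM$ strong symmetric monoidal and $(-)^{\atomic}$ lax symmetric monoidal, and no additional compatibility needs to be checked by hand.

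There is essentially no obstacle here beyond confirming the formal hypotheses of the cited result, namely a genuine $\infty$-categorical adjunction together with a symmetric monoidal structure on the left adjoint. Both were established in the two references above, so the remaining step is a single invocation of \cite[Corollary 7.3.2.7]{HA}. The only aspect one might want to be mildly careful about is that the symmetric monoidal structure on $\Mod_\MM^{\intL}$ used in \cref{psh-sm} agrees with the one obtained as a subcategory of $\Mod_\MM$; this is immediate since $\Mod_\MM^{\intL}$ is defined as a wide subcategory and the symmetric monoidal structure was shown to restrict from $\Mod_\MM$ at the start of the subsection.
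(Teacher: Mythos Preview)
Your proposal is correct and matches the paper's own proof essentially verbatim: the paper simply observes that the theorem follows by applying \cite[Corollary 7.3.2.7]{HA} to the adjunction of \cref{atomic-psh-adj}, using \cref{psh-sm} for the symmetric monoidal structure on $\PShKM$.
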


Note that for any operad $\OO$ we get an induced adjunction
$$
\PShKM\colon \Alg_\OO(\CatK) \rightleftarrows \Alg_\OO(\Mod_\MM^{\intL})\colon (-)^{\atomic}
$$
whose unit is an enhancement of the Yoneda map (landing in the atomics) to $\OO$-algebras.
Furthermore, for any $\CC \in \Alg_\OO(\Mod_\MM^{\intL})$ we see that $\CC^{\atomic} \subset \CC$ is in fact an $\OO$-monoidal subcategory.
We therefore get the following corollary, which generalizes \cite[Section 3]{Saul} and \cite[Corollary 4.8.1.12]{HA} from the case of $\MM = \Spaces, \clK = \emptyset$ and $\OO = \bbE_\infty$ and makes them natural.

\begin{cor}\label{yoneda-O}
	The Yoneda natural transformation lifts to a natural transformation
	$\yonKM\colon \iota_{\clK} \Rightarrow \PShKM(-)$ of functors $\Alg_\OO(\CatK) \to \Alg_\OO(\CAT)$.
	That is, the Yoneda map $\yonKM\colon \CC_0 \to \PShKM(\CC_0)$ is $\OO$-monoidal and natural in $\CC_0 \in \Alg_\OO(\CatK)$, and factors through the atomics $\PShKM(\CC_0)^{\atomic}$.
\end{cor}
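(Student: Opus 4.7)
The plan is to obtain the statement as a formal consequence of \cref{atomic-psh-sm-adj} by applying the functor $\Alg_\OO(-)$ to the symmetric monoidal adjunction. First I would invoke \cref{atomic-psh-sm-adj} to get that $\PShKM \dashv (-)^{\atomic}$ is a symmetric monoidal adjunction between $\CatK$ and $\Mod_\MM^{\intL}$. Since $\Alg_\OO(-)$ is functorial in symmetric monoidal functors and lax symmetric monoidal natural transformations (e.g.\ via \cite[Section 7.3]{HA}), it produces an adjunction
$$
\PShKM \colon \Alg_\OO(\CatK) \leftrightarrows \Alg_\OO(\Mod_\MM^{\intL}) \colon (-)^{\atomic}
.
$$
The unit $\eta\colon \id \Rightarrow (-)^{\atomic} \circ \PShKM$ of this induced adjunction is, by construction, compatible with the unit $\yonKM$ of \cref{atomic-psh-adj} in the sense that post-composition with the forgetful functor $\Alg_\OO(\CatK) \to \CatK$ recovers $\yonKM$; this is the defining property of lifting an adjunction to $\OO$-algebras.

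Next I would upgrade this to a natural transformation into $\Alg_\OO(\CAT)$. For any $\CC \in \Alg_\OO(\Mod_\MM^{\intL})$, the lax symmetric monoidal structure on $(-)^{\atomic}$ supplies comparison maps $\CC_1^{\atomic} \otimes \CC_2^{\atomic} \to (\CC_1 \otimes \CC_2)^{\atomic}$, and in the special case $\CC_1 = \CC_2 = \CC$ with the $\OO$-algebra multiplication these identify $\CC^{\atomic} \subseteq \CC$ as an $\OO$-monoidal subcategory (the closure of atomics under the $\OO$-monoidal product is also directly visible from \cref{atomic-tensor} applied to $-\otimes X\colon \CC\to\CC$ for $X\in\CC^{\atomic}$). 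Hence the inclusion $(-)^{\atomic} \hookrightarrow \PShKM$ is an $\OO$-monoidal natural transformation of functors $\Alg_\OO(\CatK) \to \Alg_\OO(\CAT)$. Whiskering $\eta$ by this inclusion (and by the forgetful functor $\Alg_\OO(\Mod_\MM^{\intL}) \to \Alg_\OO(\CAT)$) produces the desired $\yonKM\colon \iota_\clK \Rightarrow \PShKM$, with the factorization through the atomics being tautological from the construction.

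The main obstacle is the compatibility claim in the first paragraph: that the unit of the adjunction at the $\Alg_\OO$-level truly recovers, after forgetting to underlying objects, the previously defined $\yonKM$. This is not a computation but a coherence statement about how the $\Alg_\OO$-construction interacts with symmetric monoidal adjunctions; it follows because the forgetful functor $\Alg_\OO(\CatK) \to \CatK$ commutes with both the left adjoint $\PShKM$ (as it is symmetric monoidal) and with $(-)^{\atomic}$ (by naturality of the underlying-object functor in $\OO$-algebras), so the unit at the $\OO$-algebra level is forced to be the unit of the underlying adjunction equipped with its unique $\OO$-monoidal enhancement. Granting this, the remaining verifications — that $\CC_0 \in \CatK$ equipped with an $\OO$-algebra structure gets mapped to $\PShKM(\CC_0)$ equipped with its Day/induced $\OO$-monoidal structure, and that the factorization through $\PShKM(\CC_0)^{\atomic}$ is itself $\OO$-monoidal — are all built into the machinery.
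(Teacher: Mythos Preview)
Your proposal is correct and follows essentially the same approach as the paper: the paper's argument (given in the paragraph immediately preceding the corollary) is precisely to apply $\Alg_\OO(-)$ to the symmetric monoidal adjunction of \cref{atomic-psh-sm-adj}, obtaining an induced adjunction on $\OO$-algebras whose unit enhances $\yonKM$, together with the observation that $\CC^{\atomic} \subset \CC$ is an $\OO$-monoidal subcategory. Your write-up is more explicit about the coherence issue (that the unit at the $\Alg_\OO$-level recovers the underlying $\yonKM$), which the paper takes for granted, but the strategy is identical.
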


Recall that in \cref{atomic-tensor} we showed that if $X \in \CC^{\atomic}$ and $m \in \MM^{\atomic}$ then $m \otimes X \in \CC^{\atomic}$.
Using \cref{atomic-psh-sm-adj}, we strengthen this into a module structure, using the fact that any lax symmetric monoidal functor lands in modules over the image of the unit.

\begin{cor}\label{atomics-mod-m-at}
	The functor of atomic objects factors as a lax symmetric monoidal functor $(-)^{\atomic} \colon \Mod_\MM^{\intL} \to \Mod_{\MM^{\atomic}}(\CatK)$.
\end{cor}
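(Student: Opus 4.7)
The plan is to deduce this corollary directly from \cref{atomic-psh-sm-adj} together with the general principle that a lax symmetric monoidal functor automatically factors through modules over the image of the unit.

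First, I would recall that $\Mod_\MM^{\intL}$ is a symmetric monoidal $\infty$-category whose unit object is $\MM$ itself, since the symmetric monoidal structure is inherited from $\Mod_\MM$. Applying the lax symmetric monoidal functor $(-)^{\atomic}\colon \Mod_\MM^{\intL} \to \CatK$ of \cref{atomic-psh-sm-adj} to this unit yields $\MM^{\atomic} \in \CatK$, which acquires the structure of a commutative algebra object in $\CatK$ from the lax symmetric monoidal structure (the multiplication $\MM^{\atomic} \otimes \MM^{\atomic} \to \MM^{\atomic}$ is the lax structure map, and the unit comes from $* \to \MM^{\atomic}$). This recovers, and refines, the closure under tensor product from \cref{atomic-tensor}.

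Next, I would invoke the general principle (a standard consequence of $\infty$-operadic formalism, essentially \cite[Corollary 7.3.2.7]{HA} applied in the reverse direction, or more directly a formal property of lax symmetric monoidal functors): for any lax symmetric monoidal functor $F\colon \CC \to \DD$, each object $F(X)$ is naturally a module over the commutative algebra $F(\unit_\CC)$ via the lax structure map $F(\unit_\CC) \otimes F(X) \to F(\unit_\CC \otimes X) \simeq F(X)$, and this factors $F$ through a lax symmetric monoidal functor $\tilde F\colon \CC \to \Mod_{F(\unit_\CC)}(\DD)$ whose composition with the (symmetric monoidal) forgetful functor $\Mod_{F(\unit_\CC)}(\DD) \to \DD$ recovers $F$. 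Applied to $F = (-)^{\atomic}$ with $\CC = \Mod_\MM^{\intL}$ and $\DD = \CatK$, this yields precisely the desired lax symmetric monoidal factorization $(-)^{\atomic}\colon \Mod_\MM^{\intL} \to \Mod_{\MM^{\atomic}}(\CatK)$.

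I do not anticipate a main obstacle, as this is a formal corollary: the hard work was establishing lax symmetric monoidality of $(-)^{\atomic}$ in \cref{atomic-psh-sm-adj}. The only point requiring a bit of care is confirming that $\MM$ is indeed the unit of $\Mod_\MM^{\intL}$ (rather than merely of $\Mod_\MM$), but this is immediate from the fact that the symmetric monoidal structure on $\Mod_\MM^{\intL}$ was constructed as a subcategory inclusion preserving the tensor product and unit.
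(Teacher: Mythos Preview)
Your proposal is correct and follows essentially the same approach as the paper: the paper states the corollary as an immediate consequence of \cref{atomic-psh-sm-adj} together with the general fact that any lax symmetric monoidal functor lands in modules over the image of the unit, which is precisely your argument. The only minor remark is that \cite[Corollary 7.3.2.7]{HA} is not quite the right reference for this factorization principle (that result concerns lifting adjunctions), but the principle itself is standard and the paper does not cite a specific source for it either.
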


We also mention the following easy corollary of \cref{il-sm}.

\begin{lem}\label{mod-smash-tensor}
	Let $L\colon \MM_1 \to \MM_2$ be a smashing localization of modes and let $\NN$ be another mode
	Then, $L \otimes \id_\NN\colon \MM_1 \otimes \NN \to \MM_2 \otimes \NN$ is also a smashing localization of modes.
\end{lem}

\begin{proof}
	First, by \cite[Lemma 5.2.1]{AmbiHeight}, the functor $L \otimes \id_\NN$ is a localization as well.
	By \cite[Proposition 5.2.15]{AmbiHeight}, a localization of modes is smashing if and only if it is an internally left adjoint functor in $\PrL$.
	Thus, $L$ is internally left adjoint, and by \cref{il-sm} we conclude that $L \otimes \id_\NN$ is also internally left adjoint, so that it is also a smashing localization.
\end{proof}

\subsection{Atomic Modules}\label{subsec-at-mod}

In the remainder of the section we show that the atomic objects in $\LMod_R$ for $R \in \Alg(\MM)$ are the left dualizable left modules (in the sense of \cite[Definition 4.6.2.3]{HA}), summarized in \cref{atomic-ldbl-lax}.
We begin by collecting certain basic facts about the category of left modules from \cite[\S 4.8.5]{HA}.

\begin{thm}\label{mod-func}
	There is a symmetric monoidal functor
	$\LMod_{(-)}\colon \Alg(\MM) \to \Mod_\MM^\intL$,
	sending $R$ to $\LMod_R$ and $f\colon R \to S$ to $f_!\colon \LMod_R \to \LMod_S$.
\end{thm}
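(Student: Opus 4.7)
The plan is to construct this functor via a two-step factorization: first producing a symmetric monoidal functor $\Alg(\MM) \to \Mod_\MM$ from the standard theory of module categories, and then promoting its target to $\Mod_\MM^{\intL}$ by verifying the internally-left-adjoint condition on the image.

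For the first step, I would invoke the construction from \cite[\S 4.8.5]{HA}: since $\MM$ is a presentably symmetric monoidal $\infty$-category, there is a symmetric monoidal functor $\LMod_{(-)}\colon \Alg(\MM) \to \Mod_\MM$ sending $R \mapsto \LMod_R$ (with its canonical $\MM$-action) and $f\colon R \to S$ to extension of scalars $f_! = S \otimes_R -$. Symmetric monoidality is encoded by the standard equivalences $\LMod_{R \otimes S} \simeq \LMod_R \otimes_\MM \LMod_S$ relating module categories over tensor products of algebras and relative tensor products of module categories.

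For the second step, I need to verify that each $f_!$ is internally left adjoint, i.e., that its right adjoint $f^*$ (the restriction of scalars) is itself a left adjoint. Now, $f^*$ acts as the identity on underlying objects of $\MM$, and colimits in $\LMod_R$ and $\LMod_S$ are created by the forgetful functors to $\MM$; hence $f^*$ preserves all small colimits, and by presentability admits a further right adjoint $f_*$. Combined with the proposition opening this subsection, which identifies $\Mod_\MM^{\intL}$ as a symmetric monoidal wide subcategory of $\Mod_\MM$, this shows that $\LMod_{(-)}$ factors as the desired symmetric monoidal functor $\Alg(\MM) \to \Mod_\MM^{\intL}$. The main obstacle is essentially bookkeeping rather than conceptual content: extracting the appropriate symmetric monoidal functor from Lurie's formalism of bifibrations of operads, and confirming that the tensor product on $\Alg(\MM)$ used here is the standard one induced by commutativity of $\MM$.
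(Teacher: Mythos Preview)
Your proposal is correct and follows essentially the same approach as the paper: both invoke \cite[\S 4.8.5]{HA} to obtain the symmetric monoidal functor $\LMod_{(-)}$, then verify that $f^*$ is itself a left adjoint (the paper cites \cite[Corollary 4.2.3.7 (2)]{HA} directly, while you give the equivalent argument via creation of colimits by the forgetful functors), and finally factor through the symmetric monoidal wide subcategory $\Mod_\MM^{\intL}$.
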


\begin{proof}
	Let $\clK$ denote the collection of all (small) categories, then by \cite[Remark 4.8.5.17]{HA} there is a symmetric monoidal functor
	$\LMod_{(-)}\colon \Alg(\MM) \to \Mod_\MM(\CAT_\clK)$.
	As in \cite[Notation 4.8.5.10]{HA}, $\LMod_R$ is presentable and $f_!$ is left adjoint to $f^*$.
	Furthermore, \cite[Corollary 4.2.3.7 (2)]{HA} shows that the right adjoint $f^*$ of $f_!$ is itself a left adjoint, so that the functor lands in $\Mod_\MM^\intL$.
	As the symmetric monoidal structure on $\Mod_\MM^\intL$ is restricted from $\Mod_\MM(\CAT_\clK)$, the factorization $\LMod_{(-)}\colon \Alg(\MM) \to \Mod_\MM^\intL$ is indeed symmetric monoidal.
\end{proof}

We now recall the following result about left dualizability and adjunctions.

\begin{prop}\label{ldbl-adj}
	Let $X \in \LMod_R, Y \in \RMod_R$.
	Then $Y$ is left dual to $X$ if and only if there is an adjunction
	$$
	X \otimes_{\unit_\MM} -\colon \MM \rightleftarrows \LMod_R\colon Y \otimes_R -
	.
	$$
\end{prop}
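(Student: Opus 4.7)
The plan is to translate both sides into duality/adjunction data and exhibit them as mutual inverses.

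First, given the adjunction, I would extract the duality data by setting $c := \eta_{\unit_\MM}\colon \unit_\MM \to Y \otimes_R X$ (using $X \otimes_{\unit_\MM} \unit_\MM \simeq X$) and $e := \varepsilon_R\colon X \otimes_{\unit_\MM} Y \to R$ (using $Y \otimes_R R \simeq Y$). The map $c$ lives in $\MM$ by construction; the map $e$ is a priori a morphism of left $R$-modules, but its compatibility with the right $R$-action, upgrading it to a map of $R$-$R$-bimodules, follows from the naturality of $\varepsilon$ with respect to the right-multiplication endomorphisms of $R \in \LMod_R$. The triangle identities of the duality datum are then obtained by evaluating the adjunction triangle identities on $\unit_\MM \in \MM$ and $R \in \LMod_R$.

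Conversely, given duality data $(c, e)$, I would build the adjunction explicitly: define the unit at $m \in \MM$ by
$$m \xrightarrow{c \otimes_{\unit_\MM} m} Y \otimes_R X \otimes_{\unit_\MM} m \simeq Y \otimes_R (X \otimes_{\unit_\MM} m),$$
and the counit at $Z \in \LMod_R$ by
$$X \otimes_{\unit_\MM} (Y \otimes_R Z) \simeq (X \otimes_{\unit_\MM} Y) \otimes_R Z \xrightarrow{e \otimes_R Z} R \otimes_R Z \simeq Z.$$
Naturality in $m$ and $Z$ is immediate from functoriality of the (relative) tensor products, and the adjunction triangle identities fall out by tensoring the duality triangle identities with $m$ or $Z$.

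The main obstacle is the higher coherence: in the $\infty$-categorical setting, this back-and-forth must constitute an equivalence of (higher) spaces of data, not merely a bijection of homotopy classes. I would address this by invoking the general theory of adjoints in the symmetric monoidal bicategory of algebras, bimodules, and bimodule maps in $\MM$ as developed in HA \S 4.6--4.8: ``$Y$ is left dual to $X$'' is by definition the statement that $Y$ is right adjoint to $X$ as a $1$-morphism in this bicategory, and applying the $2$-functor $\hom(\unit_\MM, -)$ of hom-categories to such an adjunction yields precisely the functorial adjunction in the statement; the inverse construction is supplied by the fact that $\MM$-linear colimit-preserving functors $\MM \to \LMod_R$ are determined by their value at $\unit_\MM$, so the adjunction data is representable by a bimodule-level adjunction.
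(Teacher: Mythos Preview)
Your proposal is correct and follows essentially the same route as the paper: both reduce the statement to the duality theory for bimodules developed in \cite[\S 4.6]{HA}. The paper is simply more terse, citing \cite[Proposition 4.6.2.18]{HA} (with $\CC = \MM$, $A = R^{\mrm{rev}}$, and the roles of $X$ and $Y$ swapped) and noting that the coevaluation and the adjunction unit/counit determine each other; your explicit extraction of $c = \eta_{\unit_\MM}$ and $e = \varepsilon_R$, and the reverse construction via tensoring with the duality data, is exactly what that citation unpacks to, and your final appeal to the bicategorical framework for the coherences is the same appeal the paper makes implicitly.
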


\begin{proof}
	We explain how this follows from \cite[Proposition 4.6.2.18]{HA}, with $\CC = \MM$, $A = R^{\mrm{rev}}$ and the roles of $X$ and $Y$ reversed (see also \cite[Remark 4.6.3.16]{HA}).

	For the first direction, assume that there is an adjunction and let $\eta\colon \id_\MM \Rightarrow Y \otimes_R X \otimes_{\unit_\MM} -$ be the unit.
	By the adjunction, we know that for each $P \in \MM$ and $Q \in \LMod_R$ the composition
	$$
	\hom(X \otimes_{\unit_\MM} P, Q)
	\to \hom(Y \otimes_R X \otimes_{\unit_\MM} P, Y \otimes_R Q)
	\xrightarrow{- \circ \eta_P} \hom(P, Y \otimes_R Q)
	$$
	is an equivalence.
	Since both functors in the adjunction preserve colimits, and the categories are in the mode $\MM$, the adjunction is $\MM$-linear.
	Therefore the two maps
	$$
	\eta_P\colon P \to Y \otimes_R X \otimes_{\unit_\MM} P
	,\quad
	\eta_{\unit_\MM} \otimes_{\unit_\MM} \id_P \colon P \to Y \otimes_R X \otimes_{\unit_\MM} P
	$$
	coincide.
	This shows that $c = \eta_{\unit_\MM}$ satisfies condition $(*)$ of the cited proposition.

	Similarly, for the other direction, if $Y$ is left dual to $X$ then the coevaluation map $c\colon \unit_\MM \to Y \otimes_R X$ gives an ($\MM$-linear) natural transformation $c \otimes_{\unit_\MM} -\colon \id_\MM \Rightarrow Y \otimes_R X \otimes_{\unit_\MM} -$, which is a unit of an adjunction by condition $(*)$ of the cited proposition.
\end{proof}

\begin{lem}
	Let $R \in \Alg(\MM)$, then $R \in \LMod_R$ is atomic.
	$R \otimes m$ is also atomic for any $m \in \MM^{\atomic}$.
\end{lem}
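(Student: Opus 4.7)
The plan is to identify $\hom^\MM(R,-)$ with the forgetful functor $U\colon \LMod_R \to \MM$, establish that $U$ preserves colimits, and then deduce the second statement from \cref{atomic-tensor}.

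First, by \cref{mod-func} the functor $R \otimes -\colon \MM \to \LMod_R$ is the morphism in $\Mod_\MM^{\intL}$ induced by the unit map $\unit_\MM \to R$ in $\Alg(\MM)$, and its right adjoint is $U$. Since both categories lie in $\Mod_\MM$ and the left adjoint is a morphism of $\MM$-modules, \cref{M-adj} upgrades the ordinary adjunction to an $\MM$-enriched adjunction
$$
\hom^\MM_{\LMod_R}(R \otimes m, M) \;=\; \hom^\MM_\MM(m, U M).
$$
Setting $m = \unit_\MM$ yields $\hom^\MM_{\LMod_R}(R, -) = U$ as functors $\LMod_R \to \MM$.

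Next, I need to show $U$ preserves colimits. By \cref{mod-func}, the free functor $R \otimes -$ is internally left adjoint, which by definition means its right adjoint $U$ is itself a left adjoint. Hence $U$ commutes with all colimits, which together with the previous identification shows that $\hom^\MM(R,-)$ commutes with colimits, i.e. $R \in \LMod_R$ is $\MM$-atomic.

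For the second assertion, note that $\LMod_R$ is an $\MM$-module, and for $m \in \MM$ the object $R \otimes m$ is (up to the symmetry of the tensor product in $\MM$) the $\MM$-action $m \otimes R$ on the left $R$-module $R$. Since $R$ is atomic in $\LMod_R$ by the first part and $m$ is atomic in $\MM$ by assumption, the last clause of \cref{atomic-tensor} shows that $m \otimes R \simeq R \otimes m$ is atomic in $\LMod_R$. The argument is essentially a one-line invocation once $R$ is known to be atomic, so the only substantive step is the identification of $\hom^\MM(R,-)$ with the forgetful functor and the observation that this forgetful functor is a left adjoint; neither presents a real obstacle given the preceding material.
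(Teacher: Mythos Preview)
Your proof is correct and follows essentially the same approach as the paper: identify $\hom^\MM(R,-)$ with the forgetful functor $U\colon \LMod_R \to \MM$, observe that $U$ preserves colimits, and deduce the second part from \cref{atomic-tensor}. The only difference is packaging---the paper cites \cite[Corollary 4.2.3.7 (2)]{HA} directly for the colimit-preservation of $U$, whereas you route this through \cref{mod-func} (whose proof invokes the same HA result) and make the identification $\hom^\MM(R,-) = U$ explicit via \cref{M-adj}.
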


\begin{proof}
	Consider \cite[Corollary 4.2.3.7 (2)]{HA} where both $\CC$ and $\MM$ in the reference's notation are our $\MM$, $A = \unit_\CC$ and $B = R$.
	Then, the functor $\LMod_B \to \LMod_A$ is $\hom^\MM(R, -)\colon \LMod_R \to \MM$, which therefore commutes with all colimits, showing that $R$ is atomic.
	The second part follows from \cref{atomic-tensor}.
\end{proof}

\begin{prop}\label{Mod-R-mol}
	Let $R \in \Alg(\MM)$.
	Then $\LMod_R$ is molecular with $R$ as an atomic generator.
\end{prop}

\begin{proof}
	The previous lemma shows that $R$ is indeed atomic, and we need to show that it generates $\LMod_R$ under colimits and the action of $\MM$.
	Specifically, we will show that $\LMod_R$ is generated under colimits from $R \otimes m$ for $m \in \MM$.
	By \cite[Corollary 2.5]{MonTow}, this is equivalent to showing that $\hom(R \otimes m, -)\colon \LMod_R \to \Spaces$ are jointly conservative.
	Note that $R \otimes -\colon \MM \to \LMod_R$ is the left adjoint of $\underlying\colon \LMod_R \to \MM$, so that $\hom(R \otimes m, -) \cong \hom(m, \underlying)$.
	These functors are indeed jointly conservative since $\underlying\colon \LMod_R \to \MM$ is conservative, and $\hom(m, -)\colon \MM \to \Spaces$ over all $m \in \MM$ are jointly conservative.
\end{proof}

Let $X \in \LMod_R$, and consider the functor $\hom^\MM(X, -)\colon \LMod_R \to \MM$.
Note that $\hom^\MM(X, R)$ is equipped with a canonical right $R$-module structure which we denote by $X^\vee \in \RMod_R$.
In addition, there is a canonical map $X^\vee \otimes_R - \to \hom^\MM(X, -)$.

\begin{prop}\label{atomic-ldbl}
	$\LMod_R^\ldbl = \LMod_R^{\atomic}$, that is the left dualizable objects are atomic.
\end{prop}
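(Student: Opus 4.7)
The plan is to prove the two directions separately, using \cref{ldbl-adj} as the bridge between left dualizability and a certain adjunction.

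For the inclusion $\LMod_R^\ldbl \subseteq \LMod_R^{\atomic}$, suppose $X \in \LMod_R$ is left dualizable with left dual $Y \in \RMod_R$. By \cref{ldbl-adj} we get an adjunction $X \otimes_{\unit_\MM} -\colon \MM \leftrightarrows \LMod_R\colon Y \otimes_R -$. But the functor $X \otimes_{\unit_\MM} -\colon \MM \to \LMod_R$ is precisely $- \otimes X$ (the action of $\MM$ on $X$), whose right adjoint is by definition $\hom^\MM(X, -)$. By uniqueness of adjoints we deduce $\hom^\MM(X, -) = Y \otimes_R -$. Since the relative tensor product preserves colimits in each variable, $\hom^\MM(X, -)$ preserves colimits, i.e. $X$ is atomic.

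For the converse, assume $\MM$ is molecular and $X \in \LMod_R^{\atomic}$. I would produce a left dual by setting $Y = X^\vee = \hom^\MM(X, R) \in \RMod_R$ and showing that the canonical natural transformation $\alpha\colon Y \otimes_R - \Rightarrow \hom^\MM(X, -)$ of functors $\LMod_R \to \MM$ is an equivalence. By \cref{ldbl-adj}, this would exhibit $Y$ as the left dual to $X$, since $\hom^\MM(X,-)$ is the right adjoint of $- \otimes X$. Both sides of $\alpha$ commute with colimits in $\LMod_R$: the left-hand side because $- \otimes_R -$ preserves colimits in each variable, and the right-hand side by the atomicity of $X$. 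By \cref{Mod-R-mol}, the objects $R \otimes m$ for $m \in \MM^{\atomic}$ generate $\LMod_R$ under colimits, so it suffices to check $\alpha$ on these generators. On the left, $Y \otimes_R (R \otimes m) \simeq Y \otimes m = \hom^\MM(X, R) \otimes m$. On the right, since $X$ is atomic the functor $\hom^\MM(X,-)$ is a morphism in $\Mod_\MM$ and therefore commutes with the action of $\MM$, giving $\hom^\MM(X, R \otimes m) \simeq \hom^\MM(X, R) \otimes m$. Tracing through the construction of $\alpha$ shows that on $R \otimes m$ it is precisely the identification between these two expressions, hence an equivalence.

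The main obstacle is the second direction, and specifically the verification that the comparison map $\alpha$ agrees (up to equivalence) with the identification $\hom^\MM(X, R) \otimes m$ on both sides at the generators $R \otimes m$; this requires a careful check that the natural map $Y \otimes_R (R \otimes m) \to \hom^\MM(X, R \otimes m)$ coming from evaluation factors through the same equivalence. Once this is established, the combination of colimit-preservation on both sides with molecularity of $\LMod_R$ completes the proof via \cref{ldbl-adj}.
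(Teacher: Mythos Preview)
Your proposal is correct and follows essentially the same approach as the paper. The only minor difference is in phrasing: the paper observes that both functors are colimit-preserving morphisms in $\Mod_\MM$ (hence commute with the $\MM$-action) and therefore are determined by their value on $R$ alone, whereas you check agreement on the generators $R \otimes m$ directly; these amount to the same thing, and the paper likewise leaves the verification that the canonical comparison is the expected equivalence implicit.
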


\begin{proof}
	Recall that for $X \in \LMod_R$, the functor $X \otimes_{\unit_\MM} -\colon \MM \to \LMod_R$ is left adjoint to $\hom^\MM(X, -)$.

	If $X$ is left dualizable, then $X \otimes_{\unit_\MM} -$ is left adjoint to $Y \otimes_R -$ for some $Y \in \RMod_R$ by \cref{ldbl-adj}.
	By the uniqueness of adjoints we get that $\hom^\MM(X, -) \cong Y \otimes_R -$.
	Since $Y \otimes_R -$ commutes with colimits, we get that $X$ is atomic.

	Now assume that $X$ is atomic.
	The two functors $\hom^\MM(X, -)$ and $X^\vee \otimes_R -$ are colimit preserving, i.e.\ morphisms in $\Mod_\MM$, thus also commute with tensor from $\MM$.
	\cref{Mod-R-mol} shows that $\LMod_R$ is generated from $R$ by these operations, and by the construction of $X^\vee$, they agree on $R$, so the canonical map between the two is an equivalence.
	This shows that $X \otimes_{\unit_\MM} -$ is left adjoint to $\hom^\MM(X, -) \cong X^\vee \otimes_R -$, concluding by \cref{ldbl-adj}.
\end{proof}

\begin{remark}\label{mod-calg}
	If $R \in \CAlg(\MM)$, then left and right $R$-modules coincide, and the category of modules $\Mod_R$ is equipped with a symmetric monoidal structure for which dualizable modules coincide with left dualizable modules, thus also with atomic objects, that is $\Mod_R^\dbl = \Mod_R^{\atomic}$.
\end{remark}

Combining \cref{atomic-psh-sm-adj}, \cref{mod-func} and \cref{atomic-ldbl} we get the following main result.

\begin{cor}\label{atomic-ldbl-lax}
	There is a lax symmetric monoidal functor
	$
	\LMod_{(-)}^{\atomic}\colon \Alg(\MM) \to \CatK
	$,
	and $\LMod_R^{\atomic} = \LMod_R^\ldbl$.
\end{cor}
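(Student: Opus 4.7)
The plan is to assemble the statement from three pieces already at hand. By \cref{mod-func}, the assignment $R \mapsto \LMod_R$ is a symmetric monoidal functor $\LMod_{(-)}\colon \Alg(\MM) \to \Mod_\MM^{\intL}$. By \cref{atomic-psh-sm-adj}, the molecularity hypothesis on $\MM$ gives that $(-)^{\atomic}\colon \Mod_\MM^{\intL} \to \CatK$ is lax symmetric monoidal (as the right adjoint in a symmetric monoidal adjunction). Composing a symmetric monoidal functor with a lax symmetric monoidal one yields a lax symmetric monoidal functor, so $\LMod_{(-)}^{\atomic}\colon \Alg(\MM) \to \CatK$ is lax symmetric monoidal as desired.

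For the pointwise identification, \cref{atomic-ldbl} shows that under the molecularity assumption on $\MM$, the $\MM$-atomic objects in $\LMod_R$ coincide with the left dualizable ones, i.e. $\LMod_R^{\atomic} = \LMod_R^\ldbl$ as full subcategories of $\LMod_R$, which finishes the claim.

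There is essentially no obstacle: the two ingredients \cref{atomic-psh-sm-adj} and \cref{mod-func} are designed to be composed, and \cref{atomic-ldbl} is exactly the translation identifying the value of the composite functor with the left dualizable modules. The only point worth noting is to ensure that the codomain of $(-)^{\atomic}$ matches: \cref{atomic-abs} guarantees that when $\clK$ is a collection of absolute colimits of $\MM$, the atomics functor factors through $\CatK$, which is consistent with the declared target of the composite.
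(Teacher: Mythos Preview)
Your proof is correct and follows exactly the same approach as the paper, which simply states that the corollary follows by combining \cref{atomic-psh-sm-adj}, \cref{mod-func} and \cref{atomic-ldbl}. Your write-up is in fact more detailed than the paper's one-line justification.
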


As a by product, we also obtain the following result.

\begin{lem}
	Let $F\colon \MM \to \NN$ be a map of modes, then it sends $\MM$-atomic objects to $\NN$-atomic objects.
\end{lem}

\begin{proof}
	By \cref{atomic-ldbl}, the $\MM$-atomic objects in $\MM$ are the dualizable objects.
	Since $F$ is symmetric monoidal it sends dualizable objects to dualizable objects.
	Thus the $\MM$-atomic objects are sent to dualizable objects in $\NN$.
	Again by \cref{atomic-ldbl}, the dualizable objects in $\NN$ are $\NN$-atomic.
\end{proof}

	\section{Day Convolution}\label{sec-day}

The Day Convolution on functor categories was developed in \cite{Saul, HA}.
In this section we prove results about the Day convolution, specifically its functoriality in the source and target.
The results of this section are used in \cref{cmon-mul} to show that the mode symmetric monoidal structure on higher commutative monoids coincides with the localization of the Day convolution.
This is subsequently used in \cref{k-lax} to endow higher semiadditive algebraic K-theory with a lax symmetric monoidal structure.

We begin by recalling the universal property of the Day convolution:

\begin{thm}[{\cite[Remark 2.2.6.8]{HA}}]\label{day-univ-prop}
	Let $\II, \CC$ be symmetric monoidal categories, and assume that $\CC$ has all colimits and that its tensor product preserves colimits in each coordinate separately.
	Then, there is a symmetric monoidal structure on $\Fun(\II, \CC)$, called the \tdef{Day convolution} denoted by $\mdef{\circledast}$, satisfying the following universal property:
	There is an equivalence of functors $\CMon(\Cat) \to \Cat$
	$$
	\Funlax(- \times \II, \CC)
	\iso \Funlax(-, \Fun(\II, \CC))
	,
	$$
	which lifts the equivalence of functors $\Cat \to \Cat$
	$$
	\Fun(- \times \II, \CC)
	\iso \Fun(-, \Fun(\II, \CC))
	.
	$$
\end{thm}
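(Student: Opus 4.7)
The plan is to construct the Day convolution directly via left Kan extension and then verify the universal property, first pointwise in $\DD$ and then naturally.

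First, I would define the tensor product $F \circledast G \colon \II \to \CC$ for $F, G \in \Fun(\II, \CC)$ as the left Kan extension of the composite
$$
\II \times \II \xrightarrow{(F, G)} \CC \times \CC \xrightarrow{\otimes_\CC} \CC
$$
along $\otimes_\II \colon \II \times \II \to \II$. This Kan extension exists because $\CC$ is cocomplete, and the hypothesis that $\otimes_\CC$ preserves colimits separately in each variable ensures that $\circledast$ does as well, and therefore behaves well with respect to coherence. The unit is the functor corepresenting evaluation at $\unit_\II$, appropriately combined with $\unit_\CC$.

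Next, I would assemble this operation into a genuine symmetric monoidal structure at the level of $\infty$-operads. The cleanest approach, following \cite[\S 2.2.6]{HA}, is to define the operad $\Fun(\II, \CC)^\otimes \to N(\mrm{Fin}_*)$ as a suitable subcategory of a functor category between the underlying operads of $\II$ and $\CC$, then verify it is a cocartesian fibration of operads whose cocartesian pushforwards recover the Kan extensions above. Carrying out this construction together with its formal properties is the main technical obstacle, and in a detailed write-up I would simply invoke \cite[Proposition 2.2.6.16]{HA} for this step.

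With the symmetric monoidal structure in hand, the universal property becomes essentially formal. For each fixed $\DD \in \CMon(\Cat)$, a lax symmetric monoidal functor $\widehat{\Phi} \colon \DD \to \Fun(\II, \CC)$ consists of a functor together with coherent natural maps $\widehat{\Phi}(d_1) \circledast \widehat{\Phi}(d_2) \to \widehat{\Phi}(d_1 \otimes_\DD d_2)$. Unwinding the definition of $\circledast$ via the Kan extension formula and combining it with the ordinary exponential adjunction, these correspond to natural transformations $\Phi(d_1, i_1) \otimes_\CC \Phi(d_2, i_2) \to \Phi(d_1 \otimes_\DD d_2, i_1 \otimes_\II i_2)$, i.e., to a lax symmetric monoidal structure on the adjoint $\Phi \colon \DD \times \II \to \CC$. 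Checking that this bijection respects the full $\infty$-operadic coherence data is routine when performed at the operadic level, and I would do it by exhibiting a map of operads in both directions and verifying they invert one another on objects and mapping spaces.

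Finally, naturality in $\DD \in \CMon(\Cat)$ is formal: both functors are defined via precomposition with a lax symmetric monoidal functor $\DD' \to \DD$, and the equivalence constructed at each $\DD$ is manifestly compatible with this, since it is built entirely out of universal properties preserved by precomposition. Compatibility with the underlying equivalence $\Fun(\DD \times \II, \CC) \simeq \Fun(\DD, \Fun(\II, \CC))$ is tautological, since forgetting the lax structure throughout recovers the ordinary exponential adjunction.
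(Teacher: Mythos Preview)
The paper does not prove this statement: it is stated as a theorem with attribution \cite[Remark 2.2.6.8]{HA} and no proof is given, so there is nothing in the paper to compare your argument against. Your sketch is broadly the standard one from \cite[\S 2.2.6]{HA}, and you correctly identify that the operadic construction (your invocation of \cite[Proposition 2.2.6.16]{HA}) is where the real work lies; the rest is indeed formal unwinding. If anything, note that the claimed naturality in $\DD$ as a functor $\CMon(\Cat) \to \Cat$ is slightly more than what the cited remark in \cite{HA} literally records (which is a pointwise equivalence of categories of commutative algebra objects), so in a careful write-up you would want to either extract this from the operadic construction directly or observe that the equivalence is corepresented and hence automatically natural.
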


\begin{example}
	Let $\II$ be a symmetric monoidal category.
	Then $\II^\op$ is also endowed with a symmetric monoidal structure, and $\Spaces$ can be endowed with the cartesian structure, yielding the Day convolution on $\PSh(\II) = \Fun(\II^\op, \Spaces)$.
	By \cite[Remark 4.8.1.13]{HA}, this agrees with the symmetric monoidal structure on $\PSh(\II)$ of \cite[Remark 4.8.1.8]{HA} used in the proof of \cref{psh-sm}.
\end{example}

\begin{prop}\label{day-range}
	Let $\II, \CC$ and $\DD$ be symmetric monoidal categories, and assume that $\CC$ and $\DD$ have all colimits and that their tensor product preserve colimits in each coordinate.
	Let $F\colon \CC \to \DD$ be a functor and let $\tilde{F}\colon \Fun(\II, \CC) \to \Fun(\II, \DD)$ be the functor induced by post-composition.
	If $F$ is lax symmetric monoidal, then so is $\tilde{F}$.
	If $F$ is colimit preserving, then so is $\tilde{F}$.
	If $F$ is both colimit preserving and symmetric monoidal, then so is $\tilde{F}$.
\end{prop}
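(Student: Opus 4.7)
The plan is to leverage the universal property of the Day convolution stated in \cref{day-univ-prop} for all three parts.

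For the lax symmetric monoidal claim, I would start by applying the universal equivalence with $\CC$ as target to the identity functor on $\Fun(\II, \CC)$; this corresponds to a canonical lax symmetric monoidal ``evaluation'' functor $e\colon \Fun(\II,\CC) \times \II \to \CC$. Composing with the lax symmetric monoidal $F\colon \CC \to \DD$ yields a lax symmetric monoidal functor $F \circ e\colon \Fun(\II,\CC) \times \II \to \DD$, and then the universal equivalence now with $\DD$ as target transports this to a lax symmetric monoidal functor $\Fun(\II, \CC) \to \Fun(\II, \DD)$. The compatibility square of \cref{day-univ-prop} between the lax and ordinary functor category versions forces the underlying functor to be exactly post-composition with $F$, i.e.\ $\tilde{F}$.

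For the colimit-preserving claim, the argument is immediate: colimits in $\Fun(\II, \CC)$ and $\Fun(\II, \DD)$ are computed pointwise, and $\tilde F$ is defined pointwise by $F$, so preservation of colimits is inherited pointwise.

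For the third claim, I would upgrade the lax symmetric monoidal structure from Part 1 to a strong one by checking that the structure maps are equivalences pointwise. This is where I use the explicit coend formula for the Day convolution, which expresses $(G \circledast H)(i)$ as a colimit of objects of the form $G(a) \otimes_\CC H(b)$ indexed by pairs with a map $a \otimes_\II b \to i$. Since $F$ preserves this colimit (Part 2, or rather the hypothesis of Part 3) and also intertwines $\otimes_\CC$ with $\otimes_\DD$ (being symmetric monoidal), the value of $F$ on this coend agrees with the analogous coend computing $(\tilde F G \circledast \tilde F H)(i)$, witnessing that the lax structure map is an equivalence. The main obstacle is setting up the coend carefully enough that the identification of $F$ applied to it with the analogous coend in $\DD$ is clean; once that is arranged, both the colimit and the tensor product hypothesis on $F$ slot in naturally.
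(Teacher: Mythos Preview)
Your proposal is correct and follows essentially the same approach as the paper: the lax part via the universal property applied to the identity, the colimit part via pointwise computation, and the strong part by inspecting the explicit colimit formula for the Day convolution and using that $F$ preserves both the colimit and the tensor. The only cosmetic difference is that the paper phrases the formula as a colimit over $i_1\otimes i_2\to i$ rather than as a coend, and explicitly factors the lax structure map into two arrows (one controlled by symmetric monoidality of $F$, one by colimit preservation), which is exactly what your final paragraph is describing.
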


\begin{proof}
	We begin with the first part.
	The identity functor of $\Fun(\II, \CC)$ is (lax) symmetric monoidal, therefore by the universal property of the Day convolution, the corresponding functor $\Fun(\II, \CC) \times \II \to \CC$ is also lax symmetric monoidal.
	Post-composition of this functor with the lax symmetric monoidal functor $F$ gives a lax symmetric monoidal functor $\Fun(\II, \CC) \times \II \to \DD$.
	Using the universal property again, we get that $\tilde{F}\colon \Fun(\II, \CC) \to \Fun(\II, \DD)$ is also lax symmetric monoidal.
	
	For the second part, if $F$ is colimit preserving, then since colimits in functor categories are computed level-wise, $\tilde{F}$ is colimit preserving.

	Lastly, we assume that $F$ is both colimit preserving and symmetric monoidal.
	We already know from the second part that $\tilde{F}$ is colimit preserving.
	We show that the lax symmetric monoidal structure from the first part is in fact symmetric monoidal.
	Recall that by \cite[Example 2.2.6.17]{HA}, the Day convolution of $X, Y\in \Fun(\II, \CC)$ is given on objects by
	$$
	(X \circledast Y)(i)
	\cong \colim_{i_1 \otimes i_2 \to i} X(i_1) \otimes Y(i_2)
	.
	$$
	The lax symmetric monoidal structure of $\tilde{F}$ is then given by the canonical map:
	\begin{align*}
		(\tilde{F}X \circledast \tilde{F}Y)(i)
		&\cong \colim_{i_1 \otimes i_2 \to i} FX(i_1) \otimes FY(i_2)\\
		&\xrightarrow{\text(1)} \colim_{i_1 \otimes i_2 \to i} F(X(i_1) \otimes Y(i_2))\\
		&\xrightarrow{\text(2)} F(\colim_{i_1 \otimes i_2 \to i} X(i_1) \otimes Y(i_2))\\
		&\cong (\tilde{F}(X \circledast Y))(i)
	\end{align*}
	where map (1) uses the fact $F$ is lax symmetric monoidal, and (2) is the assembly map.
	Since $F$ is symmetric monoidal (1) is an equivalence, and since $F$ is colimit preserving (2) is an equivalence, showing that $\tilde{F}$ is in fact symmetric monoidal.
\end{proof}

Our next goal is to study the behavior of the Day convolution under the change of the source $\II$, namely given a symmetric monoidal functor $p\colon \II \to \JJ$, what can we say about $p_!\colon \Fun(\II, \CC) \to \Fun(\JJ, \CC)$ and $p^*\colon \Fun(\JJ, \CC) \to \Fun(\II, \CC)$.

We wish to thank Lior Yanovski for suggesting the following argument to prove \cref{p-ls-prl}.

\begin{lem}\label{p-ls-spaces}
	Let $\CC = \Spaces$ equipped with the carestian structure, then $p_!\colon \Fun(\II, \Spaces) \to \Fun(\JJ, \Spaces)$ is symmetric monoidal.
\end{lem}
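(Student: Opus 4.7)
The plan is to invoke the universal property of $\Fun(\II, \Spaces) \simeq \PSh(\II^\op)$ as the free presentable symmetric monoidal $\infty$-category on $\II^\op$, and show that $p_!$ is the unique colimit-preserving symmetric monoidal extension determined by $p^\op$ through the Yoneda embedding.

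First, I would use the example immediately preceding the lemma, which (via \cite[Remark 4.8.1.13]{HA}) identifies the Day convolution on $\Fun(\II, \Spaces)$ with the canonical presentable symmetric monoidal structure on $\PSh(\II^\op)$: the unique structure for which the Yoneda embedding $\yon\colon \II^\op \to \PSh(\II^\op)$ is symmetric monoidal and the tensor product preserves colimits separately in each variable. By \cite[Proposition 4.8.1.10]{HA}, this exhibits $\PSh(\II^\op)$ as the free presentable symmetric monoidal $\infty$-category on $\II^\op$: for any presentable symmetric monoidal $\EE$ whose tensor product preserves colimits in each variable, restriction along $\yon$ induces an equivalence between symmetric monoidal colimit-preserving functors $\PSh(\II^\op) \to \EE$ and symmetric monoidal functors $\II^\op \to \EE$.

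Next, I would apply this universal property with $\EE = \Fun(\JJ, \Spaces) \simeq \PSh(\JJ^\op)$ together with the composite
$$
\II^\op \xrightarrow{p^\op} \JJ^\op \xrightarrow{\yon} \Fun(\JJ, \Spaces),
$$
which is symmetric monoidal since both $p^\op$ (by hypothesis on $p$) and the Yoneda embedding (by the previous step) are symmetric monoidal. The universal property then produces a symmetric monoidal colimit-preserving functor $\Phi\colon \Fun(\II, \Spaces) \to \Fun(\JJ, \Spaces)$ extending $\yon \circ p^\op$ along the Yoneda embedding of $\II^\op$.

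Finally, I would identify $\Phi$ with $p_!$ as plain functors. By construction $\Phi \circ \yon \simeq \yon \circ p^\op$, and the analogous identity holds for $p_!$ since left Kan extension along $p^\op$ agrees with $p^\op$ on representables via the Yoneda-type computation $(p_!\yon(i))(j) \simeq \colim_{p(i') \to j} \hom_\II(i, i') \simeq \hom_\JJ(p(i), j) \simeq \yon(p(i))(j)$. Since both $\Phi$ and $p_!$ are colimit-preserving, the non-monoidal universal property of $\PSh(\II^\op)$ as the free cocompletion of $\II^\op$ (\cite[Theorem 5.1.5.6]{HTT}) forces $\Phi \simeq p_!$, transferring the symmetric monoidal structure of $\Phi$ to $p_!$. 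The main technical point is simply the comparison between the Day convolution on $\Fun(\II, \Spaces)$ and the standard symmetric monoidal structure on $\PSh(\II^\op)$ used in Lurie's universal property; everything else is formal.
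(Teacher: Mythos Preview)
Your argument is correct and is essentially the same approach as the paper's, which simply cites \cite[Remark 4.8.1.8]{HA} applied to $p^\op\colon \II^\op \to \JJ^\op$: that remark records the symmetric monoidal functoriality of $\PSh(-)$, whose content is precisely the universal property \cite[Proposition 4.8.1.10]{HA} you unpack by hand, together with the identification of Day convolution with the $\PSh$ symmetric monoidal structure from \cite[Remark 4.8.1.13]{HA}. The only difference is packaging: the paper quotes the functor-level statement, while you re-derive the relevant instance from the object-level universal property.
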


\begin{proof}
	This is \cite[Remark 4.8.1.8]{HA} applied to $p^\op\colon \II^\op \to \JJ^\op$.
\end{proof}

\begin{lem}\label{p-ls-fun}
	Let $\sK$ be a symmetric monoidal category, and let $\CC = \Fun(\sK, \Spaces)$ equipped with the Day convolution, then $p_!\colon \Fun(\II, \CC) \to \Fun(\JJ, \CC)$ is symmetric monoidal.
\end{lem}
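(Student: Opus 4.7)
The plan is to reduce the statement to the previous lemma by identifying the double functor category with a single functor category on a product.

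Specifically, I would use the universal property of the Day convolution (\cref{day-univ-prop}) twice. Iterating the equivalence
\[
\Funlax(-, \Fun(\II, \Fun(\sK, \Spaces))) \simeq \Funlax(- \times \II, \Fun(\sK, \Spaces)) \simeq \Funlax(- \times \II \times \sK, \Spaces),
\]
and comparing with the Day convolution on $\Fun(\II \times \sK, \Spaces)$ arising from the product symmetric monoidal structure on $\II \times \sK$ and the cartesian structure on $\Spaces$, one obtains a symmetric monoidal equivalence
\[
\Fun(\II, \Fun(\sK, \Spaces)) \simeq \Fun(\II \times \sK, \Spaces),
\]
where the left-hand side carries the iterated Day convolution and the right-hand side carries the Day convolution for $\II \times \sK$. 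The same equivalence holds with $\II$ replaced by $\JJ$.

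Next I would verify that under these identifications, the functor $p_!\colon \Fun(\II, \Fun(\sK, \Spaces)) \to \Fun(\JJ, \Fun(\sK, \Spaces))$ corresponds to the left Kan extension
\[
(p \times \id_\sK)_!\colon \Fun(\II \times \sK, \Spaces) \to \Fun(\JJ \times \sK, \Spaces)
\]
along the symmetric monoidal functor $p \times \id_\sK\colon \II \times \sK \to \JJ \times \sK$. This is a standard compatibility between currying and left Kan extension in one factor: since currying identifies $p_!$ on the outer variable with $(p \times \id_\sK)_!$ on the product (both being characterized as the left adjoint to restriction along $p^*$, respectively $(p \times \id_\sK)^*$), the two descriptions agree.

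Finally, since $p \times \id_\sK$ is a symmetric monoidal functor between symmetric monoidal categories, \cref{p-ls-spaces} applies to it directly, showing that $(p \times \id_\sK)_!$ is symmetric monoidal for the Day convolutions. Transporting back along the symmetric monoidal equivalences above, we conclude that $p_!\colon \Fun(\II, \CC) \to \Fun(\JJ, \CC)$ is symmetric monoidal, as desired. The only mildly subtle step is the identification of the iterated Day convolution with the Day convolution on the product, which is really a consequence of the universal property together with the associativity of the Cartesian product of symmetric monoidal categories; the verification that $p_!$ is intertwined with $(p \times \id_\sK)_!$ under currying is then routine.
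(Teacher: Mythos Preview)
Your proposal is correct and follows the same overall strategy as the paper: use the universal property of Day convolution to unwind the iterated functor category, then reduce to \cref{p-ls-spaces}. The execution differs slightly. You identify $\Fun(\II,\Fun(\sK,\Spaces))$ with $\Fun(\II\times\sK,\Spaces)$ and apply \cref{p-ls-spaces} to the symmetric monoidal functor $p\times\id_\sK$, after checking that currying intertwines $p_!$ with $(p\times\id_\sK)_!$. The paper instead swaps the two variables to get a symmetric monoidal equivalence $\Fun(\II,\Fun(\sK,\Spaces))\simeq\Fun(\sK,\Fun(\II,\Spaces))$, applies \cref{p-ls-spaces} to $p$ alone to obtain a symmetric monoidal colimit-preserving functor $\Fun(\II,\Spaces)\to\Fun(\JJ,\Spaces)$, and then invokes \cref{day-range} (post-composition with such a functor is symmetric monoidal for Day convolution). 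Your route avoids appealing to \cref{day-range} at the cost of the extra adjoint-identification step; the paper's route trades that step for a use of \cref{day-range}. Both identifications are routine, and the arguments are equivalent in strength.
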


\begin{proof}
	By applying the universal property of the Day convolution twice, we know that there is an equivalence of functors:
	$$
	\Funlax(- \times \II \times \sK, \Spaces)
	\iso \Funlax(- \times \II, \Fun(\sK, \Spaces))
	\iso \Funlax(-, \Fun(\II, \Fun(\sK, \Spaces)))
	$$
	Using the same reasoning with the roles of $\II$ and $\sK$ reversed, and the equivalence $\II \times \sK \cong \sK \times \II$, the universal property of the Day convolution implies that there is a symmetric monoidal equivalence:
	$$
	\Fun(\sK, \Fun(\II, \Spaces))
	\iso \Fun(\II, \Fun(\sK, \Spaces))
	$$
	Similarly, we have an equivalence with $\JJ$ in place of $\II$.
	
	\cref{p-ls-spaces} constructs a symmetric monoidal functor $p_!\colon \Fun(\II, \Spaces) \to \Fun(\JJ, \Spaces)$.
	Since this functor is also colimit preserving, by \cref{day-range}, post-composition with it gives a symmetric monoidal functor
	$$
	\Fun(\sK, \Fun(\II, \Spaces))
	\to \Fun(\sK, \Fun(\JJ, \Spaces))
	,
	$$
	which under the equivalences above gives the desired map $p_!\colon \Fun(\II, \CC) \to \Fun(\JJ, \CC)$.
\end{proof}

\begin{prop}\label{p-ls-prl}
	Let $\CC \in \CAlg(\PrL)$, then $p_!\colon \Fun(\II, \CC) \to \Fun(\JJ, \CC)$ is symmetric monoidal.
\end{prop}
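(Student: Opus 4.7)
The plan is to reduce to the presheaf case already handled in \cref{p-ls-fun} by presenting $\CC$ as a symmetric monoidal localization of a functor category into $\Spaces$, and then transferring the symmetric monoidality of $p_!$ through the localization.

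First, I would invoke the standard fact that every $\CC \in \CAlg(\PrL)$ arises as a symmetric monoidal localization: there exists a small symmetric monoidal category $\sK$ together with an adjunction $L \colon \Fun(\sK, \Spaces) \rightleftarrows \CC \colon i$ in which $L$ is symmetric monoidal and colimit preserving (with respect to the Day convolution on the source) and $i$ is fully faithful. Concretely, one takes $\sK$ to be (the opposite of) a small symmetric monoidal subcategory of $\CC$ through which the restricted Yoneda embedding exhibits $\CC$ as a reflective subcategory of $\Fun(\sK, \Spaces)$.

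By \cref{day-range}, post-composing with $L$ then yields symmetric monoidal colimit-preserving functors $\tilde L\colon \Fun(\II, \Fun(\sK, \Spaces)) \to \Fun(\II, \CC)$ and $\tilde L\colon \Fun(\JJ, \Fun(\sK, \Spaces)) \to \Fun(\JJ, \CC)$, each equipped with a fully faithful right adjoint $\tilde i$ given by post-composition with $i$. The naturality square
\[
\begin{tikzcd}
\Fun(\II, \Fun(\sK, \Spaces)) \arrow{r}{p_!} \arrow{d}[swap]{\tilde L} & \Fun(\JJ, \Fun(\sK, \Spaces)) \arrow{d}{\tilde L} \\
\Fun(\II, \CC) \arrow{r}{p_!} & \Fun(\JJ, \CC)
\end{tikzcd}
\]
commutes, as one sees by passing to right adjoints: both $p^* \circ \tilde i$ and $\tilde i \circ p^*$ act on $F \colon \JJ \to \CC$ by sending it to $i \circ F \circ p \colon \II \to \Fun(\sK, \Spaces)$. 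Combined with \cref{p-ls-fun}, which ensures the top $p_!$ is symmetric monoidal, this shows that $\tilde L \circ p_! = p_! \circ \tilde L$ is symmetric monoidal.

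The final step is to promote the bottom $p_!$ itself to a symmetric monoidal functor. Since $\tilde L$ is a symmetric monoidal localization (being symmetric monoidal colimit-preserving with fully faithful right adjoint), any colimit-preserving functor out of $\Fun(\II, \CC)$ is symmetric monoidal precisely when its pre-composition with $\tilde L$ is. Applying this criterion to the bottom $p_!$, whose pre-composition with $\tilde L$ is the symmetric monoidal composite just constructed, yields the desired symmetric monoidal structure. The main obstacle is this last step: carefully invoking the universal property of a symmetric monoidal localization to transfer the symmetric monoidal structure across $\tilde L$ at the $\infty$-categorical level, though this is standard once phrased correctly.
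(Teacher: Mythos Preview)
Your proposal is correct and follows essentially the same route as the paper: present $\CC$ as a symmetric monoidal localization of a presheaf category $\Fun(\sK,\Spaces)$, invoke \cref{p-ls-fun} there, and then descend the symmetric monoidal structure on $p_!$ through the induced symmetric monoidal localizations $\Fun(\II,\Fun(\sK,\Spaces)) \to \Fun(\II,\CC)$ and $\Fun(\JJ,\Fun(\sK,\Spaces)) \to \Fun(\JJ,\CC)$. The paper phrases the final step as ``the composite factors through the localization'' rather than via your commuting square, but the content is identical, and your acknowledged ``main obstacle'' is exactly the universal property of symmetric monoidal localization that the paper also leaves implicit.
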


\begin{proof}
	\cite[Proposition 2.2]{LocSym} shows that there is a symmetric monoidal reflective localization $\tilde{\CC} \to \CC$, for some $\tilde{\CC} = \Fun(\sK, \Spaces)$.
	Then $\Fun(\II, \tilde{\CC}) \to \Fun(\II, \CC)$ given by post-composition is also a reflective localization, and by \cref{day-range} it is also symmetric monoidal, and the same holds with $\JJ$ in place of $\II$.
	
	By \cref{p-ls-fun}, we have a symmetric monoidal functor $p_!\colon \Fun(\II, \tilde{\CC}) \to \Fun(\JJ, \tilde{\CC})$.
	Composing this with the symmetric monoidal localization $\Fun(\JJ, \tilde{\CC}) \to \Fun(\JJ, \CC)$, we get a symmetric monoidal functor
	$\Fun(\II, \tilde{\CC}) \to \Fun(\JJ, \CC)$.
	This functor factors through the symmetric monoidal localization $\Fun(\II, \tilde{\CC}) \to \Fun(\II, \CC)$, yielding the symmetric monoidal structure on $p_!\colon \Fun(\II, \CC) \to \Fun(\JJ, \CC)$.
\end{proof}

Applying this to the special case where $p$ is the unit map $* \to \II$ we get

\begin{cor}\label{day-from-c}
	There is a map $F\colon \CC \to \Fun(\II, \CC)$ in $\CAlg(\PrL)$.
\end{cor}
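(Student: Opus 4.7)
The plan is to specialize Proposition \ref{p-ls-prl} to the case where the source of $p$ is the terminal category. First I would observe that $*$, equipped with its unique symmetric monoidal structure, is the initial object of $\CAlg(\Cat)$, so for any symmetric monoidal category $\II$ there is an essentially unique symmetric monoidal functor $p\colon * \to \II$, and this functor selects the unit object $\unit_\II$. This supplies the hypothesis needed to feed into the previous proposition.

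Next, I would apply Proposition \ref{p-ls-prl} to this $p$ (with source $*$ and target $\II$, in the proposition's notation), which immediately yields that the left Kan extension
\[
p_!\colon \Fun(*, \CC) \to \Fun(\II, \CC)
\]
is symmetric monoidal. Under the canonical identification $\Fun(*, \CC) \simeq \CC$, this gives the desired symmetric monoidal functor $F\colon \CC \to \Fun(\II, \CC)$; concretely, $F$ sends $X \in \CC$ to the functor $j_{!}(X)$ obtained by left Kan extension along the inclusion of $\unit_\II$.

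To promote this to a morphism in $\CAlg(\PrL)$, it remains to check that $F$ preserves colimits. By construction $F = p_!$ is left adjoint to the restriction functor $p^*\colon \Fun(\II, \CC) \to \Fun(*, \CC) \simeq \CC$, which is just evaluation at $\unit_\II$. Since limits and colimits in $\Fun(\II, \CC)$ are computed pointwise in $\CC$, the functor $p^*$ preserves all of them, so $p_!$ lies in $\PrL$. Combined with its symmetric monoidal structure, $F$ is indeed a morphism in $\CAlg(\PrL)$.

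There is essentially no obstacle: the real content is already in Proposition \ref{p-ls-prl}, and the corollary amounts to feeding in a well-chosen $p$ together with a short verification that the resulting $p_!$ preserves colimits. The only points to be careful about are (i) that the map $* \to \II$ is canonically symmetric monoidal, which follows from the initiality of $*$ in $\CAlg(\Cat)$, and (ii) that presentability of $\CC$ ensures $\Fun(\II, \CC)$ is presentable and admits the required Kan extension, so $p_!$ exists.
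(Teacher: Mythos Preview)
Your proposal is correct and follows exactly the same approach as the paper: the paper's proof is a single line saying to apply Proposition~\ref{p-ls-prl} to the unit map $p\colon * \to \II$. Your additional verifications that $*\to\II$ is canonically symmetric monoidal and that $p_!$ is a left adjoint (hence lies in $\PrL$) are reasonable details that the paper leaves implicit.
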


Furthermore, applying \cite[Corollary 7.3.2.7]{HA} to the adjunction $p^* \dashv p_!$ we get

\begin{cor}
	Let $\CC \in \CAlg(\PrL)$, then $p^*\colon \Fun(\JJ, \CC) \to \Fun(\II, \CC)$ is lax symmetric monoidal.
\end{cor}

\begin{remark}
	One can directly use the universal property of the Day convolution to show that $p^*$ is lax symmetric monoidal, even only assuming that $p$ is lax symmetric monoidal.
	In fact, one can use the main result of \cite{lax-adj} to construct an oplax symmetric monoidal structure on $p_!$ in this way while only assuming that $p$ is lax symmetric monoidal, and prove that it is symmetric monoidal in case $p$ is.
	However, we have not shown that the lax symmetric monoidal structure on $p^*$ obtained in the above corollary coincides with the one obtained directly from the universal property of the Day convolution.
\end{remark}

\begin{prop}\label{day-prl-tensor}
	Let $\II \in \CAlg(\Cat) \cong \CMon(\Cat)$ and $\CC \in \CAlg(\PrL)$.
	Then the equivalence $\Fun(\II, \Spaces) \otimes \CC \iso \Fun(\II, \CC)$ is symmetric monoidal.
\end{prop}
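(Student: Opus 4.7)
The plan is to realize the stated equivalence as a symmetric monoidal functor by exhibiting both sides as a coproduct in $\CAlg(\PrL)$ of $\Fun(\II, \Spaces)$ and $\CC$, and then identifying the induced map with the canonical equivalence.

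First, I would produce two morphisms in $\CAlg(\PrL)$ into $\Fun(\II, \CC)$ (equipped with Day convolution): (i) a functor $F_1 \colon \CC \to \Fun(\II, \CC)$, which is precisely Corollary \ref{day-from-c} (the application of Proposition \ref{p-ls-prl} to the unit $p \colon * \to \II$ in $\CMon(\Cat)$); (ii) a functor $F_2 \colon \Fun(\II, \Spaces) \to \Fun(\II, \CC)$ obtained from Proposition \ref{day-range} applied to the unit map $u \colon \Spaces \to \CC$, which is symmetric monoidal and colimit-preserving since $\Spaces$ is the initial object of $\CAlg(\PrL)$.

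Second, since the tensor product in $\PrL$ computes the coproduct in $\CAlg(\PrL)$, the pair $(F_1, F_2)$ assembles into a symmetric monoidal functor
$$
\Phi \colon \Fun(\II, \Spaces) \otimes \CC \to \Fun(\II, \CC).
$$
It then remains to identify the underlying $\PrL$-morphism of $\Phi$ with the canonical equivalence. A morphism in $\PrL$ out of a coproduct in $\CAlg(\PrL)$ is determined by its restrictions along the two structural inclusions from $\Fun(\II, \Spaces)$ and from $\CC$. Under the canonical equivalence $\Fun(\II, \Spaces) \otimes \CC \simeq \Fun(\II, \CC)$, these restrictions send a presheaf $X$ to $u \circ X \colon \II \to \CC$ and send $c \in \CC$ to the Day-convolution unit tensored with $c$, namely $\hom_\II(\unit_\II, -) \otimes c$. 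By construction these coincide with $F_2$ (post-composition with $u$) and $F_1$ (the left Kan extension $p_!$ along $p \colon * \to \II$ evaluated at $c$), so $\Phi$ realizes the canonical equivalence and is symmetric monoidal, as claimed.

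The main obstacle is Step 3: matching the structural inclusions of $\Fun(\II, \Spaces) \otimes \CC$ under the canonical equivalence with our maps $F_1$ and $F_2$. This is not conceptually deep once one invokes the universal property of the coproduct, but it requires careful bookkeeping of unit objects, of the description of $p_!$ along $p \colon * \to \II$, and of the $\Spaces$-action on $\CC$. Everything else reduces to a direct application of the previously established results (Propositions \ref{day-range} and \ref{p-ls-prl}, together with Corollary \ref{day-from-c}).
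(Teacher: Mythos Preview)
Your proposal is correct and follows essentially the same strategy as the paper: both use that $\otimes$ is the coproduct in $\CAlg(\PrL)$, produce the two symmetric monoidal maps $\CC \to \Fun(\II,\CC)$ (via \cref{day-from-c}) and $\Fun(\II,\Spaces) \to \Fun(\II,\CC)$ (via \cref{day-range} applied to $\Spaces \to \CC$), and conclude that the induced map out of the coproduct is the canonical equivalence. Your Step~3 is more explicit than the paper, which simply asserts that upgrading these two maps suffices to upgrade the equivalence; the extra bookkeeping you flag is exactly what the paper leaves implicit.
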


\begin{proof}
	Since the tensor product is the coproduct in $\CAlg(\PrL)$, to upgrade the equivalence into a symmetric monoidal functor, it suffices to upgrade the functors $\Fun(\II, \Spaces) \to \Fun(\II, \CC)$ and $\CC \to \Fun(\II, \CC)$ to symmetric monoidal functors.
	Indeed, by \cref{day-range} post-composition with the unit map $\Spaces \to \CC$ yields a symmetric monoidal functor $\Fun(\II, \Spaces) \to \Fun(\II, \CC)$,
	and \cref{day-from-c} shows that $\CC \to \Fun(\II, \CC)$ is symmetric monoidal.
\end{proof}

	\section{Higher Commutative Monoids}\label{sec-cmon}

In this section we recall the notion of ($p$-typical) $m$-commutative monoids as developed in \cite{Harpaz} and \cite{AmbiHeight} (see \cref{def-cmon}), and their relationship to higher semiadditivity (see \cref{cmon-mode}), which feature prominently in the definition of higher semiadditive algebraic K-theory in \cref{sa-k-def}.
A key result of this section is \cref{cmon-mul}, which shows that for $\CC \in \CAlg(\PrL)$, the symmetric monoidal structures on $\CMonm(\CC)$ coming from the mode structure on $\CMonm(\Spaces)$ and from the Day convolution coincide.
This result is used in \cref{k-lax} to endow higher semiadditive algebraic K-theory with a lax symmetric monoidal structure.

\subsection{Definition and Properties}

\begin{defn}
	Let $\CC \in \PrL$ be a presentable category.
	We define the category of \tdef{($p$-typical) pre-$m$-commutative monoids} in $\CC$ by
	$\mdef{\PCMonm[\CC]} = \Fun(\Span(\Spacesm)^\op,\CC)$.
	We define the \tdef{underlying object} functor $\mdef{\underlying}\colon \PCMonm[\CC]\to\CC$ by pre-composition with $* \to \Span(\Spacesm)^\op$, i.e.\ $\underline{X} = X(*)$.
\end{defn}

\begin{defn}\label{def-cmon}
	We say that a pre-$m$-commutative monoid $X \in \PCMonm[\CC]$ is a \tdef{($p$-typical) $m$-commutative monoid} if it satisfies the $m$-Segal condition, i.e.\ the assembly map $X(A) \to \lim_{A}\underline{X}$ is an equivalence for any $m$-finite $p$-space $A$.
	Their category is the full subcategory
	$\mdef{\CMonm[\CC]} = \Fun^{\seg}(\Span(\Spacesm)^\op,\CC) \subseteq \PCMonm[\CC]$.
\end{defn}

\begin{remark}\label{seg-lim}
	The $m$-Segal condition for $X$ is equivalent to $X$ preserving limits indexed by $A \in \Spacesm$.
\end{remark}

\begin{prop}\label{underlying-conservative}
	The restriction of the underlying functor $\underlying\colon \CMonm[\CC]\to\CC$ is conservative.
\end{prop}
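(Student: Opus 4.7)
The plan is to reduce the question of whether a map $f\colon X \to Y$ in $\CMonm[\CC]$ is an equivalence to a pointwise check on $\Span(\Spacesm)^\op$, and then use the Segal condition to express every value $f(A)$ as a limit of copies of $\underline{f}$.

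First I would observe that $\CMonm[\CC]$ is a full subcategory of the functor category $\PCMonm[\CC] = \Fun(\Span(\Spacesm)^\op,\CC)$, so a morphism in $\CMonm[\CC]$ is an equivalence if and only if it is an equivalence at every object $A \in \Span(\Spacesm)^\op$, i.e. for every $m$-finite $p$-space $A$. Hence suppose $f\colon X \to Y$ is a morphism of $m$-commutative monoids whose underlying map $\underline{f} = f(*)\colon \underline{X} \to \underline{Y}$ is an equivalence in $\CC$; the task is to show each $f(A)\colon X(A) \to Y(A)$ is an equivalence.

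Next I would invoke the Segal condition. By \cref{seg-lim} (equivalently, by definition of $\CMonm[\CC]$), the assembly maps
\[
X(A) \xrightarrow{\sim} \lim_{A}\underline{X}, \qquad Y(A) \xrightarrow{\sim} \lim_{A}\underline{Y}
\]
are equivalences for every $A \in \Spacesm$, and these identifications are natural in $f$, so that $f(A)$ is identified with $\lim_A \underline{f}$. Since $\CC$ is presentable it admits the relevant limits, and since limits preserve equivalences, $\lim_A \underline{f}$ is an equivalence whenever $\underline{f}$ is. Hence $f(A)$ is an equivalence for every $A$, and therefore $f$ itself is an equivalence in $\CMonm[\CC]$.

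There is no real obstacle here beyond correctly interpreting ``pointwise'' on $\Span(\Spacesm)^\op$ (whose objects are the same as those of $\Spacesm$), and keeping naturality of the assembly map straight so that $f(A)$ genuinely corresponds to $\lim_A \underline{f}$ under the Segal identifications.
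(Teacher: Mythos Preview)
Your proof is correct and is essentially the same as the paper's: the paper simply writes ``Follows immediately from the $m$-Segal condition,'' and your argument is precisely the unpacking of that sentence via the identification $f(A) \simeq \lim_A \underline{f}$.
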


\begin{proof}
	Follows immediately from the $m$-Segal condition.
\end{proof}

\begin{lem}\label{filt-colims-small-lims}
	Let $\CC \in \PrL$ be a $\kappa$-presentable category.
	Then, $\mu$-filtered colimits commute with $\mu$-small limits in $\CC$, for any $\mu \geq \kappa$.
\end{lem}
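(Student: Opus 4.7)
The plan is to reduce the statement to the corresponding classical fact for $\Spaces$, namely that $\mu$-filtered colimits commute with $\mu$-small limits in $\Spaces$ for any regular cardinal $\mu$ (e.g.\ \cite[Proposition 5.3.3.3]{HTT}), by means of a fully faithful embedding of $\CC$ into a presheaf category in which both operations are computed pointwise.

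Concretely, I would first invoke the restricted Yoneda embedding $\yon\colon \CC \hookrightarrow \PSh(\CC^\kappa)$. Since $\CC$ is $\kappa$-presentable, this functor is fully faithful, preserves all small limits, and preserves $\kappa$-filtered colimits (this is one of the equivalent characterizations of $\kappa$-presentability via presentation as an accessible reflective subcategory of a presheaf category). For any $\mu \geq \kappa$, a $\mu$-filtered category is in particular $\kappa$-filtered, so $\yon$ preserves $\mu$-filtered colimits as well. Therefore the essential image of $\yon$ is closed under both $\mu$-small limits and $\mu$-filtered colimits, and both the colimit $\colim_I \lim_J F$ and the limit $\lim_J \colim_I F$ (for any bifunctor $F\colon I \times J \to \CC$ with $I$ $\mu$-filtered and $J$ $\mu$-small) may be computed equivalently in $\PSh(\CC^\kappa)$ after applying $\yon$.

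Next, I would use that in $\PSh(\CC^\kappa) = \Fun((\CC^\kappa)^{\op}, \Spaces)$ both limits and colimits are computed pointwise, so the comparison map $\colim_I \lim_J \yon F \to \lim_J \colim_I \yon F$ in $\PSh(\CC^\kappa)$ is an equivalence if and only if the analogous map is an equivalence pointwise in $\Spaces$. The pointwise statement is precisely the classical fact that $\mu$-filtered colimits commute with $\mu$-small limits in $\Spaces$. Since $\yon$ is fully faithful, the corresponding comparison map in $\CC$ is an equivalence if and only if its image under $\yon$ is, which concludes the proof.

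I do not expect any substantial obstacle: the only nontrivial inputs are the standard characterization of $\kappa$-presentable categories via the Yoneda embedding and the classical commutation result in $\Spaces$. The mild point worth being explicit about is the inequality $\mu \geq \kappa$, which is precisely what ensures that the $\mu$-filtered colimits we wish to handle are computed correctly inside $\PSh(\CC^\kappa)$ under the Yoneda embedding.
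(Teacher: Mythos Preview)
Your proposal is correct and follows essentially the same approach as the paper: embed $\CC$ into $\PSh(\CC^\kappa)$ via the restricted Yoneda, observe that this embedding preserves both $\mu$-small limits and $\mu$-filtered colimits (the latter using $\mu \geq \kappa$), reduce to the presheaf case where everything is computed pointwise, and invoke \cite[Proposition 5.3.3.3]{HTT} for $\Spaces$. The paper's write-up is organized as three explicit cases ($\Spaces$, then presheaves, then general $\kappa$-presentable), but the content is the same.
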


\begin{proof}
	First, the case $\CC = \Spaces$ is \cite[Proposition 5.3.3.3]{HTT}.
	Second, the case $\CC = \PSh(\CC_0)$ follows from the previous case, since limits and colimits are computed level-wise in functor categories.
	Lastly, for the general case we have that $\CC \cong \mrm{Ind}_\kappa(\CC^\kappa)$.
	By \cite[Proposition 5.3.5.3]{HTT}, $\CC \subseteq \PSh(\CC^\kappa)$ is closed under $\kappa$-filtered colimits.
	Additionally, \cite[Corollary 5.3.5.4 (3)]{HTT} shows that it is also closed under limits, since limits commute with limits.
	To conclude, $\CC$ is closed under $\mu$-filtered colimits and $\mu$-small limits in $\PSh(\CC^\kappa)$ for any $\mu \geq \kappa$, and by the second case the result holds for $\PSh(\CC^\kappa)$ for any $\mu$.
\end{proof}

\begin{prop}
	The inclusion $\CMonm[\CC] \subseteq \PCMonm[\CC]$ preserves all limits and $\CMonm[\CC]$ is presentable.
\end{prop}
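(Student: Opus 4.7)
The plan is to handle the two claims in order, using the closure under limits as an input to the presentability argument.

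For the closure under limits, let $X_j\colon \JJ \to \CMonm[\CC]$ be a diagram and let $X = \lim_j X_j$ computed in $\PCMonm[\CC]$, where limits are formed pointwise on $\Span(\Spacesm)^\op$. For each $A \in \Spacesm$, the assembly map for $X$ fits into a commutative square with the assembly maps for the $X_j$, and since limits commute with limits,
\[
X(A) \;=\; \lim_j X_j(A) \;\xrightarrow{\sim}\; \lim_j \lim_A \underline{X_j} \;\xrightarrow{\sim}\; \lim_A \lim_j \underline{X_j} \;=\; \lim_A \underline{X},
\]
where the first equivalence uses the $m$-Segal condition for each $X_j$. Hence $X$ satisfies the Segal condition, which by \cref{seg-lim} is precisely the condition that $X$ preserve $\Spacesm$-indexed limits.

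For presentability, the strategy is to exhibit $\CMonm[\CC]$ as an accessible, limit-closed, full subcategory of the presentable category $\PCMonm[\CC]$. Pick a regular cardinal $\kappa$ such that $\CC$ is $\kappa$-presentable, and then pick $\mu \geq \kappa$ large enough that every $A \in \Spacesm$ is $\mu$-small (possible because $\Spacesm$ is essentially small, so the collection of indexing diagrams $A$ has a bound). By the preceding lemma, $\mu$-filtered colimits commute with $\mu$-small limits in $\CC$. Consequently, for a $\mu$-filtered diagram $Y_i$ in $\PCMonm[\CC]$ landing in $\CMonm[\CC]$, with colimit $Y = \colim_i Y_i$ computed pointwise, and any $A \in \Spacesm$,
\[
Y(A) \;=\; \colim_i Y_i(A) \;\xrightarrow{\sim}\; \colim_i \lim_A \underline{Y_i} \;\xrightarrow{\sim}\; \lim_A \colim_i \underline{Y_i} \;=\; \lim_A \underline{Y},
\]
so $Y$ again satisfies the Segal condition. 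Thus $\CMonm[\CC] \subseteq \PCMonm[\CC]$ is closed under $\mu$-filtered colimits as well as under limits.

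Combining both closure properties: $\CMonm[\CC]$ is an accessible full subcategory of the presentable $\PCMonm[\CC]$ that is closed under all limits, so it is a presentable category (it is the category of local objects for a small set of maps, namely the assembly maps $X(A) \to \lim_A \underline{X}$ for $A \in \Spacesm$). The only delicate point is to ensure that the cardinality $\mu$ can be chosen uniformly in $A$, which is afforded by the essential smallness of $\Spacesm$; once $\mu$ is fixed the rest is a direct application of the previous lemma together with the fact that limits commute with limits.
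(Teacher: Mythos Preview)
Your proposal is correct and follows essentially the same approach as the paper: closure under limits via ``limits commute with limits,'' closure under sufficiently filtered colimits via the preceding lemma, and then presentability from these two closure properties. The paper combines your two cardinals into one (choosing $\kappa$ so that $\CC$ is $\kappa$-presentable \emph{and} every $A \in \Spacesm$ is $\kappa$-small), and for the final step it cites the reflection principle rather than sketching the local-objects description; your parenthetical about local objects is slightly imprecise since the assembly maps you wrote depend on $X$, but the intended argument is the standard one and matches the paper's conclusion.
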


\begin{proof}
	We essentially repeat the proof in \cite[Lemma 5.17]{Harpaz}.
	Recall that $\CMonm[\CC]$ is the full subcategory of $\PCMonm[\CC]$ on those functors that preserve limits index by $A \in \Spacesm$.
	As limits commute with limits, and limits are computed level-wise in $\PCMonm[\CC]$, we get that $\CMonm[\CC]$ is closed under limits.
	Let $\kappa$ be any cardinal such that $\CC$ is $\kappa$-presentable and all $A \in \Spacesm$ are $\kappa$-small, then by \cref{filt-colims-small-lims}, limits indexed by $A \in \Spacesm$ commute with $\kappa$-filtered colimits in $\CC$.
	Again, as colimits are computed level-wise in $\PCMonm[\CC]$, we get that $\CMonm[\CC]$ is closed under $\kappa$-filtered colimits.
	It follows that $\CMonm[\CC]$ is presentable by the reflection principle of \cite{RP}.
	Alternatively, the presentability follows from \cite[Lemma 2.11 (iv)]{Seg}.
\end{proof}

\begin{defn}
	Let $\CC \in \PrL$.
	We denote by $\mdef{\Lseg}\colon \PCMonm[\CC] \to \CMonm[\CC]$ the left adjoint of the inclusion $\CMonm[\CC] \subseteq \PCMonm[\CC]$.
\end{defn}

Given a functor $F\colon \CC \to \DD$ we get an induced functor $F\colon \PCMonm[\CC] \to \PCMonm[\DD]$ by post-composing level-wise with $F$.

\begin{prop}\label{lim-cmon}
	If $F\colon \CC \to \DD$ commutes with limits indexed by any $m$-finite $p$-space $A$,
	then the restriction of $F\colon \PCMonm[\CC] \to \PCMonm[\DD]$ to $\CMonm[\CC]$ lands in $\CMonm[\DD]$, yielding a functor
	$F\colon \CMonm[\CC] \to \CMonm[\DD]$.
\end{prop}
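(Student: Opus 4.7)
The plan is to verify that post-composition with $F$ preserves the $m$-Segal condition. By \cref{seg-lim}, a pre-$m$-commutative monoid $X \in \PCMonm[\CC]$ lies in $\CMonm[\CC]$ precisely when $X$ preserves limits indexed by any $A \in \Spacesm$, equivalently when the assembly map $X(A) \to \lim_A \underline{X}$ is an equivalence for every $m$-finite $p$-space $A$.

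So fix $X \in \CMonm[\CC]$; I need to check that $F \circ X \in \PCMonm[\DD]$ satisfies the $m$-Segal condition. For any $A \in \Spacesm$, the assembly map for $F \circ X$ is the composition
\[
(F \circ X)(A) = F(X(A)) \xrightarrow{F(\text{assembly})} F\bigl(\lim_A \underline{X}\bigr) \to \lim_A (F \circ \underline{X}) = \lim_A \underline{F \circ X}.
\]
The first map is an equivalence because $X$ already satisfies the $m$-Segal condition, and the second map is an equivalence precisely by the hypothesis that $F$ commutes with $A$-indexed limits. Composing yields the desired equivalence $(F \circ X)(A) \xrightarrow{\sim} \lim_A \underline{F \circ X}$, so $F \circ X \in \CMonm[\DD]$.

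There is no serious obstacle here; the argument is essentially a diagram chase using that the underlying-object functor is defined by restriction along $* \to \Span(\Spacesm)^\op$ (so $\underline{F \circ X} = F \circ \underline{X}$) together with the hypothesis on $F$. The only care needed is to interpret both sides of the Segal map under post-composition, which is immediate from the level-wise definition. The resulting functor $F\colon \CMonm[\CC] \to \CMonm[\DD]$ is then just the restriction of the post-composition functor at the level of pre-$m$-commutative monoids.
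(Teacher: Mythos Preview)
Your proof is correct and follows the same approach as the paper: both use the characterization in \cref{seg-lim} that the $m$-Segal condition is equivalent to preservation of $A$-indexed limits for $A \in \Spacesm$, so that $F \circ X$ preserves such limits because both $X$ and $F$ do. The paper states this in a single sentence, while you have spelled out the assembly-map factorization explicitly.
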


\begin{proof}
	Follows immediately from the characterization given in \cref{seg-lim} and the fact that $F$ commutes with these limits.
\end{proof}

\subsection{Higher Commutative Monoids and Semiadditivity}

The underlying object functor $\underline{(-)}\colon \CMonm[\Spaces] \to \Spaces$ has a left adjoint $\Fseg \colon \Spaces \to \CMonm[\Spaces]$.
The fact that this endows $\CMonm[\Spaces]$ with the structure of a mode was first proved in \cite[Corollary 5.19]{Harpaz} (for $m < \infty$), and subsequently developed in \cite[Proposition 5.3.1]{AmbiHeight}.

\begin{thm}[{\cite[Proposition 5.3.1]{AmbiHeight}}]\label{cmon-mode}
	$\CMonm[\Spaces]$ is a \emph{mode}, that is, it is an idempotent presentable symmetric monoidal category.
	As such, it classifies the property of being ($p$-typically) $m$-semiadditive.
	Furthermore, for any $\CC \in \PrL$ there is an equivalence $\CMonm[\CC] \cong \CMonm[\Spaces] \otimes \CC$,
	and tensoring the unit map $\Spaces \to \CMonm[\Spaces]$ with $\CC$ yields a left adjoint functor $\Fseg\colon \CC \to \CMonm[\CC]$.
\end{thm}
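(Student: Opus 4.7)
My plan is to reduce the statement to its non-$p$-typical counterpart \cite[Proposition 5.3.1]{AmbiHeight} by observing that the proof there only uses formal properties of the collection of allowed spaces, and these properties all hold for $\Spacesm$ in place of the $m$-finite spaces. Concretely, $\Spacesm$ is closed under finite products, finite coproducts, and pullbacks, contains $*$, and every $A \in \Spacesm$ is $\kappa$-small for a sufficiently large cardinal $\kappa$ -- these are the only features of the indexing class used in the reference. Given this, the span $\infty$-category $\Span(\Spacesm)$ inherits a symmetric monoidal structure from cartesian product of spaces, and one endows $\PCMonm[\Spaces]$ with the Day convolution symmetric monoidal structure via \cref{day-univ-prop}.

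First I would construct the symmetric monoidal localization $\Lseg\colon \PCMonm[\Spaces] \to \CMonm[\Spaces]$ and show that the $m$-Segal condition is compatible with the Day convolution, so that $\CMonm[\Spaces]$ inherits a symmetric monoidal structure. Next I would verify idempotence: one exhibits the unit $u\colon \Spaces \to \CMonm[\Spaces]$ (left adjoint to $\underlying$, i.e.\ $\Fseg$ in the statement) and shows that $u \otimes \id\colon \CMonm[\Spaces] \to \CMonm[\Spaces] \otimes \CMonm[\Spaces]$ is an equivalence; this is the formal content of being an idempotent algebra in $\PrL$ and matches the argument in the cited proposition.

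Next, to identify the modules, I would follow the reference: a presentable $\CC$ admits a (necessarily unique) $\CMonm[\Spaces]$-module structure in $\PrL$ exactly when the unit map $\CC \to \CMonm[\Spaces] \otimes \CC$ is an equivalence, and one shows directly that this map coincides with the natural comparison $\CC \to \CMonm[\CC]$, which is an equivalence iff $\CC$ is $m$-semiadditive in the $p$-typical sense. The identification $\CMonm[\CC] = \CMonm[\Spaces] \otimes \CC$ for arbitrary presentable $\CC$ follows from the same manipulation (using \cref{day-prl-tensor} if one wishes to identify the Day convolution description with the tensor product in $\PrL$, together with the fact that the localization $\Lseg$ commutes with tensoring by $\CC$). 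The final claim about $\Fseg$ is then immediate: it is by definition the left adjoint to $\underlying$, and the construction as $u \otimes \CC$ is tautological from the mode structure.

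The only genuinely nontrivial point in adapting from the non-$p$-typical case is checking that the Segal condition for $\Spacesm$ interacts well with Day convolution. This reduces to the statement that the class of $m$-finite $p$-spaces is closed under the operations (products, summands via pullbacks) that appear in the convolution formula, which is immediate since $p$-groups are closed under subgroups, quotients, and extensions. Thus I expect no genuine obstacle beyond carefully transcribing the argument of \cite[Proposition 5.3.1]{AmbiHeight} with $\Spacesm$ replacing the full class of $m$-finite spaces.
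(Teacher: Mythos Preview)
Your proposal is correct and matches the paper's approach exactly: the paper gives no independent proof but simply cites \cite[Proposition 5.3.1]{AmbiHeight} (and \cite[Corollary 5.19]{Harpaz}) together with the remark that ``the cited papers prove the result in the non $p$-typical case, i.e.\ for all $m$-finite spaces, but the same proofs work for the $p$-typical case.'' Your outline is a faithful expansion of what that transcription entails, and the closure properties of $\Spacesm$ you isolate are indeed the only inputs needed.
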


\begin{remark}
	The cited papers prove the result in the non $p$-typical case, i.e.\ for all $m$-finite spaces, but the same proofs work for the $p$-typical case.
\end{remark}

Following \cite[Definition 5.3.3]{AmbiHeight}, we make the following definition.

\begin{defn}\label{tsadim-def}
	We define $\mdef{\tsadim} = \CMonm[\Sp_{(p)}] \cong \CMonm[\Spaces] \otimes \Sp_{(p)}$.
	This is the mode which classifies the property of being a \emph{$p$-local stable and ($p$-typically) $m$-semiadditive} presentable category.
	There is a canonical map of modes, which we call the \tdef{group-completion} $\mdef{(-)^{\gpc}}\colon \CMonm[\Spaces] \to \tsadim$.
\end{defn}

\begin{example}
	The case $m = 0$ reproduces $\tsadi^{[0]} \cong \Sp_{(p)}$ and the group-completion is the map
	$$
	(-)^{\gpc}\colon \CMon[\Spaces] \to \CMongl[\Spaces] \cong \conSp \hookrightarrow \Sp_{(p)}.
	$$
\end{example}

We recall the following:

\begin{prop}\label{lim-cat-lam}
	The inclusions $\Catmfin \to \Cat$, $\Catst \to \Cat$ and $\Catmfinst \to \Catmfin$ create all limits.
\end{prop}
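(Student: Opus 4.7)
The plan is to reduce each of the three cases to standard results about $\Catst$ and $\CatK$, and then to combine them for the last inclusion.

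First, for $\Catst \to \Cat$, I would invoke the result of \cite{HA} (Proposition 1.1.4.4 and the surrounding discussion), which states that $\Catst$ is closed under limits in $\Cat$: the $\Cat$-limit of a diagram of stable categories and exact functors is again stable, and the projection functors are exact. This immediately yields creation of limits for $\Catst \to \Cat$.

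Second, for $\Catmfin = \CatK$ with $\clK = \Spacesm$, I would appeal to the general statement (as in \cite{HTT}, around Proposition 5.5.3.13) that for any collection $\clK$ of simplicial sets the inclusion $\CatK \to \Cat$ creates small limits. Concretely, given $\CC_\bullet\colon \JJ \to \CatK$, the $\Cat$-limit $\CC$ admits $\clK$-indexed colimits computed level-wise from those in each $\CC_j$, and all the projection functors $\CC \to \CC_j$ preserve these colimits, so the limit already lies in $\CatK$ and is taken there.

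For the third case $\Catmfinst \to \Catmfin$, I would argue as follows. Given a diagram $\CC_\bullet\colon \JJ \to \Catmfinst$, compose with the two forgetful inclusions to get diagrams $\JJ \to \Catmfin$ and $\JJ \to \Catst$. By the first two steps, the single underlying $\Cat$-limit $\CC$ simultaneously carries the structure witnessing both: it is stable with exact projections (first step) and admits $m$-finite $p$-space–indexed colimits with projections preserving them (second step). Hence $\CC \in \Catmfinst$ and each projection is a morphism in $\Catmfinst$, so this limit is also the limit in $\Catmfinst$; this is creation.

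The only genuinely non-formal step is ensuring that the two layers of structure on the projection maps are compatible — that is, that exactness (from the $\Catst$ computation) and preservation of $m$-finite colimits (from the $\Catmfin$ computation) refer to the same underlying $\Cat$-morphisms $\CC \to \CC_j$. This is essentially automatic because both are properties, not data, of the underlying functor, and both limits are computed on the same underlying $\Cat$-level object. The main conceptual point, which makes all three cases routine, is that stability and the existence/preservation of a fixed class of colimits are both detected at the level of the underlying $\Cat$-limit, so there is no obstruction to combining them.
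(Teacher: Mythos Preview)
Your proposal is correct and follows essentially the same approach as the paper: cite the standard results from \cite{HA} (Theorem~1.1.4.4) for $\Catst \to \Cat$ and from \cite{HTT} (the paper uses Corollary~5.3.6.10 rather than 5.5.3.13, but the content is the same) for $\CatK \to \Cat$, then combine them for $\Catmfinst \to \Catmfin$. The paper's proof is a three-line version of exactly this argument.
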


\begin{proof}
	The first part is \cite[Corollary 5.3.6.10]{HTT} for $\clK = \emptyset$ and $\clK' = \Spacesm$.
	The second part is \cite[Theorem 1.1.4.4]{HA}.
	The third part follows from the combination of the first two.
\end{proof}

Combining \cref{lim-cmon} and \cref{lim-cat-lam}, we get:

\begin{cor}\label{cmon-cat-lam}
	The inclusions $\Catmfinst \to \Catmfin$ and $\Catmfin \to \Cat$
	induce inclusions $\CMonm[\Catmfinst] \to \CMonm[\Catmfin]$ and $\CMonm[\Catmfin] \to \CMonm[\Cat]$.
\end{cor}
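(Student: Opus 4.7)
The plan is to combine the two preceding results directly: \cref{lim-cat-lam} gives that the inclusions $\Catmfinst \hookrightarrow \Catmfin$ and $\Catmfin \hookrightarrow \Cat$ preserve all limits (in fact they create them), and in particular they preserve limits indexed by any $m$-finite $p$-space $A$. Applying \cref{lim-cmon} to each of these inclusions then produces the induced functors
\[
\CMonm[\Catmfinst] \to \CMonm[\Catmfin], \qquad \CMonm[\Catmfin] \to \CMonm[\Cat].
\]

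Next, I would verify that each induced functor is itself an inclusion, i.e.\ fully faithful. The induced functor on $\PCMonm$ is obtained by post-composition with the inclusion, and post-composition with a fully faithful functor yields a fully faithful functor on functor categories (this is immediate from the formula for mapping spaces in $\Fun(\Span(\Spacesm)^\op, -)$ as an end, or from the pointwise computation). The induced functor on $\CMonm$ is the restriction of this fully faithful functor to the full subcategories $\CMonm[-] \subseteq \PCMonm[-]$, and the restriction of a fully faithful functor to full subcategories remains fully faithful, hence an inclusion.

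This is essentially bookkeeping and I do not anticipate any substantive obstacle; the only subtle point is making sure to invoke \cref{lim-cmon} for the correct class of limits. Since $\Spacesm$-indexed limits are in particular limits of small diagrams, and \cref{lim-cat-lam} gives preservation of \emph{all} limits, this is automatic.
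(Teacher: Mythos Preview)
Your proposal is correct and matches the paper's approach exactly: the paper derives this corollary simply by combining \cref{lim-cmon} and \cref{lim-cat-lam}, without further argument. Your additional verification that the induced functors are fully faithful (via post-composition with a fully faithful functor and restriction to full subcategories) makes explicit a point the paper leaves implicit.
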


\begin{prop}\label{tsadi-atomics}
	Let $\MM = \CMonm[\Sp]$, then for any $\CC \in \Mod_{\MM}$, the category $\CC^{\atomic}$ is stable and $m$-semiadditive.
	Furthermore, the functor taking the atomics gives a lax symmetric monoidal functor $(-)^{\atomic} \colon \Mod_{\MM}^{\intL} \to \Catmfinst$.
	The same holds for the mode $\tsadim \cong \MM \otimes \Sp_{(p)}$, and the corresponding functor $(-)^{\atomic} \colon \Mod_{\tsadim}^{\intL} \to \Catmfinst$ is the restriction of the previous functor.
\end{prop}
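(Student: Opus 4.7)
The main structural input is that $\MM = \CMonm[\Sp]$ is a stable, ($p$-typically) $m$-semiadditive mode; equivalently, it classifies this combined property, so every $\CC \in \Mod_\MM$ inherits both. Stability is automatic from the $\Sp$-factor, and $m$-semiadditivity is provided by \cref{cmon-mode}. From this I would deduce that both finite categories and all $m$-finite $p$-spaces are absolute colimits of $\MM$ in the sense of \cref{abs-def}: the former by \cref{absolute-stable}; for the latter, the norm equivalence $\colim_A \simeq \lim_A$ in any $\CC \in \Mod_\MM$ exhibits $\lim_A$ as a left adjoint, so it commutes with colimits.

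With $\clK$ the collection of these absolute colimits, \cref{atomic-abs} yields a factorization $(-)^{\atomic}\colon \Mod_\MM^{\intL} \to \CatK$, which I then want to refine to a factorization through $\Catmfinst$. Stability of $\CC^{\atomic}$ is exactly \cref{absolute-stable}. For $m$-semiadditivity, the proposition in the excerpt showing that atomics are closed under absolute colimits gives closure of $\CC^{\atomic} \subseteq \CC$ under $A$-shaped colimits for $A \in \Spacesm$; closure under $A$-shaped limits then follows because limits and colimits of this shape agree in $\CC$. The norm map is a natural transformation between functors defined on all of $\CC$, and hence restricts to an equivalence on $\CC^{\atomic}$, proving $m$-semiadditivity. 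Functoriality is immediate: an internally left adjoint functor preserves all colimits and so induces an exact, $\Spacesm$-cocontinuous functor between atomics.

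For the lax symmetric monoidal enhancement, $\MM$ is molecular by \cref{cmon-molecular} together with \cref{s-sp-atomic}, so \cref{atomics-mod-m-at} supplies a lax symmetric monoidal functor $(-)^{\atomic}\colon \Mod_\MM^{\intL} \to \Mod_{\MM^{\atomic}}(\CatK)$. Composing with the (lax symmetric monoidal) forgetful functor to $\CatK$ and using that the essential image lies in $\Catmfinst$ gives the desired functor, provided $\Catmfinst$ inherits a compatible symmetric monoidal structure from $\CatK$. This last point is the step I expect to require the most care: one must verify that the $\CatK$-tensor product of two stable $\Spacesm$-cocomplete categories is itself stable (a standard fact, reducing to the analogous result for $\Catst$), and that exactness is automatic from preservation of finite colimits in that setting, so that $\Catmfinst \hookrightarrow \CatK$ is a symmetric monoidal (non-full) subcategory closed under the relevant tensor products.

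Finally, for the $p$-local variant $\tsadim = \MM \otimes \Sp_{(p)}$: the map $\MM \to \tsadim$ is obtained by further tensoring with the smashing localization $\Sp \to \Sp_{(p)}$ and is therefore itself smashing. By \cref{atomic-smashing}, for any $\CC \in \Mod_{\tsadim}$ the $\tsadim$-atomics coincide with the $\MM$-atomics, so the functor for $\tsadim$ is simply the restriction of the one for $\MM$ along the inclusion $\Mod_{\tsadim}^{\intL} \hookrightarrow \Mod_\MM^{\intL}$, which automatically inherits the lax symmetric monoidal structure.
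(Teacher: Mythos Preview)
Your overall strategy matches the paper's: identify finite categories and $m$-finite $p$-spaces as absolute colimits of $\MM$, deduce stability and $m$-semiadditivity of $\CC^{\atomic}$ (the paper cites \cite[Proposition 4.3.9]{HL} and \cite[Proposition 2.1.4 (4)]{AmbiHeight} where you argue directly via the norm), invoke \cref{atomic-psh-sm-adj} for the lax symmetric monoidal structure landing in $\CatK$, and handle $\tsadim$ via \cref{atomic-smashing}.

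There is one genuine wrinkle you flag but do not resolve correctly. You propose to show that $\Catmfinst \hookrightarrow \CatK$ is a symmetric monoidal subcategory. It is not: although tensor products of stable $\clK$-cocomplete categories are again stable, the \emph{unit} of $\CatK$ is not stable, so the inclusion is not symmetric monoidal (and incidentally the inclusion is full, not non-full, since exactness follows from preservation of finite colimits between stable categories). The paper's fix is to pass to operads: $\Cat_{\mfin}^{\st,\otimes}$ is a sub-operad of $\Cat_\clK^\otimes$ (the multi-ary tensor products agree; only the nullary one differs), and since the image of $(-)^{\atomic}$ lies in stable categories, the associated map of operads $\Mod_\MM^{\intL,\otimes} \to \Cat_\clK^\otimes$ factors through $\Cat_{\mfin}^{\st,\otimes}$, which is exactly a lax symmetric monoidal functor to $\Catmfinst$.

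A smaller point: for the $\tsadim$ case, saying the restriction ``automatically inherits'' the lax structure runs into the same unit mismatch, now for $\Mod_{\tsadim}^{\intL} \hookrightarrow \Mod_\MM^{\intL}$. The paper instead reruns the construction for $\tsadim$ directly and then checks compatibility by observing that the symmetric monoidal left adjoint $\PShK^{\tsadim}$ factors as $\PShKM$ followed by $-\otimes \tsadim$, so the lax right adjoints factor accordingly.
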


\begin{proof}
	We first show that for any $\CC \in \Mod_{\MM}$, $\CC^{\atomic}$ is stable and $m$-semiadditive.
	Recall from \cite[Proposition 4.3.9]{HL} that for any $\CC$ in the mode $\CMonm[\Spaces]$ and $m$-finite $p$-space $A$, $A$-shaped limits in $\CC$ commute with all colimits, showing that all $m$-finite $p$-space $A$ are absolute limits in $\CMonm[\Spaces]$, and by \cref{abs-mod} also in $\MM$.
	Since $\CC^{\atomic}$ is a full subcategory of the $m$-semiadditive category $\CC$, and is closed under all colimits indexed by any $m$-finite $p$-space $A$, it is in fact $m$-semiadditive by \cite[Proposition 2.1.4 (4)]{AmbiHeight}.
	The stability statement follows from \cref{absolute-stable}.

	In particular, this shows that $\clK$, the collection of all finite categories and $m$-finite $p$-spaces, is a collection of absolute limits of $\MM$.
	\cref{atomic-psh-sm-adj} then shows that there is a lax symmetric monoidal functor $(-)^{\atomic} \colon \Mod_{\MM}^{\intL} \to \Cat_\clK$.
	Recall that there is a fully faithful functor $(-)^\otimes\colon \CMon(\Cat)^\lax \to \Op$ from the category of symmetric monoidal categories and lax symmetric monoidal functors to operads.
	Note that $\Catmfinst \subset \Cat_\clK$ is the full subcategory on those categories which are in addition stable, but it is not a sub-symmetric monoidal category, since the unit of $\Cat_\clK$ is not stable.
	However, it is true that the tensor product of a family of categories in either category is the same, in particular $\Cat_{\mfin}^{\st,\otimes}$ is a sub-operad of $\Cat_\clK^\otimes$.
	Therefore, we get that the map of operads $(-)^{\atomic}\colon \Mod_{\MM}^{\intL,\otimes} \to \Cat_\clK^\otimes$ factors through the operad $\Cat_{\mfin}^{\st,\otimes}$, which corresponds to the desired lax symmetric monoidal functor $(-)^{\atomic} \colon \Mod_{\MM}^{\intL} \to \Catmfinst$.
	
	Note that $\tsadim \cong \MM \otimes \Sp_{(p)}$ is a smashing localization of $\MM$.
	The argument above works for $\tsadim$ in place of $\MM$.
	Furthermore, by \cref{atomic-smashing}, the atomic objects with respect to either one are the same, showing that that $(-)^{\atomic} \colon \Mod_{\tsadim}^{\intL} \to \Catmfinst$ is indeed the restriction of $(-)^{\atomic} \colon \Mod_{\MM}^{\intL} \to \Catmfinst$.
	We also note that the lax symmetric monoidal structure on the latter restricts to the lax symmetric monoidal structure on the former.
	To see that, observe that the symmetric monoidal left adjoint of the latter $\PShK^{\tsadim}\colon \Catmfinst \to \Mod_{\tsadim}^{\intL}$ factors as the composition of the symmetric monoidal functors $\Catmfinst \xrightarrow{\PShKM} \Mod_{\MM}^{\intL} \xrightarrow{- \otimes \tsadim} \Mod_{\tsadim}^{\intL}$, so the lax symmetric monoidal right adjoint factors accordingly.
\end{proof}

\subsection{Tensor Product of Higher Commutative Monoids}

Let $\CC \in \CAlg(\PrL)$ be a presentably symmetric monoidal category.
In this subsection, we endow $\CMonm[\CC]$ with two symmetric monoidal structures, and show that they coincide.
The first, which we call the \emph{mode symmetric monoidal structure} (see \cref{mode-sm}), comes from the fact that $\CMonm[\Spaces]$ is a mode.
The second, which we call the \emph{localized Day convolution} (see \cref{localized-day-sm}), is obtained by localizing the Day convolution on $\PCMonm[\CC]$.
Finally, in \cref{cmon-mul} we show that the two structures coincide.

Recall that by \cref{cmon-mode}, $\CMonm[\Spaces]$ is a mode, and in particular it is equipped with a symmetric monoidal structure.

\begin{defn}\label{mode-sm}
	Let $\CC \in \CAlg(\PrL)$ be a presentably symmetric monoidal category.
	The equivalence $\CMonm[\CC] \cong \CMonm[\Spaces] \otimes \CC$ of \cref{cmon-mode} endows $\CMonm[\CC]$ with a presentably symmetric monoidal structure which we call the \tdef{mode symmetric monoidal structure} and denote by $\mdef{\otimes}$.
	Furthermore, by construction, $\Fseg\colon \CC \to \CMonm[\CC]$ is endowed with a symmetric monoidal structure.
\end{defn}

In a different direction, consider the category $\Span(\Spacesm)$.
Since $\Spacesm$ is closed under products, it has a cartesian monoidal structure.
By \cite[Theorem 1.2 (iv)]{Spans}, its span category $\Span(\Spacesm)$ is endowed with a symmetric monoidal structure given on objects by their cartesian product in $\Spacesm$.
Therefore, the opposite category $\Span(\Spacesm)^\op$ is also endowed with a symmetric monoidal structure.

\begin{remark}
	The symmetric monoidal structure on $\Span(\Spacesm)$ that we use is \emph{not} the cartesian or cocartesian structure.
	In fact, the cartesian and cocartesian structures coincide (since products and coproducts coincide in $\Span(\Spacesm)$, being a semiadditive category), and are given on objects by the disjoint union of spaces, whereas the symmetric monoidal structure we use is given on objects by the product of spaces.
\end{remark}

\begin{defn}
	Let $\CC \in \CAlg(\PrL)$ be a presentably symmetric monoidal category.
	We endow $\PCMonm[\CC] = \Fun(\Span(\Spacesm)^\op,\CC)$ with the \tdef{Day convolution} of \cref{day-univ-prop}, which we denote by $\mdef{\circledast}$.
	By \cref{day-from-c}, we have a symmetric monoidal functor $F\colon \CC \to \PCMonm[\CC]$.
\end{defn}

Our next goal, achieved in \cref{sh-compat}, is to show that the Day convolution is compatible with the reflective localization $\Lseg\colon \PCMonm[\CC] \to \CMonm[\CC]$, endowing it with a localized symmetric monoidal structure.
Recall from \cite[Example 2.2.1.7]{HA} that for a symmetric monoidal category $\DD$, we say that a reflective localization $L\colon \DD \to \DD_0$ is compatible with the symmetric monoidal structure, if and only if for any $L$-equivalence $X \to Y \in \DD$ and $Z \in \DD$, the morphism $X \otimes Z \to Y \otimes Z \in \DD$ is an $L$-equivalence.

\begin{remark}
	By the Yoneda lemma, a map $X \to Y$ is an $L$-equivalence if and only if for any $T \in \DD_0$ the induced map
	$\hom(Y, T) \to \hom(X, T)$
	is an equivalence.
\end{remark}

\begin{lem}\label{compat-int-hom}
	Let $\DD$ and $L$ be as above, and assume further that the symmetric monoidal structure on $\DD$ is closed.
	Then, the reflective localization is compatible with the symmetric monoidal structure if and only if for any $L$-equivalence $X \to Y \in \DD$ and $T \in \DD_0$ the induced map on internal homs
	$\hom^\DD(Y, T) \to \hom^\DD(X, T)$
	is an equivalence.
\end{lem}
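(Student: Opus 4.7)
The plan is to show that both conditions in the lemma are equivalent, via the Yoneda remark immediately preceding the statement together with the tensor–internal hom adjunction $-\otimes Z \dashv \hom^\DD(Z,-)$ (valid since the symmetric monoidal structure on $\DD$ is closed). Throughout, I will freely use that $\DD$ is \emph{symmetric} monoidal, so that $W\otimes Z\simeq Z\otimes W$.

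For the forward direction, assume the symmetric monoidal structure is compatible with $L$. Let $X\to Y$ be an $L$-equivalence and $T\in\DD_0$. I want to show $\hom^\DD(Y,T)\to\hom^\DD(X,T)$ is an equivalence, and by the (ordinary) Yoneda lemma it suffices to check that for every $W\in\DD$ the induced map
$$
\hom(W,\hom^\DD(Y,T))\longrightarrow\hom(W,\hom^\DD(X,T))
$$
is an equivalence. Under the tensor–hom adjunction this map identifies with $\hom(W\otimes Y,T)\to\hom(W\otimes X,T)$. By compatibility, $W\otimes X\to W\otimes Y$ is an $L$-equivalence, and since $T\in\DD_0$ the Yoneda remark concludes.

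For the reverse direction, assume the internal hom condition, let $X\to Y$ be an $L$-equivalence, and let $Z\in\DD$. To show $X\otimes Z\to Y\otimes Z$ is an $L$-equivalence, by the Yoneda remark it suffices to prove that for every $T\in\DD_0$ the map $\hom(Y\otimes Z,T)\to\hom(X\otimes Z,T)$ is an equivalence. Using the symmetry of $\otimes$ and the adjunction, we rewrite this map as
$$
\hom(Z,\hom^\DD(Y,T))\longrightarrow\hom(Z,\hom^\DD(X,T)),
$$
which is induced by $\hom^\DD(Y,T)\to\hom^\DD(X,T)$. By hypothesis the latter is an equivalence, so the induced map on $\hom(Z,-)$ is as well.

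There is no real obstacle: the whole proof is a single application of the tensor–internal hom adjunction in each direction, combined with Yoneda (ordinary in one direction, the Yoneda remark in the other). The only mildly subtle point is remembering to invoke symmetry of the monoidal structure in the reverse direction, in order to land on the \emph{right} slot of $\hom^\DD$ so that the hypothesis applies directly.
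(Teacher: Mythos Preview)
Your proof is correct and takes essentially the same approach as the paper: both arguments are a straightforward chain of Yoneda plus the tensor--internal hom adjunction, with the paper phrasing it as a single string of ``if and only if'' statements rather than splitting into two directions. Your explicit mention of the symmetry of the monoidal structure is a small bonus, as the paper uses it silently when passing from $\hom(Y\otimes Z,T)$ to $\hom(Z,\hom^\DD(Y,T))$.
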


\begin{proof}
	Fix a map $X \to Y \in \DD$.
	By the Yoneda lemma,
	$\hom^\DD(Y, T) \to \hom^\DD(X, T)$ is an equivalence for any $T \in \DD_0$,
	if and only if the map
	$\hom(Z, \hom^\DD(Y, T)) \to \hom(Z, \hom^\DD(X, T))$
	is an equivalence for any $Z \in \DD, T \in \DD_0$.
	By adjunction, the latter holds if and only if
	$\hom(Y \otimes Z, T) \to \hom(X \otimes Z, T)$
	is an equivalence for any $Z \in \DD, T \in \DD_0$.
	By the Yoneda lemma, this holds if and only if $X \otimes Z \to Y \otimes Z$ is an $L$-equivalence for any $Z \in \DD$.
\end{proof}

\begin{lem}\label{tensor-compat}
	Let $\DD, \DD_0, \EE \in \CAlg(\PrL)$ and let $L\colon \DD \to \DD_0$ be a reflective localization which is compatible with symmetric monoidal structure.
	Then $L \otimes \id\colon \DD \otimes \EE \to \DD_0 \otimes \EE$ is also a reflective localization compatible with the symmetric monoidal structure on $\DD \otimes \EE$.
\end{lem}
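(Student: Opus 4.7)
The plan is to exploit the fact that the tensor product in $\PrL$ is the coproduct in $\CAlg(\PrL)$, and to characterize compatible symmetric monoidal reflective localizations in $\CAlg(\PrL)$ as epimorphisms — equivalently, as maps $L\colon\DD\to\DD_0$ for which the multiplication (codiagonal) $\mu\colon\DD_0\otimes_\DD\DD_0\to\DD_0$ is an equivalence. This reduces the lemma to the formal statement that epimorphisms are stable under pushouts.

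First, I would recall (cf.\ \cite[\S 4.8]{HA}) that for $L\colon\DD\to\DD_0$ in $\CAlg(\PrL)$, being a compatible symmetric monoidal reflective localization is equivalent to $L$ being an epimorphism in $\CAlg(\PrL)$, i.e.\ to $\mu$ being an equivalence. Next, since $\otimes$ is the coproduct in $\CAlg(\PrL)$, the square
$$
\begin{tikzcd}
\DD \ar[r, "L"] \ar[d] & \DD_0 \ar[d] \\
\DD \otimes \EE \ar[r, "L \otimes \id"'] & \DD_0 \otimes \EE
\end{tikzcd}
$$
is a pushout in $\CAlg(\PrL)$. Base-changing the equivalence $\mu$ along $\DD\to\DD\otimes\EE$ then produces the codiagonal of $L\otimes\id$:
$$
(\DD_0 \otimes \EE) \otimes_{\DD \otimes \EE} (\DD_0 \otimes \EE)
\simeq (\DD_0 \otimes_\DD \DD_0) \otimes \EE
\simeq \DD_0 \otimes \EE,
$$
where the first equivalence comes from base change of relative tensor products, and the second from the hypothesis on $L$. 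Thus $L\otimes\id$ is again an epimorphism in $\CAlg(\PrL)$, hence a compatible symmetric monoidal reflective localization.

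The main obstacle is justifying the dictionary between epimorphisms in $\CAlg(\PrL)$ and compatible symmetric monoidal reflective localizations. If the cited version of the dictionary is not applied directly, a self-contained alternative is: construct a right adjoint $R'$ of $L\otimes\id$ by the adjoint functor theorem (valid since $L\otimes\id$ is a morphism in $\PrL$), and deduce fully faithfulness of $R'$ from the codiagonal computation above by translating it into the statement that the counit $(L\otimes\id)R'\to\id$ is an equivalence. Compatibility with the symmetric monoidal structure then follows automatically, either because everything took place inside $\CAlg(\PrL)$, or alternatively from the internal-hom criterion of \cref{compat-int-hom} combined with the observation that $(L\otimes\id)$-equivalences are generated by tensors of $L$-equivalences with objects of $\EE$.
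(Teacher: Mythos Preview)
Your approach is genuinely different from the paper's and more conceptual, but there is a gap in the key step.

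The paper argues directly: it first cites \cite[Lemma 5.2.1]{AmbiHeight} to conclude that $L' = L \otimes \id$ is a reflective localization, then uses \cite[Proposition 2.2.1.9]{HA} to make $L$ (and hence $L'$, via the coproduct description of $\otimes$ in $\CAlg(\PrL)$) symmetric monoidal, and finally checks compatibility by hand: for an $L'$-equivalence $X \to Y$ and any $Z$, the map $L'(X \otimes Z) \to L'(Y \otimes Z)$ is $(L'X \to L'Y) \otimes L'Z$, which is an equivalence.

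Your route via epimorphisms in $\CAlg(\PrL)$ is elegant, and the forward direction of your dictionary (compatible symmetric monoidal reflective localization $\Rightarrow$ epimorphism) together with stability of epimorphisms under pushout is unproblematic. The gap is the converse: from the codiagonal $(\DD_0 \otimes \EE) \otimes_{\DD \otimes \EE} (\DD_0 \otimes \EE) \simeq \DD_0 \otimes \EE$ you want to conclude that the right adjoint $R'$ of $L'$ is fully faithful. But the codiagonal equivalence is a statement about the relative tensor product in $\Mod_{\DD \otimes \EE}(\PrL)$, i.e.\ a $2$-categorical statement about module \emph{categories}, whereas the counit $L'R' \to \id$ is a natural transformation of endofunctors of the category $\DD_0 \otimes \EE$ itself. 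Your proposed translation between these two lives at different categorical levels, and bridging them is not immediate; for instance, idempotency of $\DD_0 \otimes \EE$ as a $(\DD \otimes \EE)$-algebra tells you that $\Mod_{\DD_0 \otimes \EE}(\PrL) \hookrightarrow \Mod_{\DD \otimes \EE}(\PrL)$ is fully faithful, not that $R'\colon \DD_0 \otimes \EE \to \DD \otimes \EE$ is. The vague citation to \cite[\S 4.8]{HA} does not point to a statement that closes this gap. In practice, to finish your argument you would still need something equivalent to \cite[Lemma 5.2.1]{AmbiHeight}, which is precisely what the paper invokes; once that is in hand, the paper's direct compatibility check is shorter than routing through the epimorphism characterization.
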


\begin{proof}
	We let $L' = L \otimes \id$.
	First note that $L'$ is indeed a reflective localization by \cite[Lemma 5.2.1]{AmbiHeight}.
	Using \cite[Proposition 2.2.1.9]{HA} we endow $\DD_0$ with the localized symmetric monoidal structure, making $L$ into a symmetric monoidal functor.
	Since $\otimes$ is the coproduct of $\CAlg(\PrL)$, this makes the categories and the map $L'\colon \DD \otimes \EE \to \DD_0 \otimes \EE$ symmetric monoidal.
	Now let $X \to Y \in \DD \otimes \EE$ be an $L'$-equivalence.
	For any $Z \in \DD \otimes \EE$, we have
	$$
	L'(X \otimes Z \to Y \otimes Z)
	\cong
	(L' X \iso L' Y) \otimes (L' Z \xrightarrow{\id} L' Z)
	$$
	which is an equivalence.
\end{proof}

Note that by the Yoneda lemma, for any $A \in \Spacesm$, the object $\yon(A) \in \PCMonm[\Spaces]$ corepresents the evaluation at $A$ functor $\PCMonm[\Spaces] \to \Spaces$
given by $X \mapsto  X(A)$.
We also note that these functors over all $A \in \Spacesm$ are jointly conservative.

\begin{lem}\label{UA-cmon}
	Let $X \in \PCMonm[\Spaces]$, and $A \in \Spacesm$, then 
	$$
	\hom^{\PCMonm[\Spaces]}(\yon(A), X)
	\cong X(A \times -)
	.
	$$
	In particular, if $X \in \CMon[\Spaces]$, then so is $\hom^{\PCMonm[\Spaces]}(\yon(A), X)$.
\end{lem}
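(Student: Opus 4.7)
The plan is to compute the internal hom pointwise on representables and then invoke the fact that the Yoneda embedding is symmetric monoidal with respect to the Day convolution.

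First, I would evaluate $\hom^{\PCMonm[\Spaces]}(\yon(A), X)$ at an arbitrary $B \in \Span(\Spacesm)^\op$ using the Yoneda lemma:
$$
\hom^{\PCMonm[\Spaces]}(\yon(A), X)(B)
= \hom(\yon(B), \hom^{\PCMonm[\Spaces]}(\yon(A), X))
= \hom(\yon(B) \circledast \yon(A), X),
$$
where the second step is the internal-hom/tensor adjunction for the Day convolution. Next, I would use that the Yoneda embedding is symmetric monoidal when the target is equipped with the Day convolution, so $\yon(B) \circledast \yon(A) = \yon(B \times A)$; this is the specialization of \cref{yoneda-O} (or \cite[Corollary 4.8.1.12]{HA}) to $\MM = \Spaces$, $\clK = \emptyset$ and the symmetric monoidal structure on $\Span(\Spacesm)$ coming from products of $p$-spaces. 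Applying the Yoneda lemma once more gives
$$
\hom^{\PCMonm[\Spaces]}(\yon(A), X)(B)
= \hom(\yon(A \times B), X)
= X(A \times B),
$$
which, together with the naturality of this identification in $B$, yields the desired equivalence $\hom^{\PCMonm[\Spaces]}(\yon(A), X) = X(A \times -)$.

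For the second assertion, I would check the $m$-Segal condition for $Y := X(A \times -)$ directly. Given any $C \in \Spacesm$, the Segal condition for $X$ at the $m$-finite $p$-space $A \times C$ gives $X(A \times C) = \lim_{A \times C} \underline{X}$, and since limits commute with limits this equals $\lim_C \lim_A \underline{X} = \lim_C X(A) = \lim_C \underline{Y}$. Hence $Y$ is itself an $m$-commutative monoid.

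The only non-routine ingredient is the monoidality of the Yoneda embedding with respect to the Day convolution, but this is already available from \cref{yoneda-O} (and moreover is classical at the level of unenriched presheaves), so I do not foresee a substantive obstacle.
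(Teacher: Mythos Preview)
Your proposal is correct and follows essentially the same argument as the paper: both compute the internal hom via the Yoneda lemma, the tensor--hom adjunction for the Day convolution, and the symmetric monoidality of the Yoneda embedding (citing \cite[Corollary 4.8.1.12]{HA}). For the second part the paper phrases things slightly more abstractly---observing that $X(A \times -)$ is the precomposition of $X$ with $A \times -\colon \Span(\Spacesm)^\op \to \Span(\Spacesm)^\op$, which preserves $\Spacesm$-indexed limits, and invoking \cref{seg-lim}---but your direct verification via Fubini is the same argument unpacked one step further.
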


\begin{proof}
	By \cite[Corollary 4.8.1.12]{HA}, the Yoneda embedding $\yon\colon \Span(\Spacesm) \to \PCMonm[\Spaces]$ is symmetric monoidal, so that $\yon(A) \otimes \yon(-) \cong \yon(A \times -)$.
	Therefore, we get
	\begin{align*}
		\hom^{\PCMonm[\Spaces]}(\yon(A), X)(-)
		&\cong \hom(\yon(-), \hom^{\PCMonm[\Spaces]}(\yon(A), X))\\
		&\cong \hom(\yon(A) \otimes \yon(-), X)\\
		&\cong \hom(\yon(A \times -), X)\\
		&\cong X(A \times -)
		.
	\end{align*}
	For the second part, note that $X(A \times -)$ is the pre-composition of $X$ with $A \times -\colon \Span(\Spacesm)^\op \to \Span(\Spacesm)^\op$ which preserves limits indexed by $B \in \Spacesm$.
	By \cref{seg-lim}, the $m$-Segal condition is equivalent to preservation of limits indexed by $B \in \Spacesm$, so the result follows.
\end{proof}

\begin{lem}\label{sh-S-compat}
	The reflective localization $\Lseg\colon \PCMonm[\Spaces] \to \CMonm[\Spaces]$ is compatible with the Day convolution.
\end{lem}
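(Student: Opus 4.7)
The plan is to verify the criterion supplied by \cref{compat-int-hom}: given an $\Lseg$-equivalence $f\colon X \to Y$ in $\PCMonm[\Spaces]$ and $T \in \CMonm[\Spaces]$, I must show that the induced map $\hom^{\PCMonm[\Spaces]}(Y, T) \to \hom^{\PCMonm[\Spaces]}(X, T)$ is an equivalence. The key move is to translate this into the statement that \emph{every} internal hom into $T$ is already $\Lseg$-local; once that is in place, everything reduces to representables and is handled by \cref{UA-cmon}.

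First, I would rephrase the criterion. By the Yoneda lemma in $\PCMonm[\Spaces]$, the desired map is an equivalence iff, for every $Z \in \PCMonm[\Spaces]$, the map
$$
\hom(Z, \hom^{\PCMonm[\Spaces]}(Y, T)) \to \hom(Z, \hom^{\PCMonm[\Spaces]}(X, T))
$$
is an equivalence. Applying the tensor-hom adjunction twice (commuting $Z$ past), this is in turn equivalent to
$$
\hom(Y, \hom^{\PCMonm[\Spaces]}(Z, T)) \to \hom(X, \hom^{\PCMonm[\Spaces]}(Z, T))
$$
being an equivalence. Since $f$ is an $\Lseg$-equivalence by hypothesis, this last condition holds automatically provided that $\hom^{\PCMonm[\Spaces]}(Z, T)$ is $\Lseg$-local, i.e., lies in $\CMonm[\Spaces]$. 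So the proof reduces to the following claim: \emph{for every $Z \in \PCMonm[\Spaces]$ and every $T \in \CMonm[\Spaces]$, one has $\hom^{\PCMonm[\Spaces]}(Z, T) \in \CMonm[\Spaces]$.}

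Next, I would reduce the claim to the case of representable $Z$. Writing $Z$ as a colimit of representables $Z = \colim_i \yon(A_i)$ with $A_i \in \Spacesm$, and using that the internal hom carries colimits in the first variable to limits,
$$
\hom^{\PCMonm[\Spaces]}(Z, T) = \lim_i \hom^{\PCMonm[\Spaces]}(\yon(A_i), T).
$$
Since the inclusion $\CMonm[\Spaces] \subseteq \PCMonm[\Spaces]$ preserves limits (established earlier in this subsection), it suffices to verify the claim when $Z = \yon(A)$ for some $A \in \Spacesm$. But this is precisely the content of \cref{UA-cmon}, which identifies $\hom^{\PCMonm[\Spaces]}(\yon(A), T)$ with $T(A \times -)$ and confirms that it again satisfies the $m$-Segal condition. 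This completes the proof. The only real subtlety is the initial reformulation, which swaps $Z$ into the second argument of the internal hom so that the $\Lseg$-equivalence $f$ is the one being tested against a local object; everything downstream is a formal colimit-of-representables argument.
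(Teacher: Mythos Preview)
Your proof is correct and follows essentially the same approach as the paper. The only cosmetic difference is that the paper tests the map $\hom^{\PCMonm[\Spaces]}(Y,T) \to \hom^{\PCMonm[\Spaces]}(X,T)$ directly against the representables $\yon(A)$ (using that evaluations at $A \in \Spacesm$ are jointly conservative), whereas you test against all $Z$ and then reduce to representables via a colimit argument; both routes immediately reduce to \cref{UA-cmon}.
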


\begin{proof}
	By \cref{compat-int-hom}, it suffices to show that for any $\CMonm[\Spaces]$-equivalence $X \to Y$ and $T \in \CMonm[\Spaces]$ the induced map
	$\hom^{\PCMonm[\Spaces]}(Y, T) \to \hom^{\PCMonm[\Spaces]}(X, T)$
	is an equivalence.
	Since the evaluations at $A \in \Spacesm$ are jointly conservative, it suffices to show that for any $A \in \Spacesm$ the map
	$\hom(\yon(A), \hom^{\PCMonm[\Spaces]}(Y, T)) \to \hom(\yon(A), \hom^{\PCMonm[\Spaces]}(X, T))$
	is an equivalence.
	By adjunction, this is equivalent to showing that
	$\hom(Y, \hom^{\PCMonm[\Spaces]}(\yon(A), T)) \to \hom(X, \hom^{\PCMonm[\Spaces]}(\yon(A), T))$
	is an equivalence.
	By assumption $X \to Y$ is an $\CMonm[\Spaces]$-equivalence and $T \in \CMonm[\Spaces]$, so the result follow from the second part of \cref{UA-cmon}.
\end{proof}

\begin{prop}\label{sh-compat}
	The reflective localization $\Lseg\colon \PCMonm[\CC] \to \CMonm[\CC]$ is compatible with the Day convolution.
\end{prop}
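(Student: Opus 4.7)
The plan is to reduce the statement to the already-established case $\CC = \Spaces$ (\cref{sh-S-compat}) by base-changing along $\CC$ and invoking \cref{tensor-compat}.

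First, I would identify both sides of $\Lseg$ as tensor products with $\CC$ in a symmetric monoidal fashion. On the target side, \cref{cmon-mode} gives a symmetric monoidal equivalence $\CMonm[\CC] = \CMonm[\Spaces] \otimes \CC$. On the source side, I would apply \cref{day-prl-tensor} to $\II = \Span(\Spacesm)^\op$ (equipped with the symmetric monoidal structure coming from the cartesian product on $\Spacesm$) to obtain a symmetric monoidal equivalence
$$
\PCMonm[\CC]
= \Fun(\Span(\Spacesm)^\op, \CC)
= \Fun(\Span(\Spacesm)^\op, \Spaces) \otimes \CC
= \PCMonm[\Spaces] \otimes \CC,
$$
where the Day convolutions on both factors correspond under this equivalence.

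Next, \cref{sh-S-compat} shows that the reflective localization $\Lseg\colon \PCMonm[\Spaces] \to \CMonm[\Spaces]$ is compatible with the Day convolution on $\PCMonm[\Spaces]$. Applying \cref{tensor-compat} with $\DD = \PCMonm[\Spaces]$, $\DD_0 = \CMonm[\Spaces]$ and $\EE = \CC$, we conclude that
$$
\Lseg \otimes \id_\CC\colon
\PCMonm[\Spaces] \otimes \CC \to \CMonm[\Spaces] \otimes \CC
$$
is a reflective localization compatible with the (tensor product) symmetric monoidal structure.

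Finally, I would verify that under the two symmetric monoidal identifications above, the functor $\Lseg \otimes \id_\CC$ agrees with $\Lseg\colon \PCMonm[\CC] \to \CMonm[\CC]$. By the uniqueness of reflective localizations, it suffices to check that both have the same class of local objects, namely the pre-$m$-commutative monoids in $\CC$ satisfying the $m$-Segal condition; equivalently, that the inclusion $\CMonm[\Spaces] \otimes \CC \hookrightarrow \PCMonm[\Spaces] \otimes \CC$ corresponds under the identifications to the inclusion $\CMonm[\CC] \hookrightarrow \PCMonm[\CC]$, which follows from the construction in \cref{cmon-mode} (where $\CMonm[\Spaces] \otimes \CC$ sits inside $\PCMonm[\Spaces] \otimes \CC$ exactly as the $\CC$-valued functors satisfying the Segal condition). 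The only mild subtlety — and the step most worth spelling out carefully — is this last bookkeeping identification of the localization $\Lseg \otimes \id_\CC$ with $\Lseg$ on $\PCMonm[\CC]$; once that is in hand, compatibility with Day convolution transports immediately from \cref{tensor-compat}.
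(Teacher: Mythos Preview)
Your approach is essentially identical to the paper's: both reduce to the case $\CC = \Spaces$ via \cref{sh-S-compat}, base-change along $\CC$ using \cref{tensor-compat}, and transport across the symmetric monoidal equivalence of \cref{day-prl-tensor}, with the commutativity of the resulting square in $\PrL$ identifying $\Lseg \otimes \id_\CC$ with $\Lseg$ on $\PCMonm[\CC]$. One small remark: you needn't (and shouldn't) invoke a \emph{symmetric monoidal} equivalence on the target side from \cref{cmon-mode} --- at this point the localized Day convolution on $\CMonm[\CC]$ has not yet been defined (indeed, this proposition is what justifies defining it), and compatibility of a localization with a symmetric monoidal structure is a condition purely on the source; the bottom equivalence is used only to identify the underlying reflective localization.
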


\begin{proof}
	Consider the following commutative diagram in $\PrL$:
	$$\begin{tikzcd}
		\PCMonm[\Spaces] \otimes \CC \arrow{r}{\sim} \arrow{d}{} & \PCMonm[\CC] \arrow{d}{} \\
		\CMonm[\Spaces] \otimes \CC \arrow{r}{\sim} & \CMonm[\CC]
	\end{tikzcd}$$
	The bottom map is an equivalence by \cref{cmon-mode}.
	The top map is a symmetric monoidal equivalence by \cref{day-prl-tensor}.
	By \cref{sh-S-compat}, $\Lseg\colon \PCMonm[\Spaces] \to \CMonm[\Spaces]$ is compatible with the Day convolution, so by \cref{tensor-compat} the left map is also compatible with the symmetric monoidal structure.
	Therefore, the right map is also compatible with the Day convolution.
\end{proof}

\begin{defn}\label{localized-day-sm}
	Using \cite[Proposition 2.2.1.9]{HA}, we endow $\CMonm[\CC] \subseteq \PCMonm[\CC]$ with the induced symmetric monoidal structure, which we call the \tdef{localized Day convolution} and denote by $\mdef{\hat{\circledast}}$.
	This makes the functor $\Lseg\colon \PCMonm[\CC] \to \CMonm[\CC]$ symmetric monoidal (with respect to the localized Day convolution), thus by \cite[Corollary 7.3.2.7]{HA} the right adjoint $\CMonm[\CC] \subseteq \PCMonm[\CC]$ is lax symmetric monoidal.
\end{defn}

The main result of this subsection is the following:

\begin{thm}\label{cmon-mul}
	Let $\CC \in \CAlg(\PrL)$, then the mode symmetric monoidal structure and the localized Day convolution on $\CMonm[\CC]$ coincide, making the following diagram in $\CAlg(\PrL)$ commute:
	$$\begin{tikzcd}
	 & \PCMonm[\CC] \arrow{d}{\Lseg} \\
	\CC \arrow{r}[swap]{\Fseg} \arrow{ru}{F} & \CMonm[\CC]
	\end{tikzcd}$$
\end{thm}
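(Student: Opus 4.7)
The approach is to leverage the idempotency of the mode $\CMonm[\Spaces] \in \CAlg(\PrL)$ from \cref{cmon-mode} to pin down both structures via the same universal property.

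First I would note that $\CMonm[\CC]$ equipped with the localized Day convolution $\hat\circledast$ has an $m$-semiadditive underlying presentable category, since $m$-semiadditivity is a property of the underlying category and $\CMonm[\CC] \simeq \CC \otimes \CMonm[\Spaces]$ is already $m$-semiadditive by \cref{cmon-mode}. By idempotency, there is a unique symmetric monoidal left adjoint $\CMonm[\Spaces] \to (\CMonm[\CC], \hat\circledast)$ in $\CAlg(\PrL)$. Pairing this with the symmetric monoidal functor $\Lseg \circ F\colon \CC \to (\CMonm[\CC], \hat\circledast)$ (where $F$ is symmetric monoidal by \cref{day-from-c} and $\Lseg$ is symmetric monoidal by \cref{localized-day-sm}) yields a symmetric monoidal left adjoint
$$\Phi\colon \CC \otimes \CMonm[\Spaces] \to (\CMonm[\CC], \hat\circledast)$$
in $\CAlg(\PrL)$ satisfying $\Phi \circ \Fseg = \Lseg \circ F$.

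Next I would show that $\Phi$ is the identity on underlying categories. The key observation is that both $\Fseg$ and $\Lseg \circ F$ are left adjoints to $\underlying\colon \CMonm[\CC] \to \CC$: this is by construction for $\Fseg$, while for $\Lseg \circ F$ note that $F = p_!$ is left Kan extension along the inclusion $p\colon \{*\} \to \Span(\Spacesm)^\op$ of the unit (from the proof of \cref{p-ls-prl} applied in \cref{day-from-c}), hence left adjoint to $p^* = \underlying$, while $\Lseg$ is left adjoint to the fully faithful inclusion of \cref{L-K-def}. Therefore $\Fseg \simeq \Lseg \circ F$ as plain functors. By idempotency, both structures give $\CMonm[\CC]$ the same canonical $\CMonm[\Spaces]$-module structure in $\PrL$, and $\Phi$ is automatically $\CMonm[\Spaces]$-linear. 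Since $\CC \otimes \CMonm[\Spaces]$ is the free $\CMonm[\Spaces]$-module on $\CC$ in $\PrL$, a $\CMonm[\Spaces]$-linear colimit-preserving endofunctor of $\CMonm[\CC]$ is uniquely determined by its restriction along $\Fseg$; since both $\Phi$ and $\id$ restrict to $\Fseg$, we conclude $\Phi \simeq \id$ as plain functors.

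Finally, $\Phi$ being a symmetric monoidal functor whose underlying functor is the identity is a symmetric monoidal equivalence, so the mode structure $\otimes$ and the localized Day convolution $\hat\circledast$ coincide. The commutativity of the diagram in the statement is immediate from the identity $\Phi \circ \Fseg = \Lseg \circ F$, which under $\Phi \simeq \id$ becomes $\Fseg = \Lseg \circ F$. The main obstacle is the second step: correctly identifying $\Lseg \circ F$ with $\Fseg$ by unpacking $F$ as the left Kan extension along the unit inclusion into $\Span(\Spacesm)^\op$; once this is secured, the uniqueness afforded by the free $\CMonm[\Spaces]$-module property forces $\Phi$ to be the identity.
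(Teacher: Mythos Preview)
Your argument is correct and takes a genuinely different route from the paper. The paper first treats the base case $\CC = \Spaces$ by showing that the units $\unit^{\hat\circledast}$ and $\unit^\otimes$ both corepresent the underlying functor, and then invokes the rigidity statement that a mode admits a unique presentably symmetric monoidal structure with a prescribed unit; it then bootstraps to general $\CC$ by tensoring the resulting symmetric monoidal equivalence $\CMonm[\Spaces]^{\hat\circledast} \simeq \CMonm[\Spaces]^{\otimes}$ with $\CC$ via the square comparing $\PCMonm[\Spaces]\otimes\CC$ and $\PCMonm[\CC]$. Your approach bypasses the base case entirely: you use the coproduct universal property of $\CC\otimes\CMonm[\Spaces]$ in $\CAlg(\PrL)$ to manufacture a comparison map $\Phi$ directly, and then the free $\CMonm[\Spaces]$-module property to pin down its underlying functor as the identity. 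Both arguments ultimately hinge on the idempotency of $\CMonm[\Spaces]$, but yours packages the input more uniformly and avoids splitting off $\CC=\Spaces$ as a separate lemma; the paper's approach, on the other hand, makes the comparison of units explicit and is perhaps more transparent about \emph{why} the structures agree in the universal case.

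One cosmetic point: your reference to \cref{L-K-def} for $\Lseg$ is misplaced --- that label points to the presheaf localization $L_\clK$, not to the Segal localization. The relevant fact (that $\Lseg$ is left adjoint to the fully faithful inclusion $\CMonm[\CC]\subseteq\PCMonm[\CC]$) appears in the definition immediately following the proposition that establishes presentability of $\CMonm[\CC]$.
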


We begin by proving the result for $\CC = \Spaces$.

\begin{lem}
	The localized Day convolution and the mode symmetric monoidal structure on $\CMonm[\Spaces]$ coincide, and $\Lseg F \cong \Fseg$.
\end{lem}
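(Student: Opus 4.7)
The plan is to deduce the coincidence of the two symmetric monoidal structures on $\CMonm[\Spaces]$ from (i) the identification $\Lseg F = \Fseg$ at the level of underlying functors, and (ii) the uniqueness of idempotent algebra structures in $\PrL$ associated to a fixed unit. I would first establish (i) directly, and then derive the structural identification from (ii).

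For (i), both $\Fseg$ and $\Lseg \circ F$ are left adjoint functors $\Spaces \to \CMonm[\Spaces]$, so by uniqueness of adjoints it suffices to compute their right adjoints. The right adjoint of $\Lseg$ is the fully faithful inclusion $\iota\colon \CMonm[\Spaces] \hookrightarrow \PCMonm[\Spaces]$. Since $F$ is the symmetric monoidal unit of the Day convolution we have $F(X) = X \otimes \yon(*)$, so by the $\Spaces$-enriched Yoneda lemma its right adjoint is evaluation at $*$, namely $Y \mapsto \hom^{\PCMonm[\Spaces]}(\yon(*), Y) = Y(*)$. Composing, the right adjoint of $\Lseg F$ is evaluation at $*$ restricted to the Segal objects, which is precisely the underlying object functor $\underlying\colon \CMonm[\Spaces] \to \Spaces$. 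As $\Fseg$ is by definition the left adjoint of $\underlying$, this forces $\Lseg F = \Fseg$.

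For (ii), note that both the mode structure $\otimes$ and the localized Day convolution $\hat\circledast$ endow $\CMonm[\Spaces]$ with the structure of a commutative algebra in $\PrL$, and by (i) both algebra structures share the same unit map $\Fseg$. The mode structure is idempotent by construction, which means that $\Fseg \otimes \id_{\CMonm[\Spaces]}\colon \CMonm[\Spaces] \to \CMonm[\Spaces] \otimes \CMonm[\Spaces]$ is an equivalence in $\PrL$; this is a condition on the pair $(\CMonm[\Spaces], \Fseg)$ alone, independent of any choice of multiplication. Invoking the standard uniqueness of commutative algebra structures associated to an idempotent object in a symmetric monoidal $\infty$-category (cf.\ \cite[Proposition 4.8.2.9]{HA}), the space of commutative algebra structures on $\CMonm[\Spaces]$ in $\PrL$ with unit $\Fseg$ is contractible, so the localized Day convolution must coincide with the mode structure.

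The main subtle point is ensuring that the symmetric monoidal unit of the localized Day convolution truly is the composite $\Lseg \circ F$ (which follows from the compatibility of the localization with the Day convolution via \cite[Proposition 2.2.1.9]{HA}, since symmetric monoidal left adjoints out of the initial object $\Spaces$ are unique up to contractible choice), and applying the uniqueness theorem for idempotent algebras in the correct form. Once these are in place, the identification is formal.
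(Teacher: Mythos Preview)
Your proposal is correct and follows essentially the same strategy as the paper: identify the unit of the localized Day convolution with that of the mode structure, then invoke uniqueness of presentably symmetric monoidal structures on an idempotent algebra with a given unit. The only cosmetic differences are that the paper first matches the unit \emph{objects} $\unit^{\hat\circledast} = \Lseg\yon(*)$ and $\unit^{\otimes} = \Fseg(*)$ via a Yoneda computation (both corepresent $\underlying$) and then deduces $\Lseg F = \Fseg$ at the end from uniqueness of mode maps, whereas you reverse the order by first establishing $\Lseg F = \Fseg$ via right adjoints; and the paper cites \cite[Proposition 5.1.6]{AmbiHeight} in place of your \cite[Proposition 4.8.2.9]{HA} for the uniqueness step.
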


\begin{proof}
	The right adjoint of $\Fseg$ is the underlying object functor $\underline{(-)}\colon \CMonm[\Spaces] \to \Spaces$, which by adjunction is represented by $\Fseg(*) \cong \unit^{\otimes}$.
	By \cite[Corollary 4.8.1.12]{HA}, the Yoneda embedding $\yon\colon \Span(\Spacesm) \to \PCMonm[\Spaces]$ is symmetric monoidal, and in particular the unit of $\PCMonm[\Spaces]$ is $\unit^{\circledast} \cong \yon(*)$.
	Using the Yoneda lemma we get
	\begin{align*}
		\hom_{\CMonm[\Spaces]}(\unit^{\hat{\circledast}},X)
		&\cong \hom_{\CMonm[\Spaces]}(\Lseg \unit^{\circledast},X)\\
		&\cong \hom_{\PCMonm[\Spaces]}(\unit^{\circledast},X)\\
		&\cong \hom_{\PCMonm[\Spaces]}(\yon(*),X)\\
		&\cong \underline{X}
		.
	\end{align*}
	Therefore, $\unit^{\hat{\circledast}}$ also represents $\underline{(-)}$, so that $\unit^{\hat{\circledast}} \cong \unit^{\otimes}$.
	Since $\CMonm[\Spaces]$ is a mode, it has a unique presentably symmetric monoidal structure with the given unit as in \cite[Proposition 5.1.6]{AmbiHeight}, so that localized Day convolution and the mode symmetric monoidal structure on $\CMonm[\Spaces]$ coincide.
	Since there is a unique map of modes $\Spaces \to \CMonm[\Spaces]$, the functors $\Lseg F$ and $\Fseg$ coincide.
\end{proof}

\begin{proof}[Proof of \cref{cmon-mul}]
	Consider the following diagram in $\CAlg(\PrL)$ where we endow $\CMonm[\CC]$ with the localized Day convolution structure (and the rest of the categories are endowed with a single symmetric monoidal structure, as we have shown that the two structures on $\CMonm[\Spaces]$ coincide.)
	$$\begin{tikzcd}
		\PCMonm[\Spaces] \otimes \CC \arrow{r}{\sim} \arrow{d}{} & \PCMonm[\CC] \arrow{d}{} \\
		\CMonm[\Spaces] \otimes \CC \arrow{r}{\sim} & \CMonm[\CC]
	\end{tikzcd}$$
	The bottom map is an equivalence by \cref{cmon-mode}, and we wish to upgrade it to a symmetric monoidal equivalence.
	
	The top map is a symmetric monoidal equivalence by \cref{day-prl-tensor}.
	As in the proof of \cref{sh-compat}, both the left and the right maps are symmetric monoidal.
	This shows that the bottom map is the localization of the top map, and thus inherits the structure of a symmetric monoidal equivalence.
\end{proof}

	\section{Higher Cocartesian Structure}\label{sec-cocart}

Endowing a category $\CC \in \Cat$ with a symmetric monoidal structure is the same as providing a lift $\CC^\otimes \in \CMon[\Cat]$.
If $\CC$ has finite coproducts, it has a \emph{cocartesian structure} $\CC^\cocart$ given by the coproduct.
An Eckmann--Hilton style argument characterizes it as the unique symmetric monoidal structure that commutes with coproducts (in all coordinates together), namely satisfying
$$
(X \otimes Y) \sqcup (Z \otimes W)
\cong (X \sqcup Z) \otimes (Y \sqcup W)
.
$$
Building on \cite[Theorem 5.23]{Harpaz}, in this section we define the ($p$-typical) $m$-cocartesian structure as an $m$-commutative monoid structure, and in \cref{cocart-is-rest} we show that it enjoys the expected properties, which in particular gives a construction of the ordinary cocartesian structure.
The results of this section feature in the definition of higher semiadditive algebraic K-theory in \cref{sa-k-def}, by preserving the $m$-commutative monoid structure afforded by the $m$-cocartesian structure.

\begin{defn}\label{lam-monoidal}
	The category of categories with a \tdef{($p$-typical) $m$-symmetric monoidal structure} is $\CMonm[\Cat]$.
	That is, an $m$-symmetric monoidal structure on $\CC \in \Cat$ is a lift $\CC^\otimes \in \CMonm[\Cat]$.
\end{defn}

In \cite[Theorem 5.23]{Harpaz} and \cite[Proposition 2.2.7]{AmbiHeight} it is shown that the category $\Catmfin$ of categories admitting colimits indexed by $m$-finite $p$-spaces is itself an ($p$-typically) $m$-semiadditive category for any $-2 \leq m \leq \infty$ (the proofs in the cited papers are not in the $p$-typical case, but the same proofs work in the $p$-typical case).
In other words, the underlying functor
$$\underlying\colon \CMonm[\Catmfin] \to \Catmfin$$
is an equivalence.
We denote its inverse by $\mdef{(-)^{\mcocart}}\colon \Catmfin \iso \CMonm[\Catmfin]$.
We recall from \cref{cmon-cat-lam} that there is an inclusion $\CMonm[\Catmfin] \to \CMonm[\Cat]$.

\begin{defn}
	For every $\CC \in \Catmfin$, we call $\mdef{\CC^{\mcocart}} \in \CMonm[\Cat]$ the \tdef{$m$-cocartesian structure} on $\CC$.
	When $m$ is clear from the context, we shall write $\CC^\cocart$ for $\CC^{\mcocart}$.
\end{defn}

Our next goal is to justify this name.
In particular, we will show that for every $m$-finite $p$-space $A$, the map $\CC^A \to \CC$ induced by evaluating $\CC^\cocart$ at $A \to *$ is given by taking the colimit over $A$.
More precisely, for any $\CC \in \Cat$, let $\CC^* \in \Fun((\Spacesm)^\op,\Cat)$ be the functor $\Fun(-, \CC)$, given by sending $m$-finite $p$-space $A$ to $\CC^A$ and $q\colon A \to B$ to $q^*\colon \CC^B \to \CC^A$.
If we assume that $\CC \in \Catmfin$, then $q^*\colon \CC^B \to \CC^A$ has a left adjoint $q_!\colon \CC^A \to \CC^B$.
By passing to the left adjoints, we obtain a functor $\CC_! \in \Fun(\Spacesm,\Cat)$.
The main result of this section is then:

\begin{thm}\label{cocart-is-rest}
	The restriction of $\CC^\cocart \in \CMonm[\Cat]$ along the right-way maps $\Spacesm \to \Span(\Spacesm)$ is $\CC^*$, and similarly along the wrong-way maps $(\Spacesm)^\op \to \Span(\Spacesm)$ is $\CC_!$.
\end{thm}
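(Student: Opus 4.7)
My plan is to identify both restrictions of $\CC^{\mcocart}$ by combining the Segal condition with the adjointness encoded in the Span formalism. First, to identify the values: since $\CC^{\mcocart}$ satisfies the Segal condition and has underlying object $\CC$, we have natural equivalences $\CC^{\mcocart}(A) \xrightarrow{\sim} \lim_A \underline{\CC^{\mcocart}} = \lim_A \CC$ for $A \in \Spacesm$. By \cref{lim-cat-lam}, limits in $\Catmfin$ are created in $\Cat$, so $\lim_A \CC = \CC^A$. For the right-way restriction, the embedding $j^r\colon \Spacesm \to \Span(\Spacesm)$ sends $q\colon A \to B$ to the span $A \xleftarrow{=} A \xrightarrow{q} B$; naturality of the Segal identification in $A$ then identifies the induced functor $\CC^{\mcocart}(B) \to \CC^{\mcocart}(A)$ with $q^* = \Fun(q, \CC)$, yielding $\CC^{\mcocart} \circ (j^r)^{\op} = \CC^*$.

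The wrong-way restriction is the crux. The key structural fact I would invoke is that for any functor $X\colon \Span(\Spacesm)^{\op} \to \Cat$ and any $q\colon A \to B$ in $\Spacesm$, the wrong-way induced functor $q^{?}\colon X(A) \to X(B)$ is canonically left adjoint to the right-way induced functor $q^{*}\colon X(B) \to X(A)$. The unit and counit are built from spans involving the diagonal $A \to A \times_B A$ and its duals, with the triangle identities following from Span calculus. Applying this to $X = \CC^{\mcocart}$ produces an adjunction between the wrong-way functor $\CC^A \to \CC^B$ and $q^*\colon \CC^B \to \CC^A$. Since $\CC \in \Catmfin$ admits $A$-shaped colimits, $q^*$ already has $q_!$ as its left adjoint, so by uniqueness of left adjoints the wrong-way functor coincides with $q_!$, giving the restriction $\CC_!$.

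The main obstacle is the canonical adjointness between wrong-way and right-way maps in functors out of $\Span$; this is a well-known feature of the Span formalism but strictly speaking requires unpacking the $\infty$-categorical Span machinery of Barwick, Haugseng, or Harpaz. An alternative route, which avoids invoking this fact in a black-box manner, would be to directly assemble $\CC^*$ and $\CC_!$ into a single functor $\Span(\Spacesm)^{\op} \to \Catmfin$ by verifying Beck-Chevalley compatibility for pullback squares of $m$-finite $p$-spaces (which follows from the pointwise formula for left Kan extensions and the existence of $m$-finite colimits in $\CC$), check that this assembled functor lies in $\CMonm[\Catmfin]$ with underlying object $\CC$, and then conclude by the uniqueness of the inverse equivalence $(-)^{\mcocart}\colon \Catmfin \xrightarrow{\sim} \CMonm[\Catmfin]$ that it must coincide with $\CC^{\mcocart}$.
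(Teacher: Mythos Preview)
Your primary approach contains a genuine error. The ``key structural fact'' you invoke---that for \emph{any} functor $X\colon \Span(\Spacesm)^{\op} \to \Cat$ the wrong-way map $q^{?}$ is left adjoint to the right-way map $q^{*}$---is false. For a concrete counterexample at $m=0$: a commutative monoid in $\Cat$ is a symmetric monoidal category $\DD$, the right-way map for $2 \to *$ is the diagonal $\DD \to \DD \times \DD$, and the wrong-way map is the tensor product $\otimes\colon \DD \times \DD \to \DD$. The tensor is left adjoint to the diagonal precisely when $\otimes$ is the coproduct, so any non-cocartesian symmetric monoidal structure furnishes a counterexample. What you are recalling is that in the $(\infty,2)$-category of spans, right-way and wrong-way maps are adjoint; but $\CC^{\mcocart}$ is only given as a $1$-functor out of the $1$-category $\Span(\Spacesm)^{\op}$, and a $1$-functor need not transport those $2$-categorical adjunctions to adjunctions in $\Cat$. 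In effect your argument assumes exactly what it is trying to prove: that for the particular functor $\CC^{\mcocart}$ the wrong-way maps are given by colimits.

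Your alternative route, on the other hand, is essentially the paper's proof. The paper makes the ``assembling'' step precise by invoking Barwick's unfurling construction $\unfurling$, which takes a functor $(\Spacesm)^{\op} \to \Cat$ satisfying Beck--Chevalley and produces a functor $\Span(\Spacesm)^{\op} \to \Cat$ with the prescribed right-way and wrong-way restrictions; this is a nontrivial piece of $\infty$-categorical machinery that one cites rather than rederives. The paper then checks that $\unfurling(\CC^*)$ satisfies the $m$-Segal condition and lands in $\Catmfin$, and concludes $\unfurling(\CC^*) = \CC^{\mcocart}$ by uniqueness of lifts along the equivalence $\underlying\colon \CMonm[\Catmfin] \xrightarrow{\sim} \Catmfin$, exactly as you outline.
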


To prove this, we first note that not only each $q^*$ has a left adjoint $q_!$, but they also satisfy the Beck--Chevalley condition.
This means that $\CC^*$ is in fact in $\Fun^{\BC}((\Spacesm)^\op,\Cat)$ (where $\Fun^{\BC}$ are functors such that each morphism is mapped to a right adjoint, such that the Beck--Chevalley condition is satisfied).
We will use Barwick's unfurling construction \cite[Definition 11.3]{Unfurling}.
Barwick works in a more general context, allowing to prescribe only certain right- and wrong-way morphisms, but we shall not use this generality.
After straightening, the unfurling construction for $(\Spacesm)^\op$ takes a functor $F \in \Fun^{\BC}((\Spacesm)^\op,\Cat)$, and produces a new functor $\unfurling(F) \in \Fun(\Span(\Spacesm)^\op,\Cat)$, and enjoys the following properties:

\begin{thm}[{\cite[Proposition 11.6 and Theorem 12.2]{Unfurling}}]
	For any $F \in \Fun^{\BC}((\Spacesm)^\op,\Cat)$,
	the restriction of $\unfurling(F)$ along the right-way maps $\Spacesm \to \Span(\Spacesm)$ is $F$, and similarly along the wrong-way $(\Spacesm)^\op \to \Span(\Spacesm)$ is the functor sending the morphisms to the left adjoints.
\end{thm}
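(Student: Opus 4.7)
The plan is to identify $\CC^{\mcocart}$ with Barwick's unfurling $\unfurling(\CC^*)$ and then read off both restrictions from the cited theorem. Since the underlying functor $\underlying\colon \CMonm[\Catmfin] \to \Catmfin$ is an equivalence with inverse $(-)^{\mcocart}$, it suffices to show that $\unfurling(\CC^*)$ lies in $\CMonm[\Catmfin]$ with underlying category $\CC$: the equivalence then forces $\unfurling(\CC^*) \simeq \CC^{\mcocart}$, and the fully faithful inclusion $\CMonm[\Catmfin] \hookrightarrow \CMonm[\Cat]$ of \cref{cmon-cat-lam} transports this identification to $\CMonm[\Cat]$.

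Concretely, I would proceed in four steps. First, verify that $\CC^*$ refines to an object of $\Fun^{\BC}((\Spacesm)^\op, \Cat)$: for any $q\colon A \to B$ in $\Spacesm$, the restriction functor $q^*\colon \CC^B \to \CC^A$ admits a left adjoint $q_!$ since $\CC \in \Catmfin$ has all $m$-finite $p$-space colimits, and Beck-Chevalley for a pullback square in $\Spacesm$ reduces to the pointwise formula for left Kan extension combined with the pointwise computation of colimits in functor categories. Second, form $\unfurling(\CC^*) \in \Fun(\Span(\Spacesm)^\op, \Cat)$ and check the $m$-Segal condition. By Barwick's theorem the right-way restriction recovers $\CC^*$, so $\unfurling(\CC^*)(A) = \CC^A$ and $\underline{\unfurling(\CC^*)} = \CC^*(*) = \CC$; moreover, each point $a\colon * \to A$ corresponds in $\Span(\Spacesm)$ to the right-way span $* \xleftarrow{=} * \xrightarrow{a} A$, and therefore the induced component of the assembly map $\unfurling(\CC^*)(A) \to \underline{\unfurling(\CC^*)}$ is $a^*\colon \CC^A \to \CC$, evaluation at $a$. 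Assembling over all points shows that the assembly map is the canonical equivalence $\CC^A = \Fun(A, \CC) \xrightarrow{\sim} \lim_A \CC$. Third, verify that $\unfurling(\CC^*)$ factors through $\Catmfin$: each $\CC^A$ has $m$-finite $p$-space colimits computed pointwise, right-way maps $q^*$ preserve them since restriction is pointwise, and wrong-way maps $q_!$ preserve them as left adjoints. Fourth, from $\unfurling(\CC^*)(*) = \CC$ conclude $\unfurling(\CC^*) \simeq \CC^{\mcocart}$, and then apply Barwick's theorem to read off both restrictions.

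The main obstacle is the identification in the second step of the assembly map with the canonical equivalence $\CC^A \xrightarrow{\sim} \lim_A \CC$: although both sides are visibly $\CC^A$, matching the comparison map requires identifying the Segal cone with the family of right-way inclusions of points of $A$ into $\Span(\Spacesm)$, and then invoking Barwick's description of the right-way restriction to recognize each component as evaluation. The Beck-Chevalley verification and the factorization through $\Catmfin$ are essentially formal consequences of the pointwise computation of colimits in functor categories once the unfurling formalism is in hand.
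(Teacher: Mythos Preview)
Your proposal does not address the stated theorem. The statement is Barwick's general result about the unfurling construction $\unfurling(F)$ for an arbitrary $F \in \Fun^{\BC}((\Spacesm)^\op,\Cat)$; the paper does not prove it but cites it from \cite{Unfurling}. What you have written is instead a proof of \cref{cocart-is-rest}, the theorem that \emph{applies} Barwick's result to the specific functor $F = \CC^*$. With respect to the stated theorem your argument is circular: in your first sentence and again in your fourth step you invoke ``the cited theorem'' as a black box, but that cited theorem is precisely what you were asked to prove.

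If your intent was actually to prove \cref{cocart-is-rest}, then your approach is essentially identical to the paper's: both identify $\CC^{\mcocart}$ with $\unfurling(\CC^*)$ by checking that the latter lies in $\CMonm[\Catmfin]$ (the $m$-Segal condition plus factorization through $\Catmfin$) and then read off the two restrictions from Barwick's theorem. The paper verifies the Segal condition slightly more tersely, observing that it is inherited from $\CC^*$ along the right-way restriction, rather than unpacking the assembly map point by point as you do in your second step.
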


Using this result, we our now in position to prove \cref{cocart-is-rest}.

\begin{proof}[Proof of \cref{cocart-is-rest}]
	By Barwick's theorem, $\unfurling(\CC^*)$ has the properties we ought to prove for $\CC^\cocart$, so it suffices to show that $\CC^\cocart \cong \unfurling(\CC^*)$.
	Furthermore, recall that that the underlying functor $\underlying\colon \CMonm[\Catmfin] \to \Catmfin$ is an equivalence, so each $\CC \in \Catmfin$ has a unique lift to $\CMonm[\Catmfin]$.
	Therefore, it suffices to show that $\unfurling(\CC^*) \in \Fun(\Span(\Spacesm)^\op,\Cat)$ is in $\CMonm[\Catmfin]$, i.e.\ that it satisfies the $m$-Segal condition and that it takes values in  $\Catmfin$.
	
	First, by construction, $\CC^*$ satisfies the $m$-Segal condition.
	Since the restriction of $\unfurling(\CC^*)$ along $(\Spacesm)^\op \to \Span(\Spacesm)$ is $\CC^*$, it follows that it satisfies the $m$-Segal condition as well, thus $\unfurling(\CC^*) \in \CMonm[\Cat]$.
	
	Second, we need to show that $\unfurling(\CC^*)$ lands in $\Catmfin$.
	By assumption $\CC \in \Catmfin$, thus the same holds for $\CC^A$ for all $m$-finite $p$-space $A$.
	For morphisms, we need to show they are sent to functors that commute with colimits indexed by any $m$-finite $p$-space $A$.
	Any morphism in $\Span(\Spacesm)$ is the composition of a right-way and a wrong-way map, so we can check these separately.
	So let $q\colon A \to B$ be a morphism of $m$-finite $p$-spaces.
	Since $q_!$ is a left adjoint, it commutes with colimits indexed by any $m$-finite $p$-space $A$, so it is a morphism in $\Catmfin$.
	Since colimits in functor categories are computed level-wise, the functor $q^*$ commutes with them, so it is also a morphism in $\Catmfin$.
\end{proof}

\begin{remark}
	In light of Barwick's construction, one could define the $m$-cocartesian structure simply by $\CC^\cocart = \unfurling(\CC^*)$.
	The reason why we define it via the equivalence $(-)^{\cocart}\colon \Catmfin \iso \CMonm[\Catmfin]$ is two fold.
	First, this construction characterizes $\CC^\cocart$ in a universal way.
	Second, Barwick's unfurling construction, although much more general then our definition, is not shown to be functorial in $F$, which will be used crucially for $\CC^\cocart$ in our definition of semiadditive algebraic K-theory.
\end{remark}

\begin{thm}\label{cocart-equiv}
	The restriction of
	$(-)^{\cocart}\colon \Catmfin \iso \CMonm[\Catmfin]$
	to $\Catmfinst$ lands in $\CMonm[\Catmfinst]$,
	and induces an equivalence
	$(-)^{\cocart}\colon \Catmfinst\iso\CMonm[\Catmfinst]$.
\end{thm}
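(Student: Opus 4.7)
The plan is to leverage the existing equivalence $(-)^{\cocart}\colon \Catmfin \xrightarrow{\sim} \CMonm[\Catmfin]$ together with \cref{cmon-cat-lam}, which guarantees that $\CMonm[\Catmfinst] \to \CMonm[\Catmfin]$ is a fully faithful inclusion. Once I show that for $\CC \in \Catmfinst$ the object $\CC^{\cocart}$ actually lies in this subcategory, the restricted functor is automatically fully faithful (as the restriction of an equivalence to full subcategories), and only essential surjectivity remains.

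First I would verify the factorization. By \cref{cocart-is-rest}, $\CC^{\cocart}\colon \Span(\Spacesm)^\op \to \Cat$ sends $A$ to $\CC^A$, right-way maps $q\colon A \to B$ to $q^*\colon \CC^B \to \CC^A$, and wrong-way maps to the left adjoints $q_!\colon \CC^A \to \CC^B$. A general morphism in $\Span(\Spacesm)^\op$ is a composition of such pieces. Since $\CC$ is stable and limits and colimits in functor categories are computed level-wise, each $\CC^A$ is stable (and still admits all $m$-finite $p$-space colimits), so the values lie in $\Catmfinst$. For morphisms, $q^*$ preserves all limits and colimits level-wise and is therefore exact and a morphism in $\Catmfinst$. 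The functor $q_!$ is a left adjoint, hence preserves all colimits; as a colimit-preserving functor between stable categories, it is automatically exact, and it preserves $m$-finite $p$-space colimits in particular. Thus $\CC^{\cocart}$ lifts to $\CMonm[\Catmfinst]$.

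For essential surjectivity, given $\CC^\otimes \in \CMonm[\Catmfinst]$, the underlying object $\underline{\CC^\otimes} = \CC^\otimes(*)$ lies in $\Catmfinst$ by hypothesis, and the original equivalence gives $(\underline{\CC^\otimes})^{\cocart} \simeq \CC^\otimes$ inside $\CMonm[\Catmfin]$; this equivalence takes place in the full subcategory $\CMonm[\Catmfinst]$, so essential surjectivity follows. Combined with full faithfulness, this yields the desired equivalence.

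I do not anticipate a real obstacle; the content is essentially bookkeeping on top of the already established equivalence $(-)^{\cocart}\colon \Catmfin \xrightarrow{\sim} \CMonm[\Catmfin]$ and the identification of structure maps in \cref{cocart-is-rest}. The only non-formal point is the observation that $q_!$, a priori only a left adjoint, is automatically exact because exactness of a functor between stable categories is equivalent to preservation of finite colimits.
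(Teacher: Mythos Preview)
Your overall strategy matches the paper's, and the verification that $\CC^{\cocart}$ takes values in $\Catmfinst$ via \cref{cocart-is-rest} is exactly what the paper does. However, there is a genuine gap in the passage from objects to the full equivalence.

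You assert that the restricted functor is ``automatically fully faithful (as the restriction of an equivalence to full subcategories),'' but neither inclusion is full. The inclusion $\Catmfinst \subset \Catmfin$ requires morphisms to be exact, not merely to preserve colimits over $m$-finite $p$-spaces; preserving finite coproducts between stable categories does not force exactness. Consequently $\CMonm[\Catmfinst] \subset \CMonm[\Catmfin]$ is not full either: a natural transformation between two diagrams landing in $\Catmfinst$ could have components that are only $\Catmfin$-morphisms. So knowing that $\CC^{\cocart}$ lands in $\CMonm[\Catmfinst]$ on objects does not yet give a functor into that subcategory, let alone a fully faithful one.

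The missing step is exactly what the paper supplies: for a morphism $F\colon \CC \to \DD$ in $\Catmfinst$, the induced map $\CC^{\cocart} \to \DD^{\cocart}$ has components $F\circ -\colon \CC^A \to \DD^A$, which are exact because $F$ is. Once you know that both $(-)^{\cocart}$ and its inverse $\underlying$ restrict (on objects \emph{and} morphisms) to maps between the two subcategories, the restrictions are automatically inverse equivalences, since the components of the unit and counit are equivalences of categories and hence exact. Your essential surjectivity argument implicitly relies on this same point when you say the equivalence $(\underline{\CC^\otimes})^{\cocart}\simeq\CC^\otimes$ ``takes place in the full subcategory''; it does lift, but only because equivalences between stable categories are exact, which you should say explicitly.
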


\begin{proof}
	Let $\CC\in\Catmfinst$.
	We know that $\CC^\cocart\in\CMonm[\Catmfin]$.
	By \cref{lim-cat-lam}, for any $m$-finite $p$-space $A$, $\CC^A$ is computed the same in $\Cat$, $\Catmfin$ and $\Catmfinst$, and in particular it is stable.
	Furthermore, for any $q\colon A \to B$, both $q_!$ and $q^*$ are exact.
	Thus $\CC^\cocart \in \Fun(\Span(\Spacesm)^\op,\Catmfinst)$.
	Again by \cref{lim-cat-lam}, it satisfies the $m$-Segal condition so that $\CC^\cocart \in \CMonm[\Catmfinst]$.
	
	For a functor $F\colon \CC \to \DD$ in $\Catmfinst$, we have an induced functor $\CC^\cocart \to \DD^{\cocart}$ in $\CMonm[\Catmfin]$.
	At every point it is given by $F\circ-\colon \CC^A \to \DD^{A}$, which is exact, thus a map in $\Catmfinst$.
	This finishes the first part, showing that $(-)^{\cocart}$ lands in $\CMonm[\Catmfinst]$.
	
	By \cref{cmon-cat-lam}, the inclusion $\Catmfinst \to \Catmfin$ induces an inclusion $\CMonm[\Catmfinst]\to\CMonm[\Catmfin]$.
	The maps
	$\underline{(-)}\colon 
	\CMonm[\Catmfin]
	\rightleftarrows
	\Catmfin
	\colon (-)^{\cocart}$,
	which are inverses to each other, restrict to maps between the subcategories, which are therefore inverses to each other as well.
\end{proof}

	\section{Semiadditive Algebraic K-Theory}\label{sec-k}

In this section we define an $m$-semiadditive version of algebraic K-theory.
We begin by recalling the construction of ordinary algebraic K-theory, and present it in a way which is amenable to generalizations.
We then generalize the definition to construct $m$-semiadditive algebraic K-theory in \cref{sa-k-def}, and connect it to ordinary algebraic K-theory in \cref{sheafification-ord}.
We leverage this connection in \cref{k-lax} to endow the functor of $m$-semiadditive algebraic K-theory with a lax symmetric monoidal structure.
This is later used to prove \cref{Km-height-0} and \cref{Km-cJW}, two of the main results of this paper.

\subsection{Ordinary Algebraic K-Theory}

We recall the definition of the $S_\bullet$-construction for stable categories and exact functors.
One defines the functor $S_{\bullet}\colon \Catst \to \Spaces^{\Delta^\op}$ by letting $S_n \CC$ be the subspace of those functors $X\colon [n]^{[1]} \to \CC$ that satisfy:
\begin{enumerate}
	\item $X_{ii}=0$,
	\item For all $i\leq j\leq k$ the following is a bicartesian square
	$$\begin{tikzcd}
	X_{ij} \arrow{r}{} \arrow{d}{} & X_{ik} \arrow{d}{} \\
	X_{jj} \arrow{r}{} & X_{jk}
	\end{tikzcd}$$
	that is, $X_{ij}\to X_{ik}\to X_{jk}$ is a (co)fiber sequence.
\end{enumerate}

The \tdef{algebraic K-theory \emph{space}} functor $\mdef{\underline{\KK}}\colon \Catst \to \Spaces$ is then defined as the composition $\underline{\KK}(\CC) = \Omega|S_\bullet\CC|$.
One then proceeds to lift to (connective) spectra, e.g.\ by means of iterated $S_\bullet$-construction.
We will give an equivalent construction of the spectrum structure, which will be easier to generalize.
To that end, we show the following:

\begin{lem}\label{S-lims}
	The functor $S_\bullet\colon \Catst \to \Spaces^{\Delta^\op}$ commutes with limits.
\end{lem}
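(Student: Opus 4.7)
The plan is to reduce to showing, for each $n \geq 0$, that the functor $S_n\colon \Catst \to \Spaces$ preserves limits, since limits in the functor category $\Spaces^{\Delta^\op}$ are computed level-wise.

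To do this, fix a diagram $\{\CC_\alpha\}$ in $\Catst$ with limit $\CC$. By \cref{lim-cat-lam}, the inclusion $\Catst \to \Cat$ creates limits, so $\CC$ is the limit in $\Cat$, and as a consequence finite limits and finite colimits in $\CC$ are computed level-wise (each projection $\CC \to \CC_\alpha$ is exact, and in a limit in $\Cat$ the level-wise finite (co)limits compute the (co)limits whenever they exist level-wise and the transition maps preserve them). In particular, a zero object in $\CC$ is detected level-wise, and a square is bicartesian in $\CC$ if and only if its projection to each $\CC_\alpha$ is bicartesian.

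Next, the functor $\CC \mapsto \Fun([n]^{[1]}, \CC)^\simeq$ preserves limits, since $\Fun(K, -)\colon \Cat \to \Cat$ and $(-)^\simeq\colon \Cat \to \Spaces$ both preserve limits (as right adjoints to $K \times -$ and to the inclusion $\Spaces \hookrightarrow \Cat$, respectively). Thus the canonical map
$$
\Fun([n]^{[1]}, \CC)^\simeq \xrightarrow{\sim} \lim_\alpha \Fun([n]^{[1]}, \CC_\alpha)^\simeq
$$
is an equivalence.

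Finally, $S_n \CC$ is the subspace of $\Fun([n]^{[1]}, \CC)^\simeq$ cut out by the two conditions (1) $X_{ii} = 0$ and (2) the prescribed squares are bicartesian. By the level-wise detection of zero objects and bicartesian squares noted above, a compatible family $(X^\alpha)$ corresponds to a functor in $S_n \CC$ if and only if each $X^\alpha$ lies in $S_n \CC_\alpha$. Hence $S_n \CC \simeq \lim_\alpha S_n \CC_\alpha$, completing the argument. The one point requiring care is the level-wise computation of finite (co)limits in a limit of stable categories, but this is immediate from \cref{lim-cat-lam} and the exactness of the projection functors.
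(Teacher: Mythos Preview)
Your proof is correct. Both your argument and the paper's reduce to the level-wise statement that each $S_n\colon \Catst \to \Spaces$ preserves limits, but the routes diverge from there. The paper invokes the standard identification $S_n \simeq \hom_{\Catst}([n-1],-)$ (coming from the equivalence between a staircase diagram $X \in S_n\CC$ and its sequence of first-row entries $X_{0,1} \to X_{0,2} \to \cdots \to X_{0,n}$), so that $S_n$ is corepresentable and hence automatically limit-preserving. You instead unpack the definition of $S_n$ directly: you note that $\Fun([n]^{[1]},-)^\simeq$ preserves limits as a right adjoint, and then check by hand that the defining conditions (zero objects on the diagonal, bicartesian squares) are detected level-wise in a limit of stable categories, appealing to \cref{lim-cat-lam} and exactness of the projections. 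Your approach is more elementary in that it avoids importing the identification $S_n \CC \simeq \Fun([n-1],\CC)^\simeq$, at the cost of a longer argument; the paper's version is a one-liner once that identification is granted.
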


\begin{proof}
	For each $n$, the functor $S_n\colon \Catst \to \Spaces$ is equivalent to $\hom([n-1],-)$, and in particular it commutes with limits.
	Since limits in the functor category $\Spaces^{\Delta^\op}$ are computed level-wise, this implies that $S_\bullet$ commutes with limits as well.
\end{proof}

This together with \cref{lim-cmon} implies that we get an induced functor $S_{\bullet}\colon \CMon[\Catst]\to\CMon[\Spaces]^{\Delta^\op}$.
Employing \cref{cocart-equiv}, we give the following definition.

\begin{defn}\label{k-def}
	We define \tdef{algebraic K-theory} $\mdef{\KK}\colon \Catst \to \Sp$ by $\KK(\CC)=\Omega|(S_\bullet(\CC^\cocart))^{\gpc}|$, that is, as the following composition
	$$
		\Catst
		\xrightarrow{(-)^\cocart} \CMon[\Catst]
		\xrightarrow{S_\bullet} \CMon[\Spaces]^{\Delta^\op}
		\xrightarrow{(-)^{\gpc}} \Sp^{\Delta^\op}
		\xrightarrow{|-|} \Sp
		\xrightarrow{\Omega} \Sp
		.
	$$
\end{defn}

\begin{lem}
	The composition of $\KK\colon \Catst \to \Sp$ with $\Omega^\infty\colon \Sp \to \Spaces$ is $\underline{\KK}$.
\end{lem}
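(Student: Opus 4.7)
The plan is to unwind $\KK(\CC)$ step by step and recover the classical formula $\underline{\KK}(\CC) = \Omega|S_\bullet\CC|$ after applying $\Omega^\infty$. Since $\Omega^\infty\colon\Sp\to\Spaces$ is a right adjoint, it commutes with $\Omega$, so
\[
\Omega^\infty\KK(\CC)
\simeq \Omega\bigl(\Omega^\infty |(S_\bullet\CC^\cocart)^\gpc|\bigr),
\]
reducing the problem to identifying $\Omega^\infty |(S_\bullet\CC^\cocart)^\gpc|$ with $|S_\bullet\CC|$.

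Next I will commute the group completion past the realization. The group completion $(-)^\gpc\colon\CMon[\Spaces]\to\conSp$ is a left adjoint (to the fully faithful inclusion of connective spectra as group-like commutative monoids), so it preserves colimits; hence levelwise group completion followed by realization in $\Sp$ agrees with realizing the simplicial object $S_\bullet\CC^\cocart$ in $\CMon[\Spaces]$ and then group completing. Write $M$ for this realization $|S_\bullet\CC^\cocart|_{\CMon[\Spaces]}$. Because the forgetful functor $\underlying\colon\CMon[\Spaces]\to\Spaces$ preserves sifted colimits (being the underlying functor for an algebraic theory) and $\underlying\CC^\cocart = \CC$ by construction of the cocartesian structure, I obtain $\underlying M \simeq |S_\bullet\CC|$.

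The crucial step is to verify that $M$ is already group-like, so that the unit $M\to M^\gpc$ of the group-completion adjunction is an equivalence and therefore $\Omega^\infty M^\gpc \simeq \underlying M \simeq |S_\bullet\CC|$. For this I appeal to the classical observation that $S_0\CC$ is contractible — the condition $X_{00}=0$ forces the unique $X\colon[0]\to\CC$ to be the zero object — so $|S_\bullet\CC|$ has contractible $0$-simplices and is in particular path-connected. Thus $\pi_0 M$ is trivial, hence a group, and $M$ is group-like.

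Stringing the steps together yields $\Omega^\infty\KK(\CC) \simeq \Omega|S_\bullet\CC| = \underline{\KK}(\CC)$. The one non-formal input is the preservation of sifted colimits by $\underlying\colon\CMon[\Spaces]\to\Spaces$; the remainder is adjunction bookkeeping together with the connectedness observation for $|S_\bullet\CC|$, and this last point is where one must be slightly careful, since without group-likeness of $M$ the group completion would genuinely change the underlying space.
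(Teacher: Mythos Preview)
Your proof is correct and follows essentially the same strategy as the paper: commute the left adjoint $(-)^\gpc$ past the geometric realization, then compare the resulting composite to the classical $\Omega|S_\bullet\CC|$ by tracking underlying objects through each step. The paper organizes this last comparison as a diagram of four commuting squares, but the content is the same as yours.

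There is one point on which your argument is actually more careful than the paper's. The paper asserts that $\Omega\circ(-)^{\gpc} = \Omega$ as functors $\CMon[\Spaces]\to\Sp$, and uses this to absorb the group completion into the loop functor. Read literally, that identity fails in general (for instance, for $M$ the free $\bbE_\infty$-monoid on a point one has $\Omega M \simeq *$ while $\Omega(M^\gpc)\simeq\Omega QS^0$ has $\pi_0=\ZZ/2$). What makes it work here is exactly the connectedness of $|S_\bullet\CC|$, which you isolate explicitly: once $M=|S_\bullet\CC^\cocart|$ is group-like, the unit $M\to M^\gpc$ is an equivalence and the comparison goes through. So your connectedness observation (via $S_0\CC\simeq *$) is not an unnecessary detour but the precise reason the argument succeeds; the paper's formulation leaves this implicit.
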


\begin{proof}
	First note that $(-)^{\gpc}\colon \CMon[\Spaces] \to \Sp$ is a left adjoint, and therefore commutes with the colimit $|-|$, and that $\Omega((-)^{\gpc}) \cong \Omega$ as functors $\CMon[\Spaces] \to \Sp$.
	This shows that our definition of algebraic K-theory is equivalent to the composition
	$$
		\Catst
		\xrightarrow{(-)^\cocart} \CMon[\Catst]
		\xrightarrow{S_\bullet} \CMon[\Spaces]^{\Delta^\op}
		\xrightarrow{|-|} \CMon[\Spaces]
		\xrightarrow{\Omega} \Sp
		.
	$$
	Consider the following diagram:
	$$\begin{tikzcd}
		\Catst \arrow{r}{(-)^\cocart} \arrow[d, equals] \arrow[dr, phantom, "\text{\small{(1)}}", blue]
		& \CMon[\Catst] \arrow{r}{S_\bullet} \arrow{d}{\underlying} \arrow[dr, phantom, "\text{\small{(2)}}", blue]
		& \CMon[\Spaces]^{\Delta^\op} \arrow{r}{|-|} \arrow{d}{\underlying} \arrow[dr, phantom, "\text{\small{(3)}}", blue]
		& \CMon[\Spaces] \arrow{r}{\Omega} \arrow{d}{\underlying} \arrow[dr, phantom, "\text{\small{(4)}}", blue]
		& \Sp \arrow{d}{\underlying}
		\\
		\Catst \arrow[r, equals]
		& \Catst \arrow{r}{S_\bullet}
		& \Spaces^{\Delta^\op} \arrow{r}{|-|}
		& \Spaces \arrow{r}{\Omega}
		& \Spaces
	\end{tikzcd}$$
	Square (1) commutes because $(-)^\cocart$ and $\underlying$ are inverses by \cref{cocart-equiv}.
	Square (2) commutes by the definition of the extension of $S_\bullet$ to $\CMon$.
	Square (3) commutes since the underlying commutes with geometric realizations.
	Square (4) commutes because $\Omega$ is a limit and the underlying is a right adjoint functor.
	Finally, the top-right composition is $\Omega^\infty \KK$, whereas the left-bottom composition is $\underline{\KK}$.
\end{proof}

We now claim that the above definition of the spectrum structure coincides with the standard one.
Note that by construction $\KK$ in fact lands in connective spectra.

\begin{prop}\label{k-unique-deloop}
	There is a unique lift of $\underline{\KK}\colon \Catst \to \Spaces$ to $\KK\colon \Catst \to \conSp$.
\end{prop}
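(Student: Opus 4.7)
The plan is to derive the uniqueness from the product-preservation of $\underline{\KK}$ combined with the semiadditivity of $\Catst$.

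First I would verify that $\underline{\KK}\colon \Catst \to \Spaces$ preserves finite products. This follows from \cref{S-lims} (which shows $S_\bullet$ preserves all limits), together with the fact that geometric realization of simplicial spaces preserves finite products, and $\Omega$ preserves all limits. Composing these, $\underline{\KK}$ is finite-product preserving.

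Next I would observe that $\Catst$ is itself $\infty$-semiadditive: finite direct sums of stable $\infty$-categories realize both products and coproducts, so the underlying functor $\CMon[\Catst] \to \Catst$ is an equivalence (this is the ambient fact already used to form $\CC^\cocart$ in \cref{k-def}). The key categorical input is then the following: for any category $\mcl{D}$ with finite products, the forgetful functor induces an equivalence
\[
\Fun^\times(\Catst, \CMon[\mcl{D}]) \xrightarrow{\sim} \Fun^\times(\Catst, \mcl{D}),
\]
so that a finite-product-preserving functor out of a semiadditive category carries a canonical and unique commutative monoid structure on its values, inherited from the biproduct structure on the source.

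Applying this to $\mcl{D} = \Spaces$, we obtain a unique lift $\widetilde{\underline{\KK}}\colon \Catst \to \CMon[\Spaces]$ of $\underline{\KK}$. Because $\underline{\KK}(\CC) = \Omega|S_\bullet \CC|$ is naturally a based loop space, the resulting commutative monoid is grouplike, so the lift factors through the full subcategory $\CMon^\gl[\Spaces] = \conSp$, yielding a lift to $\conSp$. Any candidate lift $F\colon \Catst \to \conSp$ of $\underline{\KK}$ is automatically product-preserving (since $\Omega^\infty$ is conservative and detects the product comparison maps), so the equivalence above forces $F \simeq \KK$.

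The main obstacle is rigorously establishing the uniqueness of the commutative monoid lift for product-preserving functors out of a semiadditive category. I expect the cleanest route is to combine Segal's description $\CMon[\mcl{D}] = \Fun^\times(\mrm{Span}(\FinSet), \mcl{D})$ with the semiadditive equivalence $\Catst \simeq \CMon[\Catst]$: a product-preserving functor $\Catst \to \mcl{D}$ corresponds via the equivalence to a product-preserving functor out of $\CMon[\Catst]$, and the $\mrm{Span}(\FinSet)$-structure is then transported back to give the unique commutative monoid refinement. Alternatively, one may invoke the universal property of $\CMon$ as classifying semiadditivity in $\PrL$, as used elsewhere in the paper (e.g. in \cref{cmon-mode}), to give a direct structural argument.
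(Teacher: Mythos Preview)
Your proposal is correct and follows essentially the same approach as the paper. The paper simply cites \cite[Corollary 5.15]{Harpaz} for the equivalence $\Fun^\times(\CC, \DD) \simeq \Fun^\times(\CC, \CMon[\DD])$ when $\CC$ is semiadditive, then notes that $\Fun^\times(\CC, \CMongl[\DD]) \subseteq \Fun^\times(\CC, \CMon[\DD])$ is a full subcategory (closed under products) to conclude that the forgetful $\Fun^\times(\Catst, \conSp) \to \Fun^\times(\Catst, \Spaces)$ is fully faithful; your additional remarks (verifying product-preservation of $\underline{\KK}$, and that any lift is automatically product-preserving because $\Omega^\infty$ is conservative and product-preserving) make explicit what the paper leaves implicit.
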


\begin{proof}
	The universal property of commutative monoids given in \cite[Corollary 5.15]{Harpaz} implies that if $\DD$ has finite products and $\CC$ is semiadditive, then
	$
	\Fun^\times(\CC, \DD)
	\cong \Fun^\times(\CC, \CMon[\DD])
	$.
	Since $\CMongl[\DD] \subseteq \CMon[\DD]$ is a full subcategory closed under products,
	$$
	\Fun^\times(\CC, \CMongl[\DD])
	\subseteq \Fun^\times(\CC, \CMon[\DD])
	$$
	is a full subcategory.
	Therefore, the forgetful
	$$
	\Fun^\times(\CC, \CMongl[\DD])
	\to \Fun^\times(\CC, \DD)
	$$
	is fully faithful, meaning that product preserving functors $\CC \to \DD$ have unique or no lifts to $\CMongl[\DD]$.
	In particular, for $\DD = \Spaces$, using the equivalence $\CMongl[\Spaces] \cong \conSp$, we get that the forgetful
	$$
	\Fun^\times(\CC, \conSp)
	\to \Fun^\times(\CC, \Spaces)
	$$
	is fully faithful.

	Applying this to the case $\CC = \Catst$, the result follows since $\underline{\KK}$ has a lift, which is therefore unique.
\end{proof}

\subsection{Definition of Semiadditive Algebraic K-Theory}

We restrict the $S_\bullet$-construction to $\Catmfinst$, and use the same notation i.e.\ $S_\bullet\colon \Catmfinst \to \Spaces^{\Delta^\op}$.
\cref{lim-cat-lam} shows that $\Catmfinst \to \Catst$ preserve limits, thus by \cref{S-lims}, the restriction $S_\bullet\colon \Catmfinst \to \Spaces^{\Delta^\op}$ preserves limits as well, so using \cref{lim-cmon} again we get an induced functor $S_{\bullet}\colon \CMonm[\Catmfinst] \to \CMonm[\Spaces]^{\Delta^\op}$.
Employing \cref{cocart-equiv}, we give the following definition.

\begin{defn}\label{sa-k-def}
	We define \tdef{$m$-semiadditive algebraic K-theory} $\mdef{\Km}\colon \Catmfinst \to \tsadim$ by $\Km(\CC)=\Omega|(S_\bullet(\CC^\cocart))^{\gpc}|$, that is, as the following composition
	$$
		\Catmfinst
		\xrightarrow{(-)^\cocart} \CMonm[\Catmfinst]
		\xrightarrow{S_\bullet} \CMonm[\Spaces]^{\Delta^\op}
		\xrightarrow{(-)^{\gpc}} (\tsadim)^{\Delta^\op}
		\xrightarrow{|-|} \tsadim
		\xrightarrow{\Omega} \tsadim
		.	
	$$
\end{defn}

\begin{example}\label{K0-is-K}
	\cref{k-unique-deloop} shows that the case $m = 0$ recovers the $p$-localization of the ordinary K-theory of stable categories.
\end{example}

\begin{prop}\label{Klam-limits}
	The functor $\Km\colon \Catmfinst \to \tsadim$ is an $m$-semiadditive functor, i.e.\ commutes with all limits indexed by an $m$-finite $p$-space.
	In particular, $\Km(\CC^A) \cong \Km(\CC)^A$ for any $m$-finite $p$-space $A$.
\end{prop}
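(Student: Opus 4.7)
The plan is to show that each of the five functors in the defining composition
$$
\Catmfinst
\xrightarrow{(-)^\cocart} \CMonm[\Catmfinst]
\xrightarrow{S_\bullet} \CMonm[\Spaces]^{\Delta^\op}
\xrightarrow{(-)^{\gpc}} (\tsadim)^{\Delta^\op}
\xrightarrow{|-|} \tsadim
\xrightarrow{\Omega} \tsadim
$$
preserves $A$-shaped limits for every $m$-finite $p$-space $A$; the ``in particular'' follows by applying preservation to the constant $A$-diagram at $\CC$.

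First, $(-)^\cocart$ is an equivalence by \cref{cocart-equiv}, so it preserves all limits. Second, $S_\bullet\colon \Catst \to \Spaces^{\Delta^\op}$ preserves all limits by \cref{S-lims}; combined with \cref{lim-cmon} applied to each $S_n$, the induced functor $S_\bullet\colon \CMonm[\Catmfinst] \to \CMonm[\Spaces]^{\Delta^\op}$ also preserves $A$-shaped limits (since limits in the simplicial direction are computed levelwise, and limits in $\CMonm$ are computed in the ambient functor category).

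The critical step is the third one. The group-completion $(-)^{\gpc}\colon \CMonm[\Spaces] \to \tsadim$ is a morphism of modes, in particular a left adjoint, so it preserves $A$-shaped \emph{colimits}. However, both source and target are $m$-semiadditive modes, so by ambidexterity the canonical norm map identifies $A$-shaped colimits with $A$-shaped limits in both; therefore $(-)^{\gpc}$ preserves $A$-shaped limits as well. Applied levelwise on $\Delta^\op$, the same conclusion holds for the induced functor $(\CMonm[\Spaces])^{\Delta^\op} \to (\tsadim)^{\Delta^\op}$.

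For the fourth step, the geometric realization $|-|\colon (\tsadim)^{\Delta^\op} \to \tsadim$ is a colimit, and as recalled in the proof of \cref{tsadi-atomics}, every $m$-finite $p$-space is an absolute colimit in the mode $\tsadim$, meaning $A$-shaped limits therein commute with all colimits; hence $|-|$ preserves $A$-shaped limits. Finally, $\Omega$ is a limit and so preserves all limits. The main obstacle is the third step, which seems to require commuting a left adjoint past limits and is only possible thanks to the ambidexterity built into the $m$-semiadditive setting.
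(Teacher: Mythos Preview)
Your proof is correct and follows essentially the same decomposition as the paper's proof. The paper packages the argument slightly more uniformly: it first notes that all intermediate categories in the composition are $m$-semiadditive (invoking \cite[Proposition 2.1.4]{AmbiHeight} for the diagram categories), and then observes that each functor preserves either all limits or all colimits, hence is $m$-semiadditive; in particular, for both $(-)^{\gpc}$ and $|-|$ the paper simply uses that a colimit-preserving functor between $m$-semiadditive categories is $m$-semiadditive, rather than your absolute-colimit argument for $|-|$. Your route for $|-|$ is equally valid but slightly less uniform with the treatment of $(-)^{\gpc}$.
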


\begin{proof}
	The functor $\Km\colon \Catmfinst \to \tsadim$ is defined as the composition of functors between $m$-semiadditive categories (\cite[Proposition 2.1.4 (1) and (2)]{AmbiHeight} imply the $m$-semiadditivity of $\CMonm[\Spaces]^{\Delta^\op}$ and $(\tsadim)^{\Delta^\op}$).
	All of the functors either preserve all limits (in the case of $(-)^\cocart, S_\bullet$ and $\Omega$) or preserve all colimits (in the case of $(-)^{\gpc}$ and $|-|$).
	In particular they are all $m$-semiadditive functors, thus the composition is an $m$-semiadditive functor as well.
\end{proof}

\subsection{Relationship to Ordinary Algebraic K-Theory}

\cref{Klam-limits} shows that $\Km$ is an $m$-semiadditive functor, and in particular satisfies $\Km(\CC^A) \cong \Km(\CC)^A$ for any $m$-finite $p$-space $A$.
One may wonder if $\Km$ can be obtained by forcing ordinary algebraic K-theory to satisfy this condition.
In this subsection we show a more general result of this sort.
To be more specific, let $m_0 \leq m$, then \cref{Kmmz} introduces a functor $\Kmmz$, which associates to $\CC \in \Catmfinst$ the pre-$m$-commutative monoid given on objects by $A \mapsto \Kmz(\CC^A)$.
The main result of this subsection is \cref{sheafification}, which shows that forcing the $m$-Segal condition on $\Kmmz$ is indeed $\Km$.
In particular, the case $m_0 = 0$ yields an alternative definition of $m$-semiadditive algebraic K-theory, by forcing $A \mapsto \KK(\CC^A)$ to satisfy the $m$-Segal condition.

Consider the inclusion $i\colon \CMonm[\Catmfinst] \subseteq \PCMonm[\Catmfinst] \to \PCMonm[\Catmzfinst]$.
Using this we are lead to the main definition.

\begin{defn}\label{Kmmz}
	We define the functor $\mdef{\Kmmz}\colon \Catmfinst \to \PCMonm[\tsadimz]$ by the following composition:
	$$
		\Catmfinst
		\xrightarrow{(-)^\cocart} \CMonm[\Catmfinst]
		\xrightarrow{i} \PCMonm[\Catmzfinst]
		\xrightarrow{\Kmz\circ-} \PCMonm[\tsadimz]
	$$
\end{defn}

We recall that for any $\DD$ we have an equivalence $\CMonm[\DD] \cong \CMonm[{\CMonmz[\DD]}]$ (which is given by sending $X \in \CMonm[\DD]$ to the iterated commutative monoid given on objects by $A \mapsto (B \mapsto X(A \times B))$).
In particular, we can consider it as a full subcategory $\CMonm[\DD] \subseteq \PCMonm[{\CMonmz[\DD]}]$, and this inclusion has a left adjoint $\Lseg$.

Applying the above for $\DD = \Sp_{(p)}$ shows that we have an inclusion of a full subcategory $\tsadim \subseteq \PCMonm[\tsadimz]$ with a left adjoint $\Lseg$.
This allows us consider $\Km\colon \Catmfinst \to \tsadim$ as a functor to $\PCMonm[\tsadimz]$.

\begin{thm}\label{sheafification}
	There is a natural equivalence $\Lseg \Kmmz \iso \Km$ of functors $\Catmfinst \to \tsadim$.
\end{thm}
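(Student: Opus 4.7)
The plan is to rewrite $\Km(\CC)$ in a form amenable to comparison with $\Lseg\Kmmz(\CC)$, by taking the geometric realization inside $\PCMonm[\tsadimz]$ (level-wise) rather than inside $\tsadim$, and then commuting $\Lseg$ past the loop functor. Set $T(\CC) := |(S_\bullet \CC^{\mcocart})^{\gpc}|_{\PCMon}$, the level-wise realization in $\PCMonm[\tsadimz]$ of the simplicial object $(S_\bullet \CC^{\mcocart})^{\gpc}$ viewed there via the inclusion $\tsadim \subseteq \PCMonm[\tsadimz]$ of \cref{cmon-mul}. The proof then reduces to two identifications and the commutation $\Omega\Lseg = \Lseg\Omega$.

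The first identification, $\Omega T(\CC) \simeq \Kmmz(\CC)$ in $\PCMon$, proceeds by unwinding $(S_n \CC^{\mcocart})^{\gpc}_m \in \CMonm[\Spaces]$ under the equivalence $\CMonm[\Spaces] = \CMonm[\CMonmz[\Spaces]]$ underlying the inclusion into $\PCMon$. Concretely, an object $Y \in \CMonm[\Spaces]$ becomes the functor sending $A \in \Spacesm$ to the $m_0$-commutative monoid $B \mapsto Y(A \times B)$, where the Segal condition on $Y$ places this in $\CMonmz[\Spaces]$. Applied to $Y = S_n \CC^{\mcocart}$ with the equality $\CC^{\mcocart}(A \times B) = (\CC^A)^{\mzcocart}(B)$ (from the uniqueness of the $m_0$-cocartesian structure on the $m$-semiadditive category $\CC^A$, via \cref{cocart-equiv}), the value at $A$ becomes $(S_n((\CC^A)^{\mzcocart}))^{\gpc}_{m_0}$. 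Since colimits and limits in $\PCMon$ are level-wise, realization and $\Omega$ commute with evaluation at $A$, giving $\Omega T(\CC)(A) = \Kmz(\CC^A) = \Kmmz(\CC)(A)$, naturally. The second identification, $\Km(\CC) \simeq \Omega\Lseg T(\CC)$, uses that $\tsadim = \CMonm[\tsadimz] \subseteq \PCMonm[\tsadimz]$ is reflective via $\Lseg$ and closed under all limits: $\Omega$ in $\tsadim$ agrees with $\Omega$ in $\PCMon$ on Segal objects, and geometric realization in $\tsadim$ is $\Lseg$ of the level-wise realization in $\PCMon$, so $\Km(\CC) = \Omega_{\tsadim} |(S_\bullet \CC^{\mcocart})^{\gpc}|_{\tsadim} = \Omega \Lseg T(\CC)$.

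It remains to show $\Omega\Lseg = \Lseg\Omega$. Both $\PCMonm[\tsadimz]$ and $\tsadim$ are stable presentable $\infty$-categories (as modules over the stable modes $\tsadimz$ and $\tsadim$ respectively), so the left adjoint $\Lseg$ between them is automatically $\Sp$-linear. Since $\Omega$ is given by tensoring with the invertible sphere $\Sigma^{-1}\unit \in \Sp$, any $\Sp$-linear functor commutes with it, giving $\Omega\Lseg = \Lseg\Omega$. Combining everything, $\Km(\CC) = \Omega\Lseg T(\CC) = \Lseg\Omega T(\CC) = \Lseg\Kmmz(\CC)$, with naturality in $\CC$ inherited from the naturality of each intermediate construction. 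The main technical obstacle is the bookkeeping in the first identification, in particular checking that the group completion $(-)^{\gpc}_m$ really factors through the iterated commutative monoid description so that evaluation at $A$ reduces to the $m_0$-level construction delivering $\Kmz(\CC^A)$; once this is pinned down, the commutation of $\Lseg$ and $\Omega$ is formal from stability and the result follows.
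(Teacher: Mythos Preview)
Your overall strategy—reorganizing the comparison around a single intermediate object $T(\CC)$ and then commuting $\Lseg$ past $\Omega$—is close in spirit to the paper's step-by-step square argument, and the second identification together with $\Omega\Lseg \simeq \Lseg\Omega$ are fine. However, the first identification contains a genuine error, not merely bookkeeping.

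You claim that under the inclusion $\iota\colon \tsadim \hookrightarrow \PCMonm[\tsadimz]$ one has
\[
\iota\bigl((S_n \CC^{\mcocart})^{\gpc_m}\bigr)(A) \;\simeq\; \bigl(S_n((\CC^A)^{\mzcocart})\bigr)^{\gpc_{m_0}}.
\]
This amounts to asserting that the square
\[
\begin{tikzcd}
\CMonm[\Spaces] \arrow[r,"(-)^{\gpc_m}"] \arrow[d,hook,"\iota'"'] & \tsadim \arrow[d,hook,"\iota"] \\
\PCMonm[\CMonmz[\Spaces]] \arrow[r,"(-)^{\gpc_{m_0}}"'] & \PCMonm[\tsadimz]
\end{tikzcd}
\]
commutes, i.e.\ that applying $\gpc_{m_0}$ level-wise to an $m$-Segal object yields an $m$-Segal object. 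Unwinding, this requires $\gpc_{m_0}\colon \CMonmz[\Spaces] \to \tsadimz$ to commute with limits indexed by $A \in \Spacesm$. But $\gpc_{m_0}$ is only $m_0$-semiadditive; for $A$ genuinely $m$-finite but not $m_0$-finite (e.g.\ $A = \BB C_p$ when $m_0 = 0$) this fails—indeed, group completion does not commute with $\BB C_p$-fixed points, which is precisely the failure of Galois descent for K-theory. Consequently $\Omega T(\CC)$ is \emph{not} $\Kmmz(\CC)$ as you assert.

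The paper sidesteps this by never claiming the intermediate objects remain Segal. Instead it shows that $\Lseg$ commutes with each step separately: with $(-)^{\gpc}$ because the square of left adjoints has commuting right adjoints (all forgetfuls), with $|-|$ because $\Lseg$ is a left adjoint, and with $\Omega$ by stability. Your argument can be repaired by redefining $T(\CC)$ with the inclusion applied \emph{before} group completion (i.e.\ include $S_\bullet\CC^{\mcocart}$ into $\PCMonm[\CMonmz[\Spaces]]$ and then apply $\gpc_{m_0}$ level-wise); then your first identification becomes tautological, but your second identification now needs exactly the paper's commutation of $\Lseg$ with $\gpc$, and you are back to the paper's proof.
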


\begin{proof}
	We show that the square
	$$\begin{tikzcd}
		\Catmfinst \arrow{r}{\Kmmz} \arrow[equal]{d}{}
		& \PCMonm[\tsadimz] \arrow{d}{\Lseg} \\
		\Catmfinst \arrow{r}{\Km}
		& \tsadim
	\end{tikzcd}$$
	commutes by admitting it as a (horizontal) composition of commutative squares, following the definition of the two functors.

	Both functors start with
	$\Catmfinst \xrightarrow{(-)^\cocart} \CMonm[\Catmfinst]$.
	
	Consider the square below.
	The lift of $S_\bullet$ to (pre-)$m$-commutative monoids is given by post-composition.
	From this description, the upper composition in fact lands in $\CMonm[{\CMonmz[\Spaces]^{\Delta^\op}}]$, so $\Lseg$ acts on the image as the identity, making the square commute.
	$$\begin{tikzcd}
		\CMonm[\Catmfinst] \arrow{r}{i} \arrow[equal]{d}{}
		& \PCMonm[{\Catmzfinst}] \arrow{r}{S_\bullet(-)^\mzcocart}
		& \PCMonm[{\CMonmz[\Spaces]^{\Delta^\op}}] \arrow{d}{\Lseg} \\
		\CMonm[\Catmfinst] \arrow{rr}{S_\bullet}
		&
		& \CMonm[\Spaces]^{\Delta^\op}
	\end{tikzcd}$$
	
	The following square commutes because all maps are left adjoints and the square of right adjoints commutes because they are all forgetfuls.
	$$\begin{tikzcd}
		\PCMonm[{\CMonmz[\Spaces]^{\Delta^\op}}] \arrow{r}{(-)^{\gpc}} \arrow{d}{\Lseg}
		& \PCMonm[(\tsadimz)^{\Delta^\op}] \arrow{d}{\Lseg} \\
		\CMonm[\Spaces]^{\Delta^\op} \arrow{r}{(-)^{\gpc}}
		& (\tsadim)^{\Delta^\op}
	\end{tikzcd}$$
	
	The following square commutes because $\Lseg$ is a left adjoint, thus commutes with colimits.
	$$\begin{tikzcd}
		\PCMonm[(\tsadimz)^{\Delta^\op}] \arrow{r}{|-|} \arrow{d}{\Lseg}
		& \PCMonm[\tsadimz] \arrow{d}{\Lseg} \\
		(\tsadim)^{\Delta^\op} \arrow{r}{|-|}
		& \tsadim
	\end{tikzcd}$$
	
	Lastly, $\Lseg$ is an exact functor between stable categories, thus it commutes with finite limits, so the following square commutes.
	$$\begin{tikzcd}
		\PCMonm[\tsadimz] \arrow{r}{\Omega} \arrow{d}{\Lseg}
		& \PCMonm[\tsadimz] \arrow{d}{\Lseg} \\
		\tsadim \arrow{r}{\Omega}
		& \tsadim
	\end{tikzcd}$$
\end{proof}

In particular, restricting to the case $m_0 = 0$, we get that the functor $\Kzm$ given by $A \mapsto \KK(\CC^A)$ satisfies the following:

\begin{cor}\label{sheafification-ord}
	There is a natural equivalence $\Lseg \Kzm \iso \Km$ of functors $\Catmfinst \to \tsadim$.
	In particular, if $\Kzm(\CC)\colon \Span(\Spacesm)^\op \to \Sp_{(p)}$ satisfies the $m$-Segal condition, then $\Km(\CC) \cong \Kzm(\CC)$.
\end{cor}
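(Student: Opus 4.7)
The plan is to derive this corollary as the specialization of \cref{sheafification} to the case $m_0 = 0$. Recall that by \cref{K0-is-K} we have $\tsadi^{[0]} = \Sp_{(p)}$ and $\Kzo$ is (up to $p$-localization) ordinary algebraic K-theory, so that setting $m_0 = 0$ in \cref{Kmmz} yields a functor $\Kzm\colon \Catmfinst \to \PCMonm[\Sp_{(p)}]$ whose value on $\CC$ is the pre-$m$-commutative monoid $A \mapsto \KK(\CC^A)$. The first sentence of the corollary is then just the statement of \cref{sheafification} with $m_0 = 0$.

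For the second sentence, I would argue as follows. The inclusion $\CMonm[\Sp_{(p)}] = \tsadim \subseteq \PCMonm[\Sp_{(p)}]$ is fully faithful with left adjoint $\Lseg$. By general properties of reflective localizations, the unit map $X \to \Lseg X$ is an equivalence whenever $X$ lies in the essential image of the inclusion, i.e., whenever $X$ satisfies the $m$-Segal condition. Applying this to $X = \Kzm(\CC)$, the hypothesis tells us that the unit $\Kzm(\CC) \to \Lseg \Kzm(\CC)$ is an equivalence. Combining with the natural equivalence $\Lseg \Kzm(\CC) \simeq \Km(\CC)$ from the first part immediately gives $\Kzm(\CC) \simeq \Km(\CC)$.

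No substantive obstacle is expected: the first part is a pure reading off of the general \cref{sheafification}, and the second part is the standard behavior of a localization on objects already in its essential image (together with the characterization of $\tsadim \subseteq \PCMonm[\Sp_{(p)}]$ given by the $m$-Segal condition, cf. \cref{def-cmon}). The only minor point to double-check is the identification $\tsadi^{[0]} = \Sp_{(p)}$ so that $\Kzm$ genuinely lands in $\PCMonm[\Sp_{(p)}]$ and agrees with $A \mapsto \KK(\CC^A)$ after $p$-localization, but this is supplied by \cref{K0-is-K} together with the unwinding of \cref{Kmmz}.
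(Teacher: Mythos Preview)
Your proposal is correct and follows exactly the paper's approach: the corollary is stated immediately after \cref{sheafification} as its specialization to $m_0 = 0$, with no further proof given. Your additional elaboration of the ``in particular'' clause via the reflective localization property of $\Lseg$ is the intended reasoning (note a minor slip: the identification $\tsadi^{[0]} = \Sp_{(p)}$ is by definition rather than by \cref{K0-is-K}, which instead identifies $\KK^{[0]}$ with $p$-local ordinary K-theory).
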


\begin{cor}\label{K-to-Km}
	There is a natural transformation $\KK \to \underline{\Km}$ of functors $\Catmfinst \to \Sp_{(p)}$.
\end{cor}

\begin{proof}
	By adjunction, the equivalence $\Lseg \Kzm \iso \Km$ of \cref{sheafification-ord} corresponds to a natural transformation $\Kzm \to \Km$ of functors $\Catmfinst \to \PCMonm[\Sp_{(p)}]$, where the target lands in $\tsadim$.
	Evaluating the pre-$m$-commutative monoids at the point gives the desired natural transformation.
\end{proof}

Recall that $\SpTn$ is $\infty$-semiadditive.
In particular, $\SpTn \cong \CMonm[\SpTn]$.
Additionally, there is a canonical map of modes $\LTn^{\tsadim}\colon \tsadim \to \SpTn$.
Recall from \cite[Corollary 5.5.14]{AmbiHeight} that $\LTn^{\tsadi}\colon \tsadi \to \SpTn$ is a smashing localization, here we prove a slight generalization.

\begin{lem}\label{tsadim-Tn-smash}
	For any $m \geq 1$ and $n \geq 0$, the functor $\LTn^{\tsadim}\colon \tsadim \to \SpTn$ is a smashing localization of modes.
\end{lem}

\begin{proof}
	Recall that $\Lnf\colon \Sp \to \LnfSp$ is a smashing localization.
	Tensoring with the stable mode $\tsadim$, by \cref{mod-smash-tensor} we get that $\tsadim \to \tsadim \otimes \LnfSp$ is a smashing localization of modes.
	Now, by \cite[Theorem G]{AmbiHeight} we know that $\tsadim \otimes \LnfSp \cong \SpTze \times \cdots \times \SpTn$, and the projection to a factor is a smashing localization of modes as well.
	Therefore, the composition $\LTn^{\tsadim}\colon \tsadim \to \SpTn$ is also smashing localization of modes.
\end{proof}

\begin{prop}\label{tsadi-to-SpTn}
	The following square commutes:
	$$\begin{tikzcd}
	\PCMonm[\Sp_{(p)}] \arrow{r}{\LTn} \arrow{d}{\Lseg}
	& \PCMonm[\SpTn] \arrow{d}{\Lseg} \\
	\tsadim \arrow{r}{\LTn^{\tsadim}}
	& \SpTn
	\end{tikzcd}$$
\end{prop}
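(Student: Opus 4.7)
The plan is to verify commutativity by passing to right adjoints. All four maps live in $\PrL$: the vertical $\Lseg$'s are left adjoint to the fully faithful inclusions $\CMonm \subseteq \PCMonm$; the top $\LTn$, being post-composition with $L_{\Tn}\colon \Sp_{(p)} \to \SpTn$, is left adjoint to post-composition with the inclusion $\iota\colon \SpTn \hookrightarrow \Sp_{(p)}$; and the bottom $\LTn^{\tsadim}$ is a smashing localization by \cite[Corollary 5.5.14]{AmbiHeight}, hence left adjoint to the fully faithful inclusion $\SpTn \hookrightarrow \tsadim$. By the $\PrL \simeq (\PrR)^{\op}$ correspondence of \cite[Corollary 4.7.4.18]{HA}, commutativity of the given square of left adjoints is equivalent to commutativity of the mate square of right adjoints
$$
\begin{tikzcd}
\PCMonm[\Sp_{(p)}] & \PCMonm[\SpTn] \arrow[l, "\iota \circ -"'] \\
\tsadim \arrow[u, hook] & \SpTn \arrow[l, hook] \arrow[u, hook]
\end{tikzcd}
$$

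On an object $X \in \SpTn$, both routes into $\PCMonm[\Sp_{(p)}]$ ought to produce the same diagram $\Span(\Spacesm)^\op \to \Sp_{(p)}$, namely the $m$-commutative monoid structure on $X$ viewed through $\iota$. Going up then left, we treat $X \in \CMonm[\SpTn] = \SpTn$ as a Segal functor landing in $\SpTn$ and post-compose with $\iota$; going left then up, we include $X$ into $\tsadim = \CMonm[\Sp_{(p)}]$ and then into $\PCMonm[\Sp_{(p)}]$.

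The only substantive step is thus to identify the fully faithful inclusion $\SpTn \hookrightarrow \tsadim$ with $\iota$-post-composition of the $\SpTn$-valued Segal diagram. I expect this to follow from the construction of $\LTn^{\tsadim}$ as a smashing localization: the map of modes $\tsadim \to \SpTn$ is obtained by tensoring the idempotent algebra map $\Sp_{(p)} \to \SpTn$ with $\CMonm[\Spaces]$ in $\PrL$, so its right adjoint is $\CMonm[\Spaces] \otimes \iota$. Under the equivalence $\CMonm[\Spaces] \otimes \Sp_{(p)} = \CMonm[\Sp_{(p)}] \subseteq \PCMonm[\Sp_{(p)}]$ of \cref{cmon-mode}, which identifies tensoring with $\CMonm[\Spaces]$ with passage to Segal functors on $\Span(\Spacesm)^\op$, the tensor $\CMonm[\Spaces] \otimes \iota$ acts as post-composition with $\iota$ at the level of $\Span(\Spacesm)^\op$-diagrams. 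I anticipate this naturality bookkeeping to be the main obstacle; once granted, the right-adjoint square commutes tautologically and the proposition follows.
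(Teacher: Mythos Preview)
Your approach is correct and essentially the same as the paper's: both pass to right adjoints and verify that the resulting square commutes because all maps are inclusions. The paper dispatches your ``substantive step'' more directly by writing $\SpTn = \CMonm[\SpTn]$ (using $\infty$-semiadditivity of $\SpTn$), so that the right adjoint of $\LTn^{\tsadim}$ is literally the inclusion $\CMonm[\SpTn] \hookrightarrow \CMonm[\Sp_{(p)}]$; then all four arrows in the mate square are inclusions of full subcategories of $\PCMonm[\Sp_{(p)}]$ and commutativity is tautological, with no further bookkeeping needed.
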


\begin{proof}
	First recall that by definition $\tsadim = \CMonm[\Sp_{(p)}]$, and as explained above, $\SpTn \cong \CMonm[\SpTn]$.
	All of the morphisms in the square in the statement are left adjoints.
	Using the two identifications and passing to the right adjoints we obtain the square:
	$$\begin{tikzcd}
	\PCMonm[\Sp_{(p)}]
	& \PCMonm[\SpTn] \arrow[hook']{l}{} \\
	\CMonm[\Sp_{(p)}] \arrow[hook]{u}{}
	& \CMonm[\SpTn] \arrow[hook']{l}{} \arrow[hook]{u}{}
	\end{tikzcd}$$
	This square commutes as all morphisms are inclusions, thus the original square of left adjoints commutes as well.
\end{proof}

\begin{cor}\label{sheafification-Tn}
	There is an equivalence $\Lseg \LTn \Kzm \iso \LTn^{\tsadim} \Km$.
	In particular, if $\LTn \Kzm(\CC)$ satisfies the $m$-Segal condition, then $\LTn \KK(\CC) \cong \LTn^{\tsadim} \Km(\CC)$.
\end{cor}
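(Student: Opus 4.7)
The plan is to derive \cref{sheafification-Tn} as a direct formal consequence of the two immediately preceding results, namely \cref{sheafification-ord} and \cref{tsadi-to-SpTn}. I would start from the natural equivalence $\Lseg \Kzm \xrightarrow{\sim} \Km$ of functors $\Catmfinst \to \tsadim$ established in \cref{sheafification-ord} (applied with $m_0 = 0$), and whisker it on the left with the localization of modes $\LTn^{\tsadim}\colon \tsadim \to \SpTn$ to obtain $\LTn^{\tsadim} \Lseg \Kzm \xrightarrow{\sim} \LTn^{\tsadim} \Km$.

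Next, I would invoke the commutative square of \cref{tsadi-to-SpTn}, which gives the equality $\LTn^{\tsadim} \Lseg = \Lseg \LTn$ as functors $\PCMonm[\Sp_{(p)}] \to \SpTn$. Rewriting the left-hand side of the previous equivalence using this identity produces the desired natural equivalence
$$\Lseg \LTn \Kzm \xrightarrow{\sim} \LTn^{\tsadim} \Km.$$

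For the ``in particular'' clause, assume that $\LTn \Kzm(\CC) \in \PCMonm[\SpTn]$ satisfies the $m$-Segal condition. Since $\SpTn$ is $\infty$-semiadditive, the inclusion $\CMonm[\SpTn] \subseteq \PCMonm[\SpTn]$ identifies $\CMonm[\SpTn]$ with $\SpTn$ via the underlying functor, and the localization $\Lseg$ acts as the identity on this full subcategory. Consequently $\Lseg \LTn \Kzm(\CC) \simeq \LTn \Kzm(\CC)$, whose underlying object is $\LTn \KK(\CC)$ (using \cref{K0-is-K} to identify $\Kze$ with $p$-localized ordinary K-theory, which has the same $\Tn$-localization as $\KK$). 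Combining with the equivalence from the first part yields $\LTn \KK(\CC) = \LTn^{\tsadim} \Km(\CC)$ as objects of $\SpTn$.

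I do not anticipate a substantial obstacle: the argument is essentially a diagram chase assembling the two preceding propositions, of the same flavor as the proof of \cref{sheafification} itself. The only point requiring a little care is the interpretation of the ``in particular'' equation as one between underlying objects in $\SpTn$, which uses that the $m$-Segal condition on $\LTn \Kzm(\CC)$ permits the pre-$m$-commutative monoid to be faithfully represented by its underlying spectrum.
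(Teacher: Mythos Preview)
Your proposal is correct and follows essentially the same approach as the paper: the first part is obtained by whiskering the equivalence of \cref{sheafification-ord} with $\LTn^{\tsadim}$ and then applying the commutative square of \cref{tsadi-to-SpTn}, and the second part uses the equivalence $\CMonm[\SpTn] = \SpTn$ (from $\infty$-semiadditivity of $\SpTn$) to identify the underlying object of $\LTn \Kzm(\CC)$ with $\LTn \KK(\CC)$. The paper's proof is terser but logically identical.
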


\begin{proof}
	The first part follows immediately from \cref{sheafification-ord} and \cref{tsadi-to-SpTn}.
	For the second part, if $\LTn \Kzm(\CC)$ satisfies the $m$-Segal condition, then by the first part there is an equivalence $\LTn \Kzm(\CC) \cong \LTn^{\tsadim} \Km(\CC)$.
	The equivalence $\CMonm[\SpTn] \cong \SpTn$, given by taking the underlying object, identifies $\LTn \Kzm(\CC)$ with $\LTn \KK(\CC)$, showing that indeed $\LTn \KK(\CC) \cong \LTn^{\tsadim} \Km(\CC)$.
\end{proof}

\begin{cor}\label{K-to-Km-Tn}
	There is a natural transformation $\LTn \KK \to \LTn^{\tsadim} \Km$ of functors $\Catmfinst \to \SpTn$.
\end{cor}

\begin{proof}
	Similarly to \cref{K-to-Km}, by adjunction, the equivalence $\Lseg \LTn \Kzm \iso \LTn^{\tsadim} \Km$ of \cref{sheafification-Tn} corresponds to a natural transformation $\LTn \Kzm \to \LTn^{\tsadim} \Km$ of functors $\Catmfinst \to \PCMonm[\SpTn]$, where the target lands in $\CMonm[\SpTn] \cong \SpTn$.
	Evaluating the pre-$m$-commutative monoids at the point gives the desired natural transformation.
\end{proof}

\subsection{Multiplicative Structure}

Using \cref{sheafification-ord} we leverage the lax symmetric monoidal structure on algebraic K-theory developed in \cite[Corollary 1.6]{BGT} and \cite[Proposition 3.8]{MulK} to construct a lax symmetric monoidal structure on $m$-semiadditive algebraic K-theory.

Recall that for any collection of indexing categories $\clK$, $\CatK$ has a symmetric monoidal structure constructed in \cite[\S 4.8.1]{HA}.
If $\clK$ contains all finite categories, then $\Catst_\clK$ is the full subcategory on those categories that are in addition stable, which is also endowed with a symmetric monoidal structure (but is not a sub-symmetric monoidal category of $\Cat_\clK$, whose unit is not stable).

\begin{lem}
	The inclusion $\Catmfinst \to \Catst$ is lax symmetric monoidal.
\end{lem}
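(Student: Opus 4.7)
The plan is to produce the lax symmetric monoidal structure on $\iota\colon \Catmfinst \hookrightarrow \Catst$ by exhibiting it as the right adjoint of a symmetric monoidal left adjoint; the conclusion then follows from \cite[Corollary 7.3.2.7]{HA}.

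First, I would construct a left adjoint $L\colon \Catst \to \Catmfinst$ that freely adjoins colimits indexed by $m$-finite $p$-spaces while remaining in the stable world. This is the stable analogue of the free $\clK$-cocompletion of \cite[Proposition 5.3.6.2]{HTT}, applied to the pair $\clK \subseteq \clK'$ with $\clK$ the collection of finite categories and $\clK' = \clK \cup \Spacesm$. The stability of $L(\CC)$ is inherited from that of $\CC$ because finite colimits can be computed in either $\Catst$ or in the $\clK'$-cocompletion, as captured by \cref{lim-cat-lam}.

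Second, I would show that $L$ is symmetric monoidal. Unwinding the universal property of the tensor products on $\Catst$ and $\Catmfinst$ (see \cite[\S 4.8.1--2]{HA}), which classify bilinear functors that are exact and appropriately cocontinuous in each variable, and applying the adjunction $L \dashv \iota$ in each factor, one checks that for $\CC, \DD \in \Catst$ and $\EE \in \Catmfinst$ both mapping spaces $\hom_{\Catmfinst}(L(\CC \otimes_{\Catst} \DD), \EE)$ and $\hom_{\Catmfinst}(L(\CC) \otimes_{\Catmfinst} L(\DD), \EE)$ are canonically equivalent to the space of exact bilinear functors $\CC \times \DD \to \iota\EE$. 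By Yoneda, $L(\CC \otimes_{\Catst} \DD) \simeq L(\CC) \otimes_{\Catmfinst} L(\DD)$, and assembling these equivalences into a coherent structure upgrades $L$ to a symmetric monoidal functor.

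The main obstacle is bookkeeping the coherences in the universal property computation of the second step, which the $\infty$-categorical setting makes nontrivial. A cleaner alternative route, if it can be set up, is to identify $\iota$ with the forgetful functor from modules over the idempotent algebra $L(\unit_{\Catst}) \in \Catst$; this would make the symmetric monoidality of $L$ (as the free module functor) and the lax symmetric monoidality of $\iota$ formal consequences of the standard theory of algebras and modules in a symmetric monoidal $\infty$-category.
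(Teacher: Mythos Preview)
Your strategy via a symmetric monoidal left adjoint is plausible, but the argument has a gap. Your justification for the stability of $L(\CC)$ via \cref{lim-cat-lam} is misplaced: that lemma concerns the creation of \emph{limits of diagrams of categories} by the forgetful inclusions, and says nothing about free cocompletions preserving stability. A correct argument would run as follows: with your notation $\clK \subset \clK'$, the free cocompletion $\PSh_\clK^{\clK'}(\CC)$ sits inside the stable category $\mathrm{Ind}(\CC)$ as the full subcategory generated by $\CC$ under $\clK'$-colimits; since desuspension $\Sigma^{-1}$ is an autoequivalence of $\mathrm{Ind}(\CC)$ commuting with all colimits and preserving $\CC$, this subcategory is closed under $\Sigma^{-1}$ and hence stable. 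You also rightly flag the coherence for the symmetric monoidality of $L$ as the main obstacle, and your alternative via an idempotent algebra would require establishing $\Catmfinst \simeq \Mod_{L(\Sp^\omega)}(\Catst)$, which is itself a substantial claim.

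The paper takes a different, shorter route that sidesteps both issues. Rather than constructing a left adjoint at the stable level, it first notes that the forgetful $\Cat_{\clK'} \to \Cat_\clK$ is lax symmetric monoidal (by the argument of \cite[Corollary 4.8.1.4]{HA}), giving a map of $\infty$-operads $\Cat_{\clK'}^\otimes \to \Cat_\clK^\otimes$. The key observation is then that while $\Catmfinst \subset \Cat_{\clK'}$ and $\Catst \subset \Cat_\clK$ are \emph{not} symmetric monoidal subcategories (the units differ), they are full sub-operads, and the map of operads visibly restricts to them. This produces the desired lax symmetric monoidal functor with no new constructions and no coherence bookkeeping. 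Your approach, if completed, would yield the stronger conclusion that the inclusion admits a symmetric monoidal left adjoint; the paper trades that extra information for brevity.
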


\begin{proof}
	We argue similarly to the proof of \cref{tsadi-atomics}.
	Recall again that there is a fully faithful functor $(-)^\otimes\colon \CMon(\Cat)^\lax \to \Op$ from the category of symmetric monoidal categories and lax symmetric monoidal functors to operads.
	For any collection of indexing categories $\clK$ the operad $\CatK^{\st,\otimes}$ is a sub-operad of $\Cat_\clK^\otimes$.

	The category $\Catmfinst$ is the case where $\clK$ is the collection of all finite categories and $m$-finite $p$-spaces, and $\Catst$ is the case where $\clK'$ the collection of all finite categories.
	The same proof of \cite[Corollary 4.8.1.4]{HA} shows that the inclusion $\Cat_\clK \to \Cat_{\clK'}$ is lax symmetric monoidal.
	We thus get a map of operads $\Cat_\clK^\otimes \to \Cat_{\clK'}^\otimes$.
	The restriction of this map to $\Cat_{\mfin}^{\st,\otimes}$ lands in $\Cat^{\st,\otimes}$, so we get a map of operads $\Cat_{\mfin}^{\st,\otimes} \to \Cat^{\st,\otimes}$, that is, a lax symmetric monoidal functor $\Catmfinst \to \Catst$.
\end{proof}

\begin{prop}\label{pre-k-lax}
	The functor $\Kzm\colon \Catmfinst \to \PCMonm[\Sp_{(p)}]$ is lax symmetric monoidal.
\end{prop}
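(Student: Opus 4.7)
The plan is to decompose $\Kzm$ as a composition of lax symmetric monoidal functors and appeal to \cref{day-range}. Concretely, one writes
\[
\Catmfinst \xrightarrow{(-)^\cocart} \CMonm[\Catmfinst] \hookrightarrow \PCMonm[\Catmfinst] \xrightarrow{j_\ast} \PCMonm[\Catst] \xrightarrow{\KK_{(p)} \circ -} \PCMonm[\Sp_{(p)}],
\]
where $j\colon \Catmfinst \hookrightarrow \Catst$ is the inclusion and each Day convolution is taken with respect to the symmetric monoidal structure on $\Span(\Spacesm)^\op$ induced by products in $\Spacesm$.

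The two rightmost arrows are straightforward. Algebraic K-theory $\KK\colon \Catst \to \Sp$ is lax symmetric monoidal by \cite[Corollary 1.6]{BGT}, and post-composition with the symmetric monoidal $p$-localization yields a lax symmetric monoidal $\KK_{(p)}\colon \Catst \to \Sp_{(p)}$. Applying \cref{day-range} to $\KK_{(p)}$ and to the preceding lemma (which shows $j$ is lax symmetric monoidal) takes care of both of these factors.

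For the inclusion $\CMonm[\Catmfinst] \hookrightarrow \PCMonm[\Catmfinst]$, I would adapt the proof of \cref{sh-compat}. The essential input there is the second half of \cref{UA-cmon}, namely that $\hom^{\PCMonm[\Catmfinst]}(\yon(A), X)$ inherits the Segal condition from $X$; this is purely about pre-composition along $A \times -$ and carries over verbatim with $\Catmfinst$ in place of $\CC \in \CAlg(\PrL)$, using that $\Catmfinst$ admits small colimits and that its symmetric monoidal structure preserves colimits in each variable. The localization $\Lseg$ is thus compatible with the Day convolution, and its right adjoint $\CMonm[\Catmfinst] \hookrightarrow \PCMonm[\Catmfinst]$ is lax symmetric monoidal by \cite[Corollary 7.3.2.7]{HA}.

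The main obstacle is the leftmost arrow $(-)^\cocart\colon \Catmfinst \to \CMonm[\Catmfinst]$, which by \cref{cocart-equiv} is an equivalence of underlying categories but whose symmetric monoidality must still be established. In the presentable setting this is the content of \cref{cmon-mul}, which identifies the mode symmetric monoidal structure with the localized Day convolution, and the same argument applies once the analog of \cref{sh-compat} is in place for $\Catmfinst$. Alternatively, the whole problem can be attacked through the universal property of the Day convolution (\cref{day-univ-prop}): a lax symmetric monoidal $\Kzm$ is the same as a lax symmetric monoidal $\Catmfinst \times \Span(\Spacesm)^\op \to \Sp_{(p)}$ sending $(\CC, A) \mapsto \KK_{(p)}(\CC^A)$, which factors through $(\CC, A) \mapsto \CC^A \in \Catst$ and $\KK_{(p)}$, with the lax structure on the first factor supplied by the canonical external product maps $\CC^A \otimes \DD^B \to (\CC \otimes \DD)^{A \times B}$.
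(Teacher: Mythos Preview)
Your decomposition is exactly the paper's, and the last two arrows are handled identically. The difference is that the paper does not treat the first two arrows as obstacles: it simply applies \cref{cmon-mul} with $\CC = \Catmfinst$, which is in $\CAlg(\PrL)$. That single citation does all the work. Since $\Catmfinst$ is $m$-semiadditive, the underlying functor is an equivalence (\cref{cocart-equiv}), so its left adjoint $\Fseg$ coincides with its inverse $(-)^\cocart$; \cref{cmon-mul} then gives the symmetric monoidality of $(-)^\cocart$ directly, and the same theorem exhibits the inclusion $\CMonm[\Catmfinst] \hookrightarrow \PCMonm[\Catmfinst]$ as the lax right adjoint of the symmetric monoidal $\Lseg$. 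There is no need to rerun the proof of \cref{sh-compat} by hand --- your hedging about ``the presentable setting'' is unwarranted, since $\Catmfinst$ is presentable and its tensor commutes with colimits in each variable.

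Your alternative via the universal property of Day convolution is underspecified. Exhibiting external product maps $\CC^A \otimes \DD^B \to (\CC \otimes \DD)^{A \times B}$ is not the same as producing a lax symmetric monoidal functor $\Catmfinst \times \Span(\Spacesm)^\op \to \Catst$: you would still owe all of the coherence data, and in particular compatibility with the span structure on the second factor (not just the right-way maps). This route can be made to work, but it amounts to reproving \cref{cmon-mul} in a less structured way, so it does not buy anything over the direct citation.
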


\begin{proof}
	Recall that $\Kzm$ is given by the composition:
	\begin{align*}
		\Catmfinst
		&\xrightarrow{(-)^\cocart} \CMonm[\Catmfinst]\\
		&\subseteq \PCMonm[\Catmfinst]\\
		&\to \PCMonm[\Catst]\\
		&\xrightarrow{\KK\circ-} \PCMonm[\Sp_{(p)}]
		.
	\end{align*}
	The first functor is symmetric monoidal by \cref{cmon-mul}, which also shows that the second map is lax symmetric monoidal as the right adjoint of the symmetric monoidal functor $\Lseg$.
	The third and fourth maps are post-composition with the lax symmetric monoidal functors $\Catmfinst \to \Catst$ and $\KK$, which are therefore also lax symmetric monoidal by \cref{day-range}.
\end{proof}

\begin{thm}\label{k-lax}
	The functor $\Km\colon \Catmfinst \to \tsadim$ is lax symmetric monoidal.
\end{thm}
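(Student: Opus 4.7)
The plan is to leverage \cref{pre-k-lax} and \cref{sheafification-ord}, which together essentially package all the work we need. Since \cref{sheafification-ord} exhibits $\Km$ as the reflective localization of $\Kzm$ (in the case $m_0 = 0$), and \cref{pre-k-lax} already supplies a lax symmetric monoidal enhancement of $\Kzm$, it suffices to check that the localization $\Lseg$ carries a symmetric monoidal structure, and then transport the lax structure along the natural equivalence $\Lseg \Kzm \simeq \Km$.

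Concretely, first I would invoke \cref{pre-k-lax} to obtain the lax symmetric monoidal functor $\Kzm\colon \Catmfinst \to \PCMonm[\Sp_{(p)}]$, where $\PCMonm[\Sp_{(p)}]$ is equipped with the Day convolution. Next I would note that by \cref{sh-compat} the Day convolution on $\PCMonm[\Sp_{(p)}]$ is compatible with the reflective localization $\Lseg\colon \PCMonm[\Sp_{(p)}] \to \CMonm[\Sp_{(p)}]$, so by \cref{localized-day-sm} the target inherits the localized Day convolution and $\Lseg$ becomes symmetric monoidal. Under the identification $\CMonm[\Sp_{(p)}] = \tsadim$, \cref{cmon-mul} matches this localized Day convolution with the mode symmetric monoidal structure on $\tsadim$, so $\Lseg$ is symmetric monoidal as a functor to $\tsadim$ with its mode structure.

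Then the composition $\Lseg \circ \Kzm\colon \Catmfinst \to \tsadim$ is lax symmetric monoidal, as the composition of a lax symmetric monoidal functor with a symmetric monoidal one. Finally, by \cref{sheafification-ord} there is a natural equivalence $\Lseg \Kzm \xrightarrow{\sim} \Km$ of functors $\Catmfinst \to \tsadim$; transporting the lax symmetric monoidal structure along this equivalence endows $\Km$ with the desired lax symmetric monoidal structure.

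I do not anticipate a genuine obstacle: the only mildly subtle point is making sure that the two symmetric monoidal structures on $\tsadim$ (mode versus localized Day) are identified before one claims that $\Lseg$ is symmetric monoidal into $\tsadim$ as a mode, but this is precisely the content of \cref{cmon-mul}. Everything else is formal manipulation of (lax) symmetric monoidal functors and natural equivalences between them, so the proof should be quite short.
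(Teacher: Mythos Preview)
Your proposal is correct and follows essentially the same approach as the paper: use \cref{sheafification-ord} to write $\Km = \Lseg \Kzm$, invoke \cref{pre-k-lax} for the lax symmetric monoidal structure on $\Kzm$, and cite \cref{cmon-mul} for the symmetric monoidal structure on $\Lseg$. Your only difference is that you unpack the latter reference a bit more (via \cref{sh-compat} and \cref{localized-day-sm}), but that is exactly what underlies \cref{cmon-mul}.
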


\begin{proof}
	Recall from \cref{sheafification-ord} that $\Km \cong \Lseg \Kzm$.
	\cref{pre-k-lax} shows that $\Kzm$ is lax symmetric monoidal, and $\Lseg$ is symmetric monoidal by \cref{cmon-mul}.
\end{proof}

By construction, the lax symmetric monoidal structure on $\Km$ is compatible with that of $\Kzm$, so we immediately get the following:

\begin{cor}\label{K-to-Km-sm}
	The natural transformations $\KK \to \underline{\Km}$ and $\LTn \KK \to \LTn^{\tsadim} \Km$ from \cref{K-to-Km} and \cref{K-to-Km-Tn} (respectively) are symmetric monoidal.
\end{cor}

Recall from \cref{mod-func} that there is a lax symmetric monoidal functor
$\LMod_{(-)}\colon \Alg(\tsadim) \to \Mod_{\tsadim}^\intL$.
In addition, \cref{tsadi-atomics} gives a lax symmetric monoidal functor $(-)^{\atomic} \colon \Mod_{\tsadim}^{\intL} \to \Catmfinst$.
Composing all of the above we arrive at the following definition:

\begin{defn}\label{ring-sa-k-def}
	We define the lax symmetric monoidal functor $\Km\colon \Alg(\tsadim) \to \tsadim$ by
	$\mdef{\Km}(R) = \Km(\LMod_R^{\atomic})$, i.e.\ the composition
	$$
	\Alg(\tsadim)
	\xrightarrow{\LMod_{(-)}} \Mod_{\tsadim}
	\xrightarrow{(-)^{\atomic}} \Catmfinst
	\xrightarrow{\Km} \tsadim
	.
	$$
\end{defn}

\begin{remark}\label{k-r-dbl}
	Recall from \cref{atomic-ldbl} that the atomics coincide with the left dualizable modules, therefore $\Km(R) = \Km(\LMod_R^{\ldbl})$.
\end{remark}

\begin{remark}\label{Tn-K}
	Note that if $R \in \Alg(\SpTn)$ and $m \geq n$, then since $\SpTn$ is a smashing localization of $\tsadim$ by \cref{tsadim-Tn-smash}, we know that $\LMod_R(\tsadim) \cong \LMod_R(\SpTn)$, and the atomics are the left dualizable objects.
	Therefore we get that
	$\Km(R) \cong \Km(\LMod_R(\SpTn)^\ldbl)$.
\end{remark}

\begin{remark}
	As $\Km\colon \Alg(\tsadim) \to \tsadim$ is lax symmetric monoidal, we conclude that it sends $\OO \otimes \bbE_1$-algebras in $\tsadim$ to $\OO$-algebras in $\tsadim$ for any operad $\OO$.
	In particular, for the case $\OO = \bbE_n$, we get that $\Km$ sends $\bbE_{n+1}$-algebras to $\bbE_n$-algebras, and for $\OO = \bbE_\infty$ we get $\Km\colon \CAlg(\tsadim) \to \CAlg(\tsadim)$.
	As in \cref{mod-calg}, in this case $\Km(R) = \Km(\Mod_R^\dbl)$.
\end{remark}

	\section{Redshift}\label{sec-red}

Recall that the redshift philosophy predicts that algebraic K-theory increases height by $1$.
In this section we prove some results concerning the interplay between semiadditive height and higher semiadditive algebraic K-theory.

An immediate application of the redshift result of \cite[Theorem B]{AmbiHeight}, gives an upper bound, showing that if $R \in \Alg(\tsadim)$ has semiadditive height $\leq n$ for some finite $n < m$, then $\Km(R)$ has semiadditive height $\leq n+1$ (see \cref{redshift-upper}).
Furthermore, in \cref{redshift-lower} we show that if $R$ has semiadditive height exactly $n$, and has (height $n$) $p$-th roots of unity (see \cref{p-roots}), then $\Km(R)$ has semiadditive height exactly $n+1$, i.e.\ lands in $\tsadi_{n+1}$.
In particular, the Lubin--Tate spectrum $\rE_n$ has this property, so we conclude that $\Km(\rE_n) \in \tsadi_{n+1}$ (see \cref{KEn-tsadinpo}).

\subsection{Semiadditive Height}

We begin by recalling the notion of (semiadditive) height from \cite[Definition 3.1.6]{AmbiHeight} and making a few observations which will be used to study the interaction between height and semiadditive algebraic K-theory.
We recall from \cite[Definition 3.1.3]{AmbiHeight} that for every $m$-semiadditive category $\DD$, and finite $n \leq m$, there is a natural transformation of the identity $p_{(n)}\colon \id_\DD \Rightarrow \id_\DD$, also denoted by $|\BB^n C_p|$, which is given on an object $Y \in \DD$ by
$$
	p_{(n)}\colon Y
	\xrightarrow{\Delta} Y^{\BB^n C_p}
	\xrightarrow{\mrm{Nm}^{-1}} \BB^n C_p \otimes Y
	\xrightarrow{\nabla} Y
	,
$$
using the fact that the norm map is an equivalence.
Alternatively, as $\DD$ is $m$-semiadditive, its objects have a canonical $m$-commutative monoid structure in $\DD$, so that the map is given by $q_! q^*$ where $q\colon \BB^n C_p \to *$ is the unique map.

\begin{defn}[{\cite[Definition 3.1.6]{AmbiHeight}}]
	Let $Y \in \DD$, then its \tdef{semiadditive height} is defined as follows:
	\begin{enumerate}
		\item $\height(Y) \leq n$ if $p_{(n)}\colon Y \to Y$ is invertible.
		\item $\height(Y) > n$ if $Y$ is $p_{(n)}$-complete (i.e., for every $Z \in \DD$ with $p_{(n)}\colon Z \to Z$ invertible, $\hom(Z, Y) = *$).
		\item $\height(Y) = n$ if $\height(Y) \leq n$ and $\height(Y) > n-1$.
	\end{enumerate}
	We denote by $\DD_{\leq n}$ the full subcategory of objects $Y \in \DD$ with $\height(Y) \leq n$, and similarly $\DD_{>n}$ for objects of height $> n$ and $\DD_n$ for object of height exactly $n$.
\end{defn}

\begin{prop}[{\cite[Theorem A]{AmbiHeight}}]\label{leqn-infty-sa}
	Let $\DD$ be an $m$-semiadditive category which admits all limits and colimits indexed by $\pi$-finite $p$-spaces, and let $n \leq m$ be a finite number, then $\DD_{\leq n}$ is $\infty$-semiadditive.
\end{prop}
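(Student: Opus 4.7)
The plan is to verify $\infty$-semiadditivity of $\DD_{\leq n}$ by checking ambidexterity for all $\pi$-finite $p$-spaces, using the standard bootstrap that reduces this to ambidexterity for each $\BB^k C_p$, $k \geq 0$. I would split the argument into three stages: closure of $\DD_{\leq n}$ under the relevant limits and colimits, the easy range $k \leq m$, and the inductive extension to $k > m$.

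First I would show that $\DD_{\leq n} \subseteq \DD$ is closed under arbitrary $\pi$-finite $p$-space limits and colimits. The point is that $p_{(n)}\colon \id_\DD \Rightarrow \id_\DD$ is a natural self-transformation, so for any diagram $F\colon A \to \DD$ landing in $\DD_{\leq n}$, the map $p_{(n)}$ on $\lim_A F$ (resp.\ $\colim_A F$) is the limit (resp.\ colimit) of the equivalences $p_{(n)}\colon F(a)\to F(a)$, hence is itself an equivalence. Consequently $\DD_{\leq n}$ inherits all $\pi$-finite $p$-space limits and colimits from $\DD$, and the inclusion creates them. In particular, the $m$-semiadditive structure of $\DD$ restricts: for every $m$-finite $p$-space $B$, the norm map in $\DD$ is an equivalence, and restricting to a diagram in $\DD_{\leq n}$ one sees that $\DD_{\leq n}$ is $m$-semiadditive. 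Combined with the bootstrap principle (\emph{cf.}\ \cite[Proposition 4.3.9]{HL}), this reduces the problem to showing that $\BB^k C_p$ is $\DD_{\leq n}$-ambidextrous for every $k \geq 0$; for $k \leq m$ this is already done.

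The main obstacle is the range $k > m$, which I would handle by induction on $k$. Assuming $\BB^j C_p$ is $\DD_{\leq n}$-ambidextrous for all $n \leq j < k$, one inductively constructs a norm map $\mrm{Nm}_k\colon \BB^k C_p \otimes Y \to Y^{\BB^k C_p}$ for $Y \in \DD_{\leq n}$ and must verify it is an equivalence. The key input is the invertibility of $p_{(n)}$ on $Y$: using the standard factorization of the higher cardinality maps $p_{(k)}$ in terms of lower ones via the fibration structure on iterated Eilenberg--MacLane spaces (together with the inductive ambidexterity of $\BB^j C_p$ for $j < k$), one shows that invertibility of $\mrm{Nm}_k$ can be rephrased as invertibility of a certain composite whose only nontrivial factor is (a power of) $p_{(n)}$ applied to $Y$. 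Since $Y \in \DD_{\leq n}$, this factor is an equivalence, closing the induction.

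Formalizing precisely how $\mrm{Nm}_k$ reduces to $p_{(n)}$---that is, identifying the multiplicative relations among the $p_{(j)}$'s under the chosen ambidexterity data, and checking that these relations are coherent as one extends up the Postnikov tower of $\BB^k C_p$---is the heart of the argument and the step I expect to consume most of the work. Everything else (closure, the bootstrap reduction, and the easy range $k \leq m$) is essentially formal given the $m$-semiadditive hypothesis and the naturality of $p_{(n)}$.
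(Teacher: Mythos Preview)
The paper does not prove this proposition; it is recorded as a citation of \cite[Theorem A]{AmbiHeight} with no accompanying argument, so there is nothing in the paper to compare your attempt against directly.

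Your sketch is nonetheless along the lines of the proof in the cited reference. The closure of $\DD_{\leq n}$ under $\pi$-finite $p$-space (co)limits and the inheritance of $m$-semiadditivity are as you describe. For the inductive extension past level $m$, the mechanism in \cite{AmbiHeight} is organized one level at a time rather than as a direct reduction of $\mrm{Nm}_k$ to $p_{(n)}$: invertibility of $p_{(k)}$ forces the fold map $\colim_{\BB^{k+1} C_p} Y \to Y$ to be an equivalence (this is \cite[Proposition 2.4.7]{AmbiHeight}, invoked elsewhere in the present paper), which yields $\BB^{k+1} C_p$-ambidexterity; the newly defined $p_{(k+1)}$ is then automatically the inverse of $p_{(k)}$, hence invertible, and the induction continues. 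Your phrase ``a composite whose only nontrivial factor is a power of $p_{(n)}$'' is slightly imprecise in this respect but captures the right idea. One small citation slip: \cite[Proposition 4.3.9]{HL} concerns commutation of $A$-limits with colimits in $m$-semiadditive categories, not the bootstrap reduction to Eilenberg--MacLane spaces; the reduction you have in mind is elsewhere in the Hopkins--Lurie framework.
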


\begin{prop}[{\cite[Proposition 3.1.13]{AmbiHeight}}]\label{height-dec}
	Let $F\colon \DD \to \EE$ be an $m$-semiadditive functor, between $m$-semiadditive categories, and let $n \leq m$ be a finite number.
	Then $F$ sends $p_{(n)}$ of $\DD$ to $p_{(n)}$ of $\EE$.
	In particular, if $\height(Y) \leq n$, then $\height(FY) \leq n$, that is, it induces a functor $F\colon \DD_{\leq n} \to \EE_{\leq n}$.
\end{prop}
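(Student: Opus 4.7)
The strategy is to show that $F$ intertwines the natural transformation $p_{(n)}$ itself; once this is established, the second (``in particular'') assertion is immediate, because if $p_{(n),Y}$ is an equivalence then so is its image $F(p_{(n),Y}) = p_{(n),FY}$, so $FY$ is $p_{(n)}$-divisible whenever $Y$ is. Recall the defining factorization
$$
p_{(n),Y}\colon Y \xrightarrow{\Delta} Y^{\BB^n C_p} \xrightarrow{\mrm{Nm}^{-1}} \BB^n C_p \otimes Y \xrightarrow{\nabla} Y.
$$
Since $n \leq m$ and $\BB^n C_p$ is an $m$-finite $p$-space, the hypothesis that $F$ is $m$-semiadditive guarantees that it preserves both the limit $Y^{\BB^n C_p}$ and the colimit $\BB^n C_p \otimes Y$, yielding canonical identifications of $F$ applied to the middle terms with $(FY)^{\BB^n C_p}$ and $\BB^n C_p \otimes FY$ respectively.

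First I would check, by the universal property of the (co)limit, that under these identifications $F(\Delta_Y) = \Delta_{FY}$ and $F(\nabla_Y) = \nabla_{FY}$; both are formal. The main step, and the principal obstacle, is to show that $F(\mrm{Nm}_Y) = \mrm{Nm}_{FY}$. This is \emph{not} a direct consequence of limit and colimit preservation, because the norm is the canonical comparison built from the ambidextrous adjunction structure, rather than being a (co)limit of anything concrete. The cleanest route is to invoke the classifying property of $\CMonm[\Spaces]$ as the mode for $m$-semiadditivity recorded in \cref{cmon-mode}: an $m$-semiadditive functor between $m$-semiadditive categories is precisely a morphism of $\CMonm[\Spaces]$-modules in $\PrL$, and the norm map is manufactured entirely from this module structure, so it is automatically preserved. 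Alternatively one can argue by induction on $n$, expressing $\mrm{Nm}^{\BB^n C_p}$ as an iterated integral built from $\mrm{Nm}^{\BB^k C_p}$ for $k < n$, and noting that $F$ intertwines every step of the construction.

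Taking inverses in the $\mrm{Nm}$ factor (using that $F$ sends equivalences to equivalences) and composing the three pieces gives $F(p_{(n),Y}) = p_{(n),FY}$ naturally in $Y \in \DD$, proving the first assertion. The factorization $F\colon \DD_{\leq n} \to \EE_{\leq n}$ then follows as noted above. Note that the restriction to finite $n$ is essential: the construction of $p_{(n)}$ only makes sense once $\BB^n C_p$ is guaranteed to be $m$-finite, which requires $n < \infty$ (and $n \leq m$) so that $F$ preserves the relevant limits and colimits.
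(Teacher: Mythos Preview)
The paper does not prove this proposition; it is imported wholesale from \cite[Proposition 3.1.13]{AmbiHeight} with no argument given. Your proposal is therefore not being compared against a proof in this paper but is effectively a reconstruction of the cited result, and as such it is largely sound: the decomposition $p_{(n)} = \nabla \circ \mrm{Nm}^{-1} \circ \Delta$ together with preservation of each factor is exactly the right strategy, and the deduction of the height statement from the first part is correct.

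There is one genuine wrinkle in your ``cleanest route''. Invoking \cref{cmon-mode} to say that an $m$-semiadditive functor between $m$-semiadditive categories is a morphism of $\CMonm[\Spaces]$-modules in $\PrL$ presupposes that $\DD$ and $\EE$ are \emph{presentable} and that $F$ is a \emph{left adjoint}. Neither hypothesis is part of the statement: $m$-semiadditivity of $F$ here only means preservation of $m$-finite $p$-space indexed (co)limits, and the categories need not be presentable. So the mode argument does not apply in the stated generality. Your alternative---the inductive construction of the norm and verification that $F$ intertwines each stage---is the argument actually carried out in \cite{AmbiHeight} (this is the content of their treatment of iso-normed functors, cf.\ \cite[\S2.1]{AmbiHeight}), and it works without any presentability assumption. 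With that route taken, your proof is correct.
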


\begin{cor}\label{adjunction-leqn}
	Let $\DD$ and $\EE$ be $m$-semiadditive categories, $F\colon \DD \rightleftarrows \EE \colon G$ an adjunction, and $n \leq m$ a finite number.
	Then the adjunction restricts to an adjunction $F\colon \DD_{\leq n} \rightleftarrows \EE_{\leq n} \colon G$.
	Furthermore, if $\DD$ and $\EE$ admit all limits and colimits indexed by $\pi$-finite $p$-spaces, then the restricted functors are $\infty$-semiadditive.
\end{cor}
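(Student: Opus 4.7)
The plan is to first verify that both $F$ and $G$ are themselves $m$-semiadditive functors, and then invoke \cref{height-dec} to obtain the restrictions. Since $F$ is a left adjoint, it preserves all colimits, in particular those indexed by $m$-finite $p$-spaces $A$; as $\DD$ and $\EE$ are $m$-semiadditive, these colimits coincide with the corresponding limits via the norm equivalence, making $F$ automatically $m$-semiadditive in the sense of \cref{height-dec}. Dually, $G$ preserves all limits, including $m$-finite $p$-space limits, which again coincide with colimits. Applying \cref{height-dec} to both $F$ and $G$ produces functors $F\colon \DD_{\leq n} \to \EE_{\leq n}$ and $G\colon \EE_{\leq n} \to \DD_{\leq n}$. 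Since $\DD_{\leq n} \subseteq \DD$ and $\EE_{\leq n} \subseteq \EE$ are full subcategories, the adjunction equivalence $\hom_\EE(FX, Y) \simeq \hom_\DD(X, GY)$ restricts verbatim, and the unit and counit restrict since their components lie in the relevant subcategories; this yields the restricted adjunction.

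For the second part, I would first apply \cref{leqn-infty-sa} to conclude that $\DD_{\leq n}$ and $\EE_{\leq n}$ are $\infty$-semiadditive. It then suffices to show that the restricted $F$ and $G$ preserve colimits (respectively limits) indexed by $\pi$-finite $p$-spaces. The key step is to argue that $\DD_{\leq n}$ is closed in $\DD$ under colimits indexed by any $\pi$-finite $p$-space $B$: the natural endomorphism $p_{(k)}\colon \id_\DD \Rightarrow \id_\DD$ is natural, hence commutes with $B$-indexed colimits, so if $p_{(k)}$ is invertible on each term of a $B$-diagram then it is invertible on the colimit. This shows that the inclusion $\DD_{\leq n} \hookrightarrow \DD$ preserves $\pi$-finite $p$-space colimits, and dually for limits; in particular the $\pi$-finite colimits in $\DD_{\leq n}$ are computed as in $\DD$. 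Hence the restriction of $F$, being the restriction of a colimit-preserving functor to full subcategories closed under the relevant colimits, preserves $\pi$-finite $p$-space colimits, and is therefore $\infty$-semiadditive; symmetrically for $G$ and limits.

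The step I expect to require the most care is the closure property in the second paragraph, namely that a $\pi$-finite $p$-space colimit in $\DD$ of objects of height $\leq n$ again has height $\leq n$. While it is intuitively clear from the naturality of $p_{(k)}$, one must be precise about the formulation of ``$p_{(k)}$-divisibility'' used in the definition of height and check that it really amounts to an invertibility condition preserved by level-wise colimits in the arrow category of $\DD$; once this is pinned down, the rest of the argument is formal.
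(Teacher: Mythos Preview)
Your proposal is correct and follows essentially the same approach as the paper: both argue that $F$ and $G$ are $m$-semiadditive because they preserve colimits/limits respectively, apply \cref{height-dec} to get the restriction, and then invoke \cref{leqn-infty-sa}. The paper's proof is terser and simply asserts that ``the restricted functors preserve limits or colimits''; you are more careful in justifying this by spelling out the closure of $\DD_{\leq n}$ under $\pi$-finite $p$-space colimits via the naturality of $p_{(n)}$, which is exactly the point the paper leaves implicit.
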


\begin{proof}
	Since $G$ and $F$ preserves limits and colimits respectively, they are $m$-semiadditive.
	By \cref{height-dec}, their restrictions to objects of height $\leq n$ land in objects of height $\leq n$.
	Since by \cref{leqn-infty-sa} $\DD_{\leq n}$ and $\EE_{\leq n}$ are $\infty$-semiadditive,
	and the restricted functors preserve limits or colimits, they are in fact $\infty$-semiadditive.
\end{proof}

\begin{prop}\label{tsadi-n}
	Let $n \leq m$ be a finite number, then the mode $\tsadim_{\leq n} \cong \tsadi_{\leq n}$ is independent of $m$, and is the mode classifying the property of being stable $p$-local $\infty$-semiadditive and having all objects of height $\leq n$.
	Furthermore, it decomposes as a product
	$$
	\tsadi_{\leq n} \cong \tsadi_0 \times \cdots \times \tsadi_n
	,
	$$
	where $\tsadi_k$ is the mode classifying the property of being stable $p$-local $\infty$-semiadditive and having all objects of height exactly $n$.
\end{prop}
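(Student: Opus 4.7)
The plan has three parts: show $\tsadim_{\leq n}$ is a smashing localization of $\tsadim$ (hence a mode), identify its classifying property so as to deduce independence of $m$, and then establish the product decomposition.

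For the first two parts, observe that $\tsadim_{\leq n} \subseteq \tsadim$ is exactly the locus on which the natural endomorphism $p_{(n)}\colon \id \Rightarrow \id$ is invertible. Inverting a natural transformation of the identity in a presentably symmetric monoidal stable category is automatically a smashing localization, since the class of $p_{(n)}$-equivalences is closed under tensoring with arbitrary objects; hence $\tsadim_{\leq n}$ inherits a mode structure. A $\tsadim$-algebra $\CC$ is a $\tsadim_{\leq n}$-algebra iff the unit map $\tsadim \to \CC$ factors through $\tsadim_{\leq n}$, iff $p_{(n)}$ acts invertibly on $\unit_{\CC}$, which by $\CC$-linearity of $p_{(n)}$ is in turn equivalent to $\height(\CC) \leq n$. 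Combining this with \cref{leqn-infty-sa}, which makes $\tsadim_{\leq n}$ itself $\infty$-semiadditive, I would conclude that $\tsadim_{\leq n}$-algebras are precisely the stable $p$-local $\infty$-semiadditive categories of height $\leq n$. Since this property does not depend on $m$ once $m \geq n$, the universal property of modes yields canonical equivalences $\tsadim_{\leq n} \simeq \tsadi_{\leq n}$, and in particular lets one define $\tsadi_{\leq n}$ independently of the ambient $m$.

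For the product decomposition, I would induct on $n$, the base case $\tsadi_0 = \tsadi_{\leq 0}$ holding by definition. For the inductive step $\tsadi_{\leq n} \simeq \tsadi_{\leq n-1} \times \tsadi_n$, I would exhibit $\tsadi_{\leq n-1}$ and $\tsadi_n$ as complementary smashing localizations of $\tsadi_{\leq n}$: the former inverts $p_{(n-1)}$, while the latter is its cofiber localization, picking out the $p_{(n-1)}$-complete part. The crux, and the main obstacle, is to produce a functorial splitting of the unit as $\unit_{\tsadi_{\leq n}} \simeq \unit_{\tsadi_{\leq n-1}} \oplus \unit_{\tsadi_n}$, since in general the localization/completion fiber sequence for a natural transformation does not split. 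Here the splitting has to be forced out of the fact that $p_{(n)}$ is already invertible in $\tsadi_{\leq n}$, which makes $p_{(n-1)}$ behave ``semisimply'' on the height-exactly-$n$ part; I expect this to follow from the height-theoretic structural results in \cite{AmbiHeight}. Once the splitting is in hand, the product functor $\tsadi_{\leq n-1} \times \tsadi_n \to \tsadi_{\leq n}$ has trivial overlap between its two factors and jointly generates via the splitting, so it is an equivalence of modes, completing the induction.
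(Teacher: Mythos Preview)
Your proposal is correct and arrives at the same conclusion, but it is organized differently from the paper's proof and is more explicit in places where the paper simply cites \cite{AmbiHeight}.

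The paper's proof is essentially a two-line citation: for $n < m$ it invokes \cite[Theorem 4.2.7 and Theorem 5.3.6]{AmbiHeight} directly to get both the classifying property and the product decomposition, and for the boundary case $n = m$ it uses the recollement of $\tsadim_{>n-1}$ from $\tsadi_n$ and $\tsadim_{>n}$, then passes to height $\leq n$ to kill the top piece. By contrast, you first establish independence of $m$ \emph{before} touching the product decomposition, by arguing directly that $\tsadim_{\leq n}$ is the smashing localization inverting $p_{(n)}$ and then identifying its module category via \cref{leqn-infty-sa}. This lets you avoid the $n = m$ case-split entirely, which is a genuine simplification in the logical flow. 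For the product decomposition you then induct inside the already $m$-independent $\tsadi_{\leq n}$, correctly isolating the nontrivial point---that the $p_{(n-1)}$-complete piece is not merely a colocalization but splits off as a smashing factor---and deferring it to \cite{AmbiHeight}, which is exactly where the paper defers as well. So the dependence on the external reference is the same; what you gain is a clearer account of \emph{why} the mode structure and independence hold, and what the paper's organization gains is brevity and an explicit treatment of the edge case via recollement rather than relying on the independence having been established first.
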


\begin{proof}
	If $n < m$, then this follows immediately from \cite[Theorem 4.2.7 and Theorem 5.3.6]{AmbiHeight}.
	If $n = m$, then the same results show that
	$$
	\tsadim \cong \tsadi_0 \times \cdots \times \tsadi_{n-1} \times \tsadim_{>n-1}
	.
	$$
	By \cite[Proposition 4.2.1]{AmbiHeight}, $\tsadim_{>n-1}$ is a recollement of $\tsadi_n$ and $\tsadim_{> n}$, but $(\tsadim_{> n})_{\leq n} = 0$, so the result follows upon taking objects of height $\leq n$.
\end{proof}

Consider the case $\DD = \Catmfinst$.
In this case, the objects are themselves categories $\CC \in \DD$ on which $p_{(n)}$ acts, and can have heights $\height(\CC)$ as objects of $\Catmfinst$.

\begin{prop}\label{pn-formula}
	Let $\CC \in \Catmfinst$.
	For any $m$-finite $p$-space $A$, the map $|A|\colon \CC \to \CC$ is given by $|A|(X) \cong \colim_A X$.
	In particular, $p_{(n)}(X) \cong \colim_{\BB^n C_p} X$.
\end{prop}

\begin{proof}
	Recall that if we consider the objects of $\Catmfinst$ as equipped with the canonical $\CMonm$ structure, then $p_{(n)} \cong q_! q^*$ where $q\colon A \to *$ is the unique map.
	\cref{cocart-is-rest} and \cref{cocart-equiv} then show that $q^*\colon \CC \to \CC^A$ is taking the constant diagram and that $q_!\colon \CC^A \to \CC$ is computing the colimit.
\end{proof}

\subsection{Upper Bound}

\begin{prop}\label{redshift-small}
	Let $\CC \in \Catmfinst$ and assume that $\height(\CC) \leq n$ as an object of $\Catmfinst$ for some finite $n \leq m$,
	then $\height(\Km(\CC)) \leq n$.
	That is, $\Km$ restricts to a functor
	$\Km\colon \Catmfinstleqn \to \tsadi_{\leq n}$.
\end{prop}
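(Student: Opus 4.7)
The plan is to observe that this is essentially an immediate consequence of two facts already established in the paper: the $m$-semiadditivity of $\Km$ and the monotonicity of semiadditive height under $m$-semiadditive functors.

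First, I would invoke \cref{Klam-limits}, which tells us that $\Km\colon \Catmfinst \to \tsadim$ is an $m$-semiadditive functor. Next, I would apply \cref{height-dec} with $\DD = \Catmfinst$, $\EE = \tsadim$, and $F = \Km$: since $F$ is $m$-semiadditive and $n \leq m$ is finite, $F$ sends the natural transformation $p_{(n)}$ on $\Catmfinst$ to $p_{(n)}$ on $\tsadim$. Consequently, if $\CC$ is $p_{(n)}$-divisible (that is, $\height(\CC) \leq n$), then so is $\Km(\CC)$, yielding $\height(\Km(\CC)) \leq n$.

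To package this as the claimed restriction, I would note that by \cref{tsadi-n}, for finite $n \leq m$ the full subcategory $\tsadim_{\leq n}$ coincides with $\tsadi_{\leq n}$, which justifies writing the target of the restricted functor as $\tsadi_{\leq n}$ rather than $\tsadim_{\leq n}$.

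The main obstacle here is really just bookkeeping: there is no hard computation to do, since all the work has been absorbed into \cref{Klam-limits} (which builds $\Km$ out of a composition of functors each preserving all limits or all colimits) and into the general height-monotonicity statement \cref{height-dec}. The only mild subtlety is checking that $\Km$ really is $m$-semiadditive and not merely preserving some weaker structure, but this is precisely the content of \cref{Klam-limits}.
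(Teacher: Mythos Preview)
Your proposal is correct and matches the paper's own proof essentially verbatim: the paper's argument is the single sentence ``$\Km$ is $m$-semiadditive by \cref{Klam-limits}, so the result follows from \cref{height-dec}.'' Your additional remark invoking \cref{tsadi-n} to justify writing the target as $\tsadi_{\leq n}$ rather than $\tsadim_{\leq n}$ is a helpful clarification that the paper leaves implicit.
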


\begin{proof}
	$\Km$ is $m$-semiadditive by \cref{Klam-limits}, so the result follows from \cref{height-dec}.
\end{proof}

By \cref{leqn-infty-sa}, $\Catmfinstleqn$ and $\tsadi_{\leq n}$ are $\infty$-semiadditive.

\begin{prop}\label{indep-lam}
	$\Km\colon \Catmfinstleqn \to \tsadi_{\leq n}$ is $\infty$-semiadditive.
	Furthermore, let $n \leq m_0 \leq m$ with $n$ a finite number, then
	$\Km\colon \Catmfinstleqn \to \tsadi_{\leq n}$
	and the restriction of $\Kmz$ to the (not full) subcategory $\Catmfinstleqn$ coincide.
\end{prop}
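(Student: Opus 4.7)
The plan is to exploit the fact that, by \cref{leqn-infty-sa} and \cref{tsadi-n}, both the source $\Catmfinstleqn$ and the target $\tsadi_{\leq n} = \tsadim_{\leq n}$ are $\infty$-semiadditive, so that the $m$-semiadditivity of $\Km$ should automatically upgrade to $\infty$-semiadditivity on the restriction, and then use the sheafification description to compare with $\Kmz$.

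For the first assertion, I would start from the observations that $\Km$ is $m$-semiadditive by \cref{Klam-limits} and that it restricts to $\Km\colon \Catmfinstleqn \to \tsadi_{\leq n}$ by \cref{redshift-small}. Both categories are $\infty$-semiadditive by \cref{leqn-infty-sa}, and by \cref{tsadi-n} the map $\tsadim \to \tsadi_{\leq n}$ is a smashing localization. Since both source and target are categories in the mode $\tsadi_{\leq n}$ (carrying the canonical $\infty$-semiadditive structure with all objects of height $\leq n$), any $\tsadim$-linear (i.e., $m$-semiadditive) functor between them factors through $\tsadi_{\leq n}$-linearity by the universal property of the smashing localization; composing with the map of modes $\tsadi \to \tsadi_{\leq n}$ then yields $\tsadi$-linearity, which is precisely $\infty$-semiadditivity.

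For the second assertion, once the first is known for both $m$ and $m_0$ (legitimate as $n \leq m_0 \leq m$), I would invoke the sheafification description of \cref{sheafification}: $\Km = \Lseg \Kmmz$ where $\Kmmz(\CC)(A) = \Kmz(\CC^A)$. Applying part (1) to $m_0$ shows that $\Kmz$ restricted to $(\Catmzfinst)_{\leq n}$ is $\infty$-semiadditive, and this property is preserved by further restricting along $\Catmfinstleqn \hookrightarrow (\Catmzfinst)_{\leq n}$. In particular, $\Kmz(\CC^A) \simeq \Kmz(\CC)^A$ for every $A \in \Spacesm$ and $\CC \in \Catmfinstleqn$, which is precisely the $m$-Segal condition for $\Kmmz(\CC)$. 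Hence $\Lseg$ acts as the identity on $\Kmmz(\CC)$, and evaluating at the point yields $\Km(\CC) = \Kmmz(\CC)(*) = \Kmz(\CC)$ in the common category $\tsadi_{\leq n}$.

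The main obstacle will be making the first step fully rigorous: verifying that an $m$-semiadditive functor between $\infty$-semiadditive height-$\leq n$ categories automatically yields an $\infty$-semiadditive functor. This should follow from the smashing nature of $\tsadim \to \tsadi_{\leq n}$ together with standard properties of modes, but one must carefully identify the appropriate module structures on $\Catmfinstleqn$, which is not itself stable or $p$-local as a category of categories yet carries the relevant $m$-semiadditive structure and height bound required to factor through the $\infty$-semiadditive mode.
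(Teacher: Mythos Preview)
Your approach to the second assertion is essentially the same as the paper's: use \cref{sheafification} and verify that the $m$-Segal condition already holds for $\Kmmz(\CC)$ once $\Kmz$ is known to be $m$-semiadditive on $\Catmfinstleqn$.

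For the first assertion, however, the mode-theoretic argument you sketch does not go through, and you correctly identify the obstruction yourself: $\Catmfinstleqn$ is neither stable nor $p$-local, so it is not a module over $\tsadim$ or $\tsadi_{\leq n}$, and the smashing localization $\tsadim \to \tsadi_{\leq n}$ says nothing about it. The relevant mode for $m$-semiadditivity alone is $\CMonm[\Spaces]$, not $\tsadim$, and there is no evident smashing map $\CMonm[\Spaces] \to \CMoninf[\Spaces]$ that would let you upgrade an arbitrary $m$-semiadditive functor between $\infty$-semiadditive categories to an $\infty$-semiadditive one. So as written, the first step is a genuine gap.

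The paper avoids this issue entirely by a different, more concrete route. Rather than treating $\Km$ as a black-box $m$-semiadditive functor, it goes back to the definition (\cref{sa-k-def}) and observes that $\Km$ is a composite of functors each of which is either a left adjoint or a right adjoint between $m$-semiadditive presentable categories. One then applies \cref{adjunction-leqn} to each adjunction separately: restricting any such adjunction to the full subcategories of height $\leq n$ objects yields an adjunction between $\infty$-semiadditive categories, and the restricted adjoints are automatically $\infty$-semiadditive (being left or right adjoints). The composite of these restrictions is the restricted $\Km$, which is therefore $\infty$-semiadditive. This bypasses any need to reason about module structures on $\Catmfinstleqn$.
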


\begin{proof}
	By construction, $\Km\colon \Catmfinst \to \tsadim$ is a composition of left and right adjoints between $m$-semiadditive categories (as used in the proof of \cref{Klam-limits}), all of which admit all limits and colimits (as all categories are presentable).
	By \cref{adjunction-leqn}, $\Km\colon \Catmfinstleqn \to \tsadi_{\leq n}$ is the composition the adjoints restricted to objects of height $\leq n$, which are $\infty$-semiadditive functors.

	For the second part, note that $\Kmz\colon \Catmfinstleqn \to \tsadi_{\leq n}$ is also $\infty$-semiadditive and in particular $m$-semiadditive.
	Recall from \cref{sheafification} that there is an equivalence $\Lseg \Kmmz \iso \Km$, so when $\Kmmz$ is restricted to $\Catmfinstleqn$ it already satisfies the $m$-Segal condition and is thus equivalent to $\Km$.
\end{proof}

We recall the following redshift result, which we view as the step along the construction at which redshift happens.

\begin{thm}[\pwrap{Semiadditive Redshift {\cite[Theorem B]{AmbiHeight}}}]\label{semiadditive-redshift-2}
	Let $\CC \in \Catmfin$ be an $m$-semiadditive category and let $n < m$ be a finite number.
	Then, $\height(X) \leq n$ for all $X \in \CC$ if and only if $\height(\CC) \leq n + 1$ as an object of $\Catmfin$.
\end{thm}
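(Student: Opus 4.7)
The plan is to analyze the endofunctor $p_{(n+1)}(\CC)\colon \CC \to \CC$ which, as a morphism in $\Catmfin$, is by \cref{pn-formula} the functor $X \mapsto \colim_{\BB^{n+1}C_p} X$. The condition $\height(\CC) \le n+1$ in $\Catmfin$ is precisely the statement that this endofunctor is an equivalence of categories, so I would reduce the biconditional to a pointwise identification between this \emph{categorified} $p_{(n+1)}$-action and the \emph{objectwise} $p_{(n)}$-action of $\CC$ on itself.

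The key step is to establish, for every $X \in \CC$, a comparison diagram in which the canonical map $X \to \colim_{\BB^{n+1}C_p} X$ is identified, via the norm equivalences supplied by the $m$-semiadditivity of $\CC$ (here using that $n+1 \le m$), with an endomorphism of $X$ in $\CC$ expressible in terms of $p_{(n)}(X)$. Concretely, I would exploit the fiber sequence $\BB^n C_p \to \ast \to \BB^{n+1}C_p$ to rewrite the constant colimit over $\BB^{n+1}C_p$ as an iterated (co)limit over $\BB^n C_p$, apply ambidexterity in $\CC$ to turn this into a bar-type computation, and then recognize the resulting map $X \to X$ as the natural transformation $p_{(n)}(\CC)(X)$ built from the ambidexterity data for $\BB^n C_p$ on $X$. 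The formula $p_{(n)} = q_! q^*$ recalled before \cref{pn-formula} and its description via $\Delta$, $\mathrm{Nm}^{-1}$, and $\nabla$ make this identification tractable: both sides are assembled from the norm of $\BB^n C_p$ and the diagonal/codiagonal of $X$.

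Granted the comparison, both directions are immediate. For $(\Rightarrow)$, if every $p_{(n)}(X)$ is invertible in $\CC$, the pointwise identification shows that $p_{(n+1)}(\CC)(X) = \colim_{\BB^{n+1}C_p} X$ is naturally equivalent to $X$, so $p_{(n+1)}(\CC)$ is an equivalence of categories and $\height(\CC) \le n+1$. For $(\Leftarrow)$, if $p_{(n+1)}(\CC)$ is an equivalence, then for every $X \in \CC$ the corresponding endomorphism is an equivalence, and running the comparison in reverse forces $p_{(n)}(X)$ to be invertible, i.e., $\height(X) \le n$.

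The main obstacle is the pointwise identification of the categorified $p_{(n+1)}$-action with the objectwise $p_{(n)}$-action. This requires carefully translating the ambidexterity data for $\BB^{n+1}C_p$ at the object $\CC \in \Catmfin$ into the two-step ambidexterity data for $\BB^n C_p$ internal to $\CC$, and then matching the resulting composition $\Delta \circ \mathrm{Nm}^{-1} \circ \nabla$ in $\Catmfin$ with the corresponding formula for $p_{(n)}$ in $\CC$ under the looping/delooping relationship $\BB^{n+1}C_p = \BB(\BB^n C_p)$. Once this coherence is verified, the biconditional follows formally.
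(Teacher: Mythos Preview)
The paper does not prove this statement; it is quoted verbatim from \cite[Theorem~B]{AmbiHeight} and used as a black box, so there is no in-paper proof to compare against.

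Your sketch is in the right spirit for the forward direction: the implication ``$\height(X)\le n$ for all $X$'' $\Rightarrow$ ``$\height(\CC)\le n+1$'' is essentially what the paper itself carries out in the proof of \cref{pk-1}, invoking \cite[Proposition~2.4.7(1)]{AmbiHeight} to show that the fold map $\nabla\colon \colim_{\BB^{n+1}C_p}X\to X$ is an equivalence whenever $|\BB^n C_p|$ acts invertibly on $X$. This makes the categorified $p_{(n+1)}(\CC)$ naturally isomorphic to $\id_\CC$, as you say.

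The backward direction is where your outline is thin. You assert that if $p_{(n+1)}(\CC)$ is an equivalence of categories then ``running the comparison in reverse'' forces each $p_{(n)}(X)$ to be invertible, but an endofunctor being an auto-equivalence does not by itself force any particular natural transformation to or from the identity to be an isomorphism. The genuine content is exactly the identification you flag as the ``main obstacle'': one must show that the categorical $p_{(n+1)}$-action, under the internal ambidexterity of $\BB^{n+1}C_p$ in $\CC$, is controlled componentwise by the object-level $p_{(n)}$. In \cite{AmbiHeight} this passes through the inductive construction of ambidexterity and the relation between $|\BB^{n+1}C_p|$ and $|\BB^n C_p|$ on amenable objects, and it is not simply the forward comparison read backwards. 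Your proposal correctly locates the crux but does not supply it; what you have written is an outline of where the difficulty lies rather than a proof.
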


\begin{cor}\label{leq-n-atomic}
	Let $\CC \in \Mod_{\tsadi_{\leq n}} \subset \Mod_{\tsadim}$ for some finite number $n < m$,
	then $\CC^{\atomic} \in \Catmfinst$ is an $m$-semiadditive category and has $\height(\CC^{\atomic}) \leq n+1$ as an object of $\Catmfinst$.
\end{cor}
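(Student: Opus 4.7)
The plan is to combine the fact that $\tsadi_{\leq n}$ classifies the property ``all objects have height $\leq n$'' (from \cref{tsadi-n}) with the semiadditive redshift theorem (\cref{semiadditive-redshift-2}). The first assertion, that $\CC^{\atomic} \in \Catmfinst$ and is $m$-semiadditive, is immediate from \cref{tsadi-atomics} applied to $\CC$ viewed as an object of $\Mod_{\tsadim}$ via the given inclusion $\Mod_{\tsadi_{\leq n}} \subset \Mod_{\tsadim}$.

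For the height bound, I would first observe that since $\tsadi_{\leq n}$ is the mode classifying ``stable $p$-local $\infty$-semiadditive with all objects of height $\leq n$'' by \cref{tsadi-n}, every object $X \in \CC$ satisfies $\height(X) \leq n$. I then need to transport this to $\CC^{\atomic}$. Recall from \cref{atomic-abs} (and the argument in the proof of \cref{tsadi-atomics}) that $\CC^{\atomic}$ is closed under colimits indexed by $m$-finite $p$-spaces inside $\CC$, so the inclusion $\iota\colon \CC^{\atomic} \hookrightarrow \CC$ preserves such colimits and is therefore $m$-semiadditive. By \cref{height-dec}, $\iota$ carries $p_{(n)}$ on $\CC^{\atomic}$ to $p_{(n)}$ on $\CC$, so $p_{(n)}$-divisibility in $\CC$ pulls back to $p_{(n)}$-divisibility in $\CC^{\atomic}$, and hence every $X \in \CC^{\atomic}$ still has $\height(X) \leq n$.

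Now I would apply \cref{semiadditive-redshift-2} to the $m$-semiadditive category $\CC^{\atomic}$ with the finite integer $n < m$: since all objects have height $\leq n$, we conclude $\height(\CC^{\atomic}) \leq n+1$ as an object of $\Catmfin$. To upgrade this to $\Catmfinst$, note that by \cref{pn-formula} the action of $p_{(n+1)}$ on an object of either $\Catmfin$ or $\Catmfinst$ is given by the functor $\colim_{\BB^{n+1} C_p}\colon \CC^{\atomic} \to \CC^{\atomic}$, which depends only on $\CC^{\atomic}$ itself (not on the ambient category of categories). Therefore $p_{(n+1)}$-divisibility in $\Catmfin$ and in $\Catmfinst$ are literally the same condition, so $\height(\CC^{\atomic}) \leq n+1$ as an object of $\Catmfinst$ as well.

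The main thing to be careful about is the two ``change of category'' arguments: that height in $\CC$ passes to height in $\CC^{\atomic}$, and that height in $\Catmfin$ coincides with height in $\Catmfinst$. Both reduce to the fact that the relevant $p_{(k)}$ maps are constructed from $m$-finite $p$-space (co)limits that are preserved under the inclusions in question, together with \cref{height-dec}; no genuinely new input is needed beyond the results already available in the excerpt.
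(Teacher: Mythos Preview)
Your proof is correct and follows the same approach as the paper: invoke \cref{tsadi-atomics} for the first assertion, then combine \cref{tsadi-n} with \cref{semiadditive-redshift-2} for the height bound. You are simply more explicit than the paper about two points it leaves implicit---that height in $\CC$ agrees with height in the full $m$-semiadditive subcategory $\CC^{\atomic}$, and that the conclusion of \cref{semiadditive-redshift-2} in $\Catmfin$ transfers to $\Catmfinst$---and your justifications for both via \cref{height-dec} and \cref{pn-formula} are sound.
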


\begin{proof}
	It is in $\Catmfinst$ and $m$-semiadditive by \cref{tsadi-atomics}.
	By \cref{tsadi-n}, $\tsadi_{\leq n}$ classifies the property of having all objects of height $\leq n$, so together with \cref{semiadditive-redshift-2}, this implies that $\height(\CC^{\atomic}) \leq n+1$ as an object of $\Catmfinst$.
\end{proof}

\begin{remark}
	The mode $\tsadi_{\leq n}$ is a smashing localization of $\tsadim$ by \cite[Theorem 4.2.7]{AmbiHeight}.
	By \cref{atomic-smashing} we get that atomics in $\CC \in \Mod_{\tsadi_{\leq n}}$ with respect to either mode coincide.
	Additionally, for $R \in \Alg(\tsadi_{\leq n})$ left modules over $R$ in either mode agree.
\end{remark}

\begin{thm}\label{redshift-upper}
	Let $\CC \in \Mod_{\tsadi_{\leq n}} \subset \Mod_{\tsadim}$ for some finite number $n < m$,
	then $\Km(\CC^{\atomic}) \in \tsadi_{\leq n+1}$.
	In particular, if $R \in \Alg(\tsadi_{\leq n})$ then $\Km(R) \in \tsadi_{\leq n+1}$.
\end{thm}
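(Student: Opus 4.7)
The plan is to combine the two halves of the redshift picture that have already been set up: the categorification step \cref{leq-n-atomic}, which governs what happens when passing to atomic objects, and the decategorification step \cref{redshift-small}, which says that $\Km$ does not increase height. Given $\CC \in \Mod_{\tsadi_{\leq n}}$, the first step is to invoke \cref{leq-n-atomic} to obtain $\CC^{\atomic} \in \Catmfinst$ with $\height(\CC^{\atomic}) \leq n+1$ as an object of $\Catmfinst$; this is exactly the incarnation of \cref{semiadditive-redshift-2} that increases the height by one, and is the conceptual heart of the argument, although the work was already done earlier.

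Next, I would apply \cref{redshift-small} with $n+1$ in place of $n$. This requires the finiteness condition $n+1 \leq m$, which is guaranteed by the hypothesis $n < m$. The conclusion is that $\Km(\CC^{\atomic}) \in \tsadi_{\leq n+1}$, proving the first assertion.

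For the second assertion, let $R \in \Alg(\tsadi_{\leq n})$. Since $\tsadi_{\leq n}$ is a smashing localization of $\tsadim$ (\cref{tsadi-n} together with \cite[Theorem 4.2.7]{AmbiHeight}), the category $\LMod_R$ computed in $\tsadim$ agrees with the one computed in $\tsadi_{\leq n}$, so $\LMod_R \in \Mod_{\tsadi_{\leq n}}$. By \cref{k-r-dbl} and \cref{atomic-ldbl}, $\Km(R) = \Km(\LMod_R^{\atomic})$, and applying the first part to $\CC = \LMod_R$ gives $\Km(R) \in \tsadi_{\leq n+1}$.

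Finally, to upgrade to a lax symmetric monoidal functor $\Km\colon \Alg(\tsadi_{\leq n}) \to \tsadi_{\leq n+1}$, I would use that $\Km\colon \Alg(\tsadim) \to \tsadim$ is already lax symmetric monoidal (\cref{k-lax}), and that the inclusion $\tsadi_{\leq n+1} \hookrightarrow \tsadim$ is a smashing localization of modes, hence in particular symmetric monoidal and fully faithful. The factorization through $\tsadi_{\leq n+1}$ provided by the first part is therefore automatically lax symmetric monoidal. The only real obstacle was the redshift step itself, but that is already subsumed by \cref{leq-n-atomic}, so no serious new work is needed here.
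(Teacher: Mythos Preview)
Your proposal is correct and follows exactly the paper's approach: the paper's proof is the single line ``Combine \cref{leq-n-atomic} and \cref{redshift-small},'' and you have simply unpacked this combination with the appropriate shift $n \mapsto n+1$ and added the routine verification of the ``in particular'' clause and the lax symmetric monoidal factorization.
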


\begin{proof}
	Combine \cref{leq-n-atomic} and \cref{redshift-small}.
\end{proof}

\cref{redshift-small} shows that $p_{n}\colon \Km(\CC) \to \Km(\CC)$ is invertible, but in fact we can prove the following stronger result if we assume that $\CC$ is $m$-semiadditive.
Note that as we know now that $\Km(\CC) \in \tsadi_{\leq n}$, it is an object of an $\infty$-semiadditive category, so that $p_{(k)}$ is defined for all $k$.

\begin{prop}\label{pk-1}
	Let $\CC \in \Catmfinst$ be an $m$-semiadditive category with $\height(\CC) \leq n+1$ as an object of $\Catmfinst$ for some finite $n < m$.
	Then $p_{(k)}\colon \Km(\CC) \to \Km(\CC)$ is the identity for every $k \geq n+1$.
	In particular, for $\CC \in \Mod_{\tsadi_{\leq n}}$, the map $p_{(k)}\colon \Km(\CC^{\atomic}) \to \Km(\CC^{\atomic})$ is the identity for every $k \geq n+1$.
\end{prop}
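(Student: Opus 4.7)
The plan is to first dispose of the ``in particular'' assertion by reduction. Given $\CC \in \Mod_{\tsadi_{\leq n}}$, \cref{tsadi-atomics} yields that $\CC^{\atomic} \in \Catmfinst$ is $m$-semiadditive, while \cref{leq-n-atomic} gives $\height(\CC^{\atomic}) \leq n+1$ as an object of $\Catmfinst$. So the main assertion applied to $\CC^{\atomic}$ immediately delivers the particular case.

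For the main assertion, the strategy is to transport the computation of $p_{(k)}$ from the target $\tsadim$ back to $\Catmfinst$ along $\Km$. Since $\Km\colon \Catmfinst \to \tsadim$ is $m$-semiadditive by \cref{Klam-limits}, it preserves the natural transformation $p_{(k)}$: concretely, $p_{(k)}^{\Km(\CC)}$ in $\tsadim$ agrees with $\Km(p_{(k)}^\CC)$, where $p_{(k)}^\CC\colon \CC \to \CC$ is the $p_{(k)}$-endomorphism of $\CC$ inside $\Catmfinst$. By \cref{pn-formula}, this endomorphism is the endofunctor $X \mapsto \colim_{\BB^k C_p} \mrm{const}_X$.

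The heart of the argument is then to see why this endofunctor is sent by $\Km$ to $\id_{\Km(\CC)}$ for $k \geq n+1$. Two ingredients come in. First, using Semiadditive Redshift (\cref{semiadditive-redshift-2}) in the ``only if'' direction, the hypothesis $\height(\CC) \leq n+1$ together with $m$-semiadditivity of $\CC$ gives that every object $X \in \CC$ has $\height(X) \leq n$, and in particular $p_{(n)}^X$ is invertible. Second, an ambidexterity computation in the $m$-semiadditive $\CC$, building on the structure theorems in \cite{AmbiHeight} for higher semiadditive cardinalities, shows that in the presence of this height bound the canonical natural transformation $\id_\CC \to \colim_{\BB^k C_p}(-)$ arising from the diagonal composed with the inverse norm is an equivalence of endofunctors of $\CC$ in $\Catmfinst$ for $k \geq n+1$. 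Hence $p_{(k)}^\CC \simeq \id_\CC$, and applying $\Km$ gives $p_{(k)}^{\Km(\CC)} \simeq \id_{\Km(\CC)}$.

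The main obstacle is this last, ``internal'' step: bridging the pointwise height bound on individual objects of $\CC$ (which lives in the internal ambidexterity structure of $\CC$ itself) to an actual equivalence of the colimit endofunctor with the identity in $\Catmfinst$. This requires tracking the compatibility between the fold, the inverse norm, and the constant-diagram functor, and identifying the resulting composite with the semiadditive-cardinality action $|\BB^k C_p|\cdot (-)$ on $\id_\CC$, together with the computation that this cardinality becomes a unit on objects of height $\leq n$ for $k \geq n+1$.
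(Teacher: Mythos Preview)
Your approach is essentially the same as the paper's: use \cref{semiadditive-redshift-2} to get $\height(X)\leq n$ for all $X\in\CC$, use \cref{pn-formula} to identify $p_{(k)}^\CC$ with the $\BB^k C_p$-colimit endofunctor, invoke \cite[Proposition 2.4.7]{AmbiHeight} to see this is the identity, and transfer along the semiadditivity of $\Km$. The paper makes the inductive structure explicit: it first treats $k=n+1$ (where invertibility of $|\BB^n C_p|$ gives $\nabla\colon\colim_{\BB^{n+1}C_p}X\to X$ an equivalence via \cite[Proposition 2.4.7(1)]{AmbiHeight}), and then inducts using that $p_{(k+1)}$ is the inverse of $p_{(k)}$ whenever the latter is invertible.

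Two small points deserve care. First, a terminological slip: you invoke the ``only if'' direction of \cref{semiadditive-redshift-2}, but you are using the ``if'' direction ($\height(\CC)\leq n+1\Rightarrow$ all objects have height $\leq n$). Second, and more substantively, your formulation ``$p_{(k)}^\CC\simeq\id_\CC$ in $\Catmfinst$ for $k\geq n+1$'' is not literally well-posed when $k>m$: the ambient category $\Catmfinst$ is only $m$-semiadditive, and $\BB^k C_p$ is not $m$-finite, so neither the endofunctor $\colim_{\BB^k C_p}(-)$ nor the transformation $p_{(k)}$ is available there, and \cref{Klam-limits} only gives $m$-semiadditivity of $\Km$. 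The fix is either to first transfer the equality $p_{(n+1)}=\id$ to $\Km(\CC)\in\tsadi_{\leq n+1}$ (using $n+1\leq m$) and then run the induction in that $\infty$-semiadditive target, or to work throughout in the $\infty$-semiadditive subcategory $(\Catmfinst)_{\leq n+1}$ and invoke \cref{indep-lam} for the transfer. The paper's proof is terse on this point as well, but its inductive phrasing (establish $p_{(n+1)}=\id$ first, then bootstrap) makes the passage to higher $k$ cleaner.
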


\begin{proof}
	Recall from \cref{semiadditive-redshift-2} that for every $X \in \CC$ we have $\height(X) \leq n$, i.e.\ $|\BB^n C_p|\colon X \to X$ is invertible.
	\cite[Proposition 2.4.7 (1)]{AmbiHeight} applied to the case $A = \BB^{n+1} C_p$ shows that $\colim_{\BB^{n+1} C_p} X \xrightarrow{\nabla} X$ is an equivalence.
	By \cref{pn-formula}, $p_{(n+1)}\colon \CC \to \CC$ is given by $p_{(n+1)}(X) \cong \colim_{\BB^{n+1} C_p} X$, which by the above is $X$ itself, i.e.\ $p_{(n+1)}$ is the identity.
	By \cite[Proposition 2.4.7]{AmbiHeight}, if $p_{(k)}$ is invertible then $p_{(k+1)}$ is also invertible and is its inverse, finishing by induction.
	For the second part apply \cref{leq-n-atomic}.
\end{proof}

\subsection{Lower Bound}

\begin{prop}\label{height-geq-1}
	Let $\CC \in \Catmfinst$ be an $m$-semiadditive category with $\height(\CC) \leq n+1$ as an object of $\Catmfinst$ for some finite $n < m$.
	Then $\Km(\CC) \in \tsadi_{1 \leq \bullet \leq n+1}$.
	In particular, for $\CC \in \Mod_{\tsadi_{\leq n}}$ we have that $\Km(\CC^{\atomic}) \in \tsadi_{1 \leq \bullet \leq n+1}$.
\end{prop}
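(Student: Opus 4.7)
The plan is to combine two ingredients: the identity $p_{(n+1)} = \id$ on $\Km(\CC)$ established in \cref{pk-1}, and the product decomposition $\tsadi_{\leq n+1} = \tsadi_0 \times \cdots \times \tsadi_{n+1}$ provided by \cref{tsadi-n}. The upper bound $\Km(\CC) \in \tsadi_{\leq n+1}$ is already immediate from \cref{pk-1}, since $p_{(n+1)}$ being the identity is in particular invertible, so $\height(\Km(\CC)) \leq n+1$.

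For the lower bound, the remaining task is to show that the $\tsadi_0$-component $Y$ of $\Km(\CC)$ vanishes. By definition of $\tsadi_0$ (as in \cref{tsadi-n}), every object in this mode has height exactly $0$, so $p = p_{(0)}$ acts invertibly on $Y$. Iterating \cite[Proposition 2.4.7]{AmbiHeight} starting from the invertibility of $p_{(0)} = p$ gives $p_{(k+1)} = p_{(k)}^{-1}$ at each stage, yielding by induction that $p_{(k)}|_Y = p^{(-1)^k} \cdot \id_Y$ for every $k \geq 0$.

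Combining this with $p_{(n+1)}|_Y = \id_Y$ from \cref{pk-1}, we obtain $(p^{(-1)^{n+1}} - 1) \cdot \id_Y = 0$. Since $p \geq 2$, the scalar $p^{(-1)^{n+1}} - 1 \in \QQ$ is nonzero, hence a unit on the rational spectrum $Y$; multiplying by its inverse forces $\id_Y = 0$, so $Y = 0$. This shows $\Km(\CC) \in \tsadi_{1 \leq \bullet \leq n+1}$. The ``In particular'' statement follows by applying the main assertion to $\CC^{\atomic}$, which by \cref{leq-n-atomic} is $m$-semiadditive and has $\height \leq n+1$ as an object of $\Catmfinst$. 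I expect no serious obstacle: the only computational input beyond \cref{pk-1} is the iteration of \cite[Proposition 2.4.7]{AmbiHeight} identifying $p_{(k)}$ with $p^{(-1)^k}$ on the height-zero part, which is immediate from the cited result.
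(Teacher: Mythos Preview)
Your proposal is correct and follows essentially the same route as the paper's proof: both reduce to showing the $\tsadi_0$-component vanishes by combining $p_{(n+1)} = \id$ from \cref{pk-1} with the iteration $p_{(k+1)} = p_{(k)}^{-1}$ from \cite[Proposition 2.4.7]{AmbiHeight}, concluding that $p-1$ (or $p^{\pm 1}-1$) acts as zero and is simultaneously a unit. The only cosmetic differences are that the paper picks an even index $2k \geq n+1$ so as to land exactly on $p$ rather than $p^{\pm 1}$, and invokes $p$-locality of $\tsadi_0$ (rather than rationality, which amounts to the same thing here) to see that $p-1$ is invertible; you might also note, as the paper does via \cref{height-dec}, that the projection to $\tsadi_0$ preserves the $p_{(k)}$'s, which is what lets you transport \cref{pk-1} to the component $Y$.
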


\begin{proof}
	Recall from \cref{redshift-small} that $\Km(\CC) \in \tsadi_{\leq n+1}$, and that $\tsadi_{\leq n+1}$ is $\infty$-semiadditive.
	We thus need to show that $L_0 \Km(\CC) = 0$ where $L_0\colon \tsadi_{\leq n+1} \to \tsadi_0$ is the projection.
	By \cref{height-dec}, the map $L_0$ preserves $p_{(k)}$, so by \cref{pk-1} we know that $p_{(k)}$ acts as the identity on $L_0 \Km(\CC)$ for every $k \geq n+1$.
	Since $L_0 \Km(\CC)$ is of height $\leq 0$, it is $p_{(0)} = p$-invertible, and \cite[Proposition 2.4.7]{AmbiHeight} then shows inductively that $p_{(2k)} \cong p$ for any $k \geq 0$.
	Choose some $k$ such that $2k \geq n+1$, then we get that $p \cong p_{(2k)} \cong 1$, so that $p - 1$ is the zero morphism on $L_0 \Km(\CC)$.
	However, since $\tsadi_0$ is $p$-local, this morphism is invertible, thus $L_0 \Km(\CC) = 0$.
	For the second part apply \cref{leq-n-atomic}.
\end{proof}

Let $\unit$ be the unit of $\tsadi_n$, and consider $\unit[\BB^n C_p] = \colim_{\BB^n C_p} \unit \in \CAlg(\tsadi_n)$ (which carries an action of $(\ZZ/p)^\times \cong \Aut(C_p)$).
We then have the following result.

\begin{prop}\label{mod-at-pn}
	Let $\CC \in \Mod_{\tsadi_n}$, then
	$p_{(n)}\colon \CC^{\atomic} \to \CC^{\atomic}$ is given by $\unit[\BB^n C_p] \otimes -$.
\end{prop}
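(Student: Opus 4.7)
The plan is direct: apply \cref{pn-formula} to identify $p_{(n)}$ on $\CC^{\atomic}$ with the constant-diagram colimit functor, then use the $\tsadi_n$-module structure on $\CC$ to rewrite this colimit as $\unit[\BB^n C_p] \otimes -$.

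Concretely, I would first verify that $\CC^{\atomic}$ is $n$-semiadditive and stable, so that \cref{pn-formula} applies. Since $\tsadi_n$ is $\infty$-semiadditive, every $\pi$-finite $p$-space is an absolute colimit of $\tsadi_n$ (as in the argument of \cref{tsadi-atomics}), so $\CC^{\atomic} \subseteq \CC$ is closed under such colimits and inherits $n$-semiadditivity by \cite[Proposition 2.1.4 (4)]{AmbiHeight} as well as stability by \cref{absolute-stable}. Then \cref{pn-formula} gives that the functor $p_{(n)}\colon \CC^{\atomic} \to \CC^{\atomic}$ is $X \mapsto \colim_{\BB^n C_p} X$, where the diagram is constant at $X$.

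Next, the inclusion $\CC^{\atomic} \hookrightarrow \CC$ preserves $\BB^n C_p$-shaped colimits (again because these are absolute for $\tsadi_n$), so the colimit may be computed inside $\CC$. Because $\CC$ is a $\tsadi_n$-module, the functor $-\otimes X\colon \tsadi_n \to \CC$ is a left adjoint and hence commutes with colimits, yielding
\[
\colim_{\BB^n C_p} X
= \colim_{\BB^n C_p}(\unit \otimes X)
= \Bigl(\colim_{\BB^n C_p} \unit\Bigr) \otimes X
= \unit[\BB^n C_p] \otimes X,
\]
which is the asserted formula.

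The only delicate point is a compatibility of conventions: the $p_{(n)}$ in the statement arises from the ambient $\infty$-semiadditive structure that $\CC^{\atomic}$ inherits as a full subcategory of $\CC \in \Mod_{\tsadi_n}$, whereas the $p_{(n)}$ produced by \cref{pn-formula} arises from the $n$-cocartesian structure on $\CC^{\atomic}$ viewed as an object of $\Catmfinst$. These must coincide by the uniqueness of the $n$-commutative monoid lift of an $n$-semiadditive stable category guaranteed by \cref{cocart-equiv}, so this introduces no genuine obstacle.
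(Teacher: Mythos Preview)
Your proposal is correct and follows essentially the same route as the paper: apply \cref{pn-formula} to get $p_{(n)}(X) = \colim_{\BB^n C_p} X$ and then use that the $\tsadi_n$-action commutes with colimits to rewrite this as $\unit[\BB^n C_p] \otimes X$. The paper's proof is terser and leaves implicit the verifications (that $\CC^{\atomic} \in \Catmfinst$, via \cref{tsadi-atomics}) and the compatibility point you spell out, but the argument is the same.
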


\begin{proof}
	Recall that the action of $\tsadi_n$ on $\CC$ commutes with colimits.
	Then, by \cref{pn-formula}
	$$
		p_{(n)}(-)
		\cong \colim_{\BB^n C_p} (-)
		\cong \colim_{\BB^n C_p} (\unit \otimes -)
		\cong (\colim_{\BB^n C_p} \unit) \otimes -
		= \unit[\BB^n C_p] \otimes -
		.
	$$
\end{proof}

Motivated by this result, we further study the action of $\unit[\BB^n C_p]$.
To that end, we recall the following:

\begin{defn}[{\cite[Proposition 4.5 and Definition 4.7]{Cyclo}}]\label{R-bncp}
	The ring $\unit[\BB^n C_p]$ splits ($(\ZZ/p)^\times$-equivariantly) as a product in $\CAlg(\tsadi_n)$
	$$
		\unit[\BB^n C_p] \cong \unit \times \mdef{\unit[\omegapn]}
	$$
	where $\unit[\omegapn] \in \CAlg(\tsadi_n)$ is called the \tdef{(height $n$) $p$-th cyclotomic extension} of $\unit$.
	For any $R \in \Alg(\tsadi_n)$, we define the $R$-algebra $R[\omegapn] = \unit[\omegapn] \otimes R$.
\end{defn}

\begin{defn}\label{p-roots}
	We say that a category $\CC \in \Mod_{\tsadi_n}$ has \tdef{(height $n$) $p$-th roots of unity} if
	$\unit[\omegapn] \otimes -$
	is equivalent to
	$\prod_{(\ZZ/p)^\times} -$
	as functors $\CC \to \CC$.
	Similarly, we say that $R \in \Alg(\tsadi_n)$ has \tdef{(height $n$) $p$-th roots of unity} if $R[\omegapn]$ is equivalent to $\prod_{(\ZZ/p)^\times} R$ as an $R$-algebra.
\end{defn}

\begin{remark}
	It makes sense to require the equivalence to be $(\ZZ/p)^\times$-equivariant, but we will not use this assumption.
\end{remark}

\begin{remark}
	For $R \in \Alg(\SpTn)$, it is always true that the (height $n$) $p$-th cyclotomic extension $R[\omegapn]$ has (height $n$) $p$-th roots of unity.
	Indeed, \cite[Proposition 5.2]{Cyclo} shows that $R[\omegapn]$ is a Galois extension of $R$, which immediately implies the condition of having (height $n$) $p$-th roots of unity.
	However, it is \emph{not} true in general for $R \in \Alg(\tsadi_n)$, by a counterexample constructed by Allen Yuan.
\end{remark}

\begin{example}[{\cite[Proposition 5.1]{Cyclo}}]\label{En-roots-of-unity}
	The Lubin--Tate spectrum $\rE_n \in \CAlg(\SpKn)$ has (height $n$) $p$-th roots of unity.
\end{example}

\begin{example}
	At the prime $p = 2$, any $\CC = \LMod_R$ for $R \in \SpTn$, and more generally $\CC \in \Mod_{\SpTn}$, has (height $n$) $p$-th roots of unity.
	This is true because \cite[Proposition 5.2]{Cyclo} shows that $\SS_{\Tn}[\omegapn]$ is a $(\ZZ/p)^\times$-Galois extension of $\SS_{\Tn}$.
	Since $p = 2$, it is a Galois extension of order $p-1=1$, i.e.\ $\SS_{\Tn}[\omegapn] \cong \SS_{\Tn}$, so the condition is automatic.
\end{example}

\begin{lem}
	Let $\CC$ be a monoidal category, $R \in \Alg(\CC)$ and $n$ a natural number.
	Consider $R^n$ as an algebra under $R$ via the diagonal map $\Delta\colon R \to R^n$.
	Then the composition of the extension of scalars and the restriction of scalars $\LMod_R \xrightarrow{\Delta_!} \LMod_{R^n} \xrightarrow{\Delta^*} \LMod_R$ is equivalent to the functor $(-)^n\colon \LMod_R \to \LMod_R$.
\end{lem}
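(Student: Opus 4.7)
The plan is to compute $\Delta^*\Delta_! M$ directly by unfolding the definitions and using a bimodule decomposition of $R^n$.

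The starting point is the observation that $R^n$ is the $n$-fold product of $R$ in $\Alg(\CC)$, equipped with projection algebra maps $\pi_i\colon R^n \to R$ ($i = 1, \ldots, n$) satisfying $\pi_i \circ \Delta = \id_R$. In particular, $\Delta^* \pi_i^* = \id_{\LMod_R}$. The key structural fact I would use is that $R^n$, regarded as an $(R^n, R)$-bimodule with right $R$-action via $\Delta$ and left $R^n$-action the canonical one, decomposes as a biproduct $\bigoplus_{i=1}^n R_i$, where each summand $R_i$ is a copy of $R$ with left $R^n$-action through $\pi_i$ and the standard right $R$-action. This decomposition is implemented by the orthogonal idempotents $e_i \in R^n$ picking out the $i$-th factor.

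Given this, I would compute
\[
\Delta_! M = R^n \otimes_R M = \bigoplus_{i=1}^n (R_i \otimes_R M) = \bigoplus_{i=1}^n \pi_i^* M,
\]
using that the relative tensor product distributes over biproducts and that $R \otimes_R M = M$. Applying $\Delta^*$ and the identity $\Delta^* \pi_i^* = \id_{\LMod_R}$, I would conclude
\[
\Delta^* \Delta_! M = \bigoplus_{i=1}^n \Delta^* \pi_i^* M = \bigoplus_{i=1}^n M = M^n,
\]
which is the desired formula.

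The main obstacle is justifying the biproduct decomposition of $R^n$ as an $(R^n, R)$-bimodule at the desired level of generality. This is clean in any pointed monoidal setting in which relative tensor products distribute over finite biproducts, which in particular covers the presentably stable context (such as $\CC = \tsadim$) in which the lemma is ultimately applied. One can alternatively phrase the argument via the equivalence $\LMod_{R^n} \simeq \prod_{i=1}^n \LMod_R$ under which $\Delta^*$ corresponds to the product functor $(M_i)_i \mapsto \prod_i M_i$ and its left adjoint $\Delta_!$ is the diagonal $M \mapsto (M, \ldots, M)$, yielding the same conclusion.
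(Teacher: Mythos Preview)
Your proposal is correct. The alternative phrasing you give at the end --- identifying $\LMod_{R^n}$ with $\prod_i \LMod_R$ via extension of scalars along the projections $\pi_i$, under which $\Delta_!$ becomes the diagonal and $\Delta^*$ the product --- is precisely the paper's proof. Your primary route via the $(R^n,R)$-bimodule decomposition $R^n \simeq \bigoplus_i R_i$ is a more hands-on variant of the same idea: it makes explicit at the level of bimodules what the categorical equivalence encodes, at the cost of requiring biproducts and distributivity of the relative tensor product (which, as you note, holds in the stable presentable setting where the lemma is actually used). The paper's formulation is slightly cleaner in that it avoids invoking the bimodule splitting directly and instead only uses the functorial identities $(\pi_i\Delta)_! = \id$ and $(\pi_i\Delta)^* = \id$, together with the equivalence $\LMod_{R^n} \simeq \LMod_R^n$; your approach has the advantage of being more concrete about where each summand of $M^n$ comes from.
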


\begin{proof}
	Consider the $i$-th projection $\pi_i\colon R^n \to R$.
	The extension of scalars along it gives a functor $\LMod_{R^n} \to \LMod_R$.
	The $n$ functors together give an equivalence $\LMod_{R^n} \iso \LMod_R^n$.
	Since $R \xrightarrow{\Delta} R^n \xrightarrow{\pi_i} R$ is the identity, so are $(\pi_i \Delta)_!$ and $(\pi_i \Delta)^*$.
	Therefore, $\LMod_R \xrightarrow{\Delta_!} \LMod_{R^n} \iso \LMod_R^n$ is the diagonal, and $\LMod_R^n \iso \LMod_{R^n} \to \LMod_R$ is the multiplication of modules, so we conclude that their composition is indeed $(-)^n\colon \LMod_R \to \LMod_R$.
\end{proof}

\begin{prop}\label{R-to-LMod-p-roots}
	Let $R \in \Alg(\tsadi_n)$ have (height $n$) $p$-th roots of unity, then $\LMod_R \in \Mod_{\tsadi_n}$ has (height $n$) $p$-th roots of unity.
\end{prop}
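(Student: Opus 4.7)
The plan is to reduce the claim to the preceding lemma about the composition $\Delta^*\Delta_!$. The first step is to identify, for any $X \in \LMod_R$, the action $\unit[\omegapn] \otimes X$ of $\unit[\omegapn] \in \tsadi_n$ on $X$ (through the $\tsadi_n$-module structure on $\LMod_R$) with the relative tensor product $R[\omegapn] \otimes_R X$. This follows because for any $M \in \tsadi_n$ the action is given by $M \otimes X \simeq (M \otimes R) \otimes_R X$ naturally in $X$, which is the standard compatibility between the $\tsadi_n$-action and extension of scalars along $\unit \to R$. Specializing to $M = \unit[\omegapn]$ yields a natural equivalence of functors $\unit[\omegapn] \otimes - \;\simeq\; R[\omegapn] \otimes_R -$ from $\LMod_R$ to itself.

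Next, by hypothesis, $R[\omegapn] \simeq \prod_{(\ZZ/p)^\times} R$ as $R$-algebras, where the right-hand side carries its diagonal $R$-algebra structure. Writing $\Delta\colon R \to \prod_{(\ZZ/p)^\times} R$ for that structure map, the functor $R[\omegapn] \otimes_R -$ is identified with $\Delta^* \Delta_!\colon \LMod_R \to \LMod_R$: indeed $\Delta_!(X) = (\prod_{(\ZZ/p)^\times} R) \otimes_R X$ and $\Delta^*$ only restricts the scalar action back to $R$. Invoking the preceding lemma with $n = p-1$, this composition is naturally equivalent to $(-)^{p-1} = \prod_{(\ZZ/p)^\times} -$. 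Chaining this with the first step gives the desired natural equivalence $\unit[\omegapn] \otimes - \simeq \prod_{(\ZZ/p)^\times} -$, which is precisely the assertion that $\LMod_R$ has (height $n$) $p$-th roots of unity.

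There is no serious obstacle: the content is entirely contained in the preceding lemma, and the only point requiring care is to distinguish the $\tsadi_n$-module action of $\unit[\omegapn]$ on $\LMod_R$ from the tensor product internal to $\tsadi_n$. Once the reduction $\unit[\omegapn] \otimes X \simeq R[\omegapn] \otimes_R X$ is in place, the splitting hypothesis translates the $R$-algebra equivalence on $R$ directly into the sought equivalence of endofunctors on $\LMod_R$.
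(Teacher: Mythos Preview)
Your proposal is correct and follows essentially the same approach as the paper: both identify the $\tsadi_n$-action of $\unit[\omegapn]$ on $\LMod_R$ with the composite of extension and restriction of scalars along $R \to R[\omegapn]$, then invoke the splitting hypothesis and the preceding lemma on $\Delta^*\Delta_!$ to conclude. Your identification $M \otimes X \simeq (M \otimes R) \otimes_R X$ is just a slightly more explicit phrasing of what the paper calls ``the composition of the extension of scalars and restriction of scalars along this map \dots\ is equivalent to the action of $\unit[\omegapn]$ on $\LMod_R$.''
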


\begin{proof}
	First note that the map $\unit[\omegapn] \otimes -\colon \tsadi_n \to \tsadi_n$ is equivalent to the composition of the extension of scalars and restriction of scalars along $\unit \to \unit[\omegapn]$.
	Since by definition $R[\omegapn] = \unit[\omegapn] \otimes R$, we get an $R$-algebra map $R \to R[\omegapn]$.
	Then, the composition of the extension of scalars and restriction of scalars along this map gives a functor $\LMod_R \to \LMod_R$, which is equivalent to the action of $\unit[\omegapn]$ on $\LMod_R$.
	By assumption, $R[\omegapn] \cong \prod_{(\ZZ/p)^\times} R$ as an $R$-algebra, so the previous lemma implies that $\unit[\omegapn] \otimes -$ is equivalent to $\prod_{(\ZZ/p)^\times} -$.
\end{proof}

\begin{prop}\label{pn-p}
	Let $\CC \in \Mod_{\tsadi_n}$ have (height $n$) $p$-th roots of unity, then $p_{(n)}\colon \CC^{\atomic} \to \CC^{\atomic}$ is equivalent to $p\colon \CC^{\atomic} \to \CC^{\atomic}$.
\end{prop}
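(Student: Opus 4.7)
The plan is to combine \cref{mod-at-pn} with the ring splitting of $\unit[\BB^n C_p]$ from \cref{R-bncp} and the $p$-th roots of unity assumption to express $p_{(n)}$ as the composition of a $p$-fold diagonal with a $p$-fold fold, whose composite is $p \cdot \id$ in any stable semiadditive category.

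First, by \cref{mod-at-pn}, the natural transformation $p_{(n)}\colon \id_{\CC^{\atomic}} \Rightarrow \id_{\CC^{\atomic}}$ factors as
$$\id \xrightarrow{\Delta \otimes -} \unit[\BB^n C_p] \otimes (-) \xrightarrow{\nabla \otimes -} \id,$$
where $\Delta\colon \unit \to \unit[\BB^n C_p]$ is the image of the limit diagonal under the norm equivalence (equivalently, the ring unit of $\unit[\BB^n C_p]$) and $\nabla\colon \unit[\BB^n C_p] \to \unit$ is the colimit fold.

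Next, I would apply \cref{R-bncp} to obtain the ring splitting $\unit[\BB^n C_p] \simeq \unit \times \unit[\omegapn]$ in $\CAlg(\tsadi_n)$, which decomposes the endofunctor as $\unit[\BB^n C_p] \otimes (-) \simeq \id \oplus (\unit[\omegapn] \otimes -)$. The assumption that $\CC$ has $p$-th roots of unity then gives $\unit[\omegapn] \otimes - \simeq \bigoplus_{(\ZZ/p)^\times} -$, yielding an equivalence of endofunctors of $\CC$
$$\unit[\BB^n C_p] \otimes (-) \simeq \bigoplus_{i=1}^p (-).$$

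The final step, and the main obstacle, is to verify that under this decomposition $\Delta \otimes -$ becomes the $p$-fold diagonal $\id \to \bigoplus_{i=1}^p \id$ and $\nabla \otimes -$ becomes the $p$-fold fold $\bigoplus_{i=1}^p \id \to \id$; once this is shown, their composite is $p \cdot \id$ by the standard matrix computation in a stable/semiadditive category. The identification of $\Delta$ is the easier half: since $\Delta$ is the ring unit of $\unit[\BB^n C_p]$ and the splitting is as rings, $\Delta$ must project to the ring unit of each factor, and combined with the $p$-th roots identification (made at the level of the algebra $\unit[\omegapn]$) it becomes the $p$-fold diagonal. For $\nabla$ the analysis is more delicate, as $\nabla$ is not a ring map; one must leverage the self-duality of $\BB^n C_p$ encoded in the norm equivalence, so that $\nabla$ is determined from $\Delta$ via the ambidexterity data, together with the $(\ZZ/p)^\times$-equivariance of the cyclotomic splitting from \cite{Cyclo}, to conclude that each component of $\nabla$ is the identity and hence $\nabla \otimes -$ is the $p$-fold fold, giving $p_{(n)} = p$.
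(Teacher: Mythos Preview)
You have misread the statement. In the paper's usage (cf.\ \cref{pn-formula} and \cref{mod-at-pn}), the expression ``$p_{(n)}\colon \CC^{\atomic} \to \CC^{\atomic}$'' denotes the component at the object $\CC^{\atomic}$ of the natural transformation $p_{(n)}\colon \id_{\Catmfinst} \Rightarrow \id_{\Catmfinst}$; this component is a \emph{functor} $\CC^{\atomic} \to \CC^{\atomic}$, not a natural transformation $\id_{\CC^{\atomic}} \Rightarrow \id_{\CC^{\atomic}}$. Likewise $p = p_{(0)}$ is the functor $X \mapsto \coprod_p X \simeq \prod_p X$. The claim is an equivalence of these two endofunctors in $\Catmfinst$, and this is exactly what is needed downstream in \cref{redshift-lower}, where one applies the $m$-semiadditive functor $\Km$ and uses \cref{height-dec}.

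With the correct reading, the argument collapses to a short chain of equivalences of endofunctors and the obstacle you flag disappears. By \cref{mod-at-pn} one has $p_{(n)} = \unit[\BB^n C_p] \otimes -$ (note that \cref{mod-at-pn} gives only this identification of functors, not the $\Delta$/$\nabla$ factorization you attribute to it). Then \cref{R-bncp} splits $\unit[\BB^n C_p] \simeq \unit \times \unit[\omegapn]$, and the $p$-th roots hypothesis says precisely that the endofunctor $\unit[\omegapn] \otimes -$ is equivalent to $\prod_{(\ZZ/p)^\times}(-)$. Since the $\tsadi_n$-action preserves colimits and $\tsadi_n$ is semiadditive, tensoring distributes over finite products, yielding $p_{(n)} \simeq \prod_p(-) = p$. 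No analysis of $\nabla$, the self-duality of $\BB^n C_p$, or the $(\ZZ/p)^\times$-equivariance is required; the difficulty you encountered is entirely an artifact of trying to match two natural transformations of $\id_{\CC^{\atomic}}$ rather than two functors.
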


\begin{proof}
	Recall that the action of $\tsadi_n$ on $\CC$ commutes with colimits, so that $\unit[\BB^n C_p] \otimes -\colon \CC \to \CC$ commutes with colimits.
	Additionally, $\tsadi_n$ is semiadditive, so finite products and coproducts coincide, so that $\unit[\BB^n C_p] \otimes -\colon \CC \to \CC$ also commutes with finite products, and the same holds for the restriction to the atomics $\unit[\BB^n C_p] \otimes -\colon \CC^{\atomic} \to \CC^{\atomic}$.
	By \cref{mod-at-pn}, $p_{(n)} \cong \unit[\BB^n C_p] \otimes -$ on $\CC^{\atomic}$.
	Now, using \cref{R-bncp} and the assumption that $\CC$ has $p$-th roots of unity, we get an equivalence of functors $\CC^{\atomic} \to \CC^{\atomic}$
	$$
		p_{(n)}(-)
		\cong \unit[\BB^n C_p] \otimes -
		\cong (\unit \times \unit[\omegapn]) \otimes -
		\cong (\unit \times \prod_{(\ZZ/p)^\times} \unit) \otimes -
		\cong \prod_p \unit \otimes -
		\cong \prod_p (-)
		.
	$$
\end{proof}

\begin{thm}\label{redshift-lower}
	Let $\CC \in \Mod_{\tsadi_n}$ have (height $n$) $p$-th roots, and $n < m$ a finite number,
	then $\Km(\CC^{\atomic}) \in \tsadi_{n+1}$.
	In particular, if $R \in \Alg(\tsadi_n)$ has (height $n$) $p$-th roots of unity, then $\Km(R) \in \tsadi_{n+1}$.
\end{thm}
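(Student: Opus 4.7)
The plan is to combine the upper bound $\Km(\CC^{\atomic}) \in \tsadi_{\leq n+1}$ from \cref{redshift-upper} with a vanishing of the projections $L_k \Km(\CC^{\atomic}) \in \tsadi_k$ for every $0 \leq k \leq n$. Since $\tsadi_{\leq n+1}$ decomposes as $\tsadi_0 \times \cdots \times \tsadi_{n+1}$ by \cref{tsadi-n}, this will force $\Km(\CC^{\atomic}) \in \tsadi_{n+1}$.

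First, I would promote \cref{pn-p} to the level of $\Km(\CC^{\atomic})$. Because $\Km$ is $m$-semiadditive by \cref{Klam-limits}, \cref{height-dec} gives that $\Km$ sends the natural transformation $p_{(n)}$ on $\CC^{\atomic}$ to the natural transformation $p_{(n)}$ on $\Km(\CC^{\atomic})$, while the additivity of $\Km$ sends multiplication by $p$ to multiplication by $p$. Combined with \cref{pn-p}, which identifies $p_{(n)}$ with $p$ on $\CC^{\atomic}$, this yields the equality $p_{(n)} = p$ of natural endomorphisms of $\id_{\Km(\CC^{\atomic})}$. On the other hand, \cref{pk-1} gives directly that $p_{(n+1)} = 1$ on $\Km(\CC^{\atomic})$.

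Now fix $0 \leq k \leq n$ and project to the component $L_k \Km(\CC^{\atomic}) \in \tsadi_k$. By \cref{height-dec} both identities $p_{(n)} = p$ and $p_{(n+1)} = 1$ persist on $L_k \Km(\CC^{\atomic})$. Since $p_{(k)}$ is invertible on $\tsadi_k$, the relation $p_{(j+1)} = p_{(j)}^{-1}$ from \cite[Proposition 2.4.7]{AmbiHeight} applies for all $j \geq k$; iterating, $p_{(j)}$ equals $p_{(k)}$ or $p_{(k)}^{-1}$ according to the parity of $j-k$. Because $n$ and $n+1$ have opposite parities, exactly one of $p_{(n)}, p_{(n+1)}$ equals $p_{(k)}$ and the other equals $p_{(k)}^{-1}$ on this component. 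Substituting $p_{(n)} = p$ and $p_{(n+1)} = 1$ then forces $p = 1$ on $L_k \Km(\CC^{\atomic})$. As $\tsadi_k$ is $p$-local, the endomorphism $p - 1$ of the identity is invertible, so $p - 1 = 0$ can only hold if $L_k \Km(\CC^{\atomic}) = 0$, exactly as in the proof of \cref{height-geq-1}.

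For the second assertion, if $R \in \Alg(\tsadi_n)$ has height $n$ $p$-th roots of unity, then \cref{R-to-LMod-p-roots} shows that $\LMod_R$ does as well, and $\Km(R) = \Km(\LMod_R^{\atomic})$ by definition, so the first part applies. The only mildly subtle step is the parity-bookkeeping that packages $p_{(n)} = p$ and $p_{(n+1)} = 1$ together with the alternating sequence $p_{(j)} = p_{(k)}^{(-1)^{j-k}}$ into the conclusion $p = 1$; everything else is essentially formal given the results already established in the paper.
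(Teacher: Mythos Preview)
Your argument is correct, but it takes a more circuitous route than the paper's. Both proofs derive the key identity $p_{(n)} = p$ on $\Km(\CC^{\atomic})$ from \cref{pn-p} together with the $m$-semiadditivity of $\Km$ (\cref{height-dec}). From there, however, the paper argues in one step: by \cref{height-geq-1} the object already lies in $\tsadi_{1 \leq \bullet \leq n+1}$, so it is $p_{(0)}$-complete, i.e.\ $p$-complete; since $p_{(n)} = p$, it is also $p_{(n)}$-complete, i.e.\ of height $> n$, and hence lands in $\tsadi_{n+1}$. Your approach instead decomposes along the product $\tsadi_{\leq n+1} = \prod_{k \leq n+1} \tsadi_k$ and kills each factor $L_k$ for $k \leq n$ individually, combining both $p_{(n)} = p$ and $p_{(n+1)} = 1$ with the alternating relation $p_{(j+1)} = p_{(j)}^{-1}$ to force $p = 1$ on each component. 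This works, but it effectively re-proves \cref{height-geq-1} at every height $k \leq n$ rather than invoking it once and using the completeness characterization of height directly. The paper's argument buys you brevity and avoids the parity bookkeeping; your argument is self-contained in that it does not rely on the semantic notion of $p_{(n)}$-completeness, only on the algebraic relations among the $p_{(j)}$.
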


\begin{proof}
	\cref{height-geq-1} shows that $\Km(\CC^{\atomic}) \in \tsadi_{1 \leq \bullet \leq n+1}$.
	In particular, it is of height $> 0$, i.e.\ $p_{(0)} = p$-complete.
	By \cref{pn-p}, $p_{(n)} \cong p$, so that it is also $p_{(n)}$-complete, that is, of height $> n$.
	The last part follows from \cref{R-to-LMod-p-roots}.
\end{proof}

\begin{cor}\label{KEn-tsadinpo}
	Let $n < m$ be a finite number, then $\Km(\En) \in \CAlg(\tsadi_{n+1})$.
\end{cor}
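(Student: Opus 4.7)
The plan is to combine \cref{En-roots-of-unity} with \cref{redshift-lower} and the lax symmetric monoidal structure of $\Km$. Concretely, \cref{En-roots-of-unity} identifies $\En \in \CAlg(\SpKn)$ as having (height $n$) $p$-th roots of unity, and \cref{redshift-lower} then says that for any $R \in \Alg(\tsadi_n)$ with such roots of unity we have $\Km(R) \in \tsadi_{n+1}$. So the whole task reduces to verifying that $\En$ can be regarded as an object of $\CAlg(\tsadi_n)$ for which this hypothesis still holds.

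For this I would pass $\En$ along the chain $\CAlg(\SpKn) \to \CAlg(\SpTn) \to \CAlg(\tsadi_n)$. The inclusion $\SpTn \subseteq \tsadim$ is the right adjoint of the smashing localization of \cite[Corollary 5.5.14]{AmbiHeight}, hence symmetric monoidal with respect to the relevant subcategory structure and in particular carries $\CAlg$ to $\CAlg$; it also preserves height, so $\En$ lands in $\tsadi_n$. A brief compatibility check then confirms that the cyclotomic extension $\En[\omegapn]$ computed in $\SpKn$ agrees with the one computed in $\tsadi_n$ of \cref{R-bncp}, because the smashing localization commutes with the colimit $\unit[\BB^n C_p] = \colim_{\BB^n C_p} \unit$ up to the relevant units and thus transports the splitting of \cref{R-bncp} from $\SpKn$ to $\tsadi_n$. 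With this in place, the hypothesis of \cref{redshift-lower} is satisfied and yields $\Km(\En) \in \tsadi_{n+1}$.

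To upgrade this to the $\CAlg$ level, I would invoke \cref{k-lax}: since $\Km\colon \Alg(\tsadim) \to \tsadim$ is lax symmetric monoidal, it lifts to a functor $\CAlg(\tsadim) \to \CAlg(\tsadim)$, so $\Km(\En) \in \CAlg(\tsadim)$. By \cref{tsadi-n} the mode $\tsadi_{n+1}$ is a smashing localization of $\tsadim$, hence a symmetric monoidal full subcategory; combining $\Km(\En) \in \tsadi_{n+1}$ with the $\CAlg(\tsadim)$-structure then yields $\Km(\En) \in \CAlg(\tsadi_{n+1})$ as desired.

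The only step with any genuine content is the transfer of the roots-of-unity hypothesis from $\SpKn$ to $\tsadi_n$; everything else is a direct assembly of \cref{redshift-lower}, \cref{k-lax}, and \cref{tsadi-n}. I expect this transfer to follow cleanly from the smashing-localization structure, so the corollary should really be a one-line consequence of the preceding theorem once the algebra is parked in the correct ambient category.
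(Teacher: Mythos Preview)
Your proposal is correct and takes essentially the same approach as the paper, whose proof is the one-liner ``Follows from \cref{En-roots-of-unity} and \cref{redshift-lower}.'' You spell out in more detail the passage of $\En$ from $\CAlg(\SpKn)$ into $\Alg(\tsadi_n)$ and the $\CAlg$ upgrade via the lax symmetric monoidal structure, both of which the paper leaves implicit; note in particular that \cref{En-roots-of-unity} is already phrased as an instance of \cref{p-roots}, so the compatibility of cyclotomic extensions you worry about is absorbed into that example's citation of \cite[Proposition 5.1]{Cyclo}.
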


\begin{proof}
	Follows from \cref{En-roots-of-unity} and \cref{redshift-lower}.
\end{proof}

	\section{Relationship to Chromatically Localized K-Theory}\label{sec-chrom-k}

In \cref{sec-red} we have shown that higher semiadditive algebraic K-theory interacts well with semiadditive height.
For example, $\height(\Km(\En)) = n+1$ when $m > n$ by \cref{KEn-tsadinpo}.
Note that the assumption $m > n$ is necessary to even define semiadditive height $n+1$.
In this section we study the connection between higher semiadditive algebraic K-theory and chromatic localizations of ordinary algebraic K-theory by other means, while also dropping the assumption $m > n$.

Let $R \in \Alg(\SpTn)$.
The inclusion $\SpTnpo \subset \Sp$ admits a left adjoint $\LTnpo\colon \Sp \to \SpTnpo$.
Since $\KK(R) \in \Sp$, we can consider $\LTnpo \KK(R) \in \SpTnpo$.
Similarly, by \cref{tsadim-Tn-smash}, there is an inclusion $\SpTnpo \subset \tsadim$ for any $m \geq 1$, which admits a left adjoint $\LTnpo^{\tsadim}\colon \tsadim \to \SpTnpo$.
Note that, in some senses, $\LTnpo^{\tsadim}$ is better behaved than $\LTnpo$, as it is a smashing localization.
Since $\Km(R) \in \tsadim$, we can consider $\LTnpo^{\tsadim} \Km(R) \in \SpTnpo$.
By \cref{K-to-Km-Tn}, there is a natural comparison map
$$
\LTnpo \KK(R) \to \LTnpo^{\tsadim} \Km(R)
\quad \in \SpTnpo.
$$
This raises two independent questions:
\begin{enumerate}
	\item Does $\Km(R)$ land in $\SpTnpo \subset \tsadim$?
 	\item Is the comparison map an equivalence?
\end{enumerate}
A positive answer to both questions will imply that $\Km(R) \cong \LTnpo \KK(R)$, see \cref{conj-Km-Tnpo-K}.
In \cref{Km-Tk-local} we show that the first question is closely related to the Quillen--Lichtenbaum conjecture for $R$, in the guise of having a non-zero finite spectrum $X$ such that $\KK(R) \otimes X$ is bounded above.
By \cref{sheafification-Tn}, the second question is equivalent to $\LTnpo \Kzm(R)$ satisfying the $m$-Segal condition.
More informally, having descent properties for $\Tnpo$-localized K-theory.

Using the Galois descent results for $\Tnpo$-localized K-theory of \cite{DescVan}, the second question is answered in the affirmative for $m = 1$ in \cref{Ko-desc}.

We then study the case where $R$ has height $0$.
The main result is \cref{Km-height-0}, showing that for any $p$-invertible algebra $R \in \Alg(\Sppinv)$ and $m \geq 1$, there is an equivalence
$$
\Km(R) \cong L_{\To}\KK(R)
\quad \in \SpTo.
$$
This is first proved for $R = \SSpinv$ by employing the Quillen--Lichtenbaum property of $\SSpinv$ together with \cref{Ko-desc} mentioned above.
The general case then follows via the lax symmetric monoidal structure on $\Km$.

Finally, we study the completed Johnson--Wilson spectrum $\cJW$ at height $n \geq 1$, endowed with the Hahn--Wilson \cite{BPex} $\bbE_3$-algebra structure (see \cref{BP-QL-orig}) and, more generally, any $R \in \Alg(\LMod_{\cJW})$.
In \cref{Km-cJW} we show that
$$\Km(R) \in \SpTnpo$$
for any $m \geq 1$, strengthening \cref{redshift-lower} for $\cJW$-algebras.
In the case $m = 1$, \cref{Ko-desc} implies that
$$
\Kmo(R) \cong \LTnpo \KK(R)
\quad \in \SpTnpo.
$$
To prove \cref{Km-cJW}, we first use the Quillen--Lichtenbaum result for $\BPn$ of \cite{BPex} and the lax symmetric monoidal structure on $\Km$ to show that $\Km(\cJW) \in \SpTze \times \cdots \times \SpTnpo$.
We would like to thank the anonymous referee for suggesting this argument.
Then, we compute the cardinality of the classifying space of the $k$-fold wreath product of $C_p$ at each chromatic height in two different ways.
We observe that they are compatible only in chromatic height $n+1$, concluding that $\Km(\cJW) \in \SpTnpo$.
Using the lax symmetric monoidal structure on $\Km$, this is generalized to any $\cJW$-algebra.

Throughout this section $F(n)$ denotes a type $n$ finite spectrum (for example, the generalized Moore spectrum $\SS/(p^{i_0}, v_1^{i_1}, \dotsc, v_{n-1}^{i_{n-1}})$).
Without loss of generality, we may assume that $F(n)$ is an algebra, i.e.\ $F(n) \in \Alg(\Sp)$, by replacing it by $F(n) \otimes DF(n) \cong \End(F(n))$.

\subsection{General Results}

We begin this subsection by recalling and slightly generalizing some results from \cite{TeleAmbi} and \cite{AmbiHeight} that will be used in the rest of the section.

\begin{lem}\label{tor-zero}
	Let $\CC$ be a symmetric monoidal stable $p$-local $1$-semiadditive category, and let $R \in \CAlg(\CC)$.
	If $1 \in \pi_0 R = \hom_{h\CC}(\unit_\CC, R)$ is a torsion element, then $R = 0$.
\end{lem}

\begin{proof}
	This is essentially \cite[Corollary 4.3.5]{TeleAmbi}, we repeat the argument for the convenience of the reader.
	By \cite[Theorem 4.3.2]{TeleAmbi}, the ring $\pi_0 R$ admits an additive $p$-derivation.
	By assumption, $\pi_0 R$ is $p$-local.
	Since $1 \in \pi_0 R$ is torsion, \cite[Proposition 4.1.10]{TeleAmbi} shows that it is also nilpotent, which means that $1 = 0 \in \pi_0 R$.
	Therefore $R = 0$.
\end{proof}

Fix $m \geq 1$.
Recall from \cite[Theorem 4.2.7]{AmbiHeight} that $\tsadim \cong \tsadi_0 \times \tsadim_{>0}$.
Also, by \cite[Example 5.3.7]{AmbiHeight}, $\tsadi_0 \cong \SpTze \cong \Sp_\QQ$.
The collection of objects $X \in \tsadim_{>0}$ such that $X \otimes F(n+2) = 0$ form a full subcategory, which is equivalent to $\tsadim_{>0} \otimes \LnpofSp \cong \SpTo \times \cdots \times \SpTnpo$ by \cite[Corollary 5.5.7]{AmbiHeight} and \cite[Theorem G]{AmbiHeight}.
We thus have:

\begin{lem}\label{chrom-char}
	Let $X \in \tsadim_{>0}$, then $X \in \SpTo \times \cdots \times \SpTnpo$ if and only if $X \otimes F(n+2) = 0$.
\end{lem}

Recall from \cite[Proposition 5.3.9]{AmbiHeight} that, similarly to the $\Kn$- and $\Tn$-localizations, the map of modes $\Sp \to \tsadi_n$ vanishes on all bounded above spectra when $n \geq 1$.
Here we prove a slight generalization of this result.

\begin{lem}\label{bounded-tsadi}
	Let $m \geq 1$, then the map of modes $G_{>0}\colon \Sp \to \tsadim_{>0}$ vanishes on all bounded above spectra.
\end{lem}

\begin{proof}
	We follow closely the argument of \cite[Proposition 5.3.9]{AmbiHeight}, diverging only the case of $\mbb{F}_p$.
	The class of spectra on which $G_{>0}$ vanishes is closed under colimits and desuspensions in $\Sp$.
	Hence, by a standard devissage argument, it suffices to show that $G_{>0}$ vanishes on $\QQ$ and $\mbb{F}_\ell$ for all primes $\ell$.
	First, $\QQ$ and $\mbb{F}_\ell$ for $\ell \neq p$ are $p$-divisible.
	Since $G_{>0}$ is $0$-semiadditive, $G_{>0}(\QQ)$ and $G_{>0}(\mbb{F}_\ell)$ are $p$-divisible as well, but all objects of $\tsadim_{>0}$ are $p$-complete, and so $G_{>0}(\QQ) = G_{>0}(\mbb{F}_\ell) = 0$.

	It remains to show that $G_{>0}(\mbb{F}_p) = 0$.
	Since $\mbb{F}_p \in \CAlg(\Sp)$ is an $\bbE_\infty$-algebra, and $G_{>0}$ is a map of modes, $G_{>0}(\mbb{F}_p) \in \CAlg(\tsadim_{>0})$ is an $\bbE_\infty$-algebra as well.
	Similarly, since $p = 0$ in $\mbb{F}_p$, the same holds in $\pi_0 G_{>0}(\mbb{F}_p)$.
	Thus, by \cref{tor-zero} with $\CC = \tsadim_{>0}$ and $R = G_{>0}(\mbb{F}_p)$, we know that $G_{>0}(\mbb{F}_p) = 0$ which concludes the proof.
\end{proof}

We now move on to proving the two main results of this subsection.

\begin{prop}\label{Km-Tk-local}
	Let $\CC \in \Alg(\Catmfinst)$ be an $m$-semiadditive category for some $m \geq 1$.
	Assume that there exists a category $\CC_0 \in \Alg(\Catst)$, a left $\CC_0$-module structure $\CC \in \LMod_{\CC_0}(\Catst)$, and that $\KK(\CC_0) \otimes F(n+2)$ is bounded above.
	Then $\Km(\CC) \in \SpTze \times \SpTo \times \cdots \times \SpTnpo$.
\end{prop}

\begin{remark}
	Note that $\CC_0$ need not be $m$-semiadditive or even have $m$-finite colimits.
	Moreover, we require no compatibility between the algebra structure on $\CC$ and its $\CC_0$-module structure.
\end{remark}

\begin{proof}
	Recall the decomposition $\tsadim \cong \SpTze \times \tsadim_{>0}$.
	By \cref{chrom-char}, it remains to show that the factor in $\tsadim_{>0}$ satisfies $\Km(\CC)_{>0} \otimes F(n+2) = 0$.
	Let $G\colon \Sp \to \tsadim$ denote the map of modes, the adjoint of the underlying functor $\underlying$.
	We also denote by $G_{>0}\colon \Sp \to \tsadim_{>0}$ the composition with the projection to $\tsadim_{>0}$.

	By assumption, $\KK(\CC_0) \otimes F(n+2) \in \Sp$ is bounded above, so that $G_{>0}(\KK(\CC_0) \otimes F(n+2)) = 0 \in \tsadim_{>0}$ by \cref{bounded-tsadi}.
	The composition $G_{>0}(\KK(-) \otimes F(n+2))$ is lax monoidal, since $\KK$ is lax (symmetric) monoidal by \cref{k-lax}, $F(n+2) \in \Alg(\Sp)$ is taken to be an algebra, and $G_{>0}$ is a map of modes.
	By assumption $\CC$ is a left module over $\CC_0$, thus we get that $G_{>0}(\KK(\CC) \otimes F(n+2))$ is a left module over $G_{>0}(\KK(\CC_0) \otimes F(n+2)) = 0$, and thus $G_{>0}(\KK(\CC) \otimes F(n+2)) = 0$ as well.

	Recall from \cref{K-to-Km-sm} that since $\CC$ is an algebra, we get an algebra map $\KK(\CC) \to \underline{\Km(\CC)} \in \Alg(\Sp)$.
	By the adjunction $G \dashv \underlying$ and composing with the projection to $\tsadim_{>0}$, we get an algebra map $G_{>0}(\KK(\CC)) \to \Km(\CC)_{>0} \in \Alg(\tsadim_{>0})$.
	Since the functor $G_{>0}$ is a functor between stable modes, it commutes with the action of $\Sp$.
	Therefore, tensoring the map with the algebra $F(n+2)$ yields $G_{>0}(\KK(\CC) \otimes F(n+2)) \to \Km(\CC)_{>0} \otimes F(n+2) \in \Alg(\tsadim_{>0})$.
	We have shown that the source is $0$, and since this is an algebra map, so is the target, which concludes the proof.
\end{proof}

In the next proposition we would like to use \cite[Theorem C]{DescVan}, which applies to $\Lnf\SS$-linear stable categories.
We recall that an $\Lnf\SS$-linear stable category is, by definition, a module over $\Perf(\Lnf\SS) = \Mod_{\Lnf\SS}^\dbl$ in $\Catst$.
Note that since $\LnfSp$ is a smashing localization of $\Sp$ we have that $\Mod_{\Lnf\SS} = \LnfSp$.
In particular, for $R \in \Alg(\SpTn)$, we have that $\LMod_R^{\atomic}$ is $\Lnf\SS$-linear, since $\LMod_R \in \Mod_{\SpTn} \subset \Mod_{\LnfSp}$ and left dualizable modules coincide with atomics by \cref{atomic-ldbl}.
Thus $\LMod_R^{\atomic}$ is an example for $\CC$ in the following proposition.

\begin{prop}\label{Ko-desc}
	Let $\CC \in \Catofinst$ be $\Lnf\SS$-linear for some $n > 0$ (i.e., it admits a $\Perf(\Lnf\SS)$-module structure in $\Catst$), then
	$$\LTnpo^{\tsadio} \Kmo(\CC) \cong \LTnpo \KK(\CC).$$
	In particular, if $\Kmo(\CC) \in \SpTnpo \subset \tsadio$, then $\Kmo(\CC) \cong \LTnpo \KK(\CC)$.
\end{prop}

\begin{proof}
	By \cref{sheafification-Tn} it suffices to show that $\LTnpo \Kzo(\CC)$ satisfies the $1$-Segal condition, that is, for any $1$-finite $p$-space $A$, the canonical map
	$$
	\LTnpo \KK(\lim_A \CC)
	\to \lim_A \LTnpo \KK(\CC)
	$$
	is an equivalence.
	As both sides take coproducts in $A$ to direct sums, we may assume that $A$ is connected, i.e.\ $A = \BB G$ for a finite $p$-group $G$.
	This is exactly \cite[Theorem C]{DescVan}.
\end{proof}

\subsection{Height \texorpdfstring{$0$}{0}}\label{ssec-height-0}

We recall a form of the Quillen--Lichtenbaum conjecture for $\SSpinv$.

\begin{prop}\label{QL}
	$F(2) \otimes \KK(\SSpinv)$ is bounded above.
\end{prop}

\begin{proof}
	By \cite[Theorem 1.1 and Theorem 1.2]{Hyp},
	$$
	\KK(\SSpinv) \to \Lof \KK(\SSpinv) \times_{\Lof \TC(\SSpinv)} \TC(\SSpinv)
	$$
	is an isomorphism on high enough $p$-local homotopy groups.
	Tensoring with a finite spectrum preserves the property of a map being an isomorphism on high enough homotopy groups, and $p = 0$ in $F(2)$, so it suffices to show that the right hand side vanishes after tensoring with $F(2)$.
	The tensor product of spectra commutes with finite limits, so it suffices to show that each term on the right hand side vanishes after tensoring with $F(2)$.

	By definition, any $\Lof$-local spectrum vanishes after tensoring with $F(2)$, which shows that both $\Lof \KK(\SSpinv)$ and $\Lof \TC(\SSpinv)$ vanish after tensoring with $F(2)$.
	
	Now, \cite[Definition II.1.8]{TC} exhibits the functor $\TC\colon \Alg(\Sp) \to \Sp$ as the composition
	$$
	\Alg(\Sp) \xrightarrow{\THH} \mrm{CycSp} \xrightarrow{\TC} \Sp.
	$$
	Since $\THH(\SSpinv) \cong \SSpinv$, and the second functor is exact, we get that $\TC(\SSpinv)$ is $p$-invertible.
	Therefore, $\TC(\SSpinv)/p = 0$, and in particular it vanishes after tensoring with $F(2)$.
\end{proof}

\begin{prop}\label{Sppinv}
	The category $\Mod_{\SSpinv}(\Sp) \cong \Sppinv$ is a mode, classifying the property of being a stable $p$-invertible presentable category.
	It is ($p$-typically) $\infty$-semiadditive, and a smashing localization of both $\Sp$ and of $\CMonm[\Sp]$.
	In addition, for any $m \geq 0$, there is a lax symmetric functor $(-)^{\atomic} \colon \Mod_{\Sppinv}^{\intL} \to \Catmfinst$, and $\CC^{\atomic} = \CC^\omega$.
\end{prop}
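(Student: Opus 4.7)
The plan is to verify the claims in turn, treating $\SSpinv = \SS[p^{-1}]$ as the idempotent $\bbE_\infty$-algebra in $\Sp$ obtained by inverting $p$. Since $\SSpinv \otimes \SSpinv = \SSpinv$, the category $\Sppinv = \Mod_{\SSpinv}(\Sp)$ is a mode in the sense of \cite[Section 5]{AmbiHeight} and the unit map $\Sp \to \Sppinv$ is a smashing localization. A stable presentable category admits a (necessarily unique) module structure over $\Sppinv$ in $\PrL$ exactly when $p$ acts invertibly on it, so $\Sppinv$ classifies the property of being a stable $p$-invertible presentable category.

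To establish ($p$-typical) $\infty$-semiadditivity, I would show that every $\pi$-finite $p$-space is $\Sppinv$-ambidextrous. By Postnikov tower reduction and the fact that every finite $p$-group admits a filtration with $C_p$ quotients, this reduces to ambidexterity of $\BB^n C_p$ for all $n \geq 0$. The base case follows from the classical vanishing of the Tate construction for $C_p$ after inverting $p$: the cofiber of the norm map $X_{hC_p} \to X^{hC_p}$ is $X^{tC_p}$, which vanishes in $\Sppinv$. The inductive step follows from the general ambidexterity machinery of \cite{HL}, which propagates ambidexterity through iterated classifying space constructions whenever the associated cardinality invariants are invertible.

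With $\infty$-semiadditivity in hand, the smashing localization structure over $\CMonm[\Sp]$ is obtained as follows. Since $\Sppinv$ is stable and $m$-semiadditive, the universal property of \cref{cmon-mode} produces a unique map of modes $\CMonm[\Sp] \to \Sppinv$. This factors as
$$
\CMonm[\Sp] \to \CMonm[\Sp] \otimes_{\Sp} \Sppinv = \CMonm[\Sppinv] = \Sppinv,
$$
where the first arrow is the base change of the smashing localization $\Sp \to \Sppinv$ along the symmetric monoidal functor $\Sp \to \CMonm[\Sp]$ (hence itself smashing), the middle identification uses \cref{cmon-mode} together with $\Sp \otimes \Sppinv = \Sppinv$, and the last equality is the $m$-semiadditivity of $\Sppinv$.

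The remaining two claims are then direct. The lax symmetric monoidal functor $(-)^{\atomic}\colon \Mod_{\Sppinv}^{\intL} \to \Catmfinst$ is obtained by a direct adaptation of the proof of \cref{tsadi-atomics}, using that $m$-finite $p$-spaces are absolute colimits of $\Sppinv$ (via \cref{abs-mod} applied to the mode map constructed above) and that $\Sppinv$ is molecular by \cref{molecular-adjoint-conservative} and \cref{s-sp-atomic}. The equality $\CC^{\atomic} = \CC^\omega$ is then immediate from \cref{atomic-smashing} (identifying $\Sppinv$-atomics with $\Sp$-atomics, since $\Sp \to \Sppinv$ is smashing) together with \cref{stable-atomic} (identifying $\Sp$-atomics with compact objects). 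The principal obstacle is the $\infty$-semiadditivity step, since the inductive ambidexterity argument for $\BB^n C_p$ at all $n$ is the nontrivial input on which everything else rests.
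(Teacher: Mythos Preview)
Your proposal is correct and follows essentially the same route as the paper. Two minor efficiencies in the paper's version are worth noting: for $\infty$-semiadditivity it simply cites \cite[Proposition 2.3.4]{AmbiHeight} (the same phenomenon making $\Sp_\QQ$ $\infty$-semiadditive) rather than sketching the inductive ambidexterity argument, and for the lax symmetric monoidal functor it \emph{restricts} the existing functor $(-)^{\atomic}\colon \Mod_{\CMonm[\Sp]}^{\intL} \to \Catmfinst$ from \cref{tsadi-atomics} along the inclusion $\Mod_{\Sppinv}^{\intL} \subseteq \Mod_{\CMonm[\Sp]}^{\intL}$ (available precisely because $\Sppinv$ is a smashing localization of $\CMonm[\Sp]$), rather than rerunning the argument with $\Sppinv$ in place of $\CMonm[\Sp]$.
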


\begin{proof}
	The fact that $\Sppinv$ is a mode, classifying the property of being stable $p$-invertible presentable category, follows from \cite[Proposition 5.2.17 and Proposition 5.2.10]{AmbiHeight}.
	The fact that it is $p$-typically $\infty$-semiadditive follows from \cite[Proposition 2.3.4]{AmbiHeight}, which is essentially the same fact that $\Sp_\QQ$ is $\infty$-semiadditive.
	It is a smashing localization of both $\Sp$ and of $\CMonm[\Sp]$ because it can be obtained from both by inverting $p$.

	The fact that it is a smashing localization together with \cref{atomic-smashing} show that for $\CC \in \Mod_{\Sppinv}$ the $\Sppinv$-atomics, the $\Sp$-atomics and the $\CMonm[\Sp]$-atomics all coincide.
	In addition, \cref{stable-atomic} shows that $\Sp$-atomics are the same as compact objects.
	Finally, the functor $(-)^{\atomic} \colon \Mod_{\Sppinv}^{\intL} \to \Catmfinst$ is the restriction of $(-)^{\atomic} \colon \Mod_{\CMonm[\Sp]}^{\intL} \to \Catmfinst$ from \cref{tsadi-atomics}.
\end{proof}

\begin{cor}\label{p-inv-To}
	Let $\CC \in \Mod_{\Sppinv}$ and $m \geq 1$, then $\Km(\CC^\omega) \in \SpTo$.
\end{cor}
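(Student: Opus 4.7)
The plan is to first establish the conclusion for the unit algebra $\Sppinv^\omega$ and then bootstrap to arbitrary $\CC \in \Mod_{\Sppinv}$ using the lax symmetric monoidal structure of $\Km$. For the first step, I would apply \cref{Km-Tk-local} to $\Sppinv^\omega \in \Alg(\Catmfinst)$, which is an algebra in $\Catmfinst$ since compact objects are closed under tensor products in $\Sppinv$, and which is $m$-semiadditive by \cref{Sppinv}. Every object of $\Sppinv^\omega$ has $p$ acting invertibly, hence has semiadditive height $\leq 0$, so \cref{semiadditive-redshift-2} gives $\height(\Sppinv^\omega) \leq 1$ as an object of $\Catmfinst$. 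The generalized Moore spectrum $X = \SS/(p^{i_0}, v_1^{i_1})$ is nonzero and finite, and by \cref{QL} the spectrum $X \otimes \KK(\SSpinv) = X \otimes \KK(\Sppinv^\omega)$ is bounded above. Since $m \geq 1$ we may take $n = 0 < m$, and \cref{Km-Tk-local} then yields $\Km(\Sppinv^\omega) \in \SpTo$.

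For the second step I would upgrade the $\Sppinv$-action on $\CC$ to an action that survives passage to atomics and then to $\Km$. By \cref{s-sp-atomic} and \cref{molecular-adjoint-conservative} the mode $\Sppinv$ is molecular, so by \cref{atomics-mod-m-at} the atomic functor refines to a lax symmetric monoidal functor $(-)^\omega\colon \Mod_{\Sppinv}^\intL \to \Mod_{\Sppinv^\omega}(\Catmfinst)$, exhibiting $\CC^\omega$ as a module over $\Sppinv^\omega$ in $\Catmfinst$. Applying the lax symmetric monoidal functor $\Km$ of \cref{k-lax} makes $\Km(\CC^\omega)$ into a module over $\Km(\Sppinv^\omega) \in \SpTo$ in $\tsadim$. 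Since $\SpTo \subseteq \tsadim$ is a smashing localization (as recalled in \cref{Tn-K}, citing \cite[Corollary 5.5.14]{AmbiHeight}), modules in $\tsadim$ over an algebra whose underlying object is $\To$-local are themselves $\To$-local, and so $\Km(\CC^\omega) \in \SpTo$.

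I expect the first step to be the main obstacle, as one must marry the mode-theoretic setup with the ring-theoretic Quillen--Lichtenbaum input. In particular, one has to justify that $\Sppinv^\omega$ really inherits an algebra structure in $\Catmfinst$, identify $\KK(\Sppinv^\omega)$ with $\KK(\SSpinv)$ via the equivalence $\Sppinv \simeq \Mod_{\SSpinv}$, and verify the height bound via \cref{semiadditive-redshift-2}. Once $\Km(\Sppinv^\omega) \in \SpTo$ is in hand, the propagation to arbitrary $\CC$ is formal, relying only on the lax monoidal machinery assembled earlier in the paper.
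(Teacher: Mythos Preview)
Your proposal is correct and follows essentially the same approach as the paper's proof: first establish the result for the unit $\Sppinv^\omega$ via \cref{semiadditive-redshift-2}, \cref{QL}, and \cref{Km-Tk-local}, then propagate to arbitrary $\CC$ using the lax symmetric monoidal structure on $\Km\circ(-)^{\atomic}$ together with the fact that $\SpTo$ is a smashing localization of $\tsadim$. The only cosmetic difference is that the paper invokes \cref{Sppinv} directly for the lax symmetric monoidal structure on $(-)^{\atomic}$, whereas you unpack this via molecularity and \cref{atomics-mod-m-at}; the substance is identical.
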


\begin{proof}
	First consider the universal case $\CC = \Sppinv$.
	\cref{QL} shows that $\KK(\SSpinv)$ is bounded above after tensoring with $F(2)$, so by \cref{Km-Tk-local} $\Km(\SSpinv) \in \SpTze \times \SpTo$.
	Similarly to the proof of \cref{leq-n-atomic}, $\Sppinv^\omega$ is $\infty$-semiadditive by \cref{tsadi-atomics}, and all of its objects have semiadditive height $0$, thus by \cref{height-geq-1} we get that $\height(\Km(\SSpinv)) > 0$, so that indeed $\Km(\SSpinv) \in \SpTo$.

	We now prove the general case.
	Recall that by \cref{Sppinv} and \cref{k-lax} $\Km((-)^{\atomic})$ is a composition of lax symmetric monoidal functors, and by \cref{Sppinv}, atomic objects coincide with compact objects.
	Since every $\CC \in \Mod_{\Sppinv}$ is by definition a module over $\Sppinv$, we get that $\Km(\CC^\omega)$ is a module over $\Km(\SSpinv)$ in $\tsadim$.
	As we have shown that $\Km(\SSpinv) \in \SpTo$, and $\SpTo$ is a smashing localization of $\tsadim$ by \cref{tsadim-Tn-smash}, we conclude that $\Km(\CC^\omega) \in \SpTo$ as well.
\end{proof}

\begin{thm}\label{Km-height-0}
	Let $\CC \in \Mod_{\Sppinv}$ and $m \geq 1$, then
	$$\Km(\CC^\omega) \cong L_{\To}\KK(\CC^\omega).$$
	In particular, for any $R \in \Alg(\Sppinv)$ we have $\Km(R) \cong L_{\To}\KK(R)$.
\end{thm}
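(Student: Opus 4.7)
The plan is to establish $\Km(\CC^\omega) \simeq L_{\To}\KK(\CC^\omega)$ by proving that $L_{\To}\Kzm(\CC^\omega)$ already satisfies the $m$-Segal condition, and then invoking \cref{sheafification-Tn}. By \cref{p-inv-To}, $\Km(\CC^\omega) \in \SpTo$, so $L_{\To}^{\tsadim}\Km(\CC^\omega) = \Km(\CC^\omega)$, and \cref{sheafification-Tn} then yields an equivalence $\Lseg L_{\To}\Kzm(\CC^\omega) \simeq \Km(\CC^\omega)$ of pre-$m$-commutative monoids in $\SpTo$. If $L_{\To}\Kzm(\CC^\omega)$ already lies in $\CMonm[\SpTo] \subseteq \PCMonm[\SpTo]$, then $\Lseg$ acts trivially on it, and evaluating at $\ast \in \Spacesm$ yields $L_{\To}\KK(\CC^\omega) \simeq \Km(\CC^\omega)$ as desired. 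Concretely, the Segal condition to check is that for each $A \in \Spacesm$ the assembly map $L_{\To}\KK((\CC^\omega)^A) \to \lim_{A} L_{\To}\KK(\CC^\omega)$ is an equivalence.

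I would verify this by induction on the Postnikov tower of $A$. Both sides commute with finite disjoint unions, so $A$ may be assumed connected. A connected $m$-finite $p$-space is built by iterated principal $BC_p$-bundles, and the Segal condition propagates from fibers to total spaces along such a bundle, reducing to the base case $A = BC_p$. For this case, since $\CC \in \Mod_{\Sppinv}$ and $p$ is invertible, the averaging idempotent $\tfrac{1}{p}\sum_{g \in C_p} g$ exhibits $(-)^{hC_p}$ as a retract of the identity, hence it preserves filtered colimits; consequently, a $C_p$-equivariant object is compact in $\CC^{BC_p}$ iff its underlying object in $\CC$ is compact, giving $(\CC^\omega)^{BC_p} \simeq (\CC^{BC_p})^\omega$. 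The Segal condition at $A = BC_p$ then reads $L_{\To}\KK((\CC^{BC_p})^\omega) \simeq L_{\To}\KK(\CC^\omega)^{BC_p}$, which is precisely the $C_p$-Galois descent statement of \cite[Theorem B]{DescVan}. The final clause, for $R \in \Alg(\Sppinv)$, follows by taking $\CC = \LMod_R$ and noting $\Km(R) = \Km(\LMod_R^\omega)$ and $\KK(R) = \KK(\LMod_R^\omega)$ via \cref{k-r-dbl} and \cref{Sppinv}.

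I expect the main technical obstacle to be the Postnikov induction: carefully tracking the $m$-Segal assembly map across a principal $BC_p$-bundle, while matching both the covariant (wrong-way) and contravariant (restriction) legs of spans in $\Span(\Spacesm)^\op$, requires delicate bookkeeping, since the base and fiber data must be assembled over both legs simultaneously. Additionally, applying \cite[Theorem B]{DescVan} to an arbitrary $\CC \in \Mod_{\Sppinv}$ (as opposed to modules over a fixed $\bbE_\infty$-algebra) may require some adaptation, perhaps by presenting $\CC$ through a compactly-generated model or by reducing to the universal case $\CC = \Sppinv$ and then extending by the $\Sppinv$-linear structure.
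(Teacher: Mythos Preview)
Your overall strategy matches the paper's: verify that $L_{\To}\Kzm(\CC^\omega)$ satisfies the Segal condition, apply \cref{sheafification-Tn}, and use \cref{p-inv-To} to drop the localization. The key difference is that you miss a simplification that eliminates precisely the obstacle you identify as the main difficulty.

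The paper begins by invoking \cref{indep-lam} to reduce to $m=1$: since $\Km(\CC^\omega)$ is independent of $m \geq 1$, it suffices to treat $\Kmo$. The $1$-Segal condition only involves $1$-finite $p$-spaces, whose connected components are $BG$ for finite $p$-groups $G$, so no Postnikov induction is needed at all. Moreover, \cite[Theorem B]{DescVan} applies directly to any finite $p$-group $G$, not just $C_p$; the paper cites it once for $A = BG$ and is done. Your reduction to $A = BC_p$ via ``iterated principal $BC_p$-bundles'' is both unnecessary and imprecisely stated (for $m=1$ one would need principal $C_p$-bundles, i.e.\ central extensions of $G$; for $m>1$ the Postnikov fibers are $K(\pi_k,k)$, not $BC_p$), and the inductive bookkeeping you flag as delicate is exactly what \cref{indep-lam} lets you avoid.

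Your auxiliary identification $(\CC^\omega)^{BC_p} \simeq (\CC^{BC_p})^\omega$ via the averaging idempotent is correct but also not needed in the paper's formulation: the Segal map is written there directly as $L_{\To}\KK(\lim_A \CC^\omega) \to \lim_A L_{\To}\KK(\CC^\omega)$, which is the form to which \cite[Theorem B]{DescVan} (with $n=0$, $R = \SSpinv$, noting $\SSpinv^{tC_p}=0$) applies. Your concern about adapting that theorem to general $\CC \in \Mod_{\Sppinv}$ rather than module categories is legitimate, but the paper simply asserts the cited theorem covers this case.
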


\begin{proof}
	By \cref{indep-lam}, $\Km(\CC^\omega)$ is independent of $m \geq 1$, so we may assume that $m = 1$.
	Therefore, the result follows immediately from the combination of \cref{p-inv-To} and \cref{Ko-desc}.
\end{proof}

\begin{example}\label{Km-Qb}
	For any $1 \leq m \leq \infty$, there is an equivalence
	$\Km(\Qb) \cong \KU_p$.
\end{example}

\begin{proof}
	The combination of \cite[Corollary 4.7]{Kloc} and \cite[Main Theorem]{Kclsd} shows that there is an equivalence
	$\KK(\Qb)_p \cong \KK(\bbC)_p \cong \ku_p$.
	As $\KU_p$ is $\To$-local, and $\To$-localization is insensitive to connective covers, $L_{\To} \ku_p \cong \KU_p$, which shows that
	$L_{\To} \KK(\Qb) \cong \KU_p$,
	and the result follows by \cref{Km-height-0}.
\end{proof}

\subsection{Height \texorpdfstring{$n \geq 1$}{n ≥ 1}}\label{ssec-height-n}

Fix some finite height $1 \leq n < \infty$.
We let $\cJW \in \SpTn$ denote the completed Johnson--Wilson spectrum at height $n$, namely
$\cJW \cong \LTn \JW \cong \LTn \BPn$
(note that $\Kn$- and $\Tn$-localization coincide for $\MU$-modules by \cite[Corollary 1.10]{MUmod}).
The main input to this subsection is the following result:

\begin{thm}[{\cite[Corollary of Theorem B]{BPex}}]\label{BP-QL-orig}
	There exists an $\bbE_3$-$\BP$-algebra structure on $\BPn$ such that
	$$
	\KK(\BPn)_{(p)} \to \Lnpof \KK(\BPn)_{(p)}
	$$
	induces an isomorphism on $\pi_*$ for $* \gg 0$.
\end{thm}

Henceforth, we shall consider $\BPn$ as an $\bbE_3$-algebra with the structure from \cref{BP-QL-orig}, which also endows the localization $\cJW$ with a compatible $\bbE_3$-algebra structure.
An immediate corollary of this result is the following:

\begin{cor}\label{BP-QL}
	The spectrum $\KK(\BPn) \otimes F(n+2)$ is bounded above, where $F(n+2)$ is a type $n+2$ finite spectrum.
\end{cor}

\begin{lem}\label{Km-cJW-ze-npo}
	Let $m \geq 1$, then $\Km(\cJW) \in \SpTze \times \cdots \SpTnpo$.
\end{lem}

\begin{proof}
	We shall employ \cref{Km-Tk-local}.
	Consider the symmetric monoidal functor $\LTn\colon \Sp \to \SpTn$.
	We get an induced $\bbE_2$-monoidal functor on modules
	$$
	\LMod_{\BPn}(\Sp) \to \LMod_{\cJW}(\SpTn),
	$$
	which in turn induces an $\bbE_2$-monoidal functor on left dualizable objects
	$$
	\LMod_{\BPn}(\Sp)^\ldbl \to \LMod_{\cJW}(\SpTn)^\ldbl.
	$$
	The latter map exhibits the target as a module over the source.
	By \cref{BP-QL}, we know that $\KK(\LMod_{\BPn}(\Sp)^\ldbl) \otimes F(n+2) = \KK(\BPn) \otimes F(n+2)$ is bounded above.
	By \cref{atomic-ldbl} we know that
	$$
	\Km(\cJW) = \Km(\LMod_{\cJW}(\SpTn)^{\atomic})
	= \Km(\LMod_{\cJW}(\SpTn)^\ldbl)
	.
	$$
	The result now follows immediately from \cref{Km-Tk-local}.
\end{proof}

Our next goal, achieved in \cref{Km-cJW-npo}, is to show that in fact $\Km(\cJW) \in \SpTnpo$.
For that, we need to show that each $\Tk$-local part vanishes for $0 \leq k \leq n$, and in fact it will suffice to show this after tensoring with the Lubin--Tate spectrum $\Ek$.
To do so, we shall compute the action of $|\BB(\wr_k C_p)|$, where $\wr_k C_p$ is the $k$-fold wreath product of $C_p$ with itself, in two different ways.
First, in general in $\Ek$-modules (see \cref{wreath-val}), and second by studying the dimension of the $\cJW$-module $\cJW{}^{\BB(\wr_k C_p)}$ and deducing the result for the higher semiadditive algebraic K-theory (see \cref{Km-cJW-wreath-act}).

\begin{defn}
	Let $k \geq 0$.
	We denote by $\mdef{\ModEk} = \Mod_{\Ek}(\Sp_{\Kk})$ the category of $\Kk$-local $\Ek$-modules.
\end{defn}

We recall from \cite[Theorem 5.3.1]{TeleAmbi} that $\ModEk$ is $\infty$-semiadditive.

\begin{defn}
	For a $\pi$-finite space $A$, we denote
	$$
	\mdef{|A|_k} = |A|_{\ModEk} \in \pi_0(\Ek).
	$$
\end{defn}

Also note that by \cite[Proposition 2.2.6, see also Section 5.4]{TeleAmbi}, the cardinality $|A|_k$ in fact lands in $\Zp \subseteq \pi_0(\Ek)$.

\begin{prop}\label{wreath-val}
	Let $k \geq 1$ and let $G$ be a finite group.
	If $v_p(|\BB G|_k) > 0$, then $v_p(|\BB G \wr C_p|_k) = v_p(|\BB G|_k) - 1$.
	Moreover, $|\BB(\wr_k C_p)|_k$ is invertible.
\end{prop}

\begin{proof}
	This is similar to the proof of \cite[Proposition 4.3.7]{TeleAmbi}.
	Recall the additive $p$-derivation $\delta\colon \pi_0(\Ek) \to \pi_0(\Ek)$ from \cite[Definition 4.3.1]{TeleAmbi}, which by \cite[Theorem 4.2.12]{TeleAmbi} satisfies
	$$
	\delta|\BB G|_k
	= |\BB C_p|_k |\BB G|_k - |\BB G \wr C_p|_k.
	$$
	By \cite[Lemma 5.4.6]{AmbiHeight}, for $x \in \Zp \subset \pi_0(\Ek)$, if $v_p(x)>0$ then $v_p(\delta(x)) = v_p(x)-1$.
	Note that $v_p(|\BB C_p|_k |\BB G|_k) = v_p(|\BB C_p|_k) + v_p(|\BB G|_k) \geq v_p(|\BB G|_k)$.
	Combining the two, we get
	$$
	v_p(|\BB G \wr C_p|_k) = v_p(\delta|\BB G|_k) = v_p(|\BB G|_k) - 1.
	$$

	For the second part, by \cite[Lemma 5.3.3]{TeleAmbi}, we have $|\BB C_p|_k = p^{k-1}$.
	Thus, applying the first part $(k-1)$-times, we get $v_p(|\BB(\wr_k C_p)|_k) = v_p(|\BB C_p|_k)-(k-1) = 0$.
\end{proof}

For a finite $p$-group $G$, we consider $\hom(\Zp^n, G)$, the set of $n$-tuples commuting elements in $G$.
This set is endowed with an action of $\GL_n(\Zp)$ by pre-composition.
We shall in particular be interested in the integer
$$
\mdef{\fLn(G)} = |\hom(\Zp^n, G)/G| = \frac{|\hom(\Zp^{n+1}, G)|}{|G|},
$$
where the $G$-action in the second term is by conjugation point-wise, and the second equality is Burnside's lemma (\cite[Lemma 4.13]{HKR}).

\begin{prop}\label{wreath-comm}
	Let $G$ be a finite $p$-group, then
	$$
	\fLn(G \wr C_p) = \frac{\fLn(G)^p + (p^{n+1}-1)\fLn(G)}{p}.
	$$
\end{prop}

\begin{proof}
	We shall prove
	$$
	|\hom(\Zp^n, G \wr C_p)| = |\hom(\Zp^n, G)|^p + (p^n-1)|G|^{p-1}|\hom(\Zp^n, G)|.
	$$
	Since $|G \wr C_p| = p|G|^p$, the result immediately follows (when changing $n$ to $n+1$).

	In this proof, we will write elements of $C_p$ additively, namely, identify it with $\ZZ/p$.
	Elements of $G \wr C_p$ will be denoted by pairs $(a, f)$ where $a \in C_p$ and $f\colon C_p \to G$.
	Elements of $\hom(\Zp^n, C_p)$ will be denoted by $\alpha = (a_1, \dotsc, a_n)$.
	Similarly, elements of $\hom(\Zp^n, G \wr C_p)$ will be denoted by $\beta = ((a_1, f_1), \dotsc, (a_n, f_n))$.

	Consider the projection $G \wr C_p \to C_p$, which by post-composition induces $\pi\colon \hom(\Zp^n, G \wr C_p) \to \hom(\Zp^n, C_p)$.
	By construction of the action, $\pi$ is $\GL_n(\Zp)$-equivariant.
	We let $\hom^\alpha(\Zp^n, G \wr C_p) = \pi^{-1}(\alpha)$ denote the fiber.
	Observe that the $\GL_n(\Zp)$-action on $\hom(\Zp^n, C_p) \cong C_p^n$ has two orbits -- one which is the singleton $\alpha = 0$, and the second of size $p^n-1$ containing all $\alpha \neq 0$.
	We let $\alpha_1 = (1, 0, \dotsc, 0)$, an element of the second orbit.
	Since $\pi$ is equivariant, all of the fibers in the same orbit have the same size, so that
	$$
	|\hom(\Zp^n, G \wr C_p)|
	= |\hom^0(\Zp^n, G \wr C_p)| + (p^n-1)|\hom^{\alpha_1}(\Zp^n, G \wr C_p)|.
	$$
	We shall now compute the sizes of the two fibers.

	For the first fiber, observe that $\hom^0(\Zp^n, G \wr C_p) \subset \hom(\Zp^n, G \wr C_p)$ is the subset of those tuples landing in $G^p < G \wr C_p$.
	Therefore,
	$$
	|\hom^0(\Zp^n, G \wr C_p)| = |\hom(\Zp^n, G^p)| = |\hom(\Zp^n, G)|^p.
	$$

	For the second fiber, we being with the following observations.
	A pair of elements $(0, f), (0, f') \in G \wr C_p$ commute if and only if $f, f' \in G^p$ commute, i.e.\ if and only if $f(a), f'(a) \in G$ commute for all $a \in C_p$.
	A pair of elements $(1, f), (0, f') \in G \wr C_p$ commute if and only if $f(a) f'(1+a) = f'(a) f(a)$ for all $a \in C_p$.
	Applying this inductively, we see that $f'$ is uniquely determined by $f$ and $f'(0)$ and subject to the condition that $f'(0), \prod f(a) \in G$ commute.
	Using these observations, we can say when an $n$-tuple of elements in $G \wr C_p$, denoted $\beta = ((1, f_1), (0, f_2), \dotsc, (0, f_n))$, pairwise commute, i.e.\ $\beta \in \hom^{\alpha_1}(\Zp^n, G \wr C_p)$.
	The commutativity of $(1, f_1)$ and $(0, f_i)$ (for $i \geq 2$) says that $f_i$ is determined by $f_1$ and $f_i(0)$ and forces the condition that $f_i(0), \prod f_1(a)$ commute.
	The commutativity of $(0, f_i)$ and $(0, f_j)$ (for $i,j \geq 2$) forces the condition that $f_i(0)$ and $f_j(0)$ commute.
	We thus see that $\beta$ is uniquely specified by $f_1$ and $f_2(0), \dotsc, f_n(0)$, subject to the condition that $(\prod f_1(a), f_2(0), \dotsc, f_n(0))$ pairwise commute.
	In other words, we have a bijection
	\begin{gather*}
		\hom^{\alpha_1}(\Zp^n, G \wr C_p) \iso G^{p-1} \times \hom(\Zp^n, G),\\
		\beta \mapsto ((f_1(0), \dotsc, f_1(p-2)), (\prod f_1(a), f_2(0), \dotsc, f_n(0))).
	\end{gather*}

	To conclude, we have shown
	\begin{align*}
		|\hom(\Zp^n, G \wr C_p)|
		&= |\hom^0(\Zp^n, G \wr C_p)| + (p^n-1)|\hom^{\alpha_1}(\Zp^n, G \wr C_p)|\\
		&= |\hom(\Zp^n, G)|^p + (p^n-1)|G|^{p-1}|\hom(\Zp^n, G)|,
	\end{align*}
	as required.
\end{proof}

\begin{prop}\label{wreath-comm-Cp}
	For $1 \leq k \leq n$ we have $v_p(\fLn(\wr_k C_p)) = n - (k-1)$.
	In particular $\fLn(\wr_k C_p) \in \Zp$ is \emph{not} invertible.
\end{prop}

\begin{proof}
	First, for $k = 1$ we have
	$$
	\fLn(C_p) = \frac{|\hom(\Zp^{n+1}, C_p)|}{|C_p|} = p^n,
	$$
	so that $v_p(\fLn(C_p)) = n$.
	Now assume inductively that $v_p(\fLn(\wr_k C_p)) = n - (k-1)$.
	Since
	$$
	v_p(\fLn(\wr_k C_p)^p)
	= p v_p(\fLn(\wr_k C_p))
	> v_p(\fLn(\wr_k C_p))
	= v_p((p^{n+1}-1)\fLn(\wr_k C_p)),
	$$
	we get that
	$$
	v_p(\fLn(\wr_k C_p)^p + (p^{n+1}-1)\fLn(\wr_k C_p)) = v_p(\fLn(\wr_k C_p)).
	$$
	Thus, by \cref{wreath-comm}, we get the required result
	$$
	v_p(\fLn(\wr_{k+1} C_p)) = v_p(\fLn(\wr_k C_p)) - 1 = n - ((k+1)-1).
	$$
\end{proof}

We now recall the following result:

\begin{prop}\label{K-to-E}
	Let $X$ be a space with $\dim_{\Fp}(\Kn_0(X)) = d < \infty$ and $\Kn_1(X) = 0$.
	Then, $\cJW[X] \in \LMod_{\cJW}$ is a free $\cJW$-module of dimension $d$ and, in particular, dualizable.
\end{prop}

\begin{proof}
	This is the content of \cite[Proposition 8.4]{HS} (see also \cite[Proposition 3.4.3]{HL} and \cite[Lemma 5.1.7]{TeleAmbi}).
\end{proof}

\begin{prop}\label{Km-cJW-wreath-act}
	Let $1 \leq k \leq n$ and let $m \geq 1$.
	The action of $|\BB(\wr_k C_p)|$ on $\Km(\cJW) \in \tsadim$ is by multiplication by a non-invertible $p$-adic number.
\end{prop}

\begin{proof}
	By \cite[Theorem E]{HKR}, the group $\wr_k C_p$ is good in the sense of \cite[Definition 7.1]{HKR}, and in particular $\Kn_1(\BB(\wr_k C_p)) = 0$.
	Thus, by \cite[Theorem B and Lemma 4.13]{HKR}, we know that $\dim_{\Fp}(\Kn_0(\wr_k C_p)) = \fLn(\wr_k C_p)$.
	By \cref{K-to-E}, we conclude that $\cJW[\BB(\wr_k C_p)]$ is a free $\cJW$-module of dimension $\fLn(\wr_k C_p)$.
	Recall from \cref{pn-formula} that the action of $|\BB(\wr_k C_p)|$ on $\LMod_{\cJW}^{\atomic}$ as an object of $\Catmfinst$ is by $\cJW[\BB(\wr_k C_p)] \otimes (-)$, namely by multiplication by $\fLn(\wr_k C_p)$.
	Since $\Km$ is a $1$-semiadditive functor by \cref{Klam-limits}, the same holds for the action of $|\BB(\wr_k C_p)|$ on $\Km(\cJW)$ by \cite[Corollary 3.2.7]{TeleAmbi}.
	By \cref{wreath-comm-Cp}, this number is indeed a non-invertible $p$-adic number.
\end{proof}

\begin{lem}\label{Km-cJW-npo}
	Let $m \geq 1$, then $\Km(\cJW) \in \SpTnpo$.
\end{lem}

\begin{proof}
	By \cref{Km-cJW-ze-npo}, we know that $\Km(\cJW) \in \SpTze \times \cdots \SpTnpo$.
	It remains to show that the $\Tk$-local part, which for brevity we denote by $A_k \in \SpTk$, vanishes for every $0 \leq k \leq n$.

	We first deal with the case $1 \leq k \leq n$.
	Recall from \cite[Corollary 5.1.17]{TeleAmbi} that the functor
	$$
	\Ek[-]\colon \SpTk \to \ModEk
	$$
	is nil-conservative in the sense of \cite[Definition 4.4]{TeleAmbi}, that is, for any $A \in \Alg(\SpTk)$, if $\Ek[A] = 0$ then $A = 0$.
	Therefore, it suffices to check that $\Ek[A_k] = 0$.
	We have shown in \cref{Km-cJW-wreath-act} that $|\BB(\wr_k C_p)|$ acts on $\Km(\cJW)$ by multiplication by a non-invertible $p$-adic number.
	Since $\Ek[(-)_k]$ is a $1$-semiadditive functor by \cref{Klam-limits}, the same holds for $\Ek[A_k]$ by \cite[Corollary 3.2.7]{TeleAmbi}.
	We thus have shown that $|\BB(\wr_k C_p)|$ acts on $\Ek[A_k]$ as a $p$-adic number of positive valuation.
	Since by \cref{wreath-val} $|\BB(\wr_k C_p)|_k$ is invertible, and $\ModEk$ is $p$-complete, we conclude that $\Ek[A_k] = 0$.

	We now prove the remaining case $k = 0$.
	As above, \cref{Km-cJW-wreath-act} shows that $|\BB C_p|$ acts on $A_0$ by $\fLn(C_p) = p$.
	On the other hand, $|\BB C_p|_0 = p^{-1}$ by \cite[Lemma 5.3.3]{TeleAmbi}.
	Namely $p = p^{-1}$ on the rational spectrum $A_0$, thus $A_0 = 0$.
\end{proof}

\begin{thm}\label{Km-cJW}
	Let $R \in \Alg(\LMod_{\cJW})$ and $m \geq 1$, then $\Km(R) \in \SpTnpo$.
\end{thm}

\begin{proof}
	$\LMod_R$ is a right module over $\LMod_{\cJW}$.
	Recall from \cref{atomic-psh-sm-adj} that taking the atomics is a lax symmetric monoidal functor, and from \cref{k-lax} that $\Km$ is lax symmetric monoidal.
	Thus, we get that $\Km(R)$ is a right module over $\Km(\cJW)$.
	In addition, by \cref{tsadim-Tn-smash}, $\tsadim \to \SpTnpo$ is a smashing localization, and since $\Km(\cJW)$ lands in the smashing localization by \cref{Km-cJW-npo}, so does $\Km(R)$.
\end{proof}

\begin{cor}\label{Ko-cJW}
	Let $R \in \Alg(\LMod_{\cJW})$, then $\Kmo(R) \cong \LTnpo \KK(R)$.
	In particular, $\Kmo(\cJW) \cong \LTnpo \KK(\cJW)$.
\end{cor}

\begin{proof}
	This follows immediately from the combination of \cref{Km-cJW} and \cref{Ko-desc}.
\end{proof}

In work in progress with Carmeli and Yanovski \cite{cycloredshift} we show that \cref{Ko-cJW} holds for $m$-semiadditive K-theory for any $m \geq 1$.

	\ifcompositio
	
	\fi
	
	\bibliographystyle{alpha}
	\bibliography{refs}

\end{document}